\newcommand{\cev}[1]{\reflectbox{\ensuremath{\vec{\reflectbox{\ensuremath{#1}}}}}}
    \rule{\textwidth}{1pt}
\rule{\textwidth}{0.3pt}
\titleformat{\section}[wrap]
{\normalfont\bfseries}
{\thesection.}{0.5em}{}
\titlespacing{\section}{12pc}{1.5ex plus .1ex minus .2ex}{1pc}
\titleformat{\subsection}[runin]
       {\normalfont\bfseries}
       {\thesubsection}
       {0.5em}
       {}
       [.]
\newcommand{\clevertheorem}[3]{%
	\newtheorem{#1}[thm]{#2}
	\crefname{#1}{#2}{#3}
}
\numberwithin{equation}{section} 
\numberwithin{figure}{section} 
\theoremstyle{plain} 
\newtheorem{thm}{Theorem}[subsection]
\crefname{thm}{Theorem}{Theorems}
\newtheorem*{thm*}{Theorem}
\newtheorem*{prop*}{Proposition}
\theoremstyle{definition} 
\DeclareMathSymbol\bbDelta \mathord{bbold}{"01}
\DeclareMathSymbol\bDelta \mathord{bbold}{"01}
\newtheorem{remark*}{Remark}
\newtheorem{notation}[thm]{Notation}
\newcommand{\bD}{{\mathbb D}}
\renewcommand{\P}{{\mathbb P}}
\newcommand{\mA}{{\mathcal A}}
\newcommand{\mB}{{\mathcal B}}
\newcommand{\mC}{{\mathcal C}}
\newcommand{\mD}{{\mathcal D}}
\newcommand{\mJ}{{\mathcal J}}
\newcommand{\mK}{{\mathcal K}}
\newcommand{\mL}{{\mathcal L}}
\newcommand{\mM}{{\mathcal M}}
\newcommand{\mN}{{\mathcal N}}
\newcommand{\mO}{{\mathcal O}}
\newcommand{\mP}{{\mathcal P}}
\newcommand{\mQ}{{\mathcal Q}}
\newcommand{\mR}{{\mathcal R}}
\newcommand{\mS}{{\mathcal S}}
\newcommand{\mU}{{\mathcal U}}
\newcommand{\mV}{{\mathcal V}}
\newcommand{\mW}{{\mathcal W}}
\newcommand{\mX}{{\mathcal X}}
\newcommand{\mY}{{\mathcal Y}}
\newcommand{\A}{A}
\newcommand{\B}{B}
\newcommand{\C}{C}
\newcommand{\E}{{E}}
\newcommand{\F}{{F}}
\newcommand{\G}{{G}}
\renewcommand{\L}{{\mathrm L}}
\newcommand{\M}{{M}}
\newcommand{\N}{{\mathrm N}}
\renewcommand{\P}{{P}}
\newcommand{\Q}{{Q}}
\newcommand{\R}{{\mathrm R}}
\newcommand{\T}{{T}}
\newcommand{\W}{W}
\newcommand{\X}{X}
\newcommand{\Y}{Y}
\newcommand{\Z}{Z}
\newcommand{\bj}{{j}}
\newcommand{\bi}{{i}}
\newcommand{\m}{{m}}
\newcommand{\bk}{{k}}
\newcommand{\g}{{g}}
\newcommand{\n}{{n}}
\newcommand{\op}{\mathrm{op}}
\newcommand{\colim}{\mathrm{colim}}
\newcommand{\rev}{{\mathrm{rev}}}
\newcommand{\ot}{\otimes}
\newcommand{\co}{\mathrm{co}}
\newcommand{\univ}{\mathrm{univ}}
\newcommand{\strict}{\mathrm{strict}}
\newcommand{\Gpd}{\mathrm{Gpd}}
\newcommand{\id}{\mathrm{id}}
\newcommand{\Cat}{\mathrm{Cat}}
\newcommand{\Set}{\mathrm{Set}}
\newcommand{\Alg}{\mathrm{Alg}}
\newcommand{\Mon}{\mathrm{Mon}}
\newcommand{\Fun}{\mathrm{Fun}}
\newcommand{\Cocart}{{\mathrm{coCart}}}
\newcommand{\lax}{{\mathrm{lax}}}
\newcommand{\oplax}{{\mathrm{oplax}}}
\newcommand{\tu}{{\mathbb 1}}
\newcommand{\coop}{\mathrm{coop}}
\newcommand{\Ind}{{\mathrm{Ind}}}
\newcommand{\Map}{{\mathrm{Map}}}
\newcommand{\bZ}{{\mathbb{Z}}}
\newcommand{\Mor}{{\mathrm{Mor}}}
\newcommand{\PrL}{\mathrm{Pr^L}}
\newcommand{\PrR}{\mathrm{Pr^R}}
\newcommand{\cop}{\mathrm{cop}}
\newcommand{\cofib}{\mathrm{cofib}}
\newcommand{\fib}{\mathrm{fib}}
\newcommand{\red}{\mathrm{red}}
\newcommand{\map}{\mathrm{Map}}
\newcommand{\LMor}{\mathrm{LMor}}
\newcommand{\RMor}{\mathrm{RMor}}
\newcommand{\Seg}{\mathrm{Seg}}
\newcommand{\cube}{{\,\vline\negmedspace\square}}
\newcommand{\scat}{\mathcal{C}\mathit{at}}
\newcommand{\fcat}{\mathfrak{Cat}}
\newcommand{\vertrule}[1][1ex]{\rule{.4pt}{#1}}
\newcommand{\abd}{{\,\,\vertrule{}\!\!\bar{\Diamond}}}
\begin{document}



\title{\textsc{Oriented Category Theory}}

\author{David Gepner and Hadrian Heine}

\maketitle

\begin{abstract}

As categorical dimension increases, classical categorical concepts often become too rigid and must be replaced by appropriately lax analogues. A basic manifestation of this principle is that higher-categorical versions of familiar geometric constructions --- such as cylinders, cones, and suspensions --- are not functorial in the classical sense. To remedy this situation, we
extend the theory of higher categories to a framework of so-called oriented and antioriented categories, which provides a unified formalism for lax and oplax phenomena and reveals the geometric nature inherent in higher category theory.
We characterize (anti)oriented categories as deformations of higher categories in which the various compositions only commute up to coherent (anti)oriented interchange law, and provide a geometric presentation of (anti)oriented categories as sheaves on a suitable thickening of the simplex category.
We also construct an embedding of the category of $(\infty,\infty)$-categories into the category of (anti)oriented categories and characterize the image as those $(\infty,\infty)$-categories satisfying a strict interchange law.


\end{abstract}

\tableofcontents

\vspace{5mm}

\section{\mbox{Introduction}}

\subsection{Higher categorical mathematics}

For many questions in modern mathematics, it has become convenient to replace classical set-based mathematics with homotopical space-based mathematics.
Still more recently, and in part due to applications in mathematical physics and topological quantum field theory, the need has arisen for a categorically-based formalism in which higher categories themselves are the basic geometric objects, and can be manipulated via lax variants of standard homotopical constructions.
In categorical mathematics, unlike homotopical mathematics, paths need no longer be reversible, but are instead oriented from source to target.
Like in homotopical mathematics, it is necessary to have paths between paths, etc., forcing us to consider $\infty$-categories instead of just $1$-categories.\footnote{We work in a natively homotopical context throughout: a $0$-category is automatically understood to be an $(\infty,0)$-category, or homotopy type, and inductively an $n$-category is an $(\infty,n)$-category, etc. We refer to the non-homotopical versions as strict $n$-categories, etc.}.
Hence the basic geometric objects are comprised of not necessarily invertible cells of potentially arbitrary high dimension, which can be composed when the outgoing boundary of one agrees with the incoming boundary of another.

The following table depicts how various fundamental concepts extend from classical to higher mathematics.
The leftmost column lists certain fundamental structures in the strict, classical world, the middle column lists their generalization to the groupoidal, homotopical world,\footnote{We have opted for the most universal generalization, though others are possible. For instance, connective spectra or the derived category of $\bZ$ are other useful generalizations.} and the rightmost column lists their generalization to the higher categorical world.
$$\begin{tabular}{| c | c | c |}
\hline
Classical & Homotopical & Categorical\\
\hhline{|=|=|=|}	
Sets & Spaces & $\infty$-Categories \\
\hline
Groups & Loop Spaces & Endomorphism $\infty$-Categories \\
\hline
Abelian Groups & Spectra & Categorical Spectra \\
\hline
Set-enriched Categories & Space-enriched Categories & Gray-tensor-product-enriched Categories\\
\hline
\end{tabular}$$

The purpose of this paper is to study the category of $\infty$-categories from the geometric perspective of oriented spaces, which is a monoidal category under the Gray tensor product, and develop the theory of categories enriched in oriented spaces.
This geometric approach to higher category theory, generalizing Grothendieck's homotopy hypothesis and developed in work of \cite{campion2023graytensorproductinftyncategories}, \cite{gepner2026homotopy}, \cite{GepnerHeine2026}, is inherent in the Baez--Dolan cobordism and tangle hypotheses, and has been developed in works of a number of authors. See also the work of Ayala--Francis--Rozenblyum on the stratified homotopy hypothesis \cite{MR3941460}.
In particular, we establish several geometric formulas about oriented spaces which have significant categorical consequences.

In addition, we develop a theory of bioriented categories, which are categories that carry a compatible orientation and antiorientation. This theory is built on the theory of bienriched categories developed in \cite{heine2024bienriched}. Bienrichment is a variant of the usual notion of enrichement in nonsymmetric situations, where one wants to keep track of the enrichment in both the monoidal and the reverse monoidal structures simultaneously.

The fundamental example of a bioriented category is the category of $\infty$-categories enriched in the oplax and lax Gray tensor product.
The stable counterpart of this bioriented category is the bioriented category of categorical spectra \cite{heine2025categorification} \cite{heine2026stable}, \cite{Masuda}.
Its biorientation arises from the biorientation on (based) $\infty$-categories together with the biorientation of the reduced suspension functor and its adjoint, the endomorphisms functor.
The biorientation of categorical spectra is necessary in order to formulate its universal property in terms of bioriented excisive functors \cite[Proposition 3.57]{heine2025categorification}.

The various categorical hierarchies which appear in this paper are depicted in the following diagram of fully faithful inclusions:
\[
\xymatrixrowsep{.2in}
\xymatrixcolsep{.2in}
\xymatrix{
& \left\{\overset{\text{{\normalsize{Categories:}}}}{\underset{\text{{\normalsize{in $\infty$-groupoids}}}}{\text{categories cartesian-enriched}}}\right\}\ar[d] &\\
& \left\{\overset{\text{{\normalsize{$\infty$-Categories:}}}}{\underset{\text{{\normalsize{in $\infty$-categories}}}}{\text{Categories cartesian-enriched}}}\right\}\ar[ld]\ar[rd] &\\
\left\{\overset{\text{{\normalsize{Antioriented Categories:}}}}{\underset{\text{{\normalsize{in $\infty$-categories}}}}{\text{categories left Gray-enriched}}}\right\}\ar[rd]
& & \left\{\overset{\text{{\normalsize{Oriented Categories:}}}}{\underset{\text{{\normalsize{in $\infty$-categories}}}}{\text{categories right Gray-enriched}}}\right\}\ar[ld] &\\
& \left\{\overset{\text{{\normalsize{Bioriented Categories:}}}}{\underset{\text{{\normalsize{in $\infty$-categories}}}}{\text{categories Gray-bienriched}}}\right\} &
}
\]
As in bimodule theory, bienriched categories can be viewed as categories enriched in the tensor product of the left and right enriching monoidal categories.

$$\begin{tabular}{| c | c | c |}
\hline
Categorical Structure & Notation & Morphism Objects\\
\hhline{|=|=|=|}
$\infty$-Categories & $\infty\scat$ & $\Fun(-,-)$ \\
\hline
Oriented Spaces & $\infty\fcat|_{\boxtimes}$ & $\Fun^{\lax}(-,-)$ \\
\hline
Antioriented Spaces & $_{\boxtimes}|\infty\fcat$ & $\Fun^{\oplax}(-,-)$ \\
\hline
Bioriented Spaces & $\infty\fcat$ & $(X,Y)\mapsto\Map_{\infty\Cat}(X\boxtimes -\boxtimes Y,-)$ \\
\hline
Oriented Categories & $\Cat\boxtimes$ & $\Fun\boxtimes(-,-)$\\
\hline
Antioriented Categories & $\boxtimes\Cat$ & $\boxtimes\Fun(-,-)$\\
\hline
Bioriented Categories & $\boxtimes\Cat\boxtimes$ & $\boxtimes\Fun\boxtimes(-,-)$\\
\hline
\end{tabular}$$

\subsection{Higher dimensional category theory}
In geometry, the cylinder on an $n$-dimensional manifold with boundary is an $n+1$-dimensional manifold with boundary.
The interaction of the cartesian product with geometric dimension crucially underlies many notions and constructions in geometry.

There is an analogous and equally fundamental notion of dimension in higher category theory.
However, naively interpreted, the cartesian product of an $m$-dimensional category and an $n$-dimensional category is {\em never} an $m+n$-dimensional category unless one of $m$ or $n$ is zero.
Hence, the basic geometry of higher categories only becomes apparent once we replace the cartesian product with a refinement which behaves additive with respect to categorical dimension. The correct replacement is the Gray tensor product, and was originally suggested by Gray \cite{GRAY197663} and further studied by Steiner \cite{Steiner2004OmegacategoriesAC} in the strict context.
There has been considerable work on the Gray tensor product in the context of bicategory theory \cite{MAEHARA2021107461}, \cite{gagna2021gray}, and it was recently constructed in full generality by Campion \cite{campion2023graytensorproductinftyncategories}.

However, it is not enough simply to use the Gray tensor product as an available operation.
Instead, it must be injected directly into the foundations of the theory itself.
The goal of this paper is study the deformation of higher category theory in which morphism objects compose not according the to cartesian product but directly via the Gray tensor product.
The Gray tensor product is not symmetric but rather antisymmetric: for every $\infty$-categories $X,Y$ there is a canonical equivalence 
\[
X\bar{\boxtimes} Y:=Y\boxtimes X\simeq (X^\co\boxtimes Y^\co)^\co
\]
Due to the antisymmetry of the Gray tensor product, the categories of our framework always arise in two flavors, namely left-handed and right-handed versions, and we must keep track of these orientations throughout.

Sets are evidently $0$-dimensional geometric objects, as are $\infty$-groupoids, since colimits of $n$-dimensional objects remain $n$-dimensional.
Nevertheless, higher dimensional categories can be built out of lower dimensional categories, provided one uses a version of the colimit which is compatible with categorical dimension.
In bicategory theory, this can be accomplished via the usual theory of lax and oplax colimits \cite{articles}, though in higher dimensions the situation is more subtle.\footnote{In a subsequent paper we develop the theory of {\em oriented} limits and colimits.
Surprisingly, it is not the same as the lax limit or colimit, the issue being that the tensor with $\mC$, or $\mC$-indexed coproduct, of an $\infty$-category $\mD$ is $\mC\times\mD$, whereas it must be the Gray tensor product $\mC\boxtimes\mD$ is order to be compatible with categorical dimension.}
We require only the rudiments of such a theory here, and will be able to get by with iterated oriented pushouts and pullbacks.\footnote{We warn the reader that oriented pullbacks and pushouts are indistinguishable from their lax variants in bicategory theory, as the difference is only visible in dimensions three and above.}
The most basic example of a $0$-dimensional category, or an $\infty$-groupoid, is the $0$-disk $\bD^0$, the terminal $\infty$-category, playing the role of the point.
Higher dimensional disks are obtained by suspending lower dimensional disks; specifically,
the $n$-dimensional disk $\bD^n$ is the $n$-fold suspension of $\bD^0$, an $n$-category with a unique ordered pair of $m$-cells in all dimensions $m<n$ and a single nondegenerate $n$-cell.
The $n$-disk $\bD^n$ is the most basic $n$-dimensional category, as it is free on a single $n$-cell, and any $n$-category can be built out of $\bD^n$ via iterated colimits, and an $n$-dimensional cell of an $\infty$-category is specified by a map from the $n$-disk.

In practice, it is essential to be able to perform the suspension operation.
Hence we must work at the level of $\infty$-categories,\footnote{We refer to $(\infty,n)$-categories simply as $n$-categories, and $(\infty,\infty)$-categories simply as $\infty$-categories.}
which we define as the limit
\[
\infty\Cat=\lim\{\cdots\to (n+1)\Cat\to n\Cat\to (n-1)\Cat\to\cdots\to 0\Cat\}
\]
taken along the core functors, which forget the top-dimensional morphisms
\footnote{One could also take the limit along the truncation functors, which invert the top-dimensional morphisms, but this results in the full subcategory of $\infty\Cat$ consisting of the Postnikov complete objects \cite{gepner2026homotopy}.
This full subcategory is not appropriate for our purposes since for instance the infinite cobordism $\infty$-category and its classifying space have equivalent Postnikov completions}.
As a bifunctor
\[
\boxtimes:\infty\Cat\times\infty\Cat\to\infty\Cat
\]
the Gray tensor product is biclosed.
For $\infty$-categories $X$, $Y$, $Z$ there are canonical equivalances of spaces
\[
\Map_{\infty\Cat}(Y,\Fun^{\lax}(X,Z))\simeq\Map_{\infty\Cat}(X\boxtimes Y,Z)\simeq\Map_{\infty\Cat}(X,\Fun^{\oplax}(Y,Z))
\]
which are natural, provided we view $\infty\Cat$ only as a category.
The associativity of the Gray tensor product makes $\infty\Cat$ enriched in the Gray tensor product, which we denote by $\infty\mathfrak{Cat}$.


\subsection{The failure of the cartesian product}
One of the main subtleties arising in higher category theory is that dimension shifting operations, such as the suspension, are typically no functors of the $\infty$-category of $\infty$-categories, but only of the underlying $1$-category of $\infty$-categories.
An endofunctor of the $1$-category of $\infty$-categories refines to a functor of the $\infty$-category of $\infty$-categories precisely when it is compatible with the cartesian enrichment of $\infty\Cat$ over itself, and operations involving the Gray tensor product are not usually compatible with the cartesian monoidal structure, but rather with the Gray monoidal structure.

Said differently, while a functor of $\infty$-categories corresponds to an object of the $\infty$-category $\Fun(\bD^1,\infty\Cat)$, a transformation of functors cannot be determined by a morphism of the $\infty$-category $\Fun(\bD^1,\infty\Cat)$, i.e. a {\em strictly} commuting square $\bD^1\times\bD^1\to\infty\Cat$.
Instead, it is an instance of a {\em laxly} or {\em oplaxly} commuting square $\bD^1\boxtimes\bD^1\to\infty\Cat$ (depending on which copy of $\bD^1$ corresponds to the original functors).
This is because a transformation is actually specified by a functor $\bD^2\to\infty\Cat$, but $\bD^2$ is only a quotient of $\bD^1\boxtimes\bD^1$ by either $(\partial\bD^1)\boxtimes\bD^1$ (in the oplax case) or $\bD^1\boxtimes(\partial\bD^1)$ (in the lax case), and not of $\bD^1\times\bD^1$.
Both quotients admit a common section, namely the inclusion of the $2$-cell $\bD^2\to\bD^1\boxtimes\bD^1$, and the strictly commuting square is obtained by gluing $\bD^1$ to $\bD^1\boxtimes\bD^1$ along the map $\bD^2\to\bD^1$ which collapses the $2$-cell:
\[
\xymatrix{
& (0,0)\ar[rd]\ar[ld]\ar@/^2pc/[dd]\ar@/_2pc/[dd] & &  & & (0,0)\ar[rd]\ar[ld]\ar[dd] &\\
(0,1)\ar[rd] & \Longrightarrow & (1,0)\ar[ld] & \longrightarrow & (0,1)\ar[rd]& & (1,0)\ar[ld]\\
& (1,1) & & & & (1,1) &.
}
\]
From this perspective we see that the Gray tensor product can be viewed as an laxification of the cartesian product which is compatible with categorical dimension.
Note that this phenomenon is invisible in dimension one, showing that intuition from ordinary category theory often fails in higher dimensions.

A related issue can be seen in the suspension and morphism adjunction.
If $X$ is a space equipped with a pair of basepoints $s$ and $t$,
the space of paths from $s$ to $t$ is computed as the homotopy pullback
\[
\xymatrix{
& \Map_X(x_0,x_1)\ar[rd]\ar[ld] &\\
\bD^0\ar[rd]^{x_0} & & \bD^0\ar[ld]_{x_1}\\
& X &
}
\]
In order for something similar to work for categories, we must replace the homotopy pullback with the {\em oriented pullback}.
If $X$ is an $\infty$-category equipped with an ordered pair of objects $(s,t)$, then these is indeed an analogous formula for the $\infty$-category $\Mor_X(x_0,x_1)$ of morphisms in $X$ from $s$ to $t$, which we might depict as a square
\[
\xymatrix{
& \Mor_X(s,t)\ar[rd]\ar[ld] &\\
\bD^0\ar[rd]^{s} & \Longrightarrow & \bD^0\ar[ld]_{t}\\
& X &
}
\]
which only commutes up to a $2$-cell.
To make this precise, we show that the suspension and morphism object constructions form an adjunction of antioriented categories 
\[
S:\infty\fcat\to\infty\fcat_{\partial\bD^1/}.
\]
Thus for $\infty$-categories $X$, $Y$, and an ordered pair $(s,t)$ of $Y$, there is a natural equivalence
\[
\Fun^\oplax_{\partial\bD^1/}(S(X),Y) \simeq \Fun^\oplax(X,\Mor_Y(s,t)).
\]
Note that with respect to the cartesian enrichment, any equivalence
\[
\Fun_{\partial\bD^1/}(S(X),Y) \simeq \Fun(X,\Mor_Y(s,t)).
\]
would specialize, even in the case $X=\bD^0$, to an equivalence
\[
\Fun_{\partial\bD^1/}(\bD^1,(Y;s,t)) \simeq \Fun(*,\Mor_Y(s,t)) \simeq \Mor_Y(s,t),
\]
which is necessarily an $\infty$-groupoid since
the restriction functor $\Fun(\bD^1,Y) \to \Fun(\partial\bD^1,Y)\simeq Y \times Y$ is conservative and therefore its fibers are spaces.

Finally, consider the cylinder on an $\infty$-category $X$, the {\em oriented pushout} of the span $X\overset{\id}{\leftarrow} X \overset{\id}{\rightarrow} X$, computed as the Gray tensor product $ X \boxtimes \bD^1$.
This is only an {\em oriented functor}, and not a functor of $\infty$-categories.
Indeed, part of the data of a functor of $\infty$-categories is a functor $\Fun(X,Y)\to\Fun(FX,FY)$.
In particular, there is a natural functor
\[
X\to\Fun(Y,X\times Y)\to\Fun(FY,F(X\times Y)),
\]
which is adjoint to a natural functor $X\times FY\to F(X\times Y)$.
Taking $X=\bD^1$ and $Y=\bD^0$, then such a structure for $F(X)=X\boxtimes \bD^1$ would require a functor $\bD^1\times\bD^1\to\bD^1\boxtimes\bD^1$.
This functor must be an isomorphism on objects by naturality: consider the two inclusions $\bD^0\to\bD^1$, and observe that the functor $\bD^0\times F(\bD^0)\to F(\bD^0\times\bD^0)$ must be an isomorphism.
But there is no functor $\bD^1\times\bD^1\to\bD^1\boxtimes\bD^1$ which is an isomorphism on objects.

\subsection{Oriented category theory}
Oriented categories are categories enriched in oriented spaces, or equivalently, categories enriched in $\infty\Cat$ with respect to the lax Gray tensor product.
Dually, there are antioriented categories, which are categories enriched in $\infty\Cat$ with respect to the oplax Gray tensor product.
There are $2$-categories of oriented and antioriented categories, denoted
\[
{\Cat\boxtimes}\qquad\textrm{and}\qquad{\boxtimes\Cat}
\]
respectively, in which the objects are the (anti)oriented categories, the 1-morphisms are the (anti)oriented functors, and the $2$-morphisms are the enriched transformations of (anti)oriented functors.

When we view $\infty\Cat$ as an oriented category, the morphism objects are
\[
\R\Mor_{\infty \fcat}(S,T)=\Fun^{\lax}(S,T)
\]
because $\Fun^{\lax}(S,-)$ is adjoint to $S\boxtimes(-)$, which is compatible with the right action.
Dually, when we view $\infty\Cat$ as an antioriented category, the morphisms objects are
\[
\L\Mor_{\infty \fcat}(S,T)=\Fun^{\oplax}(S,T)
\]
because $\Fun^{\oplax}(S,-)$ is adjoint to $(-)\boxtimes S$, which is compatible with the left action.

A fundamental example of an oriented endofunctor of oriented spaces is the oriented cylinder, given by $X\mapsto \bD^1\boxtimes X$.
The oriented cylinder functor $X\mapsto \bD^1\boxtimes X$ is an oriented functor because
\[
\Fun^{\lax}(S,T)\to\Fun^{\lax}(\bD^1\boxtimes S,\bD^1\boxtimes T)
\]
is adjoint to the evaluation functor
\[
\bD^1\boxtimes S\boxtimes\Fun^{\lax}(S,T)\to \bD^1\boxtimes T
\]
which is compatible with composition: the diagram
\[
\xymatrix{
\Fun^{\lax}(S,T)\boxtimes\Fun^{\lax}(T,U)\ar[r]\ar[d] & \Fun^{\lax}(\bD^1\boxtimes S,\bD^1\boxtimes T)\boxtimes\Fun^{\lax}(\bD^1\boxtimes T,\bD^1\boxtimes U)\ar[d]\\
\Fun^{\lax}(S,U)\ar[r] & \Fun^{\lax}(\bD^1\boxtimes S,\bD^1\boxtimes U)
}
\]
is adjoint to commutative square
\[
\xymatrix{
\bD^1\boxtimes S\boxtimes\Fun^{\lax}(S,T)\boxtimes\Fun^{\lax}(T,U)\ar[r]\ar[d] & \bD^1\boxtimes S\boxtimes\Fun^{\lax}(S,U)\ar[d]\\
\bD^1\boxtimes T\boxtimes\Fun^{\lax}(T,U)\ar[r] & \bD^1\boxtimes U.
}
\]
Note that commutativity requires that we tensor on the left with the disk.
Tensoring on the right with the disk is an antioriented functor, and the morphism objects are computed using $\Fun^{\oplax}$.

Related to the cylinder is the suspension, which by \cite[Theorem 3.7.2.]{GepnerHeine2026} is computed as the pushout
\[
\xymatrix{
X\boxtimes\partial\bD^1\ar[r]\ar[d] & X\boxtimes\bD^1\ar[d]\\
\partial\bD^1\ar[r] & {S}(X).
}
\]
Similarly, the antisuspension is computed by either of the pushouts
\[
\xymatrix{
X\bar{\boxtimes}\partial\bD^1\ar[r]\ar[d] & X\bar{\boxtimes}\bD^1\ar[d]& & &\partial\bD^1\boxtimes X\ar[r]\ar[d] & \bD^1\boxtimes X\ar[d]\\
\partial\bD^1\ar[r] & \bar{S}(X) & & & \partial\bD^1\ar[r] & \bar{S}(X)
}.
\]
Using these descriptions, we show in \cref{alfa} that the suspension is an antioriented functor, and the antisuspension is an oriented functor.

The analogous operations for the join monoidal structure are the antijoin and the join operations.
We show that the antijoin and join operations are computed in terms of the Gray tensor product as the pushouts
    
\begin{equation}\label{joinform}
\xymatrix{
X\bar{\boxtimes}\partial\bD^1\bar{\boxtimes} Y\ar[d]\ar[r] & X\bar{\boxtimes}\bD^1\bar{\boxtimes} Y\ar[d] & & & X{\boxtimes}\bD^1{\boxtimes} Y \ar[d]\ar[r] & X\boxtimes\bD^1\boxtimes Y\ar[d]\\
X+Y\ar[r] &  X\bar{\star}Y & & & X+Y\ar[r] & X\star Y.
}
\end{equation}
As the oriented cylinder is an oriented functor and the
antioriented cylinder is an antioriented functor,
this has the consequence that for $\infty$-categories $X$ and $Y$,
the left join $ X \ast (-) $ and the right antijoin 
$ (-) \bar{\ast} Y $ are oriented functors and the 
right join $  (-)\ast Y $ and the left antijoin 
$ X \bar{\ast} (-) $
are antioriented functors.

The join operation is very useful, as it encodes the lax under and oplax over $\infty$-categories.
We show that, for every morphism $f:Y\to Z$, there exist slice $\infty$-categories $Z_{f/\!/^\lax}$ and $Z_{/\!/^\oplax f}$, such that functors $X\to Z_{f/\!/^\lax}$ and $X\to Z_{/\!/^\oplax f}$ correspond to functors $Y\star X\to Z$ and $X\star Y\to Z$, respectively, extending the given functor $f:Y\to Z$.
Moreover, this adjunction refines to an antioriented adjunction
\[
(-)\star Y : \infty\fcat \leftrightarrows \infty\fcat_{Y/} : Z_{(-)/\!/^\lax}
\]
and an oriented adjunction
\[
X\star (-) :\infty\fcat\leftrightarrows\infty\fcat_{X/} : Z_{/\!/^\oplax (-)}.
\]

Much of the computational power of homotopy theory comes from the notion of the fiber of a map, and the long exact sequence of a fibration.
In oriented and antioriented category theory these come as
oriented and antioriented left and right fibers.
The oriented left and right fibers of an oriented functor $Y\to X$ over an object $x\in X$ are the oplax under and over categories $Y \times_X X_{x //^\oplax}$ and $Y\times_X X_{//^{\oplax} x}$, respectively, and analogously for the antioriented versions.

The cone is an instance of the join.
Hence the left cone and right anticone are oriented functors, and the right cone and left anticone are antioriented functors.
There is also the bicone operation.
Specifically, the antioriented bicone is given by the formula
\[
\xymatrix{
& X\ar[ld]\ar[rd] &\\
\bD^0\bar{\star} X\ar[rd] & & X\star\bD^0\ar[ld]\\
& X^\abd &
}
\]
and is an antioriented functor.
Its dual version, the oriented bicone, is an oriented functor.

Our approach to oriented categories allows us to model the category of oriented categories without explicit reference to the Gray tensor product on $\infty\Cat$, provided we input the combinatorics of the oriented cubes using a combinatorial device such as Steiner theory.
This also allows us to characterize the oriented category of oriented spaces via a simple universal property: namely, as the oriented cocompletion of the point.
We will treat the general theory of oriented colimits in a subsequent paper.

\subsection{Bioriented category theory}
The Gray tensor product comes in both oriented and antioriented (also called oplax and lax) versions, suggesting that we should regard $\infty\fcat$ as bienriched over itself.
Bienriched categories are a refinement of enriched categories that allows for two-sided enrichment in two monoidal categories.
This is a crucial notion when working with enrichment in a not necessarily symmetric monoidal category, as we may enrich in the monoidal and reverse monoidal structure simultaneously.

In keeping with our notation $\boxtimes\Cat$ and $\Cat\boxtimes$ for the categories of (anti)oriented categories, we write $\boxtimes\Cat\boxtimes$ for the category of bioriented categories.
The theory of enrichment provides a commutative square
\[
\xymatrix{
& \infty\Cat\ar[ld]\ar[rd] &\\
\boxtimes\Cat\ar[rd] & & \Cat\boxtimes\ar[ld]\\
& \boxtimes\Cat\boxtimes &
}
\]
and we show that each of these functors is a fully faithful embedding.

In our case of interest, we wish to enrich in the Gray tensor product, without preferring one orientation over the other.
In the case of the category of $\infty$-categories, this amounts to specifying, for each ordered pair of $\infty$-categories $(S,T)$, morphism $\infty$-categories
\[
\Fun^{\lax}(S,T)\qquad\textrm{and}\qquad\Fun^{\oplax}(S,T)
\]
of oriented (lax) and antioriented (oplax) morphisms from $S$ to $T$.
The bienriched morphism object is then the object of $\infty\Cat\otimes\infty\Cat$ which represents the functor $(\infty\Cat\otimes\infty\Cat)^{\op}\to\mS$ given by the formula
$$ (X,Y)\mapsto\Map_{\infty\Cat}(X\boxtimes S\boxtimes Y,T).
$$
This is well defined since the canonical map $\infty\Cat\times\infty\Cat\to\infty\Cat\otimes\infty\Cat$ generates under colimits.
In other words, we can write the associated representable functor in either one of the forms
\[
\Map(S,\Fun^{\lax}(X,\Fun^{\oplax}(Y,T)))\simeq\Map(S,\Fun^{\oplax}(Y,\Fun^{\lax}(X,T)).
\]

In bienriched category theory, the monoidal product is a bienriched bifunctor.
To make this notion precise in our setting and discusss further important examples, we exhibit the cartesian product $ \mC\times \mC'$ of an antioriented category $\mC$ and an oriented category $\mC'$ as a
bioriented category, 
denoted $ \langle \mC,\mC'\rangle. $
Here the morphism object is \[
\Mor_{\langle \mC,\mC'\rangle}((S,S'),(T,T'))=\LMor_{\mC}(S,T)\otimes \RMor_{\mC'}(S',T'),
\]
where $A\otimes B$ denotes the image in $\mC\otimes\mC'$ of any object $(A,B)\in\mC\times\mC'$.
If $\mD$ is a bioriented category, a bioriented functor
$\langle \mC,\mC'\rangle\to\mD$
is equivalent to either the data of an oriented functor or antioriented functor
\[
\mC'\to\boxtimes\Fun(\mC,\mD) \qquad\qquad\mC\to {\Fun\boxtimes}(\mC',\mD)
\]
respectively.

The join formula (\ref{joinform}) implies that the join is an example of a bioriented functor.
Specifically, we show that the join is a bioriented functor and that the antijoin is an antioriented functor.
The bioriented functoriality of the join can be used to obtain a very useful distributivity type formula relating the join and the tensor.
Specifically, we show that there is a cocartesian square of the form
\[
\xymatrix{
X\boxtimes A\boxtimes Y\coprod X\boxtimes B\boxtimes Y\ar[r]\ar[d] & X\boxtimes A\coprod B\boxtimes Y\ar[d]\\
X\boxtimes(A\star B)\boxtimes Y\ar[r] & (X\boxtimes A)\star(B\boxtimes Y)
}
\]
for any $\infty$-categories $A,B,X,Y$.

Nevertheless, there are some nontrivial bioriented functors coming from the cylinder construction.
The suspension is a quotient of the right cylinder, and therefore an antioriented functor, whereas the antisuspension is a quotient of the left cylinder, and therefore an oriented functor.
However, one can construct a bioriented suspension functor by applying the $(-)^\cop$ involution to the bioriented category $\infty\fcat$, which does not effect the antiorientation but twists the orientation
\[
\RMor_{\infty\fcat}(S,T)^{\coop}=\Fun^{\lax}(S,T)^{\coop}=\Fun^{\lax}(S^{\coop},T^{\coop}).
\]

Since the $(-)^{\coop}$ twisting is an oriented functor
\[
(-)^{\coop}:\infty\fcat^{\cop}\to\infty\Cat,
\]
we can compose the antisuspension with this twisting to obtain an oriented functor
\[
\bar{S}((-)^{\coop}):\infty\fcat^{\cop}\to\infty\fcat.
\]
On the other hand, the suspension itself is an antioriented functor
\[
S:\infty\fcat^{\cop}\to\infty\fcat.
\]
In this case, we have an equivalence $S(-)\simeq\bar{S}((-)^{\coop})$, and we prove in \cref{alfas2} that this equivalence is compatible with both the oriented and the antioriented structures, endowing $S$ with a bioriented structure.

\subsection{Main results}

In the following we present the main results of our work.
We prove two main results on oriented and antioriented categories.
The first provides a geometric presentation of (anti)oriented categories, the second result embeds $\infty$-category theory into (anti)oriented category theory.

Oriented categories are categories left enriched in the lax Gray tensor product, or right enriched in the oplax Gray tensor product.
Dually, antioriented categories are categories left enriched in the oplax Gray tensor product, or right enriched in the lax Gray tensor product.
Throughout the paper, unless specified otherwise, enrichment will refer to left enrichment; we could have equally well chosen the other convention, in which case one systematically replaces the oplax Gray tensor product $\boxtimes$ with the lax Gray tensor product $\bar{\boxtimes}$.
Perhaps the most confusing thing in this paper is keeping track of all the many involutions which occur!

We give a geometric presentation of (anti)oriented categories, which does not make reference to any enriched category theory. To state this geometric presentation we introduce the following notation.
Let
\begin{align*}
\Theta(\cube,\times)\subset {_{(\infty\Cat,\times)}\Cat}\simeq \Cat_{(\infty\Cat,\times)}&\simeq\infty\Cat\\
\Theta(\cube,\boxtimes)\subset {_{(\infty\Cat,\boxtimes)}\Cat}\simeq \Cat_{(\infty\Cat,\bar{\boxtimes})}&\simeq{\boxtimes\Cat} \\
\Theta({\cube}, \bar{\boxtimes})\subset {_{(\infty\Cat,\bar{\boxtimes})}\Cat\simeq\Cat_{(\infty\Cat,\boxtimes)}} &\simeq\Cat\boxtimes\\
\Theta(\cube\otimes\cube,\boxtimes\otimes\bar{\boxtimes})\subset {_{(\infty\Cat,\boxtimes)}\Cat_{(\infty\Cat,\boxtimes)}}&\simeq\boxtimes\Cat\boxtimes
\end{align*}
be the full subcategories consisting of the finite wedges of $\infty$-categories of the form $S(\cube^n), S(\bar{\cube}^n)$ for all natural numbers $n$, respectively.
Note that for any oriented cubes $\cube^m$ and $\cube^n$, one can compute that
\begin{align*}
\Mor_{S(\cube^m)\vee S(\cube^n)}(0,2)&\simeq\cube^m\times\cube^n\\
\LMor_{S(\cube^m)\vee S(\cube^n)}(0,2)&\simeq\cube^m\boxtimes\cube^n\simeq\cube^{m+n}\\
\RMor_{S({\cube}^m)\vee S({\cube}^n)}(0,2)&\simeq{\cube}^m\,\bar{\boxtimes}\,{\cube}^n\simeq{\cube}^{n+m}\\
\mathrm{BMor}_{S(\cube^m\otimes\cube^{m'})\vee S(\cube^n\otimes\cube^{n'})}(0,2)&\simeq(\cube^m\boxtimes\cube^n)\otimes(\cube^{m'}\bar{\boxtimes}\cube^{n'})\simeq\cube^{m+n}\otimes{\cube}^{n'+m'}
\end{align*}
In order to distinguish these we keep track of the enriching monoidal structure in the notation.

We present $\infty$-categories as sheaves on
$ \Theta(\cube,\times)$, antioriented categories as sheaves on
$ \Theta(\cube,\boxtimes)$, and oriented categories as sheaves on
$ \Theta(\bar{\cube},\bar{\boxtimes})$. 
For any presheaf $F$ on $ \Theta(\cube,\boxtimes) $ and $(s,t)\in F(\ast)$, we define the oriented cubical space of morphisms $s \to t$ in $F$ by the presheaf $$ \LMor_{F}(s,t) :
\cube^{\op}\xrightarrow{F\circ S} \infty\Gpd_{/F(\ast)\times F(\ast)}\overset{\fib}{\to}\infty\Gpd,
$$
where $S: \cube \to \Theta(\cube,\boxtimes)$ is the suspension.
We can do the same for presheaves on $ \Theta(\bar{\cube},\bar{\boxtimes})$ and $ \Theta(\cube,\times)$ 
using the suspensions $S: \cube \to \Theta({\cube},\bar{\boxtimes})$
and $S : \boxtimes \to \Theta(\cube,\times), $ in order to obtain oriented cubical spaces $ \RMor_{F}(s,t) $ and $ \Mor_{F}(s,t)$.

We derive the following geometric presentation of (anti)oriented categories and of $\infty$-categories:

\begin{theorem}[\cref{geomorient}]\label{geomorient1}
     The induced nerve functors
    \[
    \N_{\Theta(\cube,\boxtimes)}:{\boxtimes\Cat}\to\Fun(\Theta(\cube, \boxtimes)^\op,\infty\Gpd)
    \]
     \[
    \N_{\Theta(\bar{\cube},\bar{\boxtimes})}:\Cat\boxtimes \to \Fun(\Theta(\bar{\cube}, \bar{\boxtimes})^\op,\infty\Gpd)
    \]
     \[
    \N_{\Theta(\cube,\times)}:\infty\Cat \to \Fun(\Theta(\cube, \times)^\op,\infty\Gpd)
    \]
    are fully faithful right adjoint embeddings.
    A presheaf $F:\Theta(\cube, \boxtimes)^\op\to\infty\Gpd$ belongs the image of the nerve functor if and only if it satisfies the following two conditions:
    \begin{enumerate}[\normalfont(1)]\setlength{\itemsep}{-2pt}
    \item the Segal condition, i.e. for every $k, n_1, ..., n_k \geq 0$ the following map is an equivalence:
\[
    F(S(\cube^{n_1})\lor\cdots\lor S(\cube^{n_k}))\to F(S(\cube^{n_1}))\times_{F(\ast)}\cdots\times_{F(\ast)}F(S(\cube^{n_k}))
    \]

    \item the local Segal condition: for any $(s,t)\in F(\ast)$
    the presheaf $\LMor_{F}(s,t) $ is local with respect to the following families of maps, where $\N: \infty\Cat \to \Fun(\cube^\op, \infty\Cat) $ is the oriented cubical nerve:
\begin{enumerate}[\normalfont(i)]\setlength{\itemsep}{-2pt}
\item The boundary decomposition
$\mathrm{coeq}(\underset{0 \leq i <j \leq n}{\coprod}4 \times \N(\cube^{n-2})\rightrightarrows \underset{0 \leq i \leq n}{\coprod} 2 \times \N(\cube^{n-1})) \to \N(\partial\cube^\n).$
\item The oriental decomposition $\N(\bD^\n) \underset{\partial\N(\bD^\n)}{\coprod} \N(\partial\cube^\n) \to \N(\cube^\n).$
\item The globular decomposition
$\N(\bD^{i_0}) \!\!\underset{\N(\bD^{j_1})}{\coprod} \!\!\N(\bD^{i_1}) \underset{\N(\bD^{j_2})}{\coprod} \cdots \underset{\N(\bD^{j_n})}{\coprod} \!\!\N(\bD^{i_n}) \to \N(\bD^{i_0} \underset{\bD^{j_1}}{\coprod} \bD^{i_1} \underset{\bD^{j_2}}{\coprod} \cdots \underset{\bD^{j_n}}{\coprod} \bD^{i_n}).$
\end{enumerate}
Here, $n$, $i_0,\ldots, i_n$, and $j_1,\ldots, j_n$ are natural numbers, and 
$ \bD^{j_\ell} \rightarrowtail \bD^{i_\ell},\bD^{j_\ell} \rightarrowtail \bD^{i_{\ell-1}}$ are monomorphisms.
    
The similar holds for a presheaf on $\Theta(\bar{\cube}, \bar{\boxtimes})$
and $\Theta(\cube, \times)$.
\end{enumerate}
    
\end{theorem}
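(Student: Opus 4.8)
The plan is to deduce all three statements from a single general mechanism --- the density and image characterization of a $\Theta$-style enriched nerve --- fed by one genuinely geometric input, namely the characterization of the essential image of the cubical nerve $\N\colon\infty\Cat\to\Fun(\cube^\op,\infty\Gpd)$. I would treat the three cases uniformly: writing $(\mM,\otimes)$ for one of $(\infty\Cat,\times)$, $(\infty\Cat,\boxtimes)$, $(\infty\Cat,\bar{\boxtimes})$ and $\mN=\cube$ (resp.\ $\bar{\cube}$) for the generating cubes, the target is $\Cat_\mM$ and the indexing category is $\Theta(\mN,\otimes)$, the wedges of suspensions of cubes.

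First I would establish the fully faithful right adjoint statement, which is purely a density assertion. Since cubes are dense in $\infty\Cat$ (Campion \cite{campion2022cubesdenseinftyinftycategories}) and \cref{selfdual} identifies $\bar{\cube}^n\simeq\cube^n$, the general density theorem for the $\Theta$-construction --- that $\Theta(\mN,\otimes)\subset\Cat_\mM$ is dense whenever $\mN\subset\mM$ is dense --- shows each $\N_{\Theta(\mN,\otimes)}$ is fully faithful. Because $\Cat_\mM$ is cocomplete (indeed presentable) and the restricted Yoneda embedding always admits the left Kan extension of the inclusion (the realization) as a left adjoint, this upgrades to a fully faithful right adjoint embedding in each of the three cases.

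For the image characterization I would argue that a presheaf $F$ lies in the image precisely when (a) it is a category object and (b) its morphism objects are genuine $\infty$-categories. The Segal condition is exactly (a): the wedge $S(\cube^{n_1})\vee\cdots\vee S(\cube^{n_k})$ is the iterated pushout over the point $\ast$ of the individual suspensions, so the displayed Segal maps being equivalences says precisely that $F$ is determined by its values on single suspensions together with composition, i.e.\ that $F$ is a category weakly enriched in the cubical spaces $\Fun(\cube^\op,\infty\Gpd)$; this is the enriched analogue of the Rezk--Segal characterization, and requires no completeness hypothesis since we do not assume univalence. Granting (a), each $\LMor_F(s,t)$ is a well-defined cubical space, and what remains is to force it into the image of the cubical nerve $\infty\Cat\hookrightarrow\Fun(\cube^\op,\infty\Gpd)$; this is exactly the content of the local Segal condition, whose three families of maps (boundary, oriental, and globular decompositions) are the covering relations cutting out that image. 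Once both hold, $F$ assembles into an honest object of $\Cat_\mM$ whose nerve recovers $F$.

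The main obstacle is the geometric input underlying (b): proving that the three decomposition families genuinely characterize the essential image of the cubical nerve. The boundary decomposition records the cell structure of $\partial\cube^n$ as a coequalizer of lower cubes; the oriental decomposition glues the top disk onto the boundary; and the globular decomposition reconstructs globular pasting composites of disks --- together they enforce that a cubical space is glued from disks exactly as a genuine $\infty$-category is, refining Campion's analysis of cubical sites. Verifying sufficiency, that locality with respect to these maps forces $F$ to be a nerve, is the delicate step, since it requires showing these relations generate all colimit decompositions of cubes into disks. Finally, the cases $\Theta(\bar{\cube},\bar{\boxtimes})$ and $\Theta(\cube,\times)$ follow by the identical argument, using the self-duality $\bar{\cube}^n\simeq\cube^n$ and the reverse-tensor and $(-)^\co$ involutions relating the oriented, antioriented, and cartesian enrichments, so that the local Segal condition in every case reduces to the same cubical-nerve characterization.
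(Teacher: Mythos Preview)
Your proposal is correct and follows essentially the same strategy as the paper. The paper deduces density from \cref{denseinherited} (the general $\Theta$-density theorem) combined with \cref{cubicaldense}, and obtains the image characterization directly from the second clause of \cref{denseinherited} --- a presheaf on $\Theta(\mV)$ is in the image iff it satisfies the Segal condition and all morphism presheaves lie in the image of the $\mV$-nerve --- together with \cref{cor:cubicalsegal} (the specialization of \cref{polytopnerv} to cubes) to unpack the latter into the three local families; this is exactly your plan. One small point worth tightening: the hypothesis of \cref{denseinherited} requires $\mN\subset\mM$ to be a dense full \emph{monoidal} subcategory, not merely dense, so you should note explicitly that $\cube$ is closed under $\boxtimes$ (and $\bar{\cube}$ under $\bar{\boxtimes}$) --- which is immediate from the definition $\cube^m\boxtimes\cube^n=\cube^{m+n}$.
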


The collection of small $\infty$-categories forms a large $\infty$-category $\infty\Cat$ and also forms a large bioriented category $\infty\fcat.$ 
We use \cref{geomorient1} to characterize the underlying oriented and antioriented categories of $\infty\fcat$ as well as the $\infty$-category $\infty\Cat$ by universal properties, where $\Map'_{\mP(\Theta(\cube, \boxtimes))}\subset\Map_{\mP(\Theta(\cube, \boxtimes))}$ denotes the subspace spanned by the maps of presheaves on 
$\Theta(\cube, \boxtimes)$ which preserve tensors:

\begin{corollary}[\cref{charas}]\label{chara}
\begin{enumerate}[\normalfont(1)]\setlength{\itemsep}{-2pt}
\item Let $X$ be a presheaf on $\Theta(\cube, \boxtimes) $ satisfying the Segal condition and that is tensored.
The map $$ \Map'_{\mP(\Theta(\cube, \boxtimes))}(\N_{\Theta(\cube, \boxtimes)}(_{\boxtimes \mid}\infty\fcat),X) \to \Map_{\mP(\Theta(\cube, \boxtimes))}(\N_{\Theta(\cube, \boxtimes)}(\ast),X) \simeq X(\ast),$$ induced by the antioriented functor $\ast \to \infty\fcat $ selecting the final $\infty$-category, is an equivalence.

\item Let $X$ be a presheaf on $\Theta(\bar{\cube}, \bar{\boxtimes}) $ satisfying the Segal condition and that is tensored.
The map $$ \Map'_{\mP(\Theta(\bar{\cube}, \bar{\boxtimes}))}(\N_{\Theta(\bar{\cube}, \bar{\boxtimes})}(\infty\fcat_{\mid \boxtimes}),X) \to \Map_{\mP(\Theta(\bar{\cube}, \bar{\boxtimes}))}(\N_{\Theta(\bar{\cube}, \bar{\boxtimes})}(\ast),X) \simeq X(\ast),$$ induced by the oriented functor $\ast \to \infty\fcat $ selecting the final $\infty$-category, is an equivalence, where the left hand side is the 
subspace spanned by the maps of presheaves on 
$\Theta(\bar{\cube}, \bar{\boxtimes})$ preserving tensors.

\item Let $X$ be a presheaf on $\Theta(\cube, \times) $ satisfying the Segal condition and that is tensored.
The map $$ \Map'_{\mP(\Theta(\cube, \times))}(\N_{\Theta(\cube, \times)}(\infty\Cat),X) \to \Map_{\mP(\Theta(\cube, \times))}(\N_{\Theta(\cube, \times)}(\ast),X) \simeq X(\ast),$$ induced by the functor $\ast \to \infty\Cat $ selecting the final $\infty$-category, is an equivalence, wheere the left hand side is the subspace spanned by the maps of presheaves on 
$\Theta(\cube, \times)$ preserving tensors.
\end{enumerate}
\end{corollary}

\cref{chara} is an analogue of the classical result in homotopy theory that the category of homotopy types is the completion of the point under small colimits. 
This corollary characterizes $\infty\fcat$ without explicit reference to the Gray tensor product.

The proof of \cref{geomorient} relies on two results about density.
The first result is about dense subcategories of $\infty\Cat$
and generalizes Grothendieck's philosophy of test categories to higher categorical dimensions. The second result explains how a dense subcategory of $\infty\Cat$ gives rise to a dense subcategory of 
$ {\boxtimes \Cat}, {\Cat \boxtimes}$, the categories of 
$ (\infty,\Cat, \boxtimes)$-enriched categories, 
$ (\infty,\Cat, \bar{\boxtimes})$-enriched categories, respectively.
We prove it in full generality in the context of enrichment in any monoidal category.

To state it the second result let $(\mV, \ot)$ be a monoidal category, which one can encode by a cartesian fibration $\mV_\ot \to \Delta$ to the opposite of the simplex category whose fiber over the one simplex is $\mV$ and over the zero simplex is contractible and which satisfies a Segal condition.
We prove a result about dense subcategories of the category $\Cat_{(\mV,\otimes)}$ of small categories enriched in a monoidal category $(\mV,\otimes)$, strengthening a result of \cite{MR3345192}.
We identify the category $\mV_\otimes$ with the $\Theta$-construction $\Theta(\mV,\otimes)=\mV\wreath\Delta^{\op}$ of \cite{barwick2018operator}, which we view as a full subcategory of $\Cat_{(\mV,\otimes)}$ via the functor
\[
S:\mV_\ot \to\Cat_{(\mV,\otimes)}, \ (A_1,\ldots,A_n) \mapsto S(A_1)\vee\cdots\vee S(A_n)
\]

\begin{theorem}[\cref{denseinherited}]
Let $\mW\subset\mV$ be a dense full subcategory which is closed under the monoidal product in $\mV$, and write $\mW_\otimes\subset\mV_\otimes$ for the resulting full subcategory.
Then $S:\mW_\otimes\to\Cat_{(\mV,\otimes)}$ is a dense full subcategory.
\end{theorem}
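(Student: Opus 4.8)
The plan is to deduce density of $\mW_\otimes$ from density of the larger subcategory $\Theta(\mV,\otimes)=\mV_\otimes$, which is already available (it is the case $\mW=\mV$, established earlier and going back to \cite{MR3345192}), by an inheritance principle for density phrased through left Kan extensions. Write $\iota\colon\mV_\otimes\hookrightarrow\Cat_{(\mV,\otimes)}$ and $j\colon\mW_\otimes\hookrightarrow\mV_\otimes$ for the inclusions, so that $S=\iota j$. Recall that a fully faithful functor is dense exactly when the left Kan extension of it along itself is the identity. By the pasting law for left Kan extensions, $\mathrm{Lan}_{\iota j}(\iota j)\simeq\mathrm{Lan}_\iota(\mathrm{Lan}_j(\iota j))$, and the pointwise formula gives $(\mathrm{Lan}_j(\iota j))(d)\simeq\colim_{(e\to d)\in(\mW_\otimes)_{/d}}\iota(e)$, the canonical slice colimit computed in $\Cat_{(\mV,\otimes)}$. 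Hence the whole theorem reduces to a single assertion: for every object $d=S(V_1)\vee\cdots\vee S(V_n)$ of $\Theta(\mV,\otimes)$ the canonical map $\colim_{(\mW_\otimes)_{/d}}\iota(e)\to d$ is an equivalence in $\Cat_{(\mV,\otimes)}$. Granting this, $\mathrm{Lan}_j(\iota j)\simeq\iota$, and density of $\Theta(\mV,\otimes)$ yields $\mathrm{Lan}_{\iota j}(\iota j)\simeq\mathrm{Lan}_\iota\iota\simeq\mathrm{id}$, which is density of $\mW_\otimes$.

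To verify this generation hypothesis I would first treat the single suspension $d=S(V)$. The two inputs are that $\mW$ is dense in $\mV$, so $V\simeq\colim_{(W\to V)\in\mW_{/V}}W$, and that the suspension $S\colon\mV\to(\Cat_{(\mV,\otimes)})_{\partial\bD^1/}$ is a left adjoint to the morphism-object functor $\Mor$, hence preserves colimits. Applying $S$ to the density presentation of $V$ and computing the resulting colimit in the coslice under $\partial\bD^1$ exhibits $S(V)$ as a colimit of the suspensions $S(W)$ with $W\in\mW$. One then compares this against the canonical colimit over the \emph{full} slice $(\mW_\otimes)_{/S(V)}$: a map $S(W_1)\vee\cdots\vee S(W_m)\to S(V)$ is a monotone map on objects together with edge maps $W_k\to\tu$ or $W_k\to V$, and I would show that the comparison functor $\mW_{/V}\to(\mW_\otimes)_{/S(V)}$, $(W\to V)\mapsto(S(W)\to S(V))$, is final, so the two colimits agree.

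For a general wedge $d=S(V_1)\vee\cdots\vee S(V_n)$ the same scheme applies, but here the mapping objects of $d$ are the tensor products $\Mor_d(i,j)=V_{i+1}\otimes\cdots\otimes V_j$, and a connected wedge mapping into $d$ may span several of the $V_i$, landing in such a tensor product. This is exactly where the hypothesis that $\mW$ is closed under $\otimes$ enters decisively: together with cocontinuity of $\otimes$ in each variable it guarantees that each $V_{i+1}\otimes\cdots\otimes V_j$ is itself a canonical colimit of objects of $\mW$ (not merely of $\mV$), so that the factorizations through tensor-product mapping objects are controlled and the relevant comparison functor into $(\mW_\otimes)_{/d}$ remains final. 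Combined with the Segal/spine decomposition of $d$ — which identifies the slice over a wedge compatibly with the slices over its constituent suspensions glued along the point $\ast\in\mW_\otimes$ — this reduces the general case to the suspension case.

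I expect the finality/slice-matching to be the main obstacle. The essential difficulty is that the inclusion $\Theta(\mV,\otimes)\hookrightarrow\Cat_{(\mV,\otimes)}$ is \emph{not} cocontinuous, so density of $\mW_\otimes$ inside $\mV_\otimes$ (which one could prove fiberwise, using density of $\mW^{\times k}\subset\mV^{\times k}$ and $\otimes$-closure) does \emph{not} transfer automatically; one must genuinely identify the canonical colimit inside $\Cat_{(\mV,\otimes)}$ and prove the comparison functors cofinal, the delicate point being the extra factorizations through the tensor-product mapping objects of wedges, which only the monoidal closure of $\mW$ allows one to absorb.
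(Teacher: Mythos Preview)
Your reduction via $\mathrm{Lan}_{\iota j}(\iota j)\simeq\mathrm{Lan}_\iota(\mathrm{Lan}_j(\iota j))$ to showing $\colim_{(\mW_\otimes)_{/d}}\iota(e)\simeq d$ for each $d\in\mV_\otimes$ is sound, but the proposed verification via finality of $\mW_{/V}\to(\mW_\otimes)_{/S(V)}$ fails already for single suspensions. Take $\mV=\mS$ with the cartesian structure and $\mW=\{\ast\}$, so that $\mW_\otimes\simeq\Delta$ and $\Cat_{(\mV,\otimes)}\simeq\Cat$. For $V=S^0$ the slice $\mW_{/V}$ is the discrete set $S^0$, and your comparison sends each point to one of the two arrows $[1]\to S(S^0)$ into the parallel-pair category. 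For the object $([0]\xrightarrow{0} S(S^0))\in\Delta_{/S(S^0)}$ the relevant under-category is again the discrete set $S^0$, so finality fails. Correspondingly $\colim_{S^0}\bD^1\simeq\bD^1\amalg\bD^1$ in $\Cat$, not $S(S^0)$: the suspension preserves this colimit only in the coslice under $\partial\bD^1$, and the forgetful functor from a coslice does not preserve colimits over diagrams that are not weakly contractible. The full slice $(\mW_\otimes)_{/S(V)}$ contains in particular the two object-inclusions $\ast\rightrightarrows S(V)$, which are exactly what glue the copies of $\partial\bD^1$; your subdiagram omits them.

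The paper avoids slice computations altogether. It first treats the universal case $\mV=\mP(\mW)$ by exhibiting an explicit equivalence $\Cat_{(\mP(\mW),\otimes)}\simeq\mP_{\Seg}(\mW_\otimes)$ between enriched categories and Segal presheaves on $\mW_\otimes$ (\cref{premod}); density of $\mW_\otimes$ is then immediate, since representables are Segal (\cref{Loc}) and the restricted Yoneda factors through the inclusion into all presheaves. For general presentable $\mV$, density of $\mW$ in $\mV$ yields a monoidal left adjoint $\mP(\mW)\to\mV$ with fully faithful lax monoidal right adjoint, hence a fully faithful $\Cat_{(\mV,\otimes)}\hookrightarrow\Cat_{(\mP(\mW),\otimes)}$ through which the $\mW_\otimes$-nerve of $\Cat_{(\mV,\otimes)}$ factors.
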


As a second main structural result we embed the theory of $\infty$-categories into the larger theory of oriented and antioriented categories. To state this embedding, we introduce the following notation.

The identity functor $\id:(\infty\Cat,\times)\to(\infty\Cat,\boxtimes)$ is lax monoidal, 
which implies that we may view any $\infty$-category as an oriented category and also as an antioriented category.
We therefore obtain right adjoint functors of $2$-categories
\begin{equation}\label{eqn:bicat1}
\infty\Cat \simeq {_{(\infty\Cat,\times)}\Cat} \to {_{(\infty\Cat,\boxtimes)}\Cat}={\boxtimes\Cat}, \end{equation}
\begin{equation}\label{eqn:bicat2}
\infty\Cat \simeq \Cat_{(\infty\Cat,\times)}\to\Cat_{(\infty\Cat,\boxtimes)}={\Cat\boxtimes}.
\end{equation}
An (anti)oriented space is an (anti)oriented category in the essential image of the respective functor.
We refer to the left adjoints 
\[
||-||_\boxtimes:\Cat\boxtimes\to\infty\Cat\qquad\textrm{and}\qquad _\boxtimes||-||:\boxtimes\Cat\to\infty\Cat
\]
of the functors \ref{eqn:bicat1} and \ref{eqn:bicat2} as the oriented and antioriented realization functors.
The oriented and antioriented realization functors restrict to functors $$ \Theta(\cube,\boxtimes)\to\Theta(\cube,\times), \ \Theta(\bar{\cube},\bar{\boxtimes})\to\Theta(\bar{\cube},\times)$$ which send a wedge of suspensions of (anti)oriented cubes to a wedge of suspensions of (anti)oriented cubes, now formed with respect to the cartesian enrichment.

\begin{theorem}[\cref{interchange}]\label{thm:rightadjoint}
\begin{enumerate}[\normalfont(1)]\setlength{\itemsep}{-2pt}
\item The functors
    $$
    \infty\Cat\to\Cat\boxtimes, $$$$  \infty\Cat\to {\boxtimes\Cat}
    $$
    of 2-categories are fully faithful.
     
    \item An oriented category $\mC$ is an oriented space if and only if it satisfies
     the strict interchange law: for all functors $\bD^m\to\RMor_\mC(A,B)$ and $\bD^n\to\RMor_\mC(B,C)$, the induced functor
     \[
     \bD^m\boxtimes\bD^n\to\RMor_\mC(A,C)
     \]
     factors through the quotient $\bD^m\boxtimes\bD^n\to\bD^m\times\bD^n$.
     The similar holds for antioriented categories.

     \item A presheaf $F:\Theta(\cube, \boxtimes)^\op\to\infty\Gpd$ 
     is an antioriented space if and only if
     it factors through $\Theta(\cube, \times)^\op.$
     
     A presheaf $F:\Theta(\bar{\cube}, \bar{\boxtimes})^\op\to\infty\Gpd$ 
     is an oriented space if and only if
     it factors through $\Theta(\bar{\cube}, \times)^\op.$
     \end{enumerate}
\end{theorem}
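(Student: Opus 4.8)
The three statements rest on a single observation: the inclusions $\infty\Cat\to\Cat\boxtimes$ and $\infty\Cat\to{\boxtimes\Cat}$ are change of enrichment along the lax monoidal identity $\id\colon(\infty\Cat,\times)\to(\infty\Cat,\boxtimes)$, whose lax structure map is the strictification quotient $X\boxtimes Y\to X\times Y$; write $\iota$ for either resulting inclusion. Being an (anti)oriented space will amount to the assertion that the $\boxtimes$-enriched composition already descends along this quotient. The plan is to treat (2) as the conceptual core, to read (1) off from the mapping-object computation it produces, and to transport (2) across the nerve equivalence of \cref{geomorient1} to obtain (3). Throughout, the oriented and antioriented cases are exchanged by replacing $(\cube,\boxtimes)$ with $(\bar{\cube},\bar{\boxtimes})$, so I treat one representative of each.

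For (1) the key is to compute the hom-$\infty$-categories directly. If $\mC$ and $\mD$ are $\infty$-categories, then the mapping objects $\RMor_{\iota\mC}(x,y)$ and $\RMor_{\iota\mD}(x,y)$ are unchanged as $\infty$-categories, and their composition is the cartesian composition precomposed with the strictification $\boxtimes\to\times$. Hence the data of an oriented functor $\iota\mC\to\iota\mD$ — an object assignment together with maps on mapping objects compatible with $\boxtimes$-composition — coincides, by naturality of the strictification, with the data of a functor $\mC\to\mD$; the same collapse applies to enriched transformations, whose lax comparison data is trivialized because the target composes strictly. Since the strictification is a coherent monoidal natural transformation, all higher coherences match, yielding a natural equivalence $\Fun(\mC,\mD)\xrightarrow{\sim}\Fun\boxtimes(\iota\mC,\iota\mD)$, which is exactly fully faithfulness of $2$-categories. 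The bienriched formalism of \cite{heine2024bienriched} provides the framework that makes the coherence bookkeeping routine, and the antioriented case, with $\boxtimes\Fun$ and $\LMor$, is identical.

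For (2) I identify the essential image of $\iota$ with the cartesian-enriched oriented categories, namely those whose composition $\RMor_\mC(B,C)\boxtimes\RMor_\mC(A,B)\to\RMor_\mC(A,C)$ factors through $\RMor_\mC(B,C)\times\RMor_\mC(A,B)$; the forward direction is immediate from the previous paragraph. For the converse, the strict interchange law is precisely this factorization tested on cells: since $\infty$-categories are generated under colimits by the disks and $\boxtimes$ preserves colimits separately in each variable, the strictification $\RMor_\mC(B,C)\boxtimes\RMor_\mC(A,B)\to\RMor_\mC(B,C)\times\RMor_\mC(A,B)$ is the colimit over cells $\bD^m\to\RMor_\mC(B,C)$, $\bD^n\to\RMor_\mC(A,B)$ of the cellwise quotients $\bD^m\boxtimes\bD^n\to\bD^m\times\bD^n$, so factoring the composition through each of these forces it to factor globally. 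The main obstacle is the remaining strictification step: from a composition that factors through $\times$ one must reconstruct an $\infty$-category $\mD$ with $\iota\mD\simeq\mC$ and verify that the associativity and unit coherences of the $\boxtimes$-enrichment descend to coherences of a genuine cartesian enrichment. I would handle this by exploiting that $\boxtimes\to\times$ is itself a (coherently) monoidal transformation, so that the entire coherence datum of the $\boxtimes$-enrichment maps functorially onto coherence data for the $\times$-enrichment, exhibiting the descended structure as a cartesian-enriched category, i.e.\ an $\infty$-category, that recovers $\mC$ under $\iota$.

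For (3) I use that the nerve functors of \cref{geomorient1} intertwine the inclusion $\iota\colon\infty\Cat\to{\boxtimes\Cat}$ with restriction along the realization $\pi\colon\Theta(\cube,\boxtimes)\to\Theta(\cube,\times)$: for $\theta\in\Theta(\cube,\boxtimes)$ the adjunction together with the fact that realization restricts to $\pi$ gives $\Map_{{\boxtimes\Cat}}(\theta,\iota\mD)\simeq\Map_{\infty\Cat}(\pi(\theta),\mD)$, whence $\N_{\Theta(\cube,\boxtimes)}\circ\iota\simeq\pi^{*}\circ\N_{\Theta(\cube,\times)}$. Now for an antioriented category $\mC$ with nerve $F=\N_{\Theta(\cube,\boxtimes)}(\mC)$, the morphism-cube computation $\LMor_{S(\cube^m)\vee S(\cube^n)}(0,2)\simeq\cube^m\boxtimes\cube^n$ against its cartesian counterpart $\cube^m\times\cube^n$ shows that the only morphisms of $\Theta(\cube,\boxtimes)$ identified by $\pi$ are those differing by the strictification on these morphism cubes; hence $F$ factors through $\Theta(\cube,\times)^{\op}$ precisely when $\mC$ cannot distinguish $\boxtimes$ from $\times$ on its mapping objects, which is the strict interchange law. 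By (2), in its antioriented form, this holds exactly when $\mC$ is an antioriented space, i.e.\ $F\simeq\pi^{*}\N_{\Theta(\cube,\times)}(\mD)$ lies in the image of $\iota$. The one point requiring care — that $F=\pi^{*}G$ together with the Segal and local Segal conditions forces $G$ itself to present an $\infty$-category — follows because $\pi$ is bijective on objects and carries the Segal-defining wedges to their cartesian counterparts, so the $\boxtimes$-Segal conditions for $F$ pull back from the cartesian conditions for $G$. The oriented statement is the same with $(\bar{\cube},\bar{\boxtimes})$ and $\RMor$, and \cref{cor:cubicalsegal} supplies the cubical input underlying both.
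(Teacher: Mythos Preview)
Your proposal has a genuine gap: you never establish, or even invoke, the fact that the comparison map $X\boxtimes Y\to X\times Y$ is an \emph{epimorphism} of $\infty$-categories. This is the paper's Theorem~\ref{grayepi} (proved via Theorem~\ref{thm:oplaxinclusion}, which shows that $\Fun(\mA,\mC)\to\Fun^{\lax}(\mA,\mC)$ is an inclusion), and it is the entire technical content behind parts (1) and (2). Without it, your argument for (1) collapses: the claim that ``the data of an oriented functor $\iota\mC\to\iota\mD$ \dots\ coincides with the data of a functor $\mC\to\mD$'' is only true if factoring a map through $\RMor(B,C)\boxtimes\RMor(A,B)\to\RMor(B,C)\times\RMor(A,B)$ is a \emph{property} rather than additional structure, which is precisely the epimorphism assertion. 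A general lax monoidal functor does not induce a fully faithful functor on enriched categories; the paper's Proposition~\ref{enrinterchange} isolates the exact hypothesis (epimorphism of the lax structure maps) under which it does, and proves it via a careful limit argument (Corollary~\ref{limi}) over the simplicial bar resolution.

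The same gap undermines your argument for (2). When you write that ``factoring the composition through each of these forces it to factor globally,'' you are implicitly assuming that the cellwise factorizations are unique and hence assemble coherently --- but uniqueness of a factorization through $\bD^m\boxtimes\bD^n\to\bD^m\times\bD^n$ is again the epimorphism property. Your subsequent appeal to ``$\boxtimes\to\times$ being a coherently monoidal transformation'' does not repair this: the coherence data of the $\boxtimes$-enrichment does not automatically descend to $\times$-coherence data unless the descent is forced by an epimorphism. The paper handles this by proving the general enriched result (Proposition~\ref{enrinterchange}) abstractly, then specializing via Theorem~\ref{grayepi}. Your approach to (3), transporting (2) across the nerve, is reasonable and close to the paper's Theorem~\ref{geomorient}, but it inherits the gap from (2).
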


\cref{geomorient} provides localizations
$$ \Fun(\Theta(\cube, \boxtimes)^\op,\infty\Gpd) \rightleftarrows {\boxtimes\Cat}, $$
$$ \Fun(\Theta(\cube, \times)^\op,\infty\Gpd) \rightleftarrows {\infty\Cat}. $$

\cref{thm:rightadjoint} implies the following corollary:

\begin{corollary}[\cref{restind}]
Restriction and left Kan extension along the functor of $1$-categories $\Theta(\bar{\cube},\bar{\boxtimes})\to\Theta(\bar{\cube},\times)$ descends to a localization
\[
{\Cat\boxtimes}\leftrightarrows\infty\Cat.
\]
Moreover, the left adjoint agrees with the oriented realization functor $||-||_\boxtimes$.
\end{corollary}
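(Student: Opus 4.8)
The plan is to obtain the localization by descending, along the two reflective localizations of \cref{geomorient1}, the restriction/left-Kan-extension adjunction at the level of presheaf categories. Write $\phi\colon\Theta(\bar{\cube},\bar{\boxtimes})\to\Theta(\bar{\cube},\times)$ for the functor of $1$-categories obtained by restricting the oriented realization $||-||_\boxtimes$, so (as recorded just before \cref{thm:rightadjoint}) $\phi$ sends a wedge of suspensions of antioriented cubes to the corresponding wedge of suspensions formed with the cartesian enrichment; in particular $\phi(d)=||d||_\boxtimes$ for each object $d$. Restriction along $\phi$ is a functor $\phi^*\colon\Fun(\Theta(\bar{\cube},\times)^\op,\infty\Gpd)\to\Fun(\Theta(\bar{\cube},\bar{\boxtimes})^\op,\infty\Gpd)$ with left adjoint the left Kan extension $\phi_!$. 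Since the oriented and antioriented cubes are isomorphic as $\infty$-categories (\cref{selfdual}), the cartesian $\Theta$-categories coincide, $\Theta(\bar{\cube},\times)\simeq\Theta(\cube,\times)$, so by \cref{geomorient1} the target presheaf category carries a reflective localization onto $\infty\Cat$ via $\N'\colonequals\N_{\Theta(\cube,\times)}$, while the source carries a reflective localization onto $\Cat\boxtimes$ via $\N\colonequals\N_{\Theta(\bar{\cube},\bar{\boxtimes})}$; I write $L'$ for the reflector adjoint to $\N'$.

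The key step is to identify $\phi^*$ on local objects with the inclusion $\iota\colon\infty\Cat\to\Cat\boxtimes$ of \cref{eqn:bicat2}, that is, to prove $\phi^*\circ\N'\simeq\N\circ\iota$. I would check this pointwise: for $X\in\infty\Cat$ and an object $d$ of $\Theta(\bar{\cube},\bar{\boxtimes})$, the left side is $\phi^*\N'(X)(d)=\Map_{\infty\Cat}(\phi(d),X)=\Map_{\infty\Cat}(||d||_\boxtimes,X)$, while the right side is $\N(\iota X)(d)=\Map_{\Cat\boxtimes}(d,\iota X)\simeq\Map_{\infty\Cat}(||d||_\boxtimes,X)$ by the defining adjunction $||-||_\boxtimes\dashv\iota$; these agree naturally in $d$. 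Consequently $\phi^*$ carries the local objects $\infty\Cat$ into the local objects $\Cat\boxtimes$, realizing there the inclusion $\iota$. \cref{thm:rightadjoint}(3) is exactly what confirms this image: among local presheaves, those factoring through $\phi$ are precisely the oriented spaces; and \cref{thm:rightadjoint}(1) guarantees that $\iota$ is fully faithful.

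With this identification the descent is formal. Because the right adjoint $\phi^*$ preserves local objects, the adjunction $\phi_!\dashv\phi^*$ restricts to an adjunction $\bar{\phi}_!\dashv\bar{\phi}^*$ between the reflective subcategories, where $\bar{\phi}^*=\iota$ and $\bar{\phi}_!=L'\circ\phi_!\circ\N$; the adjunction isomorphism follows from the chain $\Map_{\infty\Cat}(\bar{\phi}_!C,X)\simeq\Map(\phi_!\N C,\N'X)\simeq\Map(\N C,\phi^*\N'X)\simeq\Map(\N C,\N\iota X)\simeq\Map_{\Cat\boxtimes}(C,\iota X)$, using $L'\dashv\N'$, then $\phi_!\dashv\phi^*$, then the identification of the previous paragraph, then full faithfulness of $\N$. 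Since $\bar{\phi}^*=\iota$ is fully faithful, this exhibits $\infty\Cat$ as a reflective localization of $\Cat\boxtimes$, i.e. the asserted localization $\Cat\boxtimes\leftrightarrows\infty\Cat$. Finally, $\bar{\phi}_!$ and $||-||_\boxtimes$ are both left adjoint to the same functor $\iota$, so by uniqueness of adjoints $\bar{\phi}_!\simeq||-||_\boxtimes$, which gives the ``moreover'' clause.

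I expect the only real obstacle to be the identification $\phi^*\circ\N'\simeq\N\circ\iota$ together with the bookkeeping needed to match the $\cube$/$\bar{\cube}$ presentations; everything after it is a formal descent of adjunctions plus uniqueness of left adjoints. This identification is precisely where the embedding theorem does its work, encoding the fact that for an honest $\infty$-category the Gray-enriched mapping presheaf collapses onto the cartesian one, which is the strict-interchange characterization of oriented spaces in \cref{thm:rightadjoint}.
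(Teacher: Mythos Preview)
Your proof is correct and takes essentially the same approach as the paper: the commutative square $\phi^*\circ\N'\simeq\N\circ\iota$ you verify pointwise is exactly the square of right adjoints established in the proof of \cref{geomorient} (there obtained by passing to right adjoints from the square of left adjoints $L'\circ\theta_!\simeq ||-||_\boxtimes\circ L$), and the paper leaves the formal descent you spell out implicit, treating the corollary as immediate. The only cosmetic difference is that you recover $||-||_\boxtimes$ via uniqueness of left adjoints, whereas the paper's square of left adjoints gives this identification directly.
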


\cref{thm:rightadjoint} has a number of interesting applications, including to the theory of oriented limits and colimits, the natural notion of (co)limit in oriented category theory.
For instance, it can be used to show that for any $\infty$-categories $\mC,\mD$ the oriented colimit of the constant oriented functor $\mD\to\infty\fcat$ with value $\mC$ is $\mC\boxtimes\mD$. Note that the corresponding statement for the cartesian product is considerably easier, but already uses the Grothendieck construction for $\infty$-categories.
In this paper, we only make use of oriented pullback and pushouts, and we develop the general theory of oriented (co)limits in a follow-up paper.

An important ingredient in the proof of \ref{thm:rightadjoint} is the following result, which exhibits the product as a quotient of the Gray tensor product:

\begin{theorem}[\cref{grayepi}]\label{grayepi2} 
For every family of $\infty$-categories $X_1,\ldots,X_n$, the functor
\[
X_1\boxtimes\cdots\boxtimes X_n\to X_1\times\cdots\times X_n
\]
is an epimorphism.
\end{theorem}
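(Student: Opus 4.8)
The plan is to establish a few stability properties of epimorphisms, use them to reduce the statement to a single pair of disks, and then treat the disk case by decomposing the comparison map into elementary cell collapses.

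\emph{Stability properties.} Recall that a functor $f\colon A\to B$ is an epimorphism if and only if the codiagonal $B\amalg_A B\to B$ is an equivalence, equivalently if and only if $\Map_{\infty\Cat}(B,Z)\to\Map_{\infty\Cat}(A,Z)$ is $(-1)$-truncated for every $Z$. From the mapping-space description, $\Map_{\infty\Cat}(-,Z)$ sends a pushout square to a pullback square, so epimorphisms are stable under cobase change (pullbacks of $(-1)$-truncated maps are $(-1)$-truncated) and, trivially, under composition. From the codiagonal description, any functor preserving colimits in a given variable preserves epimorphisms, since it preserves the pushout $B\amalg_A B$; in particular $X\boxtimes(-)$, $(-)\boxtimes X$, $X\times(-)$, and $(-)\times X$ all preserve epimorphisms, as the Gray tensor product and the cartesian product preserve colimits in each variable. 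Finally, a colimit of epimorphisms formed in the arrow category $\Fun(\Delta^1,\infty\Cat)$ is again an epimorphism: mapping out of a colimit is a limit, and $(-1)$-truncated maps of spaces are stable under limits.

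\emph{Reductions.} Granting the two-factor statement for all pairs, the general case follows by induction on $n$: with $R=X_2\boxtimes\cdots\boxtimes X_n$, $P=X_2\times\cdots\times X_n$, and $q\colon R\to P$ the $(n-1)$-factor comparison map, the map in question factors as $X_1\boxtimes R\xrightarrow{X_1\boxtimes q}X_1\boxtimes P\xrightarrow{\psi}X_1\times P$, where $X_1\boxtimes q$ is an epimorphism because $X_1\boxtimes(-)$ preserves them and $\psi$ is an instance of the two-factor case. For two factors, present $X\simeq\colim_i\bD^{a_i}$ and $Y\simeq\colim_j\bD^{b_j}$ as colimits of disks, which generate $\infty\Cat$ under colimits. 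As $\boxtimes$ and $\times$ preserve colimits in each variable, the map $X\boxtimes Y\to X\times Y$ is the colimit in $\Fun(\Delta^1,\infty\Cat)$ of the disk comparisons $\bD^{a_i}\boxtimes\bD^{b_j}\to\bD^{a_i}\times\bD^{b_j}$, so by stability of epimorphisms under arrow-category colimits it suffices to treat a single pair of disks.

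\emph{The disk case.} Both $\bD^m\boxtimes\bD^n$ and $\bD^m\times\bD^n$ are gaunt with the same objects, and the comparison $\phi_{m,n}$ is the identity on objects; the product is obtained from the Gray tensor product exactly by collapsing the \emph{interchange cells}, i.e. the nondegenerate cells of $\bD^m\boxtimes\bD^n$ that become degenerate composites in $\bD^m\times\bD^n$. Using the cubical cell structure of the Gray tensor product together with Steiner theory, I would organize this as a finite tower of cobase changes of elementary top-cell collapses $c_k\colon\bD^k\to\bD^{k-1}$ with $k\ge 2$, where $c_k$ identifies the two parallel $(k-1)$-cells and sends the top $k$-cell to an identity. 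Each $c_k$ is an epimorphism: its self-pushout $\bD^{k-1}\amalg_{\bD^k}\bD^{k-1}$ merely identifies the two top cells and creates no new cells, since the common boundary sphere is rigid and no loops are introduced, so the codiagonal is an equivalence. The basic instance $c_2\colon\bD^2\to\bD^1$ collapsing the unique $2$-cell of $\bD^1\boxtimes\bD^1$ is precisely the gluing map recorded in the Introduction that presents $\bD^1\times\bD^1$. By stability under cobase change and finite composition, $\phi_{m,n}$ is an epimorphism.

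\emph{Main obstacle.} The reductions are formal; the crux is the disk case, and within it the claim that $\phi_{m,n}$ decomposes as a finite tower of cobase changes of the maps $c_k$. This requires a precise comparison of the cell structures of $\bD^m\boxtimes\bD^n$ and $\bD^m\times\bD^n$ — matching each interchange cell to be collapsed against a generating map $c_k$ — and, most delicately, verifying that these collapses are computed as genuine pushouts in $\infty\Cat$ rather than only in strict $\infty$-categories, where the two notions of pushout differ. The cubical presentation of the Gray tensor product (\cref{cor:cubicalsegal}) together with Steiner theory is the natural apparatus for making this bookkeeping rigorous.
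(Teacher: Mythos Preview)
Your reductions are sound: the induction to $n=2$ matches the paper exactly, and the further reduction to disks via colimit stability is correct. Likewise, the argument that each elementary collapse $c_k\colon\bD^k\to\bD^{k-1}$ is an epimorphism is fine.

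The gap is the disk case, which you yourself flag as the main obstacle but do not complete. The claim that $\phi_{m,n}$ factors as a finite tower of cobase changes of the $c_k$ in $\infty\Cat$ is not established, and the available comparison tools do not obviously apply: \cref{pasting1} transfers pushouts from $\infty\Cat^\strict$ to $\infty\Cat$ only along maps in the saturated class generated by the inclusions $\partial\bD^n\subset\bD^n$, i.e.\ along cell \emph{attachments}, whereas your tower is built from cell \emph{collapses}. Moreover, after collapsing one interchange cell the intermediate stages need not remain Steiner (or even polygraphs), so the Steiner apparatus does not straightforwardly carry the bookkeeping either. Without an independent argument that these collapse pushouts are computed in $\infty\Cat$, the proof is incomplete.

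The paper avoids this entirely. For $n=2$ it does \emph{not} reduce to disks; instead it uses the closed structure to rewrite the comparison as
\[
\Map(\mA_1,\Fun(\mA_2,\mB))\longrightarrow\Map(\mA_1,\Fun^{\oplax}(\mA_2,\mB)),
\]
and appeals to the separately proved \cref{thm:oplaxinclusion}, which asserts that $\Fun(\mA,\mC)\to\Fun^{\oplax}(\mA,\mC)$ is an inclusion. That theorem is itself proven by induction on disks in the source and on categorical dimension of the target, using the explicit pullback and (anti)oriented pullback formulas for morphism objects in $\Fun(S(\mA),\mC)$ and $\Fun^{\lax}(S(\mA),\mC)$ (\cref{nat}, \cref{laxnat}). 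This route replaces your cell-by-cell collapse analysis with an induction on morphism $\infty$-categories, and thereby never needs to compare strict and weak pushouts along collapse maps.
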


The latter theorem implies an important relationship between $\infty\scat$
and $\infty\fcat$.
Another consequence of the fact that the identity functor is lax monoidal is that, for any pair of $\infty$-categories $\mA$ and $\mB$, there are induced functors
\[
\Fun(\mA,\mB)\to\Fun^\lax(\mA,\mB)\qquad\textrm{and}\qquad\Fun(\mA,\mB)\to\Fun^\oplax(\mA,\mB)
\]
which collectively organize to form a bioriented functor $i:\infty\scat$$\to\infty\fcat$.
Using \cref{grayepi2} we deduce the following corollary:

\begin{corollary}\label{monomo}
The functor $i:\infty\scat$$\to\infty\fcat$ is a monomorphism of bioriented categories.
\end{corollary}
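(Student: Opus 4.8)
The plan is to reduce the statement to the two defining conditions for a functor of bioriented categories to be a monomorphism: that it be a monomorphism on objects and that it induce monomorphisms on all bienriched morphism objects. This reduction is supplied by the general theory of bienriched categories of \cite{heine2024bienriched}, where monomorphisms are detected on the underlying space of objects together with the morphism objects. Since $i:\infty\scat\to\infty\fcat$ is the identity on objects---both sides have the small $\infty$-categories as objects, and $i$ leaves them unchanged---the condition on objects is immediate, and all of the content lies in the morphism objects.

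First I would identify the two morphism objects and the comparison map between them. Under the equivalence $\infty\Cat\otimes\infty\Cat\simeq\Fun^{\lim,\lim}(\infty\Cat^{\op}\times\infty\Cat^{\op},\mS)$, the bienriched morphism object $\Mor_{\infty\fcat}(S,T)$ is the functor $(X,Y)\mapsto\Map_{\infty\Cat}(X\boxtimes S\boxtimes Y,T)$, whereas the image of $\infty\scat$ under the embedding $i$ has morphism object $(X,Y)\mapsto\Map_{\infty\Cat}(X\times S\times Y,T)$; one verifies the latter by restricting along $\mS\to\infty\Cat$ in either variable and recovering $\Fun(S,T)$. The map on morphism objects induced by $i$ is then, evaluated at each $(X,Y)$, precomposition with the canonical quotient $X\boxtimes S\boxtimes Y\to X\times S\times Y$. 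As these quotients are natural in $X$ and $Y$, they assemble into a genuine natural transformation of functors on $\infty\Cat^{\op}\times\infty\Cat^{\op}$.

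The key step is an application of \cref{grayepi2}: for all $X,Y$ the functor $X\boxtimes S\boxtimes Y\to X\times S\times Y$ is an epimorphism of $\infty$-categories, so precomposition
\[
\Map_{\infty\Cat}(X\times S\times Y,T)\to\Map_{\infty\Cat}(X\boxtimes S\boxtimes Y,T)
\]
is a monomorphism of spaces for every $T$, since a map is an epimorphism precisely when precomposition with it yields monomorphisms of mapping spaces into every object. Hence the natural transformation of the previous paragraph is pointwise a monomorphism. Monomorphisms in a functor $\infty$-category are detected pointwise, and $\Fun^{\lim,\lim}(\infty\Cat^{\op}\times\infty\Cat^{\op},\mS)$ is a full subcategory closed under the relevant limits, so the comparison is a monomorphism in $\infty\Cat\otimes\infty\Cat$. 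Thus $i$ induces a monomorphism on each morphism object, and being the identity on objects, it is a monomorphism of bioriented categories.

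The main obstacle I anticipate is organizational rather than deep: correctly matching the two bienriched morphism objects after base change along the lax monoidal identity $(\infty\Cat,\times)\to(\infty\Cat,\boxtimes)$ (in particular confirming that the cartesian side is corepresented by $X\times S\times Y$), and checking that the abstract notion of monomorphism of bioriented categories really does reduce to the objectwise condition used above. Once \cref{grayepi2} is available the remaining analysis is light, as epimorphisms of $\infty$-categories translate directly into the required monomorphisms of mapping spaces.
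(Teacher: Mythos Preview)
Your proposal is correct and follows essentially the same approach as the paper's proof: reduce to checking that $i$ is an equivalence on underlying spaces and a monomorphism on bienriched morphism objects, identify the latter as the map of presheaves induced by precomposition with $X\boxtimes S\boxtimes Y\to X\times S\times Y$, and conclude using \cref{grayepi2} together with the fact that monomorphisms of presheaves are detected objectwise. The paper's argument is slightly terser but structurally identical, including the passage through presheaves on $\infty\Cat\times\infty\Cat$ to detect the monomorphism in $\infty\Cat\otimes\infty\Cat$.
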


To prove \cref{grayepi2}, we compute the $\infty$-category of functors and lax transformations out of a suspension (\cref{laxnat}).
This computation also implies the following 
formula for the Gray tensor product of suspensions (\cref{tensorsuspensionformula}):
the tensor product $S(\mA)\boxtimes S(\mB)$ is computed as the suspension of $\mA\boxtimes\bD^1\boxtimes\mB$ glued to the boundary of $S(\mA)\times S(\mB)$, i.e. the pushout
\[
\xymatrix{
S(\mA\times\partial\bD^1\times\mB)\ar[r]\ar[d] & S(\mA\boxtimes\bD^1\boxtimes\mB)\ar[d]\\
\partial(S(\mA)\times S(\mB))\ar[r] & S(\mA)\boxtimes S(\mB).
}
\]
\begin{corollary}
    The vertical maps in the above pushout square are fully faithful embeddings.
    In particular, we obtain a formula for the morphism $\infty$-category between the first and last objects of the tensor:
\[
\Mor_{S(\mA)\boxtimes S(\mB)}((0,0),(1,1))=\mA\boxtimes\bD^1\boxtimes\mB\underset{\mA\boxtimes\partial\bD^1\boxtimes\mB}{\coprod}\mA\times\partial\bD^1\times\mB.
\]
\end{corollary}

\subsection{Relation to other work}

Our theory of oriented spaces studies $\infty$-categories as a category enriched in the Gray monoidal structure.
The basic theory of $\infty$-categories, the Gray tensor product, fibrations, and limits and colimits have been worked out recently in various models.
Notably, in the strict setting, Verity developed the model of complicial sets \cite{VERITY1}, and Ara-Guetta \cite{ara2025lax} study the lax slice construction and its functoriality.
In the homotopical context, Loubaton adapted the approach of Verity to weak $\infty$-categories \cite{loubaton2024complicialmodelinftyomegacategories} and has written extensively about foundations from this perspective \cite{loubaton2024categorical}.

We expect the theory of oriented categories to have applications both internally to higher category theory, as well as externally, to mathematical physics, representation theory and non-commutative algebra.
For instance Johnson-Freyd-Reutter\cite{johnson2025build} apply the Gray tensor product to
construct Hopf algebras in braided monoidal categories from a suitable retract in a 3-category, which specializes to the classical Tannakian reconstruction of a Hopf algebra from a monoidal category with duals and a fiber functor. 

The gradual adaptation of homotopy theory in the higher categorical context has led to the development of categorical versions of spectra, which make essential use of the Gray enrichment.
Versions of the theory of categorical spectra have been studied in  \cite{heine2025categorification}, \cite{heine2026stable}, \cite{Masuda}, \cite{stefanich2021higher}.

The Baez--Dolan cobordism hypothesis builds a bridge between topological quantum field theory and $n$-category theory.
Lurie's sketch proof \cite{Cob} of the cobordism hypothesis suggests that higher category theory is essential in the study of field theories and categorical symmetries.
Other work in this direction, and more generally applications of higher category theory to mathematical physics and representation theory, has been carried out in \cite{baez1995higher}, \cite{baez2011prehistory}, \cite{ferrer2024dagger}, \cite{gaiotto2019condensations}, \cite{liu2024braided}, and many others.

None of these developements would have been possible without the Gray tensor product, which plays a fundamental role throughout this paper.
The insight that there should be a refinement of the cartesian product which plays an analogous role in higher categorical settings goes back to the work of Gray \cite{GRAY197663}.
\footnote{The Gray tensor has been subsequently studied by many others, especially in the context of 2-category theory. See for instance \cite{ara.folkmodel}, \cite{Dimitri_Ara_2020}, \cite{bourke2023skew}, \cite{campion2022cubesdenseinftyinftycategories}, \cite{crans1999tensor}, \cite{gordon1995coherence}, \cite{MAEHARA2021107461}.}

\subsection{Notation and terminology}

We fix a hierarchy of set-theoretic universes whose objects we call small, large, very large, etc.
We call a space (equivalently, $\infty$-groupoid) $X$ small, large, etc. if for any choice of basepoint and natural number $n$ its homotopy sets $\pi_n X$ are small, large, etc.
We call an $\infty$-category small, large, etc. if its maximal subspace and all its mapping spaces are small, large, etc.

We refer to (not necessarily univalent) weak $(\infty,\n)$-categories for $0 \leq \n \leq \infty$ simply as $\n$-categories, and we refer to (not necessarily univalent) weak $(\n,\n)$-categories as $(\n,\n)$-categories.
In particular, we refer to (not necessarily univalent) $(\infty,1)$-categories as 1-categories, or simply categories.
We will sometimes want to work strictly, which can be viewed as a basechange along the colimit-preserving symmetric monoidal functor $\mS\to\Set$.
In this case, we will refer to strict $(\n,\n)$-categories simply as strict $\n$-categories.\footnote{Note that an $(n,n)$-category need not be a strict $(n,n)$-category if $n>2$.
For instance, the fundamental $\infty$-groupoid of the $2$-sphere $S^2\simeq \mathrm{B}^2\Omega^2 S^2$ is an $(\infty,0)$-category which is not strict, nor are its $n$-truncations for any $n>2$.}


\begin{notation}
We will make use of the following notation and terminology when discussing categories, in the sense of categories enriched in the monoidal category of $\infty$-groupoids under the cartesian product.
\begin{enumerate}[\normalfont(1)]\setlength{\itemsep}{-2pt}
\item We write $\mS$ for the category of spaces, by which we mean small $\infty$-groupoids, homotopy types, or anima, and $\Set$ for the category of small sets.
\item We write $\infty\Cat$ for the large category of small $\infty$-categories.

\item We write $\Delta$ for (a skeleton of) the category of finite, non-empty, partially ordered sets and order preserving maps, whose objects we denote by $[\n] = \{0 < ... < \n\}$ for $\n \geq 0$.\footnote{This should not be confused with the category $\bDelta$ of oriented simplices.}
\item We write $\Map_{\mC}(A,B)$ for the space of maps (equivalently, $1$-morphisms) from $A$ to $B$ in $\mC$, for any category $\mC$ containing an ordered pair of objects $(A,B)\in\mC$.

\item We write $\ast$ for the final object and
$\emptyset$ for the initial object in any category.

\item We call a fully faithful functor $\mC \to \mD$ an embedding.

\item We call a functor $\mC \to \mD$ an inclusion if it induces an embedding on maximal subspaces and on all mapping spaces. A functor is an inclusion if and only if for every category $\mB$ the induced map
$\Map_\Cat(\mB,\mC) \to \Map_\Cat(\mB,\mD)$ is an embedding.
Thus inclusions are exactly the monomorphisms in $\Cat$. 

\item For a diagram $X\to Z\leftarrow Y$ in a category $\mC$ we write $X\underset{Z}{\prod} Y$ or $X\underset{Z}{\times} Y$ for the pullback, and given a diagram $X\leftarrow W\to Y$ in a category $\mC$, we write $X\underset{W}{\coprod} Y$ or $X\underset{Z}{+} Y$ for the pushout.
\item If $\mC$ and $\mD$ are categories and $\mC\to\mD$ is a left adjoint functor with right adjoint $\mD\to\mC$, we often write $\mC\rightleftarrows\mD$ for this adjunction, where the left adjoint is understood to be the functor going from left to right.

\item 
We write $\mC_*$ or $\mC_{\ast/}$ for the category of pointed objects in a category $\mC$, i.e. the full subcategory of $\Fun([1],\mC)$ of arrows in $\mC$ whose source is a final object.

\item We write $\PrL$ and $\PrR$ for the subcategories of $\widehat{\Cat}$ spanned by the presentable categories and the left and right adjoint functors, respectively.
There is a canonical equivalence $\PrL \simeq (\PrR)^\op$
sending left to right adjoints.
\item We write $\otimes$ for the symmetric monoidal structure on $\PrL$ and $\PrR$.
More precisely, by \cite{lurie.higheralgebra}, $\PrL$ carries a closed symmetric monoidal structure such that the subcategory inclusion $\PrL \subset \widehat{\Cat}$ is a lax symmetric monoidal with respect to the the cartesian structure on $\widehat{\Cat}$.
\item We write $\infty\scat$ for the large $\infty$-category of small $\infty$-categories.\footnote{The morphism $\infty$-categories are formed via enrichment in the cartesian monoidal structure.}
\item We write $\Fun(\mD,\mC)$ for the $\infty$-category of functors from an $\infty$-category $\mD$ to an $\infty$-category $\mC$, the value at $\mC$ of the right adjoint to the functor $(-)\times\mC:\infty\Cat\to\infty\Cat$ for the cartesian product.

\item We write $\bD^1$ for the walking arrow, the category with two objects and a unique non-identity arrow.
\item We write $\partial\bD^1$ and $S^0$ for the maximal subspace in $\bD^1$, the set with two elements.

\item We write $\iota_{n}\mC$ for the $n$-category arising from an $\infty$-category $\mC$ by discarding all noninvertible morphisms above dimension $n.$
\item We write $\tau_{n}\mC$ for the $n$-category arising from an $\infty$-category $\mC$ by inverting all morphisms above dimension $n.$
\end{enumerate}
\end{notation}

\subsection*{Acknowledgements}
We thank Tim Campion, Rune Haugseng, Felix Loubaton, Naruki Masuda, Thomas Nikolaus, Markus Spitzweck, and Germ\'an Stefanich for interesting conversations related to the subject of this paper.
We thank the MPIM for their hospitality while much of this work was carried out.

\vspace{.25cm}

\section{Higher categories}\label{enr}
\vspace{.25cm}

\subsection{Enriched categories}

We first recall the notion of homotopy coherent enrichment, as defined and studied in, for instance, \cite{MR3345192}, \cite{heine2024higher}, \cite{heine2025equivalence}, \cite{HINICH2020107129}.


For every presentably monoidal category $\mV$ there is a presentable 2-category $${\mV\mathrm{-}\Cat}$$ of (not necessarily univalent) $\mV$-enriched categories and $\mV$-enriched functors and a forgetful functor $$ \iota:{\mV\mathrm{-}\Cat} \to \Cat$$ to the presentable 2-category $\Cat$ of (not necessarily univalent) categories,
which is an equivalence for $\mV= \mS$ the category of homotopy types \cite[Corollary 3.23.]{heine2024bienriched}. 

Let $${\mV\mathrm{-}\Cat}^\univ \subset {\mV\mathrm{-}\Cat}$$ be the reflexive full subcategory of univalent $\mV$-enriched categories.


 


	


	



	

	
	

\begin{notation}Let $\mV$ be a presentably monoidal category.
A $\mV$-enriched category $\mC$ has an underlying category $\iota(\mC)$, for every objects $X,Y \in \iota(\mC)$ a morphism object 
$$\Mor_\mC(X, Y) \in \mV$$
and for every objects $X,Y,Z \in \iota(\mC)$ a composition morphism in $\mV:$
$$\Mor_\mC(Y, Z) \ot \Mor_\mC(X, Y) \to \Mor_\mC(X, Z).$$

We write $\X \in \mC$ for $\X \in \iota(\mC)$ and usually notationally identify $\mC$ with $\iota(\mC).$





	
\end{notation}



The following is \cite[Example 2.134.]{heine2024bienriched}:

\begin{example}\label{linenr}
Let $\mV$ be a presentably monoidal category. Every presentably left $\mV$-tensored category is a $\mV$-enriched category.
Every $\mV$-linear functor between presentably left $\mV$-tensored categories is a $\mV$-enriched functor.
In particular, $\mV$, which is presentably left tensored over itself, is a $\mV$-enriched category.

\end{example}

\begin{notation}Let $\mV$ be a presentably monoidal category.
Let $B\tu_\mV \subset \mV$ be the full $\mV$-enriched subcategory spanned by the tensor unit. Then $\Mor_{B\tu_\mV}(\tu_\mV, \tu_\mV) \simeq \tu_\mV.$
    
\end{notation}

For every enriched category there is an opposite one:

\begin{notation}
There is an involution $$(-)^\circ: {\mV\mathrm{-}\Cat} \simeq {\mV^\rev\mathrm{-}\Cat}$$ forming the opposite enriched category.
For every $\mC \in {\mV\mathrm{-}\Cat}$ and $X,Y \in \mC$
there are canonical equivalences
$ \iota(\mC^\circ) \simeq \iota(\mC)^\op$
and $$ \Mor_{\mC^\circ}(X,Y) \simeq \Mor_{\mC}(Y,X).$$
\end{notation}

The following is \cite[Proposition 3.72.]{heine2024bienriched}:

\begin{proposition}

Let $\mV,\mW$ be presentably monoidal categories and 
$\phi:\mV \to \mW$ a lax monoidal functor.
There is an induced functor
$\phi_!: \mV \mathrm{-}\Cat \to \mW \mathrm{-}\Cat$
that transfers the enrichment.
This functor descends to a functor
$\phi_!: \mV \mathrm{-}\Cat^\univ \to \mW \mathrm{-}\Cat^\univ.$
For every $\mV$-enriched category $\mC$ and $X,Y \in \iota(\mC)$
there is an equivalence
$$ \Mor_{\phi_!(\mC)}(X,Y) \simeq \phi(\Mor_\mC(X,Y)).$$
    
\end{proposition}

There is a close relationship between enriched categories
and tensored and cotensored categories:

\begin{definition}Let $\mV$ be a presentably monoidal category, $\mC$ a $\mV$-enriched category and $X \in \mC, V \in \mV.$	 
\begin{enumerate}[\normalfont(1)]\setlength{\itemsep}{-2pt}
\item The tensor of $V$ and $X$ in $\mC$ is the object $V \ot X \in \mC $ such that there is a morphism
$V \to \Mor_\mC(X, V \ot X) $ in $\mV$ that induces for every $Y \in \mC$ an equivalence
$$ \Mor_\mC(V \ot X,Y) \to \Mor_\mV(V, \Mor_\mC(X,Y)). $$ 
\item The cotensor of $V$ and $X$ in $\mC$ is the object ${^V X} \in \mC $ that is the tensor of $V $ and $X$ in the opposite $\mV^\rev$-enriched category $\mC^\circ.$
\end{enumerate}	
\end{definition}











Since the category of enriched categories forms a 2-category, there is a natural intrinsic notion of adjunction between enriched categories:

\begin{definition}Let $\mV$ be a presentably monoidal category.
A $\mV$-enriched functor $\mC \to \mD$ admits a left (right) adjoint if
it admits a left (right) adjoint in the 2-category $\mV \mathrm{-}\Cat.$

\end{definition}

\begin{notation}Let $\mV$ be a presentably monoidal category and $\mM, \mN$ be $\mV$-enriched categories.

Let $$\mV\mathrm{-}\Fun(\mM,\mN)$$ be the category of $\mV$-enriched functors
$\mM \to \mN.$

Let $$\mV\mathrm{-}\Fun^\L(\mM,\mN) \subset {\mV\mathrm{-}\Fun(\mM,\mN)}$$ be the full subcategory of $\mV$-enriched functors
$\mM \to \mN$ that admit a $\mV$-enriched right adjoint.
    
\end{notation}

The following is \cite[Remark 2.55.]{heine2024bienriched}:

\begin{proposition}
Let $\mV$ be a presentably monoidal category.
\begin{enumerate}[\normalfont(1)]\setlength{\itemsep}{-2pt}
\item A $\mV$-enriched functor $\phi: \mC \to \mD$ admits a $\mV$-enriched right adjoint if and only if for every $\Y \in \mD$ the $\mV^\rev$-enriched functor
$\Mor_\mD(\phi(-),\X): \mC^\circ \to \mV$ is representable.
\item A $\mV$-enriched functor $\phi: \mC \to \mD$ admits a $\mV$-enriched left adjoint if and only if the opposite $\mV^\rev$-enriched functor $\phi^\circ: \mC^\circ \to \mD^\circ$ admits a $\mV^\rev$-enriched right adjoint. By (1) this holds if and only if for every $\Y \in \mD$ the $\mV$-enriched functor
$\Mor_\mD(\Y,\phi(-)): \mC \to \mV$ is representable.
\end{enumerate}
\end{proposition}

The following is \cite[Lemma 2.77.]{heine2024bienriched}:

\begin{proposition}\label{adj}
Let $\mV$ be a presentably monoidal category.
\begin{enumerate}[\normalfont(1)]\setlength{\itemsep}{-2pt}
\item A $\mV$-enriched functor $\mC \to \mD$ admits a right adjoint if and only if it preserves tensors and the underlying functor admits a right adjoint.
\item A $\mV$-enriched functor $\mC \to \mD$ admits a left adjoint if and only if it preserves cotensors and the underlying functor admits a left adjoint.
\end{enumerate}

\end{proposition}





Next we introduce the enriched category of enriched presheaves \cite{heine2025equivalence}.

\begin{notation}

Let $\mV, \mW$ be presentably monoidal categories and $\mM$ a $\mV$-enriched category and $\mO$ a $\mW$-enriched category.
Let $\langle \mM, \mN \rangle $ be the $\mV \ot \mW$-enriched category that is the transfer of enrichment of the $\mV \times \mW$-enriched category $\mM \times \mN$ along the universal
monoidal functor $\mV \times \mW \to \mV \ot \mW$ preserving small colimits componentwise.
    
\end{notation}

The next theorem follows from \cite[Proposition 4.11, Theorem 4.86]{heine2024bienriched}:

\begin{theorem}\label{psinho} Let $\mV, \mW$ be presentably monoidal categories and $\mN$ a univalent $(\mV, \mW)$-bienriched category. 
\begin{enumerate}[\normalfont(1)]\setlength{\itemsep}{-2pt}
\item Let $\mM$ be a small univalent $\mV$-enriched category. The
category ${\mV\mathrm{-}\Fun}(\mM, \mN)$ refines to an univalent $\mW$-enriched category characterized by an 
equivalence
$$ \mW\mathrm{-}\Fun(\mO,{\mV\mathrm{-}\Fun}(\mM, \mN)) \to {\mV \ot \mW\mathrm{-}\Fun}(\langle\mM,\mO\rangle,\mN)$$
natural in any univalent $\mW$-enriched category $\mO$.
\item Let $\mO$ be a small univalent $\mW$-enriched category. 
The category ${\mW\mathrm{-}\Fun}(\mO, \mN)$ refines to an univalent $\mV$-enriched category characterized by an equivalence
$$ \mV\mathrm{-}\Fun(\mM,{\mW\mathrm{-}\Fun}(\mO, \mN)) \to {\mV \ot \mW\mathrm{-}\Fun}(\langle\mM,\mO\rangle,\mN) $$
natural in any univalent $\mV$-enriched category $\mM$.

\item If $\mN$ is a presentably $\mV, \mW$-bitensored category, then ${\mV\mathrm{-}\Fun}(\mM, {\mN}) $ is a presentably right $\mW$-tensored category and ${\mW\mathrm{-}\Fun}(\mO, {\mN}) $ is a presentably left $\mV$-tensored category.

\end{enumerate}

\end{theorem}
    
\begin{definition}

Let $\mV$ be a presentably monoidal category and $\mC$ a small univalent $\mV$-enriched category.
The presentably left $\mV$-tensored category of $\mV$-enriched presheaves on $\mC$ is
$$ \mP_\mV(\mC):= \mV\mathrm{-}\Fun(\mC^\op,\mV). $$
\end{definition}

The next theorem, which follows from \cite[Theorem 3.41, Theorem 4.70]{heine2024bienriched}, is an enriched version of the universal property of the category of presheaves as the free cocompletion under small colimits \cite[Theorem 5.1.5.6]{lurie.HTT}:

\begin{theorem}
\label{Yonedaext}
Let $\mV$ be a presentably monoidal category, $\mC$ a small univalent $\mV$-enriched category and $\mD$ a presentably left $\mV$-tensored category.
\begin{enumerate}[\normalfont(1)]\setlength{\itemsep}{-2pt}
\item There is a $\mV$-enriched embedding $\iota_\mC : \mC \to \mP_\mV(\mC)$ that sends $X$ to $\L\Mor_\mC(-,X)$ and induces
for every presentably left $\mV$-tensored category $\mD$ an equivalence
$$ {\mV\mathrm{-}\Fun^\L}(\mP_\mV(\mC),\mD)\to \mV\mathrm{-}\Fun(\mC,\mD).$$
\item Let $F: \mC \to \mD$ be a $\mV$-enriched functor and
$\bar{F}: \mP_\mV(\mC) \to \mD $ the unique $\mV$-enriched left adjoint extension of $F$.
For every $\mV$-enriched functor $G: \mP_\mV(\mC) \to \mD $
the induced morphism $$\Map_{{\mV\mathrm{-}\Fun^\L}(\mP_\mV(\mC),\mD)}(\bar{F},G) \to \Map_{\mV\mathrm{-}\Fun(\mC,\mD)}(F,G \circ \iota_\mC) $$ is an equivalence.
\item Let $F: \mC \to \mD$ be a $\mV$-enriched functor and
$\bar{F}: \mP_\mV(\mC) \to \mD $ the unique $\mV$-enriched left adjoint extension of $F$.
The $\mV$-enriched right adjoint $\mD \to \mP_\mV(\mC)$ of $\bar{F}$ sends $Y$ to $\Mor_\mD(-,Y) \circ F. $
\end{enumerate}
\end{theorem}

The following is the enriched Yoneda-lemma proven in 
\cite[Corollary 4.44]{heine2024bienriched}:

\begin{lemma}
Let $\mV$ be a presentably monoidal category and $\mC$ a small univalent $\mV$-enriched $\infty$-category. For every object $X \in \mC$ and $F \in \mP_\mV(\mC)$ the induced morphism $$\Mor_{\mP_\mV(\mC)}(\Mor_\mC(-,X),F) \to F(X) $$ is an equivalence.

\end{lemma}

We will use the following terminology to describe compatible enrichments in two monoidal categories:

\begin{definition}\label{bienr}
Let $\mV$ and $\mW$ be presentably monoidal categories.	 
\begin{enumerate}[\normalfont(1)]\setlength{\itemsep}{-2pt}
\item A left $\mV$-enriched category is a $\mV$-enriched category.
\item A left $\mV$-enriched functor is a $\mV$-enriched functor
\item A right $\mV$-enriched category is a $\mV^\rev$-enriched category.
\item A right $\mV$-enriched functor is a $\mV^\rev$-enriched functor.
\item A $\mV,\mW$-bienriched category is a $\mV \ot \mW^\rev$-enriched category.
\item A $\mV,\mW$-enriched functor is a $\mV \ot \mW^\rev$-enriched functor.
\end{enumerate}
\end{definition}




We often refer to a $(\mV,\mV)$-bienriched category simply as $\mV$-bienriched category.

\begin{notation}
Let $\mV, \mW$ be presentably monoidal categories.
\begin{enumerate}[\normalfont(1)]\setlength{\itemsep}{-2pt}
\item Let $${_\mV \Cat}:= {\mV}\mathrm{-}\Cat$$
be the 2-category of left $\mV$-enriched categories and left $\mV$-enriched functors.
\item Let $${\Cat_\mW}:= {\mW^\rev}\mathrm{-}\Cat$$
be the 2-category of right $\mW$-enriched categories and right $\mW$-enriched functors.
\item Let $${_\mV \Cat_\mW}:= {\mV \ot \mW^\rev}\mathrm{-}\Cat$$
be the 2-category of $\mV,\mW$-bienriched categories and $\mV,\mW$-enriched functors.
\end{enumerate}
\end{notation}

\begin{notation}

We will write $\L\Mor, \R\Mor, \mathrm{B}\Mor$ for the morphism objects in a left, right and bienriched category, respectively.    
\end{notation}

\begin{notation}
Let $\mV, \mW$ be presentably monoidal categories and $\mM, \mN$ be $(\mV,\mW)$-bienriched categories.

Let $$_\mV\Fun_\mW(\mM,\mN):= {\mV \ot \mW^\rev \mathrm{-}\Fun(\mM,\mN)}$$ be the category of $\mV,\mW$-enriched functors $\mM \to \mN.$

\end{notation}

The following is \cite[Example 2.134.]{heine2024bienriched}:

\begin{example}
Let $\mV,\mW$ be presentably monoidal categories. Every presentably $\mV,\mW$-bitensored category is a $\mV,\mW$-enriched category.
Every presentably right $\mW$-tensored category is a right $\mW$-enriched category.
This follows also from \cref{linenr} identifying
presentably $\mV,\mW$-bitensored categories with presentably left $\mV \ot \mW^\rev$-tensored categories, and identifying
presentably right $\mW$-tensored categories with presentably left $\mW^\rev$-tensored categories.
In particular, $\mV$, which is presentably bitensored over itself, is a $\mV,\mV$-bienriched category.

Similarly, every $\mV,\mW$-linear functor between presentably $\mV,\mW$-bitensored categories is a $\mV,\mW$-enriched functor.

\end{example}

\begin{definition}Let $\mV$ and $\mW$ be presentably monoidal categories, let $\mC$ be a 
$\mV,\mW$-enriched category, and let $X \in \mC, V \in \mV, W \in \mW.$	 
\begin{enumerate}[\normalfont(1)]\setlength{\itemsep}{-2pt}
\item The left (co)tensor of $V$ and $X$ in $\mC$ is the (co)tensor of
$V \ot \tu_\mW \in \mV \ot \mW^\rev$ and $X$ in $\mC.$

\item The right (co)tensor of $W$ and $X$ in $\mC$ is the (co)tensor of
$\tu_\mV \ot W \in \mV \ot \mW^\rev$ and $X$ in $\mC.$



\end{enumerate}

\end{definition}

In the following we consider enriched slice categories.
To define bienriched slice categories we use the following lemma and corollary:

\begin{lemma}\label{tensorunit}
Let $\mV, \mW$ be presentably monoidal categories whose tensor unit is final.
The tensor unit of $\mV \ot \mW$ is final.
	
\end{lemma}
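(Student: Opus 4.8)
The plan is to peel off the monoidal structures entirely and reduce to a statement about the tensor product in $\PrL$, which I then verify by presenting $\mV$ and $\mW$ as localizations of presheaf categories.

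First I would record the shape of the unit. Since $\PrL$ carries its closed symmetric monoidal structure and $\mV,\mW$ are algebra objects for it (their products being closed), the tensor product $\mV\ot\mW$ is again a presentable monoidal category, and its tensor unit is $\tu_\mV\ot\tu_\mW:=\mathrm{can}(\tu_\mV,\tu_\mW)$, where $\mathrm{can}\colon\mV\times\mW\to\mV\ot\mW$ is the canonical colimit-preserving bilinear functor; concretely it is the value at the point of the composite $\mS\simeq\mS\ot\mS\to\mV\ot\mW$ of the two unit functors. By hypothesis $\tu_\mV$ and $\tu_\mW$ are the final objects of $\mV$ and $\mW$. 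Thus it suffices to prove the following purely $\PrL$-theoretic claim, which forgets the monoidal structures: if $c\in\mV$ and $d\in\mW$ are final, then $\mathrm{can}(c,d)$ is final in $\mV\ot\mW$.

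To prove this I would choose small categories $\mA,\mB$ together with accessible reflective localizations $L_\mV\colon\mP(\mA)\rightleftarrows\mV$ and $L_\mW\colon\mP(\mB)\rightleftarrows\mW$ with fully faithful right adjoints $\iota_\mV,\iota_\mW$ (every presentable category is such a localization, \cite{lurie.HTT}). Then $\mV\ot\mW$ is an accessible reflective localization $L\colon\mP(\mA)\ot\mP(\mB)\rightleftarrows\mV\ot\mW$ of $\mP(\mA)\ot\mP(\mB)\simeq\mP(\mA\times\mB)$ with fully faithful right adjoint $\iota$ (\cite{lurie.higheralgebra}). Since a reflective subcategory is closed under limits, each inclusion preserves final objects and satisfies $L\iota\simeq\id$; in particular $\iota_\mV c\simeq\mathrm{const}_\ast$ and $\iota_\mW d\simeq\mathrm{const}_\ast$ are the final presheaves, so $c\simeq L_\mV(\mathrm{const}_\ast)$ and $d\simeq L_\mW(\mathrm{const}_\ast)$. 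Under $\mP(\mA)\ot\mP(\mB)\simeq\mP(\mA\times\mB)$ the pure tensor is the external product $(a,b)\mapsto P(a)\times Q(b)$, so the external product of the two final presheaves is again the final presheaf $\mathrm{const}_\ast$, which moreover lies in the reflective subcategory $\mV\ot\mW$ and is its final object. Because $\ot$ is functorial on $\PrL$, the bilinear maps intertwine with the reflections, whence $\mathrm{can}(c,d)\simeq L\big(\mathrm{can}_{\mP(\mA),\mP(\mB)}(\mathrm{const}_\ast,\mathrm{const}_\ast)\big)\simeq L(\mathrm{const}_\ast)\simeq\mathrm{const}_\ast$, the final object of $\mV\ot\mW$. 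Combined with the first paragraph this shows the tensor unit $\tu_\mV\ot\tu_\mW$ is final.

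The step I expect to be the main obstacle is the bookkeeping in this last compatibility: one must check that $\mathrm{can}_{\mV,\mW}$ genuinely intertwines with $\mathrm{can}_{\mP(\mA),\mP(\mB)}$ along the localization functors, and that $\mV\ot\mW$ is precisely the localization of $\mP(\mA\times\mB)$ at the combined class of local equivalences. As an independent consistency check I would confirm the claim in the functor model $\mV\ot\mW\simeq\Fun^R(\mV^\op,\mW)$ of \cref{refk}: there the final object is the constant functor at the final object of $\mW$ (it preserves limits, hence is a right adjoint, and is final in the ambient functor category), while the limit-preserving functor attached to $\mathrm{can}(c,d)$ evaluates on a set of colimit-generators of $\mV$ to the copower of $d$ by $\Map_\mV(-,c)\simeq\ast$, namely $d$ itself, and then extends to all of $\mV^\op$ by sending colimits to limits of final objects — again the constant functor at the final object of $\mW$.
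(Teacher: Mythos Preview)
Your proof is correct and shares the paper's core mechanism: present $\mV\ot\mW$ as a reflective localization of a presheaf category $\mP(\mA\times\mB)$, identify the relevant object there as the final (constant-at-$\ast$) presheaf, and use that final objects are automatically local. The difference lies in how the tensor unit is tracked. The paper arranges for all localizations to be \emph{monoidal} by taking $\mA=\mV^\kappa$ and $\mB=\mW^\tau$ for regular cardinals large enough that the monoidal structures restrict to $\kappa$- and $\tau$-compact objects; then ``tensor unit goes to tensor unit'' is automatic from monoidality of the Yoneda embedding and of the localization functor. You instead strip off the monoidal structure at the outset, choose arbitrary presentations, and track the unit as $\mathrm{can}(\tu_\mV,\tu_\mW)$ via naturality of the universal bilinear map in $\PrL$. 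This avoids the extra input about compact objects closed under the tensor, at the cost of the compatibility check you flagged---which is nothing more than the naturality square for $\mathrm{can}$ with respect to the left adjoints $L_\mV,L_\mW$, so it is routine. Your independent verification in the model $\Fun^R(\mV^\op,\mW)$ is a pleasant sanity check the paper does not give.
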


\begin{proof}

By \cite[Proposition 7.15]{FreeAlgebras} there are 
small regular cardinals $\kappa, \tau$ such that
the monoidal structures on $\mV,\mW$ restrict to the full subcategories $\mV^\kappa \subset \mV , \mW^\tau \subset \mW$ of $\kappa, \tau$-compact objects, respectively.
The latter monoidal embeddings induce monoidal equivalences
$\Ind_\kappa(\mV^\kappa) \simeq \mV, \Ind_\kappa(\mW^\kappa) \simeq \mW$.
The monoidal localizations $\mP(\mV^\kappa) \rightleftarrows \Ind_\kappa(\mV^\kappa) \simeq \mV,$
$\mP(\mW^\tau) \rightleftarrows \Ind_\tau(\mW^\kappa) \simeq \mW $
induce a monoidal localization 
$\mP(\mV^\kappa \times \mW^\tau) \simeq \mP(\mV^\kappa) \ot \mP(\mW^\tau) \rightleftarrows \mV \ot \mW . $
By assumption the tensor unit of $ \mV^\kappa \times \mW^\tau $ is final.
Since the Yoneda embedding is monoidal and preserves the final object, the tensor unit of $\mP(\mV^\kappa \times \mW^\tau) $ is final and so local for any localization.
So the tensor unit of $\mP(\mV^\kappa \times \mW^\tau) $ belongs to $\mV \ot \mW.$
Since there is a monoidal localization functor $\mP(\mV^\kappa \times \mW^\tau) \to \mV \ot \mW$,
the tensor unit of $\mV \ot \mW$ is equivalent to the tensor unit of $\mP(\mV^\kappa \times \mW^\tau) $, which is final in $\mP(\mV^\kappa \times \mW^\tau) $ and so in $\mV \ot \mW.$ 
\end{proof}

\begin{corollary}\label{finality}
	
Let $\mV, \mW$ be presentably monoidal categories whose tensor unit is final.
Then ${_\mV \Cat_\mW}$ admits a final object $*$.
	
\end{corollary}

\begin{corollary}Let $\mV, \mW$ be presentably monoidal categories whose tensor unit is final.
For every $(\mV,\mW)$-bienriched category $\mC$
the induced functor ${_\mV\Fun_\mW}(*,\mC) \to \mC$ is an equivalence.
    
\end{corollary}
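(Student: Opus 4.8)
The plan is to reduce the bienriched statement to a statement about one-sided enrichment and then verify the latter by hand. By \cref{refk}, every $(\mV,\mW)$-bienriched category is canonically left enriched in $\mU := \mV\ot\mW^\rev$, and this identification upgrades to an equivalence of $2$-categories ${}_\mV\Cat_\mW\simeq {}_\mU\Cat$ under which the bienriched functor categories ${}_\mV\Fun_\mW(-,-)$ correspond to the left $\mU$-enriched functor categories ${}_\mU\Fun(-,-)$. Since the tensor unit of $\mW^\rev$ coincides with that of $\mW$ and is therefore final, \cref{tensorunit} applies to the pair $(\mV,\mW^\rev)$ and shows that the tensor unit of $\mU$ is final. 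It thus suffices to prove: for a presentable monoidal category $\mU$ whose tensor unit $\tu$ is final, the category ${}_\mU\Cat$ admits a final object $*$, and evaluation induces an equivalence ${}_\mU\Fun(*,\mC)\to\mC$ for every left $\mU$-enriched category $\mC$.

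For the final object I would take $*:=B(\tu)$, the one-object $\mU$-enriched category whose unique endomorphism object is $\tu$; this is well posed because $\tu$ is simultaneously the tensor unit (hence carries the initial algebra structure) and the final object of $\mU$. To see that it is final, I would argue directly that $\Map_{{}_\mU\Cat}(\mC,B(\tu))$ is contractible for every $\mC$: a left $\mU$-enriched functor $\mC\to B(\tu)$ consists of the unique constant map on underlying spaces together with a compatible family of morphisms $\L\Mor_\mC(A,B)\to\tu$ in $\mU$, and since $\tu$ is final every such mapping space, together with all of the coherence data governing composition and units, is contractible. Alternatively, one can observe that the forgetful functor ${}_\mU\Cat\to\mS$ to the space of objects is a right adjoint, hence preserves final objects, and that its fiber over the point is $\Alg(\mU)$, whose final object is $\tu$.

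For the equivalence I would analyze the evaluation functor ${}_\mU\Fun(*,\mC)\to\mC$ at the unique object of $*$. On underlying spaces it is an equivalence: a left $\mU$-enriched functor $*\to\mC$ is the choice of an object $c\in\mC$ together with an algebra map $\tu\to\L\Mor_\mC(c,c)$, and since $\tu$ is the initial object of $\Alg(\mU)$ this datum is unique up to a contractible space of choices, so such functors are classified by objects of $\mC$. On morphism objects, the end computing $\L\Mor_{{}_\mU\Fun(*,\mC)}(F,G)$ is indexed by the one-object domain $*$ and therefore degenerates to $\L\Mor_\mC(F(*),G(*))$, with the action of $\L\Mor_*(x,x)\simeq\tu$ being trivial. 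Hence evaluation is fully faithful and essentially surjective, so it is an equivalence, and transporting back along the equivalence of the first paragraph yields both assertions of the corollary.

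The main obstacle I anticipate is the computation of the morphism objects of the enriched functor category in the third paragraph: one must engage with the explicit construction of ${}_\mU\Fun(*,\mC)$ from \cite{heine2024bienriched} and check that the relevant weighted limit over the terminal domain $*$ genuinely degenerates, so that no cotensor hypotheses on $\mC$ are required and the formula $\L\Mor_\mC(F(*),G(*))$ holds for arbitrary $\mC$. A secondary point requiring care is verifying that the identification of \cref{refk} is genuinely $2$-categorical, so that it matches bienriched functor categories with $\mU$-enriched functor categories and thereby licenses the reduction in the first place.
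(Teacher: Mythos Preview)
Your proposal is correct and follows the approach the paper intends. The paper states this result as a corollary of \cref{tensorunit} without proof, and your argument spells out precisely the implicit reduction: pass from $(\mV,\mW)$-bienrichment to left enrichment in $\mU=\mV\ot\mW^\rev$ via \cref{refk}, apply \cref{tensorunit} to see that the tensor unit of $\mU$ is final, and then verify directly that $B(\tu)$ is the final $\mU$-enriched category and that evaluation from it is an equivalence.
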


\subsection{Enriched slices, wedges, and suspensions}

For the next notation we that the 2-category $\mV\mathrm{-}\Cat$ admits cotensors.

\begin{notation}
Let $\mV$ be a presentably monoidal category whose tensor unit is final, $\mC$ a $\mV$-enriched category and $X $ an object of $\mC.$
Let $$ \mC_{\X/}:= \{X\} \times_{\mC^{\{0\}}} \mC^{\bD^1} $$ be the pullback of the $\mV$-enriched functor $\mC^{\bD^1} \to \mC^{\{0\}}$ evaluating at the source along the $\mV$-enriched functor $* \to \mC$ classifying $X$. 
By definition there is a $\mV$-enriched functor $\mC_{\X/}\to \mC$.
    
\end{notation}

\begin{remark}\label{init}
Let $\mV$ be a presentably monoidal category whose tensor unit is final, $\mC$ a $\mV$-enriched category and $\X \to \Y, \X \to \Z$ morphisms in $\mC.$
The induced morphism $$\tu \to \Mor_{\mC_{\X/}}(\X,\X) \to \Mor_{\mC_{\X/}}(\X,\Z)$$ is an equivalence and the resulting commutative square
$$\begin{xy}
\xymatrix{
\Mor_{\mC_{\X/}}(\Y,\Z) \ar[d]^{} \ar[r]
& \Mor_{\mC}(\Y,\Z) \ar[d] \ar[d]^{}
\\ 
\tu \simeq \Mor_{\mC_{\X/}}(\X,\Z) \ar[r] & \Mor_{\mC}(\X,\Z)
}
\end{xy}$$
is a pullback square.
    
\end{remark}

\begin{lemma}

Let $\mV$ be a presentably monoidal category whose tensor unit is final, $\mC$ a $\mV$-enriched category and $X $ an object of $\mC.$
If $\mC$ admits small weakly contractible conical colimits, then $\mC_{\X/}$ admits small colimits.

\end{lemma}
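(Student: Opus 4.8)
The plan is to reduce the statement to two manageable pieces—the existence of an initial object and the existence of all small \emph{weakly contractible} conical colimits—and to produce both from the corresponding data on $\mC$ via the forgetful functor $U\colon\mC_{\X/}\to\mC$. The reduction rests on the observation that a category admits all small colimits once it admits an initial object together with all small weakly contractible colimits: binary coproducts are pushouts over the initial object, coequalizers are pushouts of fold maps against coproducts, and arbitrary small coproducts are small filtered colimits of finite ones, so every small colimit is assembled by iterating the initial object and colimits whose indexing shape is weakly contractible (the spans indexing pushouts and filtered categories both have weakly contractible nerves). Since conical colimits are, by definition, detected by the limit-preservation of the presheaves $\Mul_\mM(\vec V,-,\vec W;Y)$, and an iterated conical colimit is again conical, it suffices to exhibit in $\mC_{\X/}$ an initial object and all small weakly contractible conical colimits.

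\textbf{Initial object.} First I would check that $\mathrm{id}_X$, regarded as the object $X\to X$ of $\mC_{\X/}$, is initial. Indeed \cref{init} gives, for every object $X\to Z$, that the canonical map $\tu\to\Mor_{\mC_{\X/}}(X,Z)$ is an equivalence. As $\mV$ and $\mW$ have final tensor unit, \cref{tensorunit} shows that the unit of $\mV\ot\mW^\rev$ (in which $\mC_{\X/}$ is enriched) is final, so $\Mor_{\mC_{\X/}}(X,-)$ is constant at the final object; hence $\mathrm{id}_X$ is initial.

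\textbf{Weakly contractible colimits.} Next, let $K$ be small and weakly contractible and $F\colon K\to\mC_{\X/}$ a diagram with underlying diagram $UF\colon K\to\mC$, whose conical colimit $C:=\colim_K UF$ exists in $\mC$ by hypothesis. The undercategory structure presents $F$ as a natural transformation $\mathrm{const}_X\Rightarrow UF$; because $K$ is weakly contractible, the conical colimit of $\mathrm{const}_X$ is $X$ itself (the limit of a constant diagram over a weakly contractible index is its value), so passing to colimits yields a structure map $X\simeq\colim_K\mathrm{const}_X\to C$, that is, an object $\tilde C\in\mC_{\X/}$ with $U(\tilde C)=C$. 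I would then verify that $\tilde C$ is the conical colimit of $F$ using the multimorphism analogue of \cref{init}: each $\Mul_{\mC_{\X/}}(\vec V,(X\to Y),\vec W;(X\to Z))$ is the fiber of $\Mul_\mC(\vec V,Y,\vec W;Z)\to\Mul_\mC(\vec V,X,\vec W;Z)$ over the point determined by the structure map $X\to Z$. Applying $\lim_{K^\op}$ and commuting it past these fibers, the numerator $\lim_{K^\op}\Mul_\mC(\vec V,Y_\bullet,\vec W;Z)$ collapses to $\Mul_\mC(\vec V,C,\vec W;Z)$ because $C$ is conical, while the denominator $\lim_{K^\op}\Mul_\mC(\vec V,X,\vec W;Z)$ collapses to $\Mul_\mC(\vec V,X,\vec W;Z)$ because $K$ is weakly contractible; together these identify the conical-colimit condition for $F$ in $\mC_{\X/}$, and show that $U$ preserves the colimit.

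\textbf{Main obstacle.} The hard part will be this last verification: establishing the fiberwise description of $\Mul_{\mC_{\X/}}$ extending \cref{init}, and then pushing weak contractibility through the multimorphism presheaves to confirm that $\tilde C$ is genuinely \emph{conical} in the bienriched sense rather than merely a colimit of underlying objects (here \cref{laulo} relating conical and underlying colimits is the relevant bookkeeping tool). The crux—and the reason the hypothesis is stated with weakly contractible colimits—is that only over a weakly contractible index does $\colim_K\mathrm{const}_X$ recover $X$, which is exactly what supplies the structure map $X\to C$ and makes the denominator of the fiber sequence constant; the failure of this identification for empty or discrete $K$ is precisely why those colimits must instead be generated from the initial object in the reduction step.
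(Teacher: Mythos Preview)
Your proposal is correct and follows essentially the same route as the paper: establish a conical initial object via \cref{init} (using that the tensor unit of $\mV\ot\mW$ is final), lift small weakly contractible conical colimits from $\mC$ to $\mC_{\X/}$ by checking conicality through the fiber description of morphism objects, and combine the two to obtain all small colimits. The only cosmetic difference is that the paper packages the conicality check using the functor $\Mor_{\mC_{\X/}}(-,Z)\colon(\mC_{\X/})^\circ\to\mV\ot\mW$ rather than working with the individual multimorphism presheaves, and leaves the ``initial $+$ weakly contractible $\Rightarrow$ all'' reduction implicit.
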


\begin{proof}

If the $\mV$-enriched category $\mC$ admits small conical weakly contractible colimits, the underlying category of $\mC_{\X/}$ admits small weakly contractible colimits.
Moreover the description of morphism objects of \cref{init} implies that the functor $$\Mor_{\mC_{\X/}}(-,\Z): (\mC_{\X/})^\circ \to \mV \otimes \mW$$ preserves weakly contractible limits.
Hence the $\mV$-enriched category $\mC_{\X/}$ admits
small conical weakly contractible colimits.
By \cref{init} the $\mV$-enriched category $\mC_{\X/}$ admits a conical initial object. This implies the result.
\end{proof}

\begin{remark}
Let $\mV$ be a presentably monoidal category whose tensor unit is final, $\mC, \mD$ be $\mV$-enriched categories, $F: \mC \to \mD$ a $\mV$-enriched functor 
and $Y \to X $ a morphism in $\mC.$
There is an induced $\mV$-enriched functor 
$\mC_{\X/} \to \mD_{F(\Y)/}$ that fits into a commutative square 
$$\begin{xy}
\xymatrix{
\mC_{\X/} \ar[d]^{} \ar[r]
&  \mD_{F(\Y)/} \ar[d] \ar[d]^{}
\\ 
\mC
\ar[r]^F & \mD
}
\end{xy}$$	
of $\mV$-enriched categories.
    
\end{remark}

\begin{lemma}\label{bien}
Let $\mV$ be a presentably monoidal category whose tensor unit is final, $\mC$ a $\mV$-enriched category and $X \to Y$ a morphism in $\mC.$
If $\mC$ admits conical pushouts, the $\mV$-enriched functor $\mC_{\Y/} \to \mC_{\X/}$ admits a $\mV$-enriched left adjoint.
	
\end{lemma}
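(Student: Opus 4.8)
The plan is to exhibit the left adjoint objectwise by cobase change and to verify the adjunction through the enriched morphism objects computed in \cref{init}. Write $f\colon X\to Y$ for the given morphism and $f^*\colon \mC_{Y/}\to\mC_{X/}$ for the restriction functor of the preceding remark; on an object $(Y\to Z)$ it returns the composite $(X\to Y\to Z)$. First I would define the putative left adjoint $f_!$ on objects by sending $(X\to W)$ to $(Y\to Y\underset{X}{\coprod}W)$, where the pushout is formed as a conical pushout in $\mC$ — this exists by hypothesis — together with the coprojection $W\to Y\underset{X}{\coprod}W$ as the candidate unit $\eta_W$, regarded as a morphism in $\mC_{X/}$ via the two structure maps out of $X$.

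The heart of the argument is a computation of enriched morphism objects, all viewed as objects of $\mV\otimes\mW$ as in \cref{refk}. Fix $W\in\mC_{X/}$ and $Z\in\mC_{Y/}$. Since the pushout $Y\underset{X}{\coprod}W$ is conical, the presheaf $\Mor_\mC(-,Z)$ carries its defining pushout square to a pullback square in $\mV\otimes\mW$, giving
\[
\Mor_\mC(Y\underset{X}{\coprod}W,Z)\simeq \Mor_\mC(Y,Z)\times_{\Mor_\mC(X,Z)}\Mor_\mC(W,Z).
\]
Applying \cref{init} with base object $Y$ then presents $\Mor_{\mC_{Y/}}(f_!W,Z)$ as the further pullback of this along $\tu\simeq\Mor_{\mC_{Y/}}(Y,Z)\to\Mor_\mC(Y,Z)$ classifying the structure map $Y\to Z$. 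Pasting the two pullback squares collapses the $\Mor_\mC(Y,Z)$-factor and yields
\[
\Mor_{\mC_{Y/}}(f_!W,Z)\simeq \tu\times_{\Mor_\mC(X,Z)}\Mor_\mC(W,Z),
\]
where now $\tu\to\Mor_\mC(X,Z)$ classifies the composite $X\to Y\to Z$. By \cref{init} applied with base object $X$, the right-hand side is precisely $\Mor_{\mC_{X/}}(W,f^*Z)$, and one checks that the resulting equivalence is the map induced by precomposition with $\eta_W$.

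Finally I would upgrade these objectwise equivalences into a $(\mV,\mW)$-enriched adjunction. By the (enriched) universal-arrow characterization of adjunctions in the $2$-category $_\mV\Cat_\mW$ of \cref{ajlt}, it suffices to produce, for each $W$, a unit $\eta_W\colon W\to f^*f_!W$ in $\mC_{X/}$ such that for every $Z$ the induced map $\Mor_{\mC_{Y/}}(f_!W,Z)\to\Mor_{\mC_{X/}}(W,f^*Z)$ of morphism objects in $\mV\otimes\mW$ is an equivalence; the previous paragraph supplies exactly this. I expect the main obstacle to be the formal bookkeeping of this last step: one must verify that the equivalence above genuinely coincides with precomposition by $\eta_W$ rather than merely existing abstractly, and that the unit maps assemble into an enriched natural transformation, so that $f_!$ is itself a $(\mV,\mW)$-enriched functor and not just an object-level assignment. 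The pushout-to-pullback step also requires care: the conicality hypothesis must be used to guarantee that $\Mor_\mC(-,Z)$ preserves the pushout not merely on underlying mapping spaces but as a limit in $\mV\otimes\mW$, which is where the definition of conical colimit via the multi-morphism presheaves $\Mul_\mC(V_1,\dots,-,\dots;Z)$ enters.
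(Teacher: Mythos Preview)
Your proposal is correct and follows essentially the same approach as the paper: define the left adjoint via conical pushouts, use conicality to identify $\Mor_\mC(Y\coprod_X W,Z)$ as a pullback, and then apply \cref{init} twice to obtain the required equivalence $\Mor_{\mC_{Y/}}(f_!W,Z)\simeq\Mor_{\mC_{X/}}(W,f^*Z)$. The paper's proof is considerably terser and stops at this equivalence of morphism objects, leaving implicit the formal upgrade to an enriched adjunction that you carefully flag in your final paragraph.
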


\begin{proof}

Let $\X \to \W, \Y \to \T $ a morphism in $\mC.$
By assumption there is a conical pushout $ \Y \coprod_\X \W$.
The canonical morphism $\Y \to \Y \coprod_\X \W$ induces the morphism
$ \Mor_{\mC_{\Y/}}(\Y \coprod_\X \W,\T) \to  \Mor_{\mC_{\X/}}(\W,\T),$
which factors as equivalences $$ \Mor_{\mC_{\Y/}}(\Y \coprod_\X \W,\T) \simeq \tu \times_{\Mor_{\mC}(\Y,\T) } \Mor_{\mC}(\Y \coprod_\X \W,\T) \simeq $$$$ \tu \times_{\Mor_{\mC}(\Y,\T)} \Mor_{\mC}(\Y,\T) \times_{\Mor_{\mC}(\X,\T)} \times_{\Mor_{\mC}(\W,\T)} \simeq \Mor_{\mC_{\X/}}(\W,\T).$$
\end{proof}


The following is \cite[Corollary 2.2.2.]{GepnerHeine2026}:

\begin{corollary}\label{susp}

Let $\mV$ be a presentably monoidal category and $\A \in \mV$.
There is a $\mV$-enriched category $S(\A) $, which we call the suspension of $A$, satisfying the following properties:
\begin{enumerate}[\normalfont(1)]\setlength{\itemsep}{-2pt}
\item The space of objects of $S(\A) $ is the set $\{0,1\}.$

\item For every $0 \leq \ell \leq 1$ the unit $\tu \to \Mor_{S(\A)}(\ell,\ell)$ is an equivalence.

\item The morphism object $\Mor_{S(\A)}(1,0)$ is initial.

\item There is an equivalence $ \Mor_{S(\A)}(0,1) \simeq \A$. 

\item For every $\mV$-enriched category $\mC $ and objects $\X,\Y$ of $\mC$
the induced map is an equivalence $$ \Map_{\mV\mathrm{-}\Cat_{B(\tu) \coprod B(\tu)/}}(S(\A), (\mC; \X,\Y)) \to \Map_\mV(\A, \Mor_\mC(\X,\Y)).$$
\end{enumerate}
    
\end{corollary}

\begin{definition}
Let $\mV$ be a presentably monoidal category, $\mC, \mD$ be $\mV$-enriched categories and $X,Y \in \mC, Y,Z \in \mD$
objects. The bipointed wedge $(\mC; X,Y) \vee (\mD; Y,Z) $
is the pushout $ (\mC \coprod_{\{Y\}} \mD; X,Z).$
    
\end{definition}

\begin{corollary}

Let $\mV$ be a presentably monoidal category, $n \geq 1$ and $\A_1, ..., \A_\n \in \mV$.
\begin{enumerate}[\normalfont(1)]\setlength{\itemsep}{-2pt}
\item The space of objects of the $\mV$-enriched category $S(A_1) \vee ...\vee S(A_n)$ is the set $\{0,...,\n\}.$
\item For every $0 \leq \ell \leq \n$ the unit $\tu \to \Mor_{S(A_1) \vee ...\vee S(A_n)}(\ell,\ell)$ is an equivalence.
\item For every $0 \leq \bk < \ell \leq \n$ the morphism object $\Mor_{S(A_1) \vee ...\vee S(A_n)}(\ell,\bk)$ is initial.
\item For every $0 \leq \ell < \n$ there is an equivalence $ \Mor_{S(A_1) \vee ...\vee S(A_n)}(\ell,\ell+1) \simeq \A_{\ell+1}$. 
\item For every $0 \leq \bk < \m \leq \n$ the following induced morphism is an equivalence $$\bigotimes_{\bk \leq \ell < \m}  \A_{\ell+1} \simeq \bigotimes_{\bk \leq \ell < \m} \Mor_{S(A_1) \vee ...\vee S(A_n)}(\ell,\ell+1) \to \Mor_{S(A_1) \vee ...\vee S(A_n)}(\bk,\m).$$
\end{enumerate}
\end{corollary}

\subsection{Reduced enrichment}

\begin{notation}
For every category $\mC$ let $\mC_* \subset \Fun(\bD^1,\mC)$ be the full subcategory of morphisms $\A \to \B$ in $\mC$ such that $\A$ is a final object. 
\end{notation}

\begin{remark}
If $\mC$ has a final object, $\mC_*$ is the fiber over the final object of evaluation at the source $\Fun(\bD^1,\mC) \to \mC$.	
Moreover if $\mC$ has a final object and cofibers, the embedding $\mC_* \subset \Fun(\bD^1,\mC)$ admits a left adjoint that sends $\phi: \A \to \B$ to the cofiber $\cofib(\phi)$ of $\phi$ since the morphism $\B \to \cofib(\phi)$
induces for every morphism $\alpha: * \to \Z$ in $\mC$ an equivalence	
$ \mC_*(\cofib(\phi),\Z) \to \Fun(\bD^1,\mC)(\phi, \alpha).$

\end{remark}

\begin{definition}
Let $\mV$ be a monoidal category compatible with finite colimits. The pushout product 
monoidal structure on $ \Fun(\bD^1,\mV)$ is the Day-convolution,
where $\bD^1$ carries the cartesian structure.
\end{definition}
\begin{remark}\label{saso}

By definition of the Day-convolution the pushout product of morphisms $f_1: \A_1 \to \B_1, ..., f_\n:\A_\n \to \B_\n$ in $\mV$ for $\n \geq 0$ is the left Kan extension of the functor
$ (\bD^1)^{\times \n} \xrightarrow{f_1 \times ...\times f_\n} \mV^{\times\n} \xrightarrow{\ot} \mV$
along the functor $ (\bD^1)^{\times \n} \to \bD^1$ taking the minimum.
This left Kan extension is the induced morphism $$\colim_{(\bD^1)^{\times \n} \setminus \{(1,...,1)\}}(\ot \circ (f_1 \times ...\times f_\n)) \to \B_1 \ot ... \ot \B_\n.$$

So the pushout product of two morphisms $f: \A \to \B, \g: \A' \to \B'$ is the morphism $$\B \ot \A' \coprod_{\A \ot \A'} \A \ot \B' \to \B \ot \B'.$$

The tensor unit for the pushout product is the morphism $\emptyset \to \tu$.
So the pushout product monoidal structure is compatible with the same sort of colimits $\mV$ is compatible with.

\end{remark}

The pushout product gives rise to the smash product:

\begin{lemma}\label{smash}
Let $\mV$ be a monoidal category compatible with finite colimits that admits a final object. The localization $$ \Fun(\bD^1,\mV) \to \mV_*$$ 
is compatible with the pushout product monoidal structure.
Consequently, there is a unique monoidal structure on $\mV_*$ compatible with the same sort of colimits $\mV$ is compatible with, such that the functor $$\cofib: \Fun(\bD^1,\mV) \to \mV_*$$ forming the cofiber is monoidal, where the left hand side carries the pushout product monoidal structure. 
\end{lemma}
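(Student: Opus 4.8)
The plan is to recognize this as an instance of the general theory of monoidal (Bousfield) localizations. By the remark preceding the statement, the inclusion $\mV_* \subset \Fun(\bD^1,\mV)$ is a reflective localization with left adjoint $\cofib$, and by \cref{saso} the pushout product monoidal structure on $\Fun(\bD^1,\mV)$ is compatible with the class of colimits that $\mV$ is compatible with. It therefore suffices to verify the compatibility criterion of \cite[Proposition~2.2.1.9]{lurie.higheralgebra}: for every $\cofib$-equivalence $f$ (that is, every morphism of $\Fun(\bD^1,\mV)$ inverted by $\cofib$) and every object $Z\in\Fun(\bD^1,\mV)$, both pushout products $f\square Z$ and $Z\square f$ are again $\cofib$-equivalences. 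Granting this, loc.\ cit.\ produces a unique monoidal structure on $\mV_*$ for which $\cofib$ is monoidal, and the compatibility with colimits can then be read off at the end.

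The main step will be the claim that the functor $\cofib(-\square Z)\colon \Fun(\bD^1,\mV)\to\mV_*$ factors, naturally in the variable object, through $\cofib$. Once this is shown, any $\cofib$-equivalence $f$ is sent to an equivalence by $\cofib(-\square Z)$, i.e.\ $f\square Z$ is a $\cofib$-equivalence, which is exactly the required closure. To prove the factorization I would write $Z=(Z_0\xrightarrow{\zeta}Z_1)$ and $\phi=(X_0\to X_1)$, and observe that $\cofib(\phi\square\zeta)$ is the total cofiber of the square with vertices $X_i\otimes Z_j$. Computing this total cofiber along the $\phi$-direction first, it equals $\cofib\big(C_0\to C_1\big)$, where $C_j=\cofib(\phi\otimes \id_{Z_j})$ and the map is induced by $\zeta$. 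The point is that $-\otimes Z_j$ preserves pushouts, so $C_j$ depends only on $\cofib(\phi)$: writing $P=\cofib(\phi)\in\mV_*$ with basepoint $\ast\to P$, one has
\[
C_j\simeq \cofib\big(\ast\otimes Z_j \to P\otimes Z_j\big),
\]
since tensoring the basepoint pushout square $\ast\leftarrow X_0\to X_1$ with $Z_j$ and then collapsing the $\ast\otimes Z_j$ summand to $\ast$ recovers $X_1\otimes Z_j\cup_{X_0\otimes Z_j}\ast$. Thus $\cofib(\phi\square Z)\simeq\cofib\big(C_0(P)\to C_1(P)\big)$ is manifestly a functor of $P=\cofib(\phi)$ alone, giving the desired factorization. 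The symmetric factorization of $\cofib(Z\square-)$ through $\cofib$ follows by the identical computation, now taking the total cofiber along the other direction and using that $W\otimes -$ preserves pushouts.

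The hard part, and the reason the naive identity ``$\cofib$ turns $\square$ into smash'' cannot be invoked directly, is that $\otimes$ does not preserve the final object, so $\cofib(\phi\otimes\id_{Z_j})\neq \cofib(\phi)\otimes Z_j$ in general. The total-cofiber reorganization above is precisely what absorbs this discrepancy: the spurious summand $\ast\otimes Z_j$ is collapsed by the outer cofiber, leaving an expression that depends on $\phi$ only through $\cofib(\phi)$. Verifying that these equivalences are natural in $\phi$ (so that the factorization holds at the level of functors, not merely objectwise) is the one point requiring care, and I would handle it by assembling $C_0\to C_1$ from the pointwise basepoint squares functorially in $P\in\mV_*$.

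Finally, with closure established, \cite[Proposition~2.2.1.9]{lurie.higheralgebra} yields the unique monoidal structure on $\mV_*$, where the product is $\cofib(iX\square iY)$ for the inclusion $i\colon\mV_*\hookrightarrow\Fun(\bD^1,\mV)$, together with the monoidality of $\cofib$; its unit is $\cofib(\emptyset\to\tu)$. Compatibility of the smash product with the colimits $\mV$ is compatible with then follows from the same closure property: since $\cofib$ preserves colimits and $\square$ preserves them in each variable, and since tensoring a $\cofib$-equivalence with a fixed object is again a $\cofib$-equivalence, one computes $\colim_\alpha\bigl(\cofib(iX_\alpha\square iY)\bigr)\simeq \cofib\bigl((\colim_\alpha iX_\alpha)\square iY\bigr)$ and then replaces $\colim_\alpha iX_\alpha$ by $i\,\colim_\alpha X_\alpha$ up to a $\cofib$-equivalence, which the smash product does not see. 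This completes the argument.
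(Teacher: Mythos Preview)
Your proof is correct and follows essentially the same strategy as the paper's: both verify compatibility of the localization by showing that the cofiber of a pushout product depends only on the cofibers of the factors. The paper does this by choosing a local replacement $g \to h$ with $h\colon * \to Z$ and then chasing explicit pushout diagrams, whereas you package the same computation as a total-cofiber (Fubini) argument showing that $\cofib(-\,\square\, Z)$ factors through $\cofib$; your framing is a bit slicker, but the content is identical.
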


\begin{proof}
We need to see that the pushout product of two local equivalences 
is a local equivalence. For this it is enough to see that
for every objects $f: \A \to \B, \g: \X \to \Y$ in $\Fun(\bD^1,\mV)$ and any local equivalence $\sigma: g \to h$ in $\Fun(\bD^1,\mV)$ to a local object $h : * \to \Z$ in $\Fun(\bD^1,\mV)$
the induced map from the pushout product of $f, g $ to the one of $f, h$
and from the pushout product of $g, f $ to the one of $h, f$ induces an equivalence on cofibers.
We prove the case of the first map, the case of the second map is similar.
The first map identifies with the bottom right hand square in the following diagram:
$$\begin{xy}
\xymatrix{
\A \ot \X \ar[d] \ar[r] & \B \ot \X \ar[d] \ar[r]
& \B \ot * \ar[d]
\\ 	
\A \ot \Y \ar[r]& \A \ot \Y \coprod_{\A \ot \X} \B \ot \X\ar[d] \ar[r]
& \A \ot \Z \coprod_{\A \ot *} \B \ot * \ar[d]
\\ 
& \B \ot \Y \ar[r] & \B \ot \Z.
}
\end{xy}$$ 
We need to see that the induced map on cofibers of the vertical morphisms of the bottom right hand square is an equivalence. For this it is enough to see that the bottom right hand square
is a pushout square. 
By the pasting law this holds if the upper and outer right hand squares are pushout squares. The outer right hand square is a pushout square since the tensor product preserves pushouts component-wise and the canonical map from the cofiber of $\g: \X \to \Y$ to $\Z$ is an equivalence as $\sigma: g \to h$ induces an equivalence on cofibers by the assumption that $\sigma$ is a local equivalence.
The upper right hand square is a pushout square if the upper outer square is a pushout square since the upper left hand square is a pushout square.
The upper outer square is the outer square in the diagram
$$\begin{xy}
\xymatrix{
\A \ot \X \ar[d]\ar[r] & \A \ot * \ar[d] \ar[r]
& \B \ot * \ar[d]
\\ 	
\A \ot \Y \ar[r] & \A \ot \Z \ar[r]
&\A \ot \Z \coprod_{\A \ot *} \B \ot *. 
}
\end{xy}$$ 
The right hand square is a pushout square by definition. The left hand square is a pushout square since the tensor product preserves pushouts component-wise
and $\sigma$ is a local equivalence.
\end{proof}

\begin{definition}\label{smasho}
Let $\mV$ be a monoidal category compatible with finite colimits that has a final object. 
The smash product monoidal structure on $\mV_*$ is the monoidal structure of \cref{smash}.
\end{definition}

\begin{remark}\label{hiso}
\begin{enumerate}[\normalfont(1)]\setlength{\itemsep}{-2pt}
\item For every morphisms $f_1: * \to \Z_1,...,f_\n: * \to \Z_\n$ in $ \mV$ for $\n \geq 0$ the smash product $\Z_1 \wedge ... \wedge \Z_\n$ is the cofiber of the pushout product of $ f_1,..., f_\n$ and the tensor unit for the smash product is   
$\tu \coprod *,$ the cofiber of the tensor unit $\emptyset \to \tu$ for the pushout product.
\item If the tensor unit of $\mV$ is final, the pushout product of any morphisms $ f_1,..., f_\n$ is the morphism 
$$ \colim_{\T \subsetneq \{1,...,\n\}}(  \ot(\Z_\bj \mid \bj \in \T)  ) \to \Z_1 \ot ... \ot \Z_\n.$$
In this case the smash product $\Z \wedge \Z'$ of morphisms $* \to \Z, *\to \Z'$ is the cofiber of the morphism $$\Z \vee \Z'= \Z \coprod_* \Z' \to \Z \ot \Z'.$$
\end{enumerate}	
\end{remark}

\begin{notation}
Let $\mC$ be a category that admits finite coproducts.
The forgetful functor $\mC_* \to \mC$ admits a left adjoint $(-)_+: \mC \to \mC_*$
that sends $\X$ to $\X \coprod \bD^0.$	
\end{notation}

\begin{lemma}

Let $\mV$ be a monoidal category compatible with finite colimits that admits a final object.
The forgetful functor $\mV_* \to \mV$ is lax monoidal and the left adjoint $(-)_+:\mV \to \mV_*$
is monoidal.		

\end{lemma}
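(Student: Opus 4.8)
We prove both assertions simultaneously by exhibiting $(-)_+$ as a composite of monoidal functors and then deducing lax monoidality of the forgetful functor by doctrinal adjunction. The plan is to factor $(-)_+$ through the pushout product monoidal structure on $\Fun(\bD^1,\mV)$. Consider the functor $j\colon\mV\to\Fun(\bD^1,\mV)$ sending $X$ to the arrow $(\emptyset\to X)$ with initial source. Since the cofiber functor $\cofib\colon\Fun(\bD^1,\mV)\to\mV_*$ is monoidal for the pushout product by \cref{smash} and \cref{smasho}, and since $\cofib(\emptyset\to X)\simeq X\coprod_\emptyset\bD^0\simeq X\coprod\bD^0$, we obtain a canonical identification $(-)_+\simeq\cofib\circ j$. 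It therefore suffices to show that $j$ is strong monoidal: then $(-)_+$ is a composite of monoidal functors, hence monoidal, and the forgetful functor, being right adjoint to a monoidal functor, is canonically lax monoidal.

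First I would show that $j$ is strong monoidal. The key observation is that $j$ is the left Kan extension $\mathrm{Lan}_{\{1\}}$ along the functor $\{1\}\colon\ast\to\bD^1$ selecting the terminal object of $\bD^1$, which is exactly the unit of the cartesian monoidal structure on $\bD^1$ underlying the Day convolution. Indeed, the pointwise formula gives $\mathrm{Lan}_{\{1\}}(X)(0)\simeq\emptyset$ (the empty colimit, since there is no morphism $1\to 0$ in $\bD^1$) and $\mathrm{Lan}_{\{1\}}(X)(1)\simeq X$, so $\mathrm{Lan}_{\{1\}}(X)\simeq(\emptyset\to X)=j(X)$. Because $\{1\}\colon\ast\to\bD^1$ is a strong monoidal functor out of the trivial monoidal category, left Kan extension along it is strong monoidal for Day convolution, and under $\Fun(\ast,\mV)\simeq\mV$ this recovers $j$ together with the original monoidal structure on $\mV$. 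At the level of objects this is consistent with the formula of \cref{saso}: using that $\ot$ preserves finite colimits in each variable (so $\emptyset$ is absorbing), the pushout product of $j(X)=(\emptyset\to X)$ and $j(Y)=(\emptyset\to Y)$ is $\bigl(X\ot\emptyset\coprod_{\emptyset\ot\emptyset}\emptyset\ot Y\to X\ot Y\bigr)\simeq(\emptyset\to X\ot Y)=j(X\ot Y)$, while $j(\tu)=(\emptyset\to\tu)$ is precisely the pushout product unit recorded in \cref{saso}.

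With $j$ strong monoidal, the composite $(-)_+\simeq\cofib\circ j$ is strong monoidal, which is the second assertion. For the first, the forgetful functor $\mV_*\to\mV$ is right adjoint to $(-)_+$ by the preceding notation, and the right adjoint of a monoidal functor of monoidal categories is canonically lax monoidal. In the fibrational model this is the statement that a fiberwise right adjoint of a map of cocartesian fibrations over $\Delta^\op$ automatically preserves cocartesian lifts of inert morphisms, since inert cocartesian edges are given by projections and the fiberwise right adjoint applied componentwise commutes with them.

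The main obstacle I anticipate is not the object-level computation but securing coherence: upgrading the pointwise equivalences $j(X)\ot' j(Y)\simeq j(X\ot Y)$ (where $\ot'$ denotes the pushout product) to a genuinely strong monoidal structure on $j$ at the level of $\infty$-operads. The cleanest way to handle this is the Kan-extension description $j\simeq\mathrm{Lan}_{\{1\}}$ combined with the functoriality of Day convolution in strong monoidal functors of the indexing category, which packages all associativity and unit constraints at once; checking these coherences by hand would be substantially more laborious and is what this argument is designed to avoid.
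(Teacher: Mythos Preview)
Your proof is correct and follows essentially the same route as the paper: both factor $(-)_+$ as the composite $\mV\xrightarrow{j}\Fun(\bD^1,\mV)\xrightarrow{\cofib}\mV_*$ and use that each factor is monoidal for the Day/pushout-product structure. The only minor differences are that the paper cites a reference for the monoidality of $j$ (whereas you supply the $\mathrm{Lan}_{\{1\}}$ argument directly), and the paper obtains lax monoidality of the forgetful functor by factoring it as the composite of the lax monoidal functors $\mV_*\subset\Fun(\bD^1,\mV)\xrightarrow{\ev_1}\mV$, rather than invoking doctrinal adjunction as you do.
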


\begin{proof}
By properties of Day-convolution \cite[Remark 2.24 2]{heine2017topological} the functor $\Fun(\bD^1,\mV) \to \mV$ evaluating at the target is lax monoidal and the left adjoint, which sends $\X$ to $\emptyset \to \X$, is monoidal.
By \cref{smasho} the lax monoidal embedding $\mV_* \subset \Fun(\bD^1,\mV)$ admits a monoidal left adjoint.
Thus the forgetful functor $\mV_* \to \mV$, which factors as $\mV_* \subset \Fun(\bD^1,\mV) \to \mV $, is lax monoidal and the left adjoint $(-)_+:\mV \to \mV_*$, which factors as $\mV \to  \Fun(\bD^1,\mV) \to \mV_*$, is monoidal.	
\end{proof}

\begin{lemma}\label{leyy} Let $\mC$ be a monoidal category compatible with finite colimits that admits a final object, $\phi: \A \to \B$ a morphism in $\mC$ and $\Y \in \mC.$	
There are canonical equivalences
$$\cofib(\phi) \wedge \Y_+ \simeq (\B \ot \Y) / (\A \ot \Y) , \Y_+ \wedge \cofib(\phi) \simeq (\Y \ot \B) / (\Y \ot \A).$$

\end{lemma}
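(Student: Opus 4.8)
The plan is to realize both smash factors as cofibers and then invoke the monoidality of the cofiber functor established in \cref{smash}, reducing the computation of the smash product to a single pushout product of morphisms, which can be read off from the explicit formula in \cref{saso}.

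First I would observe that $\Y_+$ is itself a cofiber. By the proof of the preceding lemma, the functor $(-)_+:\mV\to\mV_*$ factors as $\mV\to\Fun(\bD^1,\mV)\xrightarrow{\cofib}\mV_*$, where the first functor sends $\Y$ to the morphism $\emptyset\to\Y$ and is monoidal; hence $\Y_+\simeq\cofib(\emptyset\to\Y)$, while $\cofib(\phi)$ is the cofiber of $\phi:\A\to\B$ by definition. Since $\cofib:\Fun(\bD^1,\mV)\to\mV_*$ is monoidal for the pushout product monoidal structure on the source (\cref{smash}), it carries the pushout product of two morphisms to the smash product of their cofibers. In particular
\[
\cofib(\phi)\wedge\Y_+\simeq\cofib\big(\phi\,\square\,(\emptyset\to\Y)\big),
\]
and symmetrically $\Y_+\wedge\cofib(\phi)\simeq\cofib\big((\emptyset\to\Y)\,\square\,\phi\big)$, where $\square$ denotes the pushout product.

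Next I would evaluate these pushout products using the formula of \cref{saso}: the pushout product of $f:\A\to\B$ and $\g:\A'\to\B'$ is the canonical map $\B\ot\A'\coprod_{\A\ot\A'}\A\ot\B'\to\B\ot\B'$. Taking $f=\phi$ and $\g=(\emptyset\to\Y)$, and using that the tensor product preserves finite colimits in each variable and hence sends the initial object $\emptyset$ to $\emptyset$, the domain collapses to $\A\ot\Y$ and the map becomes $\phi\ot\id_\Y:\A\ot\Y\to\B\ot\Y$; its cofiber is $(\B\ot\Y)/(\A\ot\Y)$, giving the first equivalence. Taking instead $f=(\emptyset\to\Y)$ and $\g=\phi$, the same calculation yields the map $\id_\Y\ot\phi:\Y\ot\A\to\Y\ot\B$, whose cofiber is $(\Y\ot\B)/(\Y\ot\A)$, giving the second equivalence.

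The only point requiring care is the monoidality bookkeeping: one must verify that the identification $\Y_+\simeq\cofib(\emptyset\to\Y)$ together with the monoidal structure constraint of $\cofib$ makes the displayed equivalences the canonical ones, and that the $\emptyset$-collapses are genuinely induced by colimit-preservation of $\ot$ rather than being merely abstract isomorphisms. I expect this to be routine given \cref{smash} and \cref{saso}; no genuine obstacle should arise, since once $\Y_+$ is recognized as a cofiber the statement is a direct transport of the pushout product formula through the monoidal functor $\cofib$.
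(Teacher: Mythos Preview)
Your proof is correct and follows the same strategy as the paper: both use the monoidality of $\cofib:\Fun(\bD^1,\mC)\to\mC_*$ from \cref{smash} to reduce the smash product to the cofiber of a pushout product, then simplify using \cref{saso}. The only difference is which preimage of $\Y_+$ under $\cofib$ you choose: you lift $\Y_+$ to $\emptyset\to\Y$, whereas the paper lifts it to $*\to\Y_+$ (the tautological lift, since $\Y_+$ is already local). Your choice makes the pushout product collapse immediately via $\A\ot\emptyset\simeq\emptyset$, while the paper must unpack $\A\ot\Y_+\simeq(\A\ot\Y)\coprod(\A\ot *)$ and cancel the $\B\ot *$ summand; the end result is identical but your route is marginally shorter.
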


\begin{proof}We prove the first case. The second case is dual.	
By \cref{smasho} the object $\cofib(\phi) \wedge \Y_+$ is the cofiber of the morphism
$$(\A\ot \Y)  \coprod (\B\ot*) \simeq (\A\ot \Y)  \coprod (\A \ot *) \coprod_{(\A \ot *)} (\B \ot *) \simeq \A\ot \Y_+ \coprod_{(\A \ot *)} (\B \ot *)  \to \B \ot \Y_+ \simeq (\B \ot \Y) \coprod (\B\ot*) $$ in $\mC,$
which is the cofiber of the morphism $\A \ot \Y \to \B \ot \Y.$
\end{proof}

\begin{lemma}\label{siro}
Let $\mC$ be a monoidal category compatible with finite colimits that admits a final object and $\alpha:* \to \A,\beta: *\to \B$ morphisms of $\mC$ such that $\A,\B$ and $*,\B$ admit a morphism object $\Mor_\mC(\A,\B), \Mor_\mC(*,\B)$ in $\mC.$
Then the pullback
$$\begin{xy}
\xymatrix{
\Mor_{\mC_*}(\A,\B) \ar[d] \ar[r]
& \Mor_\mC(\A,\B) \ar[d]^{\alpha^*}
\\ 
\ast \ar[r]^\beta & \Mor_\mC(*,\B).
}
\end{xy}$$
pointed by the constant functor $\kappa: \A \to * \xrightarrow{\beta}\B$ is a morphism object of $\alpha, \beta$ for the smash product of $\mC_*$.

\end{lemma}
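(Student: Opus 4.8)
The plan is to verify directly that $P := \ast \times_{\Mor_\mC(\ast,\B)} \Mor_\mC(\A,\B)$, pointed by $\kappa$, satisfies the universal property of a left morphism object for the smash product, i.e. that for every pointed object $V \in \mC_*$ there is an equivalence $\Map_{\mC_*}(V \wedge \A, \B) \simeq \Map_{\mC_*}(V, P)$ natural in $V$. Since $\mC_*$ is monoidal and $\wedge$ is associative, the defining multimorphism equivalence $\Mul_{\mC_*}(V_1,\ldots,V_n,\A;\B) \simeq \Map_{\mC_*}(V_1 \wedge \cdots \wedge V_n, P)$ reduces, by setting $V = V_1 \wedge \cdots \wedge V_n$, to this single binary statement; so it suffices to treat one $V$.

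First I would unwind the right-hand side. For pointed objects $\Map_{\mC_*}(X,Y)$ is the fibre of $\Map_\mC(X,Y) \to \Map_\mC(\ast,Y)$ over the basepoint. Using the defining adjunctions of $\Mor_\mC(\A,\B)$ and $\Mor_\mC(\ast,\B)$ together with $\Map_\mC(V,\ast) \simeq \ast$, the pullback description of $P$ identifies $\Map_{\mC_*}(V,P)$ with the space of maps $g \colon V \ot \A \to \B$ such that (a) $g \circ (V \ot \alpha)\colon V \ot \ast \to \B$ is the constant map through $\beta$ — this is the leg $\alpha^\ast$ of the pullback — and (b) $g \circ (\mathrm{bpt}_V \ot \A)\colon \ast \ot \A \to \B$ is the constant map through $\beta$ — this is the condition that $V \to P$ be pointed, i.e. send $\mathrm{bpt}_V$ to $\kappa$, where $\kappa$ is the constant map $\A \to \ast \xrightarrow{\beta} \B$.

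Next I would compute the left-hand side. By \cref{hiso}(1) the smash $V \wedge \A$ is the cofiber of the pushout product of the basepoints $\ast \to V$ and $\alpha\colon \ast \to \A$, namely of $W \to V \ot \A$ with $W = V \ot \ast \coprod_{\ast \ot \ast} \ast \ot \A$. As $\cofib$ is left adjoint to the inclusion $\mC_* \subset \Fun(\bD^1,\mC)$ and is monoidal by \cref{smash}, the space $\Map_{\mC_*}(V \wedge \A, \B)$ is the mapping space in $\Fun(\bD^1,\mC)$ from $(W \to V \ot \A)$ to $(\ast \xrightarrow{\beta} \B)$. Unwinding this commuting square — whose top edge $W \to \ast$ is forced by finality — identifies it with the space of $g\colon V \ot \A \to \B$ whose restriction to $W$ is constant through $\beta$; and since $W$ is the pushout of $V \ot \ast$ and $\ast \ot \A$ along $\ast \ot \ast$, this restriction condition splits precisely into the pair (a) and (b) found above.

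Comparing the two descriptions gives the natural equivalence, and by construction it respects basepoints, so $P$ is the desired left morphism object. The main bookkeeping obstacle is to confirm that the two legs of the defining pullback of $P$ correspond exactly to conditions (a) and (b), and that $\kappa$ is the correct basepoint making the equivalence pointed; once the restriction maps are pinned down, naturality in $V$ is automatic, as everything is built from the hom-tensor adjunction and finite limits. Alternatively, one obtains the same conclusion more conceptually: the monoidal localization $\cofib \colon \Fun(\bD^1,\mC) \rightleftarrows \mC_*$ computes the internal hom of $\mC_*$ as the (necessarily local) internal hom in $\Fun(\bD^1,\mC)$ for the pushout product, whose Day-convolution Leibniz formula applied to $\alpha$ and $\beta$ returns exactly the pointed object $(\ast \xrightarrow{\kappa} P)$.
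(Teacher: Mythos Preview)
The paper states \cref{siro} without proof, so there is nothing to compare against; your argument supplies the missing justification and is correct. The reduction to a single $V$ via associativity of $\wedge$ is legitimate because $\mC_*$, viewed as left enriched over itself, has $\Mul_{\mC_*}(V_1,\ldots,V_n,\A;\B)\simeq\Map_{\mC_*}(V_1\wedge\cdots\wedge V_n\wedge\A,\B)$, and your unwinding of both sides into the same pair of restriction conditions (a) and (b) is accurate. The alternative sketch via the Day-convolution Leibniz hom is also sound and perhaps the cleanest way to package the argument.
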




\begin{notation}
Let $\mV$ be a presentably monoidal category.
Let $$ \mV\mathrm{-}\Cat^\red \subset \mV\mathrm{-}\Cat$$ be the subcategory of $\mV$-enriched categories that admit a zero object and $\mV$-enriched functors preserving the zero object.

\end{notation}



\begin{proposition}\label{reduct}
Let $\mV$ be a presentably monoidal category that admit a zero object.
A $\mV$-enriched category admits a zero object if it admits an initial or final object. 

\end{proposition}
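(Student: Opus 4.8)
The plan is to reduce the three cases to a single assertion about left enrichment and then prove the enriched analogue of the classical fact that over a pointed base an initial object is automatically terminal. The right $\mW$-enriched case follows from the left-enriched case applied to $\mW^\rev$, which again has a zero object (same underlying category), via the equivalence $(-)^\rev\colon\Cat_\mW\simeq{_{\mW^\rev}\Cat}$. For the bienriched case I would invoke \cref{refk}: a $(\mV,\mW)$-bienriched category is canonically left enriched in $\mV\ot\mW^\rev$, and conical (co)limits, hence conical initial and final objects, are preserved under this identification. Thus it suffices to treat a left $\mathcal{U}$-enriched category $\mC$, where $\mathcal{U}$ is a presentable monoidal category admitting a zero object $0$; for the bienriched case one takes $\mathcal{U}=\mV\ot\mW^\rev$, so one first needs that this tensor product has a zero object.

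\emph{Zero object of the tensor product.} By \cref{refk} the category $\mV\ot\mW^\rev$ is identified with the full subcategory of right adjoint (equivalently, accessible limit-preserving) functors $\mV^\op\to\mW^\rev$. The constant functor $c$ at the zero object of $\mW^\rev$ is accessible and preserves limits, hence lies in this subcategory. It is terminal there because the terminal object of $\Fun(\mV^\op,\mW^\rev)$ is computed pointwise and already lands in the subcategory; it is initial there because a natural transformation out of a functor constant at an initial object is pointwise the essentially unique map out of an initial object, so the mapping space out of $c$ into any functor is contractible. Therefore $c$ is a zero object of $\mV\ot\mW^\rev$.

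\emph{Initial implies zero.} Unwinding the definition of conical colimit for the empty diagram, a conical initial object $i$ of $\mC$ satisfies $\L\Mor_\mC(i,X)\simeq 0$ for every $X$, since $\Map_\mathcal{U}(V,\L\Mor_\mC(i,X))\simeq\ast$ for all $V\in\mathcal{U}$ forces $\L\Mor_\mC(i,X)$ to be terminal, and terminal $=$ zero in $\mathcal{U}$. In particular $\L\Mor_\mC(i,i)\simeq 0$. Because $\mathcal{U}$ is closed, $\ot$ preserves colimits in each variable, so $A\ot 0\simeq 0$ for all $A$; hence $\L\Mor_\mC(X,i)\ot\L\Mor_\mC(i,i)\simeq\L\Mor_\mC(X,i)\ot 0\simeq 0$. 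The coherent unit law for the enriched category exhibits the identity of $\L\Mor_\mC(X,i)$ as the composite $\L\Mor_\mC(X,i)\simeq\L\Mor_\mC(X,i)\ot\tu\to\L\Mor_\mC(X,i)\ot\L\Mor_\mC(i,i)\xrightarrow{\ \circ\ }\L\Mor_\mC(X,i)$, where the first map uses the unit $\tu\to\L\Mor_\mC(i,i)$. This factors the identity through $0$, so $\L\Mor_\mC(X,i)$ is a retract of the terminal object $0$ and is therefore itself terminal. Thus $\L\Mor_\mC(X,i)\simeq 0$ for all $X$, i.e. $i$ is also conical terminal, so $i$ is a zero object. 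The case of a conical final object is dual: apply the result to the opposite right $\mathcal{U}$-enriched category $\mC^\circ$, viewed as left $\mathcal{U}^\rev$-enriched, in which the final object becomes initial.

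\emph{Main obstacle.} The substantive points are the existence of a zero object in $\mV\ot\mW^\rev$ (handled above via \cref{refk}) and the verification that the coherent unit and composition data of a homotopy-coherent enriched category genuinely present the identity of $\L\Mor_\mC(X,i)$ as a retract of $\L\Mor_\mC(X,i)\ot\L\Mor_\mC(i,i)$. Once this factorization is secured, the conclusion is immediate, since a retract of a terminal object is terminal (as one checks on mapping spaces: a retract of a contractible space is contractible).
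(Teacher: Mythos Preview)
Your proof is correct and takes a genuinely different route from the paper's. The paper reduces (via external citations) to the left-enriched case with a \emph{final} object, then embeds $\mC$ via the enriched Yoneda embedding into a presentable left $\mV$-tensored category (an embedding that preserves the final object), and there uses tensors directly: the map $\tu\to 0$ in $\mV$ induces $*\simeq\tu\ot*\to 0\ot*\simeq\emptyset$, furnishing an inverse to the unique morphism $\emptyset\to*$. You instead work intrinsically in $\mC$ using only composition and units, observing that $\L\Mor_\mC(i,i)\simeq 0$ is absorbing under $\ot$ and hence the unit law factors the identity of $\L\Mor_\mC(X,i)$ through a terminal object. Your approach avoids both the Yoneda embedding and the passage to a tensored setting, at the cost of making the coherent unit law do the work; the paper's argument is a one-liner once the tensored machinery is available. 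Your reduction for the bienriched case via \cref{refk} and the explicit zero object of $\mV\ot\mW^\rev$ is also more hands-on than the paper's bare citation, but correct. One minor slip: in the paper's convention for left enrichment, composition reads $\L\Mor_\mC(i,i)\ot\L\Mor_\mC(X,i)\to\L\Mor_\mC(X,i)$, so the unit is inserted on the left; since $0$ is absorbing on both sides this does not affect your argument.
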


\begin{proof}
By duality it suffices to see the case of final object.
Embedding into a presentably left $\mV$-tensored category via the $\mV$-enriched Yoneda embedding, which preserves the final object, it suffices to see that any presentably left $\mV$-tensored category
admits a zero object if it admits a final object. The unique morphism
$ \tu \to 0$ in $\mV$ induces a morphism $* \simeq \tu \ot * \to 0 \ot * \simeq \emptyset$
that is necessarily an inverse of the unique morphism $\emptyset \to *.$
\end{proof}







\begin{proposition}\label{red}Let $\mV$ be a presentably monoidal category.
The functor $\nu: \mV_*\mathrm{-}\Cat \to \mV\mathrm{-}\Cat $ restricting along the monoidal functor $(-)\coprod * : \mV \to \mV_*$
restricts to an equivalence $ \mV_*\mathrm{-}\Cat^\red \to \mV\mathrm{-}\Cat^\red.$

\end{proposition}

\begin{proof}
Since the functor $\nu: {\mV_{\tu/}\mathrm{-}\Cat} \to \mV\mathrm{-}\Cat $ is conservative, it suffices to show that $\nu$ admits a fully faithful left adjoint $\phi.$
For every $\mV$-enriched category $\mC$ that admits a zero object let $\mP_\mV(\mC)' \subset \mP_\mV(\mC)$
be the full subcategory of $\mV$-enriched presheaves preserving the final object.
An object of $ \mP_\mV(\mC)$ belongs to $ \mP_\mV(\mC)' $ if and only if it is local with respect to the morphism $\emptyset \to *$.  
So $\mP_\mV(\mC)' \subset \mP_\mV(\mC)$ is an accessible $\mV$-enriched localization and $\mP_\mV(\mC)'$ is a presentably left $\mV$-tensored category. 

By the enriched Yoneda-lemma \cite[Corollary 5.23.]{heine2024bienriched} the $\mV$-enriched category $\mP_\mV(\mC)'$ admits a zero object.
The category $\mP_\mV(\mC)' \ot \mS_* \simeq \mP_\mV(\mC)'_* \simeq \mP_\mV(\mC)'$ is presentably left tensored
over $\mV \ot \mS_* \simeq \mV_*$.
The $\mV$-enriched Yoneda embedding $\mC \subset \mP_\mV(\mC)$ lands in $\mP_\mV(\mC)'$ 
and the resulting $\mV$-enriched embedding $\mC \subset \mP_\mV(\mC)'$ preserves the zero object.
Moreover the $\mV$-enriched localization $\mP_\mV(\mC) \rightleftarrows \mP_\mV(\mC)'$
induces a $\mV_*$-enriched localization $$\mP_\mV(\mC)_* \simeq \mP_\mV(\mC) \ot \mS_* \rightleftarrows \mP_\mV(\mC)' \ot \mS_* \simeq \mP_\mV(\mC)'$$ 
whose local objects are local with respect to the morphism $\emptyset \to *$. 

Let $\phi(\mC)$ be the essential image of the induced embedding $\mC \subset \mP_\mV(\mC)'.$
Then $\phi(\mC)$ is a $\mV_*$-enriched category whose underlying $\mV$-enriched category is $\mC,$ and that therefore admits a zero object.
We complete the proof by showing that for every $\mV_*$-enriched category $\mD$ that admits a zero object the induced functor
$$ {_{\mV_*}\Fun_*}(\phi(\mC),\mD) \to {_\mV\Fun_*(\mC,\mD)}$$ is an equivalence.
The latter functor is the pullback of the functor
$$\Psi: {_{\mV_*}\Fun_*}(\phi(\mC),\mP_{\mV_*}(\mD)) \to {_\mV\Fun_*(\mC,\mP_{\mV_*}(\mD))}.$$
The $\mV_*$-enriched localization functor $\mP_\mV(\mC)_* \to \mP_\mV(\mC)'$ 
induces an embedding $$ {_{\mV_*}\Fun}^\L(\mP_\mV(\mC)',\mP_{\mV_*}(\mD)) \hookrightarrow {_{\mV_*}\Fun}^\L(\mP_\mV(\mC)_*,\mP_{\mV_*}(\mD)) \simeq {_\mV\Fun(\mC,\mP_{\mV_*}(\mD))}$$
whose essential image is $ {_\mV\Fun_*(\mC,\mP_{\mV_*}(\mD))}. $ 
The latter functor factors as 
$$ {_{\mV_*}\Fun}^\L(\mP_\mV(\mC)',\mP_{\mV_*}(\mD)) \xrightarrow{\alpha} {_{\mV_*}\Fun_*}(\phi(\mC),\mP_{\mV_*}(\mD))\xrightarrow{\Psi} {_\mV\Fun_*(\mC,\mP_{\mV_*}(\mD))}.$$
So it remains to see that $\alpha$ is an equivalence.
For that it suffices to show that the left adjoint $\mV_*$-enriched functor $\theta: \mP_{\mV_*}(\phi(\mC))' \to \mP_{\mV}(\mC)'$ induced by the $\mV_*$-enriched embedding $\phi(\mC) \subset \mP_{\mV}(\mC)'$ is an equivalence.
This holds because $\mP_{\mV}(\mC)'$ is generated by the representables under small colimits and tensors, which are preserved by $\theta,$ and for every
$\X \in \mC$ the $\mV_*$-enriched functor $\Mor_{\mP_{\mV_*}(\phi(\mC))'}(\X,-): \mP_{\mV_*}(\phi(\mC))' \to \mV_*$ and the $\mV$-enriched functor $\Mor_{\mP_{\mV}(\mC)'}(\X,-): \mP_{\mV}(\mC)' \to \mV$ preserve small colimits and tensors. The latter holds since by \cite[Theorem 3.50.]{heine2024bienriched} the $\mV_*$-enriched functor $\Mor_{\mP_{\mV_*}(\phi(\mC))}(\X,-): \mP_{\mV_*}(\phi(\mC)) \to \mV_*$ and the $\mV$-enriched functor $\Mor_{\mP_{\mV}(\mC)}(\X,-): \mP_{\mV}(\mC) \to \mV$ preserve small colimits and tensors and so preserve the local equivalence $\emptyset \to *.$
\end{proof}


\subsection{Density}

\begin{notation}
Let $\mV^\ot \to \Delta^\op$ be a monoidal category.
We write $\mV_\ot \to \Delta$ for the cartesian fibration classifying the functor $\Delta^\op \to \Cat$ classified by the cocartesian fibration $\mV^\ot \to \Delta^\op$.
    
\end{notation}

\begin{notation}

Let $\mV$ be a small monoidal category.
There is a left adjoint functor 
$$ \mP(\mV_\ot) \to {\mP(\mV)}\mathrm{-}\Cat$$
that sends a representable presheaf $(X_1,...,X_n) \in \mV^{\times n} \simeq (\mV_\ot)_{[n]}$ for $n \geq 1$ to
$$S(X_1,\ldots,X_n):=S(X_1) \vee ... \vee S(X_n)$$
and sends the unique object in the fiber over $[0]$ to $B\tu_\mV.$
\end{notation}

\begin{definition}

Let $\mV$ be a small monoidal category.
A presheaf on $\mV_\ot $ satisfies the Segal condition 
if for every $n \geq 2$ and $X_1,...,X_n \in \mV$
the following canonical map is an equivalence:
$$ F(S(X_1) \vee\cdots\vee S(X_n)) \to F(S(X_1)) \times_{F(B\tu_\mV)}\times\cdots \times_{F(B\tu_\mV)} F(S(X_n)).$$
    
\end{definition}

\begin{notation}

Let $\mV$ be a small monoidal category.
Let 
$ \mP_\Seg(\mV_\ot) \subset \mP(\mV_\ot)$
be the full subcategory of presheaves satisfying the Segal condition.
\end{notation}

\begin{lemma}\label{Loc}

Let $\mV$ be a small monoidal category.
Every representable presheaf on $\mV_\ot$ satisfies the Segal condition.
    
\end{lemma}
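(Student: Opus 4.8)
The plan is to exploit the cartesian fibration $\mV_\ot \to \Delta$ together with the standard decomposition of mapping spaces in a cartesian fibration, which reduces the Segal condition for a representable presheaf to a combinatorial identity among morphisms of $\Delta$. Fix objects $A = (X_1,\dots,X_n)$ over $[n]$ and $B = (Y_1,\dots,Y_m)$ over $[m]$ and consider the representable $h_B = \Map_{\mV_\ot}(-,B)$. Since $\mV_\ot \to \Delta$ is the cartesian fibration classifying the monoidal functor $\Delta^\op \to \Cat$, $[k] \mapsto \mV^{\times k}$, whose pullback functors are given by the tensor product, I would first record the decomposition
\[
\Map_{\mV_\ot}(A,B) \simeq \coprod_{g\colon [n]\to[m]} \prod_{i=1}^{n} \Map_\mV\!\Big(X_i, \textstyle\bigotimes_{g(i-1)<j\leq g(i)} Y_j\Big),
\]
the coproduct running over all morphisms $g$ in $\Delta$ and the target being the tensor of the $Y_j$ with $g(i-1) < j \leq g(i)$ (the empty tensor being $\tu$).

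Next I would compute the right-hand side of the Segal condition using the same formula on each factor: $\Map_{\mV_\ot}((X_i),B)$ is indexed by maps $[1]\to[m]$, i.e.\ pairs $0\leq a_i\leq b_i\leq m$, with value $\Map_\mV(X_i, Y_{a_i+1}\ot\cdots\ot Y_{b_i})$, while $\Map_{\mV_\ot}(*,B)$ is the discrete set $\{0,\dots,m\}$ of vertices of $[m]$. The two structure maps $\Map_{\mV_\ot}((X_i),B) \to \Map_{\mV_\ot}(*,B)$ record the source vertex $a_i$ and the target vertex $b_i$. The fiber product glues the target vertex of the $i$-th factor to the source vertex of the $(i+1)$-th, forcing $b_i = a_{i+1}$; a compatible family is therefore a chain $a_1\leq b_1=a_2\leq\cdots\leq b_n$, which is exactly the datum of a morphism $g\colon [n]\to[m]$ with $g(i)=b_i$. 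This identifies the Segal limit with the coproduct above, and checking that the comparison map is precisely this identification completes the argument.

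Conceptually, this is the statement that the object $(X_1,\dots,X_n)$ is the iterated spine pushout of the edges $(X_i)$ over the object $*$ lying above $[0]$. Under the fully faithful functor $S\colon \mV_\ot \hookrightarrow {_{\mP(\mV)}\Cat}$ it is sent to $S(X_1)\vee\cdots\vee S(X_n)$, which by definition is the pushout gluing consecutive suspensions at $\mathrm{B}\tu$; hence $\Map_{\mV_\ot}(-,Z) \simeq \Map(S(-),S(Z))$, computed in ${_{\mP(\mV)}\Cat}$, carries it to the corresponding iterated pullback, yielding the Segal equivalence directly. Either route proves the lemma.

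The main obstacle is bookkeeping rather than conceptual: one must pin down the tensor-product formula for the cartesian pullback $g^*(Y_1,\dots,Y_m)$ and keep the variances straight, in particular verifying that the two wedge inclusions $\mathrm{B}\tu \to S(X_i)$ at source and target match exactly the condition $b_i = a_{i+1}$ in the fiber product. Once the fibration mapping-space decomposition and this pullback formula are in place, the remainder is routine.
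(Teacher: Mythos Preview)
Your first approach is essentially the paper's proof. The paper phrases it as: the Segal comparison map lives over the identification $\Map_{\Delta}([n],[m]) \simeq \Map_{\Delta}([1],[m]) \times_{\Map_{\Delta}([0],[m])} \cdots \times_{\Map_{\Delta}([0],[m])} \Map_{\Delta}([1],[m])$, and on the fiber over any $\phi\colon[n]\to[m]$ it is the canonical equivalence coming from $(\mV_\ot)_{[n]}\simeq\mV^{\times n}$. Your explicit coproduct decomposition via the cartesian-fibration mapping-space formula is exactly this argument unpacked, with the chain condition $b_i=a_{i+1}$ playing the role of the Segal decomposition of $\Map_\Delta([n],[m])$.

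Your second, ``conceptual'' route via the fully faithful functor $S\colon \mV_\ot \hookrightarrow {_{\mP(\mV)}\Cat}$ is circular as written: in the paper, the embedding $S$ is defined only after this lemma, precisely because one needs the representables to land in $\mP_\Seg(\mV_\ot)$ before composing with the equivalence $\mP_\Seg(\mV_\ot)\simeq {_{\mP(\mV)}\Enr}$. One could try to establish the full faithfulness of $(X_1,\dots,X_n)\mapsto S(X_1)\vee\cdots\vee S(X_n)$ directly from the universal properties of suspension and wedge, but carrying that out amounts to the same mapping-space computation as your first approach. So the alternative does not actually shortcut anything.
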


\begin{proof}

Let $m$ and $n$ be natural numbers and choose $(Y_1,..., Y_m) \in (\mV_\ot)_{[m]}$ and
$(X_1,...,X_n) \in (\mV_\ot)_{[n]}$.
For every $0 \leq \bi \leq n$ let $\tau_\bi: \{\bi-1 < \bi \} \subset [n]$ be the canonical embedding.
We need to see that the map $$\Map_{\mV_\ot}((X_1,...,X_n),(Y_1,...,Y_m)) \to $$$$ \Map_{\mV_\ot}(X_1,(Y_1,...,Y_m)) \times_{\Map_{\Delta}([0],[m])}\times\cdots \times_{\Map_{\Delta}([0],[m])} \Map_{\mV_\ot}(X_n,(Y_1,...,Y_m)) $$
is an equivalence.
The latter is a map over $$ \Map_{\Delta}([n],[m]) \simeq \Map_{\Delta}([1], [m]) \times_{\Map_{\Delta}([0],[m])}\times\cdots \times_{\Map_{\Delta}([0],[m])} \Map_{\Delta}([1], [m])$$ and induces on the fiber over any $\phi$
the canonical equivalence. 
$$\Map_{(\mV^\ot)_{[n]}^\vee}((X_1,...,X_n),\phi^*(Y_1,...,Y_m)) \to $$$$
\Map_{(\mV^\ot))_{[1]}^\vee}(X_1,\tau_1^*(\phi^*(Y_1,...,Y_m))) \times\cdots\times \Map_{(\mV^\ot)_{[1]}^\vee}(X_n,\tau_n^*(\phi^*(Y_1,...,Y_m)))$$
induced by the equivalence $ (\tau^*_1, ..., \tau^*_n):(\mV_{[n]}^\ot)^\vee \to \mV^{\times n}.$
\end{proof}

\begin{notation}\label{Thetasgen}
By \cref{Loc} the Yoneda embedding of $\mV_\ot$
lands in $\mP_\Seg(\mV_\ot).$
Let $S$ be the embedding $\mV_\ot \subset \mP_\Seg(\mV_\ot) \to {\mP(\mV)}\mathrm{-}\Cat$.
Let $\Theta(\mV)=\mV\wreath\Delta^{\op}$ be the essential image of $S$, the full subcategory 
spanned by the wedges of suspensions of objects of $\mV$ and $B\tu_\mV$.

\end{notation}

\begin{definition}Let $\mV$ be a small monoidal category, $F$ a presheaf on $\Theta(\mV)$ and $A,B \in F(B\tu_\mV). $ 
The presheaf of morphisms $A$ to $B$ in $F$ is the following presheaf on $\mV$, where $S: \mV \to \mV\mathrm{-}\Cat_{B\tu_\mV \coprod B\tu_\mV/}$ is the suspension:
$$\Mor_F(A,B):= (F \circ S) \times_{F(B\tu_\mV) \times F(B\tu_\mV)} \{(A,B)\}.$$
    
\end{definition}

\begin{theorem}\label{denseinherited} Let $\mW$ be a presentably monoidal category
and $\mV$ a small dense full monoidal subcategory of $\mW$.
The full subcategory $\Theta(\mV)$ of $\mV\mathrm{-}\Cat$ spanned by the wedges of suspensions of objects of $\mV$ and $B\tu_\mV$ is dense in $\mW\mathrm{-}\Cat.$
A presheaf on $\Theta(\mV) $ belongs to the essential image of the $\Theta(\mV)$-nerve
if and only if it satisfies the Segal condition and 
all morphism presheaves belong to the essential image of the $\mW$-nerve.

\end{theorem}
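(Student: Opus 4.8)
The plan is to reduce the statement to the universal case $\mW=\mP(\mV)$ and then transport along the change of enrichment induced by the density of $\mV$ in $\mW$. Since $\mV\subset\mW$ is a small dense monoidal subcategory and $\mW$ is presentable, Day convolution makes the Yoneda embedding $\mV\to\mP(\mV)$ monoidal and exhibits the colimit-preserving left Kan extension $L:=|-|_\mV:\mP(\mV)\to\mW$ of the inclusion $\mV\hookrightarrow\mW$ as a strong monoidal left adjoint whose right adjoint $j:=\N_\mV:\mW\to\mP(\mV)$ is lax monoidal and fully faithful (density), with $L\circ j\simeq\id_\mW$. First I would record these facts and fix this monoidal adjunction.

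Next I would pass to enriched categories using \cref{indres}. The lax monoidal $j$ yields an adjunction $j_!:{_\mW\Enr}\rightleftarrows{_{\mP(\mV)}\Enr}:j^*$, and by \cref{indres} the functor $j_!$ has morphism objects $j(\L\Mor_\mC(X,Y))$ while preserving the space of objects. Since $L\circ j\simeq\id$ and $j_!$ is identified with $L^*$, the counit on morphism objects is $L\,j(\L\Mor)\simeq\L\Mor$, so $j_!$ is fully faithful; its essential image consists of exactly those $\mP(\mV)$-enriched categories all of whose morphism objects lie in $\N_\mV(\mW)\subset\mP(\mV)$ (conversely $j_!j^*\mD\simeq\mD$ for such $\mD$, as $j$ is fully faithful). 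Moreover $j_!$ carries the wedge of suspensions $S(A_1)\vee\cdots\vee S(A_n)$ formed over $\mW$ (with $A_i\in\mV$) to the analogous object formed over $\mP(\mV)$, so by full-faithfulness the $\Theta(\mV)$-nerve of $\mC\in{_\mW\Enr}$ agrees with the $\Theta(\mV)$-nerve of $j_!\mC\in{_{\mP(\mV)}\Enr}$.

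It therefore suffices to treat the universal case: $\Theta(\mV)$ is dense in ${_{\mP(\mV)}\Enr}$ with nerve image the Segal presheaves, i.e. $\N_{\Theta(\mV)}$ induces an equivalence ${_{\mP(\mV)}\Enr}\simeq\mP_\Seg(\Theta(\mV))$. I would prove this by showing that the colimit-preserving functor $\Phi:\mP(\mV_\ot)\to{_{\mP(\mV)}\Enr}$ of \cref{Thetasgen} (sending a representable $(X_1,\dots,X_n)$ to $S(X_1)\vee\cdots\vee S(X_n)$, under $\mV_\ot\simeq\Theta(\mV)$) exhibits ${_{\mP(\mV)}\Enr}$ as the localization of $\mP(\mV_\ot)$ at the Segal maps, with right adjoint $\N_{\Theta(\mV)}$. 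That $\Phi$ inverts the Segal maps is immediate from the equivalence $S(A_1,\dots,A_n)\simeq S(A_1)\vee\cdots\vee S(A_n)$ of the corollary to \cref{susp}. That the nerve lands in $\mP_\Seg(\Theta(\mV))$ and is fully faithful there follows from the universal property \cref{susp}(6), which computes $\Map(S(A_1)\vee\cdots\vee S(A_n),\mC)$ as the fiber product over $\mC(\mathrm{B}\tu)$ of the spaces $\Map_{\mP(\mV)}(A_i,\L\Mor_\mC(X_i,X_{i+1}))$, so that the Segal presheaf $\N_{\Theta(\mV)}(\mC)$ is equivalent data to the object space $\mC(\mathrm{B}\tu)$ together with the morphism objects $\L\Mor_\mC(-,-)\in\mP(\mV)$, which range over all of $\mP(\mV)$ with no further constraint. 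The only substantial point is the counit equivalence $\Phi\,\N_{\Theta(\mV)}(\mC)\xrightarrow{\ \sim\ }\mC$, i.e. that every $\mP(\mV)$-enriched category is canonically the colimit of the wedges of suspensions mapping into it; this is the density theorem of \cite{MR3345192}, which I would invoke (or reprove via the bar resolution presenting $\mC$ as a realization of free enriched categories on $\mV$-graphs, each a coproduct of wedges of suspensions).

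Combining the two steps, the $\Theta(\mV)$-nerve of ${_\mW\Enr}$ factors as the fully faithful composite
\[
{_\mW\Enr}\xrightarrow{\ j_!\ }{_{\mP(\mV)}\Enr}\xrightarrow{\ \sim\ }\mP_\Seg(\Theta(\mV))\hookrightarrow\mP(\Theta(\mV)),
\]
which proves density, and its essential image is cut out by the Segal condition (from the middle equivalence) together with the image condition on $j_!$, namely that each morphism presheaf $\L\Mor_F(A,B)$ lie in $\N_\mV(\mW)$, i.e. in the essential image of the $\mW$-nerve. The \emph{main obstacle} is the universal-case counit equivalence of the previous paragraph: checking that the Segal nerve is essentially surjective onto enriched categories is exactly the delicate resolution argument, and the novelty beyond \cite{MR3345192} lies in organizing the transport along $j_!$ so that the ``representable morphism objects'' condition emerges precisely as the $\mW$-nerve condition.
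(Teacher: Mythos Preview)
Your proposal is correct and follows essentially the same architecture as the paper's proof: reduce to the universal case $\mW=\mP(\mV)$, then transport along the fully faithful change-of-enrichment functor induced by the lax monoidal right adjoint $j=\N_\mV:\mW\hookrightarrow\mP(\mV)$, identifying the essential image by the condition that morphism objects lie in $\N_\mV(\mW)$. The only difference is that for the universal case you invoke \cite{MR3345192} (or sketch a bar-resolution argument), whereas the paper appeals to its own \cref{premod}, which is precisely the needed strengthening of \cite[Theorem~4.5.3]{MR3345192} to the equivalence ${_{\mP(\mV)}\Enr}\simeq\mP_\Seg(\mV_\ot)$ together with the identification of morphism presheaves; you correctly flag this counit equivalence as the main obstacle.
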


\begin{proof}

We first assume that $\mW=\mP(\mV).$
By \cref{Loc} the Yoneda embedding $\mV_\ot \to \mP(\mV_\ot)$ lands in the full subcategory $\mP_\Seg(\mV_\ot).$
The embedding ${\mP(\mV)}\mathrm{-}\Cat \simeq \mP_\Seg(\mV_\ot) \subset \mP(\mV_\ot)$
factors as $$ {\mP(\mV)}\mathrm{-}\Cat \subset \mP({\mP(\mV)}\mathrm{-}\Cat) \xrightarrow{S^*} \mP(\mV_\ot) $$
and so as 
$$ {\mP(\mV)}\mathrm{-}\Cat \subset \mP({\mP(\mV)}\mathrm{-}\Cat) \to \mP(\Theta(\mV)) \simeq \mP(\mV_\ot). $$

Moreover \cref{premod} implies that for every left $\mV$-enriched category $\mC$ containing two objects
$A,B$ there is a canonical equivalence of presheaves on $\mV:$
$$ \Mor_\mC(A,B) \simeq \Map_{{\mP(\mV)}\mathrm{-}\Cat}(-, \mC) \times_{\Map_{{\mP(\mV)}\mathrm{-}\Cat}(B\tu_\mV, \mC) \times \Map_{{\mP(\mV)}\mathrm{-}\Cat}(B\tu_\mV, \mC)} \{(A,B)\}. $$

This implies the general case.
If $\mV$ is a small dense full monoidal subcategory of $\mW$,
the monoidal embedding $\mV \subset \mW$
induces a left adjoint monoidal functor
$\mP(\mV) \to \mW$ whose lax monoidal right adjoint is fully faithful.
The fully faithful lax monoidal right adjoint 
gives rise to an embedding 
$\mW\mathrm{-}\Cat \to {\mP(\mV)}\mathrm{-}\Cat $ whose essential image are the categories left enriched in $\mP(\mV)$ whose morphism objects belong to the essential image of the $\mW$-nerve.
The composition $ \mW\mathrm{-}\Cat \to {\mP(\mV)}\mathrm{-}\Cat \to \mP(\Theta(\mV)) $ 
of embeddings factors as the restricted Yoneda embedding of
$_\mW \Cat$ since $\Theta(\mV) \subset \mV\mathrm{-}\Cat \subset \mW\mathrm{-}\Cat.$
\end{proof}

\begin{notation}
Let $\mL,\mR \subset \Fun(\bD^1, \Cat)$ be the full subcategories of left and right fibrations, respectively.
    
\end{notation}

\begin{notation}

Let $\mC \to \mD$ be a cocartesian fibration.
By \cite[Corollary 3.29]{heine2026local} the functor $(-)\times_{\mD}\mC:\Cat_{/\mD}\to\Cat_{/\mD}$
admits a right adjoint $\Fun^\mD(\mC,-): \Cat_{/\mD}\to\Cat_{/\mD}$.
    
\end{notation}

\begin{lemma}\label{lemum} Let $\mC \to \mD$ be a cocartesian fibration classifying a functor $\mD \to \Cat.$
There is a canonical equivalence of categories over $\mD:$
$$ \Fun^\mD(\mC,\mD \times \mS) \simeq \mD \times_{\Cat} \mL.$$
\end{lemma}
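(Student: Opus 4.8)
The plan is to identify both sides as cartesian fibrations over $\mD$ classifying the same functor $\mD^\op\to\Cat$, namely $d\mapsto\Fun(\mC_d,\mS)$, where $\mC_d$ denotes the fiber of $\mC\to\mD$ over $d$. Write $F:\mD\to\Cat$ for the functor classified by the cocartesian fibration $\mC\to\mD$, so that $\mC_d\simeq F(d)$, and note that in the pullback $\mD\times_\Cat\mL$ the functor $\mL\to\Cat$ is the target evaluation $\ev_1$ while $\mD\to\Cat$ is $F$ (this is forced by the fiber over $d$ being left fibrations with base $F(d)$).

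First I would treat the right-hand side. Straightening for left fibrations identifies $\ev_1:\mL\to\Cat$ as a cartesian fibration: for $u:X\to Y$ in $\Cat$, the pullback of a left fibration along $u$ is again a left fibration and supplies the cartesian lifts, so $\mL\to\Cat$ is classified by the functor $\Cat^\op\to\Cat$, $X\mapsto\Fun(X,\mS)$, the equivalence $\mathrm{LFib}(X)\simeq\Fun(X,\mS)$ being natural in $X$ under restriction. Pulling back along $F$ then exhibits $\mD\times_\Cat\mL\to\mD$ as the cartesian fibration classified by $d\mapsto\Fun(F(d),\mS)\simeq\Fun(\mC_d,\mS)$, with transition functors given by restriction along the maps $F(d)\to F(d')$ induced by morphisms $d\to d'$.

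Next I would treat the left-hand side using the adjunction $(-)\times_\mD\mC\dashv\Fun^\mD(\mC,-)$ of \cite[Corollary 3.29]{heine2024localglobalprincipleparametrizedinftycategories}. For any $p:\mE\to\mD$ one computes, naturally in $\mE$,
\[
\Map_{\Cat_{/\mD}}(\mE,\Fun^\mD(\mC,\mD\times\mS))\simeq\Map_{\Cat_{/\mD}}(\mE\times_\mD\mC,\mD\times\mS)\simeq\Map_{\Cat}(\mE\times_\mD\mC,\mS),
\]
the last step because mapping over $\mD$ into the projection $\mD\times\mS\to\mD$ only remembers the $\mS$-component. Since base change of cocartesian fibrations corresponds to precomposition of classifying functors, $\mE\times_\mD\mC$ is the cocartesian fibration over $\mE$ classified by $F\circ p$; specializing $\mE=\{d\}$ and using compatibility of $\Fun^\mD$ with base change gives the fiber $\Fun^\mD(\mC,\mD\times\mS)_d\simeq\Fun(\mC_d,\mS)$, while the transition functor along $d\to d'$ is restriction along the cocartesian transport $\mC_d\to\mC_{d'}$. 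This is contravariant, so $\Fun^\mD(\mC,\mD\times\mS)\to\mD$ is cartesian and classified by the same functor as the right-hand side.

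The main obstacle is promoting this fiberwise identification to a genuine equivalence of categories over $\mD$, i.e. checking that the cartesian-transport functors on the two sides agree coherently and not merely objectwise. I would circumvent this by running the comparison entirely at the level of corepresented functors on $\Cat_{/\mD}$: combining the display above with the identification $\Map_{\Cat_{/\mD}}(\mE,\mD\times_\Cat\mL)\simeq\Map_{\Cat_{/\Cat}}(\mE,\mL)$ (over $\Cat$ via $F\circ p$ and $\ev_1$) reduces everything to the single natural equivalence
\[
\Map_{\Cat_{/\Cat}}(\mE,\mL)\simeq\Map_{\Cat}(\mE\times_\mD\mC,\mS),
\]
which expresses that a section of the pulled-back universal family $\mL$ over $\mE$ is the same datum as a left fibration over the total space $\mE\times_\mD\mC\simeq\mathrm{Un}_\mE(F\circ p)$, precisely straightening compatible with the Grothendieck construction. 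Naturality in $\mE$ then yields the equivalence over $\mD$ by the Yoneda lemma in $\Cat_{/\mD}$.
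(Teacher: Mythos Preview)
Your final argument is correct and is essentially the paper's own proof: both compute $\Map_{\Cat_{/\mD}}(\mA,-)$ for an arbitrary $\mA\to\mD$, use the defining adjunction of $\Fun^\mD$ on the left-hand side, and on the right-hand side reduce to the fact that a family of left fibrations over the fibers of a cocartesian fibration is the same as a left fibration over the total space. The paper makes this last step precise in two moves --- first identifying $\Map_{/\Cat}(\mA,\mL)$ with maps of cocartesian fibrations over $\mA$ into $\mA\times_\mD\mC$ that are fiberwise left fibrations (via unstraightening of natural transformations), then citing \cite[Lemma 2.42]{heine2023monadicity} to identify these with left fibrations over $\mA\times_\mD\mC$ --- whereas you compress this into a single appeal to ``straightening compatible with the Grothendieck construction.'' Your initial cartesian-fibration approach (identifying both sides as classified by $d\mapsto\Fun(\mC_d,\mS)$) is a reasonable heuristic but, as you correctly note, runs into coherence issues; the paper avoids this detour entirely and goes straight to Yoneda.
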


\begin{proof}
Let $\mA \to \mD$ be a functor.
Functors $\mA \to \Fun^\mD(\mC,\mD \times \mS)$
correspond to functors $\mA \times_\mD \mC \to \mS.$
Functors $\mA \to \mD \times_{\Cat} \mL$
are classified by maps of cocartesian fibrations over $\mA$ to $\mA \times_\mD \mC$ that are fiberwise left fibrations.
By \cite[Proposition 3.70.]{heine2023monadicity}
the latter correspond to left fibrations over $\mA \times_\mD \mC$ and are classified by functors $\mA \times_\mD \mC \to \mS.$
\end{proof}

\begin{lemma}
Evaluation at the target $\mL \to \Cat$ is a cocartesian and cartesian fibration. 
The cocartesian fibration $\mL \to \Cat$ classifies the functor
$ \mP: \Cat \to \mS$, and the cartesian fibration $\mL \to \Cat$ classifies the functor
$ \Fun(-,\mS): \Cat^\op \to \mS$.
\end{lemma}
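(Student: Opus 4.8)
The plan is to obtain both fibration structures on $\mathrm{ev}_1 : \mL \to \Cat$ from the target fibration on $\Fun(\bD^1,\Cat)$, and then to read off the two classifying functors through the straightening equivalence for left fibrations. Throughout I write $\mathrm{LFib}(B) \simeq \Fun(B,\mS)$ for the fiber of $\mathrm{ev}_1 : \mL \to \Cat$ over a category $B$, where the equivalence is covariant straightening.

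For the cartesian structure, I would first recall that the target evaluation $\mathrm{ev}_1 : \Fun(\bD^1,\Cat) \to \Cat$ is a cartesian fibration whose cartesian edges are precisely the pullback squares; this uses only that $\Cat$ admits pullbacks. Since left fibrations are stable under base change, the full subcategory $\mL$ is closed under these cartesian edges: the cartesian lift of a left fibration $E' \to B'$ along $f : B \to B'$ is the pullback $B \times_{B'} E' \to B$, which is again a left fibration. Hence $\mathrm{ev}_1|_\mL$ is a cartesian fibration with the same cartesian edges, and under the identification $\mathrm{LFib}(B) \simeq \Fun(B,\mS)$ its cartesian transition functor along $f$ is restriction $f^* : \Fun(B',\mS) \to \Fun(B,\mS)$. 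This exhibits the cartesian fibration as classified by $\Fun(-,\mS)$.

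For the cocartesian structure I would apply the adjoint-functor criterion \cite[Corollary 5.2.2.5]{lurie.HTT}: a cartesian fibration is also cocartesian exactly when every cartesian transition functor admits a left adjoint, in which case the cocartesian transition functors are those left adjoints. Here $f^*$ is restriction along $f : B \to B'$, whose left adjoint is left Kan extension $f_! : \Fun(B,\mS) \to \Fun(B',\mS)$, which exists since $\mS$ is cocomplete. Therefore $\mathrm{ev}_1|_\mL$ is a cocartesian fibration with cocartesian pushforward $f_!$, and the associated covariant functor $B \mapsto \Fun(B,\mS)$, $f \mapsto f_!$, is $\mP$; this gives the claim.

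The main obstacle is that the two structures cannot both be produced by a naive restriction. The ambient cocartesian edges of $\mathrm{ev}_1 : \Fun(\bD^1,\Cat) \to \Cat$ are the squares that are equivalences on the source, so the ambient cocartesian pushforward of $E \to B$ along $f$ is the composite $E \to B \to B'$, which is \emph{not} a left fibration in general. Thus $\mL$ fails to be closed under the ambient cocartesian edges, and the cocartesian structure must be constructed anew; the correct pushforward is the left Kan extension (equivalently, the reflection of $E \to B \to B'$ into left fibrations over $B'$), which is exactly what the adjoint criterion supplies. A final routine point is that \cite[Corollary 5.2.2.5]{lurie.HTT} also packages the fiberwise left adjoints $f_!$ into a genuine functor $\Cat \to \CAT$, so no separate coherence argument for $f \mapsto f_!$ is needed.
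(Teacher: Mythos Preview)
Your argument is correct and takes a genuinely different route from the paper's. The paper does not argue directly from the target fibration on $\Fun(\bD^1,\Cat)$; instead it cites an external reference for the bicartesian claim, then invokes the preceding lemma to identify $\mL \simeq \Fun^{\Cat}(\mU,\Cat\times\mS)$ over $\Cat$ (where $\mU\to\Cat$ is the universal cocartesian fibration), and finally appeals to a result about fiberwise functor categories to read off the classifying functor of the cartesian structure. The cocartesian identification is then stated as a consequence. Your approach is more self-contained: you obtain the cartesian structure by restricting the ambient pullback-square edges, identify the transition functors as restriction via straightening, and then use the adjoint criterion \cite[Corollary~5.2.2.5]{lurie.HTT} to produce the cocartesian structure with transition functors $f_!$. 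Your observation that the ambient cocartesian edges (postcomposition) do \emph{not} preserve $\mL$, so that the cocartesian structure must be rebuilt rather than restricted, is exactly the right diagnosis. One small caveat: strictly speaking, \cite[Corollary~5.2.2.5]{lurie.HTT} only asserts that the fibration is cocartesian; the identification of the straightened covariant functor with $\mP$ then follows from straightening the resulting cocartesian fibration and recognizing the fibers and transition maps, together with the naturality of the straightening equivalence $\mathrm{LFib}(B)\simeq\Fun(B,\mS)$---which you use implicitly on the cartesian side as well. This is standard, but it is the unstraightening step rather than the cited corollary that supplies the coherence.
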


\begin{proof}

Evaluation at the target $\mL \to \Cat$ is a cocartesian and cartesian fibration by \cite[Lemma 3.4]{heine2026local}.

By \cref{lemum} there is a canonical equivalence 
$\mL \simeq \Fun^\Cat(\mU,\Cat \times \mS) $ over $\Cat, $
where $\mU \to \Cat$ is the cocartesian fibration classifying the identity. 
By \cite[Proposition 7.3]{articles} the cartesian fibration $$\Fun^\Cat(\mU,\Cat \times \mS) \to \Cat$$ classifies the functor
$ \Fun(-,\mS): \Cat^\op \to \mS$. Thus the cocartesian fibration $\mL \to \Cat$ classifies the functor $ \mP: \Cat \to \mS. $
\end{proof}

\begin{notation}Let $\mC \to \mD$ be a cartesian fibration
classifing a functor $\mD^\op \to \Cat.$
Let $$ \mP^{\mD^\op}(\mC):= \mD^\op \times_{\Cat} \mR \to \mD^\op. $$

\end{notation}

\begin{corollary}\label{leim} Let $\mC \to \mD$ be a cartesian fibration.
There is a canonical equivalence over $\mD^\op:$
$$ \mP^{\mD^\op}(\mC) \simeq \Fun^{\mD^\op}(\mC^\op,\mD^\op\times \mS) .$$ 

\end{corollary}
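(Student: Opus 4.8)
The plan is to deduce this from \cref{lemum} by applying the opposite-category involution throughout, trading the cocartesian fibration and left fibrations appearing there for the cartesian fibration and right fibrations appearing here. Since $\mC \to \mD$ is a cartesian fibration, its opposite $\mC^\op \to \mD^\op$ is a cocartesian fibration. Writing $F: \mD^\op \to \Cat$ for the functor classifying $\mC \to \mD$, so that $F(d) \simeq \mC_d$, the cocartesian fibration $\mC^\op \to \mD^\op$ is classified by $G := (-)^\op \circ F$, because its fiber over $d$ is $(\mC_d)^\op \simeq F(d)^\op$. Applying \cref{lemum} to the cocartesian fibration $\mC^\op \to \mD^\op$ then yields a canonical equivalence over $\mD^\op$
\[
\Fun^{\mD^\op}(\mC^\op, \mD^\op \times \mS) \simeq \mD^\op \times_{\Cat} \mL,
\]
where the structure map $\mL \to \Cat$ is evaluation at the target and the map $\mD^\op \to \Cat$ is the classifying functor $G$.

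Next I would invoke the standard duality between left and right fibrations. Taking fiberwise opposites defines an equivalence $\mL \simeq \mR$, sending a left fibration $E \to B$ to the right fibration $E^\op \to B^\op$, and this equivalence sits over the involution $(-)^\op : \Cat \to \Cat$ with respect to evaluation at the target on both sides. Transporting the pullback along this equivalence gives, over $\mD^\op$,
\[
\mD^\op \times_{\Cat, G} \mL \simeq \mD^\op \times_{\Cat,\, (-)^\op \circ G} \mR.
\]

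Finally, since $(-)^\op$ is an involution we have $(-)^\op \circ G = (-)^\op \circ (-)^\op \circ F = F$, so the right-hand side is precisely $\mD^\op \times_{\Cat, F} \mR = \mP^{\mD^\op}(\mC)$ by definition; combining the two displayed equivalences produces the desired equivalence over $\mD^\op$. The only genuine content, and the step that demands care, is the bookkeeping of classifying functors: one must verify that the op-twist introduced in passing from the cartesian fibration $\mC \to \mD$ to the cocartesian fibration $\mC^\op \to \mD^\op$ is exactly undone by the left/right fibration duality, which is what makes the two occurrences of $(-)^\op$ cancel. Everything else is a direct appeal to \cref{lemum} and to the functoriality of the relevant pullbacks over $\mD^\op$.
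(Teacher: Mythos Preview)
Your proof is correct and follows essentially the same approach as the paper: apply \cref{lemum} to the cocartesian fibration $\mC^\op \to \mD^\op$, then use the equivalence $\mL \simeq \mR$ lying over the involution $(-)^\op : \Cat \to \Cat$ to cancel the op-twist in the classifying functor. The paper's proof is terser but identical in substance; your version spells out the bookkeeping with $F$ and $G = (-)^\op \circ F$ more explicitly, which is helpful.
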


\begin{proof}

There is a canonical equivalence
$$ \Fun^{\mD^\op}(\mC^\op,\mD^\op \times \mS) \simeq \mD^\op \times_{\Cat} \mL,$$
where the functor $\mD^\op \to \Cat$ in the pullback classified by $\mC^\op \to \mD^\op$ factors as the functor $\mD^\op \to \Cat$ classified by $\mC \to \mD$ followed by the equivalence
$(-)^\op: \Cat \simeq \Cat.$
The latter induces an equivalence $\mR \simeq \mL$ that covers the former.
We obtain a canonical equivalence
$ \Fun^{\mD^\op}(\mC^\op,\mD^\op \times \mS) \simeq \mD^\op \times_{\Cat} \mR.$
\end{proof}

\begin{corollary}\label{Clpres}
Let $\phi: \mC \to \mD$ be a cartesian fibration.
The pullback of target fibration
$\Fun(\bD^1, \mP(\mC)) \to \mP(\mC) $ along the functor $\phi^*: \mP(\mD) \to \mP(\mC)$ is a cartesian fibration
that classifies the functor 
$$ \Fun_{\mD^\op}((-)^\op, \mP^{\mD^\op}(\mC)) : \mP(\mD)^\op \simeq (\mR_{\mD})^\op \to \Cat.$$

\end{corollary}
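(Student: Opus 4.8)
The plan is to reduce the statement to a fibrewise computation of slice categories and then transport that computation through \cref{leim} and the relative-functor adjunction $(-)\times_{\mD^\op}\mC^\op\dashv\Fun^{\mD^\op}(\mC^\op,-)$.

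First I would record the shape of the target fibration. Since $\mP(\mC)$ is presentable it admits pullbacks, so the target evaluation $\ev\colon\Fun(\bD^1,\mP(\mC))\to\mP(\mC)$ is a cartesian fibration whose cartesian transport is given by pullback; it therefore classifies the functor $\mP(\mC)^\op\to\Cat$, $G\mapsto\mP(\mC)_{/G}$. Pulling a cartesian fibration back along $\phi^*\colon\mP(\mD)\to\mP(\mC)$ precomposes the classifying functor with $(\phi^*)^\op$, so the pullback fibration classifies $F\mapsto\mP(\mC)_{/\phi^*F}$ as a functor $\mP(\mD)^\op\to\Cat$. It then remains to produce a natural equivalence $\mP(\mC)_{/\phi^*F}\simeq\Fun_{\mD^\op}(\mathcal F^\op,\mP^{\mD^\op}(\mC))$, where $\mathcal F\to\mD$ is the right fibration unstraightening $F$ under $\mP(\mD)\simeq\mR_\mD$.

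Next I would identify the slice with a presheaf category via the category of elements. Unstraightening commutes with restriction, so $\phi^*F$ unstraightens to the pulled-back right fibration $\mC\times_\mD\mathcal F\to\mC$, and the identification of a slice of a presheaf category with presheaves on its category of elements gives $\mP(\mC)_{/\phi^*F}\simeq\mP(\mC\times_\mD\mathcal F)$, with pullback on the left matching restriction on the right. Then I would rewrite
\[
\mP(\mC\times_\mD\mathcal F)=\Fun\bigl((\mC\times_\mD\mathcal F)^\op,\mS\bigr)\simeq\Fun_{\mD^\op}\bigl((\mC\times_\mD\mathcal F)^\op,\mD^\op\times\mS\bigr),
\]
using that functors over $\mD^\op$ into $\mD^\op\times\mS$ are just functors into $\mS$. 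As $\phi$ is cartesian, $\mC^\op\to\mD^\op$ is cocartesian and $(\mC\times_\mD\mathcal F)^\op=\mathcal F^\op\times_{\mD^\op}\mC^\op$, so the adjunction $(-)\times_{\mD^\op}\mC^\op\dashv\Fun^{\mD^\op}(\mC^\op,-)$ yields $\Fun_{\mD^\op}(\mathcal F^\op,\Fun^{\mD^\op}(\mC^\op,\mD^\op\times\mS))$, and \cref{leim} rewrites the inner term as $\mP^{\mD^\op}(\mC)$ over $\mD^\op$. Composing these delivers the desired $\Fun_{\mD^\op}(\mathcal F^\op,\mP^{\mD^\op}(\mC))$.

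Finally I would assemble naturality. Each step is natural in $F$ for the contravariant functoriality on both sides (unstraightening is covariant in $F$, $(-)^\op$ and $(-)\times_{\mD^\op}\mC^\op$ are covariant, and $\Fun_{\mD^\op}(-,\mP^{\mD^\op}(\mC))$ is contravariant, matching the pullback-contravariance of the slice), so the equivalences glue to an equivalence of functors $\mP(\mD)^\op\to\Cat$, hence of the associated cartesian fibrations. The main obstacle is precisely this last point: upgrading the pointwise equivalences to a coherent natural equivalence of classifying functors, whose delicate input is the naturality of the ``slice $=$ presheaves on the category of elements'' identification with respect to pullback versus restriction. The cleanest way to secure it is to run the whole argument representably, exhibiting both the pullback fibration and the unstraightening of $\Fun_{\mD^\op}((-)^\op,\mP^{\mD^\op}(\mC))$ as representing the same functor of $\mA\in\Cat_{/\mP(\mD)}$, exactly as in the proof of \cref{lemum}, so that no manual comparison of cartesian edges is required.
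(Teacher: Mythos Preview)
Your argument is correct, and the fibrewise computation you carry out---identifying the fibre over $F$ with $\mP(\mC\times_\mD\mathcal F)$ and then applying the adjunction together with \cref{leim}---is exactly the same computation the paper performs. The difference is organisational rather than mathematical: the paper works throughout in the right-fibration model $\mR_\mC\simeq\mP(\mC)$, and its key move is the observation that there is a pullback square
\[
\begin{xy}\xymatrix{
\Fun(\bD^1,\mR_\mC)\ar[r]\ar[d] & \mR\ar[d]\\
\mR_\mC\ar[r] & \Cat
}\end{xy}
\]
of categories (a map of right fibrations over $\mC$ is exactly a right fibration together with a right fibration structure on its target over $\mC$). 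Pulling this square back along $(-)\times_\mD\mC\colon\mR_\mD\to\mR_\mC$ immediately identifies the fibration in question with $\mR_\mD\times_\Cat\mR$, a single global pullback; its classifying functor is then the precomposition of the classifying functor of $\mR\to\Cat$ with $\mF\mapsto\mF\times_\mD\mC$, and \cref{leim} plus the defining adjunction of $\Fun^{\mD^\op}(\mC^\op,-)$ finish the identification. Because everything happens at the level of global cartesian fibrations, the naturality you worry about in your final paragraph never needs to be assembled by hand. Your proposed fix---running the argument representably as in \cref{lemum}---would recover precisely this; in effect, the paper's proof is already the representability argument you sketch, phrased in the unstraightened picture.
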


\begin{proof}

The pullback of target fibration
$\Fun(\bD^1, \mR_\mC) \to \mR_\mC$ along the functor $(-) \times_{\mD} \mC: \mR_{/\mD} \to \mR_\mC$ is 
the pullback of the cartesian fibration
$\mR \to \Cat $ along the functor $(-) \times_{\mD} \mC: \mR_{/\mD} \to \Cat$, which by \cref{leim} classifies the functor 
$$ \Fun_{\mD^\op}((-)^\op, \mP^{\mD^\op}(\mC)) : (\mR_{/\mD})^\op \to \Cat.$$
We apply the the Grothendieck construction $\mR_\mC \simeq \mP(\mC)$.
\end{proof}

Let $\mV$ be a small monoidal category.
For the following proposition recall the suspension functor
$S: \mV \to \mV\mathrm{-}\Cat$, which receives two natural transformations from the constant functor with value $B\tu_\mV.$
Let $\mV_\ot \to \Delta$ be the cartesian fibration classifying the monoidal category $\mV.$

The following proposition is an extension of \cite[Theorem 4.5.3]{MR3345192}, and the proof is a modification of the proof of the latter.

\begin{proposition}\label{premod} Let $\mV$ be a small monoidal category.
There is a canonical equivalence
$$ \mP_\Seg(\mV_\ot) \simeq {\mP(\mV)}\mathrm{-}\Cat$$
that sends a representable presheaf $(X_1,...,X_n) \in \mV^{\times n} \simeq (\mV_\ot)_{[n]}$ for $n \geq 0$ to
$S(X_1) \vee ... \vee S(X_n).$
For every $\mV$-enriched category $\mC$ containing two objects
$A,B$ there is a canonical equivalence of presheaves on $\mV:$
$$ \Mor_\mC(A,B) \simeq \Map_{{\mP(\mV)}\mathrm{-}\Cat}(S(-), \mC) \times_{\Map_{{\mP(\mV)}\mathrm{-}\Cat}(B\tu_\mV, \mC) \times \Map_{{\mP(\mV)}\mathrm{-}\Cat}(B\tu_\mV, \mC)} \{(A,B)\}. $$
    
\end{proposition}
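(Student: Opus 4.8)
The plan is to realize the claimed equivalence as a nerve--realization adjunction and to reduce its two halves to the universal property of the suspension together with a single mapping-space computation between wedges of suspensions. Since $\mV$ is small, $\mP(\mV)$ is a presentable monoidal category, so by \cref{pr0} the category ${_{\mP(\mV)}\Enr}$ is presentable, hence cocomplete. The wedge-of-suspensions assignment $(X_1,\ldots,X_n)\mapsto S(X_1)\vee\cdots\vee S(X_n)$ defines a functor $S\colon\mV_\ot\to{_{\mP(\mV)}\Enr}$, using the bipointed wedge and the identification $S(A_1,\ldots,A_n)\simeq S(A_1)\vee\cdots\vee S(A_n)$ from the corollaries following \cref{susp}. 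By the universal property of presheaf categories \cite[Theorem 5.1.5.6]{lurie.HTT}, left Kan extension of $S$ along the Yoneda embedding produces a colimit-preserving realization functor $|-|\colon\mP(\mV_\ot)\to{_{\mP(\mV)}\Enr}$ with right adjoint the nerve $N(\mC)=\Map_{{_{\mP(\mV)}\Enr}}(S(-),\mC)$.

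First I would show that $N$ lands in the Segal subcategory $\mP_\Seg(\mV_\ot)$. By construction $S(X_1)\vee\cdots\vee S(X_n)$ is the iterated bipointed wedge, i.e.\ the iterated pushout of the basepoint inclusions $\mathrm{B}\tu_\mV\to S(X_i)$; applying $\Map_{{_{\mP(\mV)}\Enr}}(-,\mC)$ carries these pushouts to the pullbacks
\[
N(\mC)(S(X_1)\vee\cdots\vee S(X_n))\simeq N(\mC)(S(X_1))\times_{N(\mC)(\mathrm{B}\tu_\mV)}\cdots\times_{N(\mC)(\mathrm{B}\tu_\mV)}N(\mC)(S(X_n)),
\]
which is precisely the Segal condition. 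Combined with \cref{Loc}, which guarantees that representables are already Segal, this exhibits a restricted adjunction $|-|\colon\mP_\Seg(\mV_\ot)\rightleftarrows{_{\mP(\mV)}\Enr}\colon N$.

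To prove that this restricted adjunction is an equivalence I would invoke the criterion that a fully faithful left adjoint with a conservative right adjoint is an adjoint equivalence. Conservativity of $N$ is the easier half: $N(\mC)(\mathrm{B}\tu_\mV)$ recovers the space of objects of $\mC$ and $N(\mC)(S(X))$ recovers the morphism objects, so $N$ detects equivalences of $\mP(\mV)$-enriched categories. Fully faithfulness of $|-|$ on representables is exactly fully faithfulness of $S$, namely that
\[
\Map_{\mV_\ot}((Y_1,\ldots,Y_m),(X_1,\ldots,X_n))\to\Map_{{_{\mP(\mV)}\Enr}}(S(Y_1)\vee\cdots\vee S(Y_m),\,S(X_1)\vee\cdots\vee S(X_n))
\]
is an equivalence; this I would compute directly from \cref{susp}(6), decomposing the right-hand side along the Segal pullbacks into a product of mapping spaces in $\mP(\mV)$ and matching it with the corresponding product description of $\Map_{\mV_\ot}$. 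The main obstacle is upgrading this to fully faithfulness of $|-|$ on all Segal objects, equivalently to the statement that the unit $F\to N|F|$ is an equivalence for every Segal $F$: because $N$ is only a right adjoint, this cannot be obtained by extending the representable case along colimits, and it is here that the argument genuinely reproduces \cite[Theorem 4.5.3]{MR3345192}. I would carry it out by passing to the fibrational models of both sides, using \cref{lemum}, \cref{leim}, and \cref{Clpres} to identify presheaf categories and their arrow fibrations over $\mV_\ot$ with categories of left/right fibrations and with the gadget $\mP^{\mD^\op}(-)$; this bookkeeping is what reconstructs the full enriched structure, including composition and its coherences, from the Segal nerve and thereby forces the unit to be an equivalence.

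Finally, the morphism-presheaf formula is a formal consequence. Under the equivalence the representable $X\in\mV\simeq(\mV_\ot)_{[1]}$ corresponds to $S(X)$, so evaluating the nerve gives $N(\mC)(S(X))\simeq\Map_{{_{\mP(\mV)}\Enr}}(S(X),\mC)$; taking the fiber over a pair $(A,B)\in\Map_{{_{\mP(\mV)}\Enr}}(\mathrm{B}\tu_\mV,\mC)^{\times 2}$ and applying the universal property of the suspension (the corollary following \cref{susp}) identifies this fiber with $\Map_{\mP(\mV)}(X,\L\Mor_\mC(A,B))\simeq\L\Mor_\mC(A,B)(X)$. Letting $X$ range over $\mV$ then yields the asserted equivalence of presheaves
\[
\L\Mor_\mC(A,B)\simeq\Map_{{_{\mP(\mV)}\Enr}}(S(-),\mC)\times_{\Map_{{_{\mP(\mV)}\Enr}}(\mathrm{B}\tu_\mV,\mC)\times\Map_{{_{\mP(\mV)}\Enr}}(\mathrm{B}\tu_\mV,\mC)}\{(A,B)\}.
\]
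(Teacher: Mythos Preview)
Your nerve--realization framing is a genuinely different approach from the paper's. The paper does not set up an adjunction; instead it identifies both $\mP_\Seg(\mV_\ot)$ and ${_{\mP(\mV)}\Enr}$ directly as total spaces of cartesian fibrations over $\mS$ (via the ``space of objects'' projection) and shows these fibrations classify the same functor $\mS^\op\to\Cat$. Concretely, the paper builds a chain of equivalences through an auxiliary category $\mQ''$, recognizing the fiber over $X\in\mS$ on one side as Segal objects with value $X$ at $[0]$ and on the other as $\Alg_{\Delta_X^\op/\Delta^\op}(\mP(\mV)^\ot)$ via Day convolution and the universal monoid in $\mR^\times$. The identification with wedges of suspensions and the morphism-presheaf formula then fall out from this equivalence together with \cref{susp}, rather than being inputs to it.

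Your approach has a genuine gap at exactly the point you flag. You correctly observe that fully faithfulness of $|-|$ on representables plus conservativity of $N$ does not finish the argument, and that the unit $F\to N|F|$ for arbitrary Segal $F$ is the crux. But your proposed resolution---``pass to fibrational models using \cref{lemum}, \cref{leim}, \cref{Clpres}''---is precisely the paper's direct construction of the equivalence, not a lemma about your unit map. Once one runs that fibrational argument one obtains $\mP_\Seg(\mV_\ot)\simeq{_{\mP(\mV)}\Enr}$ outright, rendering the nerve--realization scaffolding superfluous; and to use it the way you suggest you would still owe a compatibility check that the fibrationally constructed equivalence agrees with your $|-|$, which neither the paper nor your sketch supplies. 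A smaller issue upstream: building the functor $S\colon\mV_\ot\to{_{\mP(\mV)}\Enr}$ with full coherence in maps of $\Delta$ is not automatic from \cref{susp}, which produces $S(A_1,\ldots,A_n)$ only objectwise with its universal property; the paper obtains this functor only a posteriori as the composite $\mV_\ot\hookrightarrow\mP_\Seg(\mV_\ot)\simeq{_{\mP(\mV)}\Enr}$.
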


\begin{proof}

Let $\mO \to \Delta^\op$
be the pullback of evaluation at the target $\mR \to \Cat $ along
$\mV: \Delta^\op \to \Cat.$

We apply \cref{Clpres} to the cartesian fibration
$\phi: \mV_\ot \to \Delta$. We deduce that the pullback $\mQ \to \mS $ of the cartesian fibration 
$\Fun(\bD^1,\mL_{(\mV_\ot)^\op}) \to \mL_{(\mV_\ot)^\op}$ evaluating at the target
along the functor $$\mS \to \mL_{(\mV_\ot)^\op}, \ X \mapsto (\mV_\ot)^\op \times_{\Delta^\op} \Delta_X^{\op} \to (\mV_\ot)^\op $$ is the cartesian fibration over $\mS$
classifying the functor $\mS^\op \to \Cat$ sending $X$ to 
$\Fun_{\Delta^\op}(\Delta_X^{\op}, \mO). $

An object of $\mQ$ is a small space $X$ and a map of left fibrations $\alpha: \mX \to (\mV_\ot)^\op \times_{\Delta^\op} \Delta_X^{\op} $ over $(\mV_\ot)^\op$.
There is a canonical equivalence 
$$\kappa: \mQ \simeq \mL_{(\mV_\ot)^\op} \times_{\Cocart_{\bDelta^\op}} \Fun(\bD^1,\Cocart_{\bDelta^\op}) \times_{\Cocart_{\bDelta^\op}} \mS, $$
where the functor $\mS \to \Cocart_{\bDelta^\op}$ in the pullback sends
a small space $X$ to $\Delta_X^{\op} \to \Delta^\op.$
The equivalence $\kappa$ sends a small space $X$ and a map of left fibrations $\alpha: \mX \to (\mV_\ot)^\op \times_{\Delta^\op} \Delta_X^{\op} $ over $(\mV_\ot)^\op$ to the corresponding map
$\alpha': \mX \to \Delta_X^{\op} $ of cocartesian fibrations over $\Delta^\op$.

The functor $ \Cocart_{\bDelta^\op} \to \Cat$ taking the fiber over $[0]$
is left adjoint to the functor sending a category $X$ to $\Delta_X^{\op} \to \Delta^\op.$
Consequently, there is a canonical equivalence 
$\Fun(\bD^1,\Cocart_{\bDelta^\op}) \times_{\Cocart_{\bDelta^\op}} \Cat \simeq \Cocart_{\bDelta^\op} \times_{\Cat} \Fun(\bD^1,\Cat), $
that sends a small space category $X$ and a map
$\beta: \mX \to \Delta_X^{\op} $ of cocartesian fibrations over $\Delta^\op$
to the corresponding functor $\beta_{[0]}: \mX_{[0]} \to (\Delta_X^{\op})_{[0]} \simeq X $.
Pulling back to $\mS \subset \Cat$ we obtain an equivalence
$\Fun(\bD^1,\Cocart_{\bDelta^\op}) \times_{\Cocart_{\bDelta^\op}} \mS \simeq \Cocart_{\bDelta^\op} \times_{\Cat} \Fun(\bD^1,\Cat) \times_{\Cat} \mS .$
The latter gives rise to an equivalence
$$ \mL_{(\mV_\ot)^\op} \times_{\Cocart_{\bDelta^\op}} \Fun(\bD^1,\Cocart_{\bDelta^\op}) \times_{\Cocart_{\bDelta^\op}} \mS
\simeq \mL_{(\mV_\ot)^\op} \times_{\Cocart_{\bDelta^\op}} \Cocart_{\bDelta^\op} \times_{\Cat} \Fun(\bD^1,\Cat) \times_{\Cat} \mS $$
$$ \simeq \mL_{(\mV_\ot)^\op} \times_{\Cat} \Fun(\bD^1,\Cat) \times_{\Cat} \mS\simeq \mL_{(\mV_\ot)^\op} \times_\mS (\mS \times_{\Cat} \Fun(\bD^1,\Cat) \times_{\Cat} \mS) \simeq \mL_{(\mV_\ot)^\op} \times_\mS \Fun(\bD^1,\mS).$$
So we obtain an equivalence
$$ \mQ \simeq \mL_{(\mV_\ot)^\op} \times_\mS \Fun(\bD^1,\mS)$$
that sends a map of left fibrations $\alpha: \mX \to (\mV_\ot)^\op \times_{\Delta^\op} \Delta_X^{\op} $ over $(\mV_\ot)^\op$ to the corresponding map
$$\alpha_{[0]}: \mX_{[0]} \to (\Delta_X^{\op})_{[0]} \simeq X .$$

Let $\mQ' \subset \mQ$ be the full subcategory of maps of left fibrations $\alpha: \mX \to (\mV_\ot)^\op \times_{\Delta^\op} \Delta_X^{\op} $ over $(\mV_\ot)^\op$ for some small space $X$
such that the induced map
$\alpha_{[0]}: \mX_{[0]} \to X $ is an equivalence.
Then the latter equivalence restricts to an equivalence 
$\mQ' \simeq \mL_{(\mV_\ot)^\op}$
that sends a map of left fibrations $\alpha: \mX \to (\mV_\ot)^\op \times_{\Delta^\op} \Delta_X^{\op} $ over $(\mV_\ot)^\op$ to $\mX $.
Using the Grothendieck construction $ \mL_{(\mV_\ot)^\op} \simeq \mP(\mV_\ot)$ we obtain an equivalence
$\mQ' \simeq \mP(\mV_\ot).$
Let $\mQ'' \subset \mQ'$ be the full subcategory corresponding 
under the equivalence $\mQ' \simeq \mP(\mV_\ot)$
to $\mP_\Seg(\mV_\ot).$

The restriction $\mQ' \subset \mQ \to \mS $ is a cartesian fibration
that classifies the functor $\mS^\op \to \Cat$ sending $X$ to 
the full subcategory of $\Fun_{\Delta^\op}(\Delta_X^{\op}, \mO) $
of functors $ \Delta_X^{\op} \to \mO$ sending objects in the fiber over $[0]$, which is $X$, to the contractible space in $\mO_{[0]} \simeq \mS.$
The restriction $ \mQ'' \subset \mQ \to \mS$ is a cartesian fibration classifying the functor sending $X$ to the full subcategory  
$\Fun'_{\Delta^\op}(\Delta_X^{\op}, \mO) \subset \Fun_{\Delta^\op}(\Delta_X^{\op}, \mO) $
of functors $ \Delta_X^{\op} \to \mO$ 
whose projection to $\mR$ is a monoid object,
which implies that it sends objects of $X$ to the
contractible space. 

The universal monoid $\mR^\times \to \mR$ of the cartesian structure 
induces a functor
$\Delta^\op \times_{\Cat^\times} \mR^\times \to \Delta^\op \times_{\Cat} \mR=\mO $, which by \cite[Proposition 5.1]{FreeAlgebras} induces an equivalence
$$\Alg_{\Delta_X^{\op}/\Delta^\op}(\Delta^\op \times_{\Cat^\times} \mR^\times) \simeq \bD^0 \times_{\Alg_{\Delta_X^{\op}/\Delta^\op}(\Cat^\times)} \Alg_{\Delta_X^{\op} /\Delta^\op}(\mR^\times)
\simeq $$$$ \bD^0 \times_{\Mon_{\Delta_X^{\op}}(\Cat)} \Mon_{\Delta_X^{\op}}(\mR) \simeq \Fun'_{\Delta^\op}(\Delta_X^{\op}, \mO).$$

Consequently, the cartesian fibration $ \mQ'' \to \mS$ classifies the functor sending $X$ to  
$\Alg_{\Delta_X^{\op}/\Delta^\op}(\Delta^\op \times_{\Cat^\times} \mR^\times). $

By \cite[Corollary 6.10]{FreeAlgebras} and \cite[Proposition 6.18]{FreeAlgebras}, the functor $\Delta^\op \times_{\Cat^\times} \mR^\times \to \Delta^\op$
is a monoidal category, which identifies with the Day-convolution monoidal structure on $\mP(\mV)$.
Thus the cartesian fibration $ \mQ'' \to \mS$ classifies the functor sending $X$ to 
$\Alg_{\Delta_X^{\op}/\Delta^\op}(\mP(\mV)^\ot) $, and so identifies with
${\mP(\mV)}\mathrm{-}\Cat. $
\end{proof}



\subsection{$\infty$-categories}

\begin{definition}
For every $\n \geq 0$ we inductively define the presentable cartesian closed category $\n\Cat$ of small (not necessarily univalent) $\n$-categories by setting
$$(\n+1)\Cat:= {{\n\Cat}\mathrm{-}\Cat} $$
starting with $$ 0\Cat :=\mS.$$
\end{definition}

\begin{notation}
For every $\n \geq 0$ we inductively define colocalizations
$$\n\Cat \rightleftarrows (\n+1)\Cat: \iota_\n,$$
where both adjoints  preserve finite products and filtered colimits.
Let
$$0\Cat= \mS \rightleftarrows 1\Cat = {\mS\mathrm{-}\Cat} : \iota_0 $$ be the canonical colocalization whose right adjoint assigns the space of objects. Let $$(\n+1)\Cat= {{\n\Cat}\mathrm{-}\Cat} \rightleftarrows (\n+2)\Cat= {{(\n+1)\Cat}\mathrm{-}\Cat}:\iota_{\n+1}:= (\iota_\n)_! $$
be the induced adjunction.	
	
\end{notation}

\begin{definition}The presentable category $\infty\Cat$ of small (non-univalent) $\infty$-categories is the limit
$$\infty\Cat:= \lim(\cdots\xrightarrow{\iota_{\n}} \n \Cat \xrightarrow{\iota_{\n-1}}\cdots \xrightarrow{\iota_0} 0 \Cat) $$
of presentable categories and right adjoint functors.

\end{definition}

The next proposition follows from the fact that the functor ${(-)\mathrm{-}\Cat}$ preserves small limits:

\begin{proposition}\label{fix}
	
There is a canonical equivalence
$$ \infty\Cat \simeq {\infty\Cat}\mathrm{-}\Cat. $$
	
\end{proposition}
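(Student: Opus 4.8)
The plan is to prove the equivalence by leveraging the stated fact that $\Cat(-)$ preserves small limits, together with a reindexing of the defining tower. First I would record the limit-preservation of the category-object construction in the precise form needed here. Since $\Cat(\mV)$ is by definition the full subcategory of $\Fun(\Delta^{\op},\mV)$ cut out by the Segal condition, and since $\Fun(\Delta^{\op},-)$ preserves limits, it suffices to check that the Segal condition is detected componentwise in an inverse limit $\mV\simeq\lim_n\mV_n$. This holds because the transition functors $\iota_n$ are right adjoints, hence preserve finite limits, so pullbacks in $\lim_n\mV_n$ are computed componentwise and equivalences are detected componentwise; thus a functor $\Delta^{\op}\to\lim_n\mV_n$ is Segal if and only if each of its components is, giving $\Cat(\lim_n\mV_n)\simeq\lim_n\Cat(\mV_n)$.

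Next I would upgrade this to the pointed variant $\Cat(-;\mS)$. The only additional condition is that the object-of-objects $\ev_0(X)$ lie in the image of $\mS$. Here I would observe that the embeddings $e_n\colon\mS\to n\Cat$ are compatible with the tower, $\iota_{n-1}\circ e_n\simeq e_{n-1}$, since the core functor fixes spaces; hence they assemble to $e_\infty=\lim_n e_n\colon\mS\to\infty\Cat$, and a compatible system $(Y_n)$ lies in the image of $e_\infty$ if and only if each $Y_n$ lies in the image of $e_n$ (using full faithfulness of the $e_n$ and the compatibility just noted). Consequently the condition that $\ev_0(X)$ be a space is again detected componentwise, yielding $\Cat(\infty\Cat;\mS)\simeq\lim_n\Cat(n\Cat;\mS)$.

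Finally I would identify the right-hand side. By definition $\Cat(n\Cat;\mS)=(n+1)\Cat$, and the transition maps induced by $\iota_n$ are exactly the $\iota_{n+1}$, so $\lim_n\Cat(n\Cat;\mS)$ is the inverse limit of the shifted tower $\cdots\to(n+1)\Cat\to\cdots\to 1\Cat$. This subtower is cofinal in the original tower defining $\infty\Cat$: dropping the bottom term $0\Cat$ does not change the inverse limit, as the missing coordinate is uniquely recovered along the map $1\Cat\to 0\Cat$. Therefore $\lim_n(n+1)\Cat\simeq\lim_n n\Cat=\infty\Cat$, and combining the three steps produces the canonical equivalence $\infty\Cat\simeq\Cat(\infty\Cat;\mS)$.

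The substantive part of the argument is the componentwise detection in the first two steps, namely that both the Segal condition and the ``object-of-objects is a space'' condition survive the inverse limit; these rest respectively on the transition functors $\iota_n$ being right adjoints (so finite-limit preserving) and on the core functors fixing spaces (so the $\mS$-embeddings form a compatible tower). The reindexing in the last step is a routine cofinality observation, and the overall equivalence is then formal.
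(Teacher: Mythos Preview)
Your proposal is correct and follows exactly the approach the paper indicates: the paper simply remarks that the result ``follows from the fact that the functor $\Cat(-)$ preserves small limits'' and gives no further proof, so your three steps (limit-preservation of $\Cat(-)$, compatibility of the $\mS$-embeddings with the tower so that $\Cat(-;\mS)$ also passes through the limit, and the cofinal reindexing identifying $\lim_n (n{+}1)\Cat$ with $\infty\Cat$) are precisely the details the paper leaves implicit.
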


\begin{notation}
Let $\partial\bD^1$ denote the two element set $\{0,1\}$.
\end{notation}

\begin{notation}
Let $\Mor: \infty\Cat_{\partial\bD^1/} \to \infty\Cat$
be the canonical functor $$\infty\Cat_{\partial\bD^1/} \simeq \mS_{\partial\bD^1/}\times_\mS {\infty\Cat}\mathrm{-}\Cat \to \infty\Cat $$
sending $(\mC,\X,\Y)$ to $\Mor_\mC(\X,\Y).$
\end{notation}

\begin{remark}\label{homfil}
The functor $\Mor: \infty\Cat_{\partial\bD^1/} \to \infty\Cat$
preserves small filtered colimits and limits.
 	
\end{remark}

\begin{definition}\label{ruik} Let $\n \geq 0.$
We inductively define involutions $$(-)^\op_\n, (-)^\co_\n: \n\Cat \to \n\Cat$$ by setting
$(-)^\co_0, (-)^\op_0:  0\Cat \to 0\Cat$ are the identities, $$(-)^\op_{\n+1}: {(\n+1)}\Cat \xrightarrow{(-)^\circ} {(\n+1)}\Cat \xrightarrow{((-)^\co_\n)_!}{(\n+1)}\Cat, $$ $$(-)^\co_{\n+1}:=((-)^\op_\n)_!: {(\n+1)}\Cat \xrightarrow{ }{(\n+1)}\Cat.$$
There are commutative squares:
$$\begin{xy}
\xymatrix{
{(\n+1)}\Cat \ar[d]^{\iota_\n}  \ar[r]^{(-)_{\n+1}^\op} & {(\n+1)}\Cat  \ar[d]^{\iota_\n}
\\ 
{\n}\Cat \ar[r]^{(-)_{\n}^\op} & {\n}\Cat
}
\end{xy}
\qquad
\begin{xy}
\xymatrix{
{(\n+1)}\Cat \ar[d]^{\iota_\n}  \ar[r]^{(-)_{\n+1}^\co} & {(\n+1)}\Cat  \ar[d]^{\iota_\n}
\\ 
{\n}\Cat \ar[r]^{(-)_{\n}^\co} & {\n}\Cat
}
\end{xy}
$$
and so induced involutions on the limit $$(-)^\op, (-)^\co: \infty\Cat \to \infty\Cat. $$

\end{definition}

\begin{remark}\label{oppo} By \cref{ruik} there are commutative squares, where $\sigma$ permutes the distinguished objects:

$$\begin{xy}
\xymatrix{
\infty\Cat_{\partial\bD^1/} \ar[d]^{\Mor}  \ar[r]^{(-)^\co} & \infty\Cat_{\partial\bD^1/}  \ar[d]^{\Mor}
\\ 
\infty\Cat \ar[r]^{(-)^\op} & \infty\Cat
}
\end{xy}\qquad
\begin{xy}
\xymatrix{
\infty\Cat_{\partial\bD^1/} \ar[d]^{\Mor}  \ar[r]^{\sigma \circ (-)^\op} & \infty\Cat_{\partial\bD^1/}  \ar[d]^{\Mor}
\\ 
\infty\Cat \ar[r]^{(-)^\co} & \infty\Cat.
}
\end{xy}
$$

\end{remark}

The following follows from \cref{susp}:

\begin{proposition}\label{suspi}

The functor $\Mor: \infty\Cat_{\partial\bD^1/} \to \infty\Cat$ admits a left adjoint
$S: \infty\Cat \to \infty\Cat_{\partial\bD^1/}$ such that for every $\infty$-category $\mC$ the $\infty$-category $S(\mC),$ called categorical suspension of $\mC$, has two objects 0,1, and morphism $\infty$-categories:
$$\Mor_{S(\mC)}(1,0)\simeq \emptyset, \ \Mor_{S(\mC)}(0,1)\simeq \mC, \ \Mor_{S(\mC)}(0,0)\simeq \Mor_{S(\mC)}(1,1) \simeq *.$$
    
\end{proposition}

\begin{definition}\label{susa}
For every $\infty$-category $\mC$ the antisuspension of $\mC$ is $\overline{S}(\mC):=S(\mC^\co)^\co.$

\end{definition}

\begin{corollary}\label{sw} Let $\mC $ be an $\infty$-category.
\begin{enumerate}[\normalfont(1)]\setlength{\itemsep}{-2pt}
\item There is a canonical equivalence $$S(\mC^\op) \simeq S(\mC)^\co,$$
which is the identity on the underlying space.
\item There is a canonical equivalence $$ S(\mC^\co) \simeq S(\mC)^\op,$$
which switches the two elements of the underlying space.
\item There is a canonical equivalence $$ S(\mC^{\co\op}) \simeq S(\mC)^{\co\op},$$
which switches the two elements of the underlying space.
\end{enumerate}

\end{corollary}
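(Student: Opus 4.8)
The plan is to characterize each right-hand side by the universal property of the suspension recorded in \cref{suspi}, exploiting that $(-)^\op$ and $(-)^\co$ are auto-equivalences of $\infty\Cat$ together with their compatibility with the morphism-object functor $\Mor$ established in \cref{oppo}. The two facts I would extract from \cref{oppo} at the outset are the natural equivalences
\[
\Mor_{\mD^\co}(Y,Z)\simeq\Mor_\mD(Y,Z)^\op \qquad\text{and}\qquad \Mor_{\mD^\op}(Z,Y)\simeq\Mor_\mD(Y,Z)^\co,
\]
the first coming from the square without $\sigma$ and the second from the square with $\sigma$ (which swaps the two distinguished objects). I would also note that, since $(-)^\co$ is the identity on underlying spaces while $(-)^\op$ reverses the direction of $1$-morphisms, the former lifts to the auto-equivalence of $\infty\Cat_{\partial\bD^1/}$ fixing the pointing, whereas $\sigma\circ(-)^\op$ lifts to an auto-equivalence of $\infty\Cat_{\partial\bD^1/}$ (it squares to the identity).

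For part (1), I would show that $S(\mC)^\co$, pointed by its objects $0,1$, corepresents the same functor as $S(\mC^\op)$. Applying the involution $(-)^\co$ on $\infty\Cat_{\partial\bD^1/}$ and then the universal property of $S(\mC)$ gives, naturally in $(\mD;Y,Z)$,
\[
\Map_{\infty\Cat_{\partial\bD^1/}}((S(\mC)^\co;0,1),(\mD;Y,Z))\simeq\Map_{\infty\Cat}(\mC,\Mor_{\mD^\co}(Y,Z)).
\]
Rewriting $\Mor_{\mD^\co}(Y,Z)\simeq\Mor_\mD(Y,Z)^\op$ and using that $(-)^\op$ is an auto-equivalence of $\infty\Cat$ turns the right-hand side into $\Map_{\infty\Cat}(\mC^\op,\Mor_\mD(Y,Z))$, which is exactly the functor corepresented by $S(\mC^\op)$. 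The uniqueness clause of \cref{suspi} then yields $S(\mC)^\co\simeq S(\mC^\op)$, and since $(-)^\co$ is the identity on objects the equivalence is the identity on the underlying space.

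For part (2), the same argument with the involution $\sigma\circ(-)^\op$ in place of $(-)^\co$ identifies $S(\mC)^\op$, now pointed by $(1,0)$, with the functor
\[
\Map_{\infty\Cat}(\mC,\Mor_{\mD^\op}(Z,Y))\simeq\Map_{\infty\Cat}(\mC,\Mor_\mD(Y,Z)^\co)\simeq\Map_{\infty\Cat}(\mC^\co,\Mor_\mD(Y,Z)),
\]
which is corepresented by $S(\mC^\co)$; because the pointing $(1,0)$ of $S(\mC)^\op$ is matched with the pointing $(0,1)$ of $S(\mC^\co)$, the resulting equivalence switches the two elements of the underlying space. Finally, part (3) I would deduce formally by composing (1) and (2): writing $\mC^{\co\op}=(\mC^\co)^\op$, part (1) applied to $\mC^\co$ gives $S(\mC^{\co\op})\simeq S(\mC^\co)^\co$ (identity on the underlying space), and applying $(-)^\co$ to the equivalence of part (2) gives $S(\mC^\co)^\co\simeq S(\mC)^{\op\co}=S(\mC)^{\co\op}$, using that $(-)^\op$ and $(-)^\co$ commute; the switch from part (2) survives since $(-)^\co$ fixes objects. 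The only real obstacle is bookkeeping: keeping straight which involution lifts to $\infty\Cat_{\partial\bD^1/}$ with a swap of distinguished objects and which without, so that the pointings match correctly and the stated effect on the underlying space comes out right. Everything else is a formal Yoneda argument built on \cref{suspi} and \cref{oppo}.
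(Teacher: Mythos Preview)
Your proof is correct and takes essentially the same approach as the paper: the paper's proof is the terse one-liner ``the first and second equivalence follow by adjointness from \cref{oppo}; the third follows from the first and second,'' and your argument is precisely the unwinding of that adjointness via the corepresenting universal property of \cref{suspi}. Your bookkeeping on which involution swaps the distinguished objects and which does not matches the two squares in \cref{oppo} exactly.
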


\begin{proof}
The first and second equivalence follow by adjointness from \cref{oppo}.
The third equivalence follows from the first and second one.
\end{proof}

Next we define strict and univalent $\infty$-categories:

\begin{definition}Let $ n \geq 1.$ By induction on $n$ we define $\n$-univalent $\infty$-categories.
An $\infty$-category is 1-univalent if it is local with respect to the unique functor $\{0 \simeq 1 \} \to *$, where $\{0\simeq 1\}$ denotes the category obtained from $\bD^1$ by inverting the unique map from $0$ to $1$.
An $\infty$-category is $n$-univalent if it is 1-univalent and all
morphism $\infty$-categories are $n-1$-univalent.
An $\infty$-category is univalent if it is $n$-univalent for every $n \geq 1.$
\end{definition}

\begin{definition}Let $ n \geq 0.$ By induction on $n$ we define $n$-strict $\infty$-categories.
An $\infty$-category is 0-strict if its underlying space is a set.
An $\infty$-category is $n$-strict if it is 0-strict and all morphism $\infty$-categories are $n-1$-strict.
An $\infty$-category is strict if it is $n$-strict for every $n \geq 0.$
\end{definition}

\begin{definition}
Let $ \infty\Cat^{\univ}, \infty\Cat^{\strict}\subset \infty\Cat$ 
be the respective full subcategories of univalent and strict $\infty$-categories.
\end{definition}






\begin{notation}
For every $0 \leq \n \leq \m$ the left adjoint embeddings $\n\Cat \leftrightarrows \m\Cat$ preserve small limits and thus induce a left adjoint embedding $\n\Cat \leftrightarrows \infty\Cat: \iota_\n$ that preserves small limits
and so admits a left adjoint $\tau_\n: \infty\Cat \to \n\Cat$ by presentability.


\end{notation}

\begin{remark}
The $\iota_n$ collectively filter every $\infty$-category $X$ as a colimit $X\simeq\colim_{n}\iota_n(X)$.
\end{remark}

The next is \cite[Lemma 2.4.16.]{GepnerHeine2026}:

\begin{lemma}\label{carclo}
The presentable category $\infty\Cat$ is cartesian closed.
\end{lemma}

\begin{notation}

For any $\infty$-category $\mC$ let 
$\Fun(\mC,-): \infty\Cat \to \infty\Cat$ be the right adjoint of the functor $(-) \times \mC:\infty\Cat \to \infty\Cat.$
    
\end{notation}

\begin{remark}
Since $\infty\Cat$ is cartesian closed, it refines to an $\infty$-category $\infty\scat$ such that for every two objects $X$ and $Y$:
\[
\Mor_{\infty\scat}(X,Y)=\Fun(X,Y).
\]
\end{remark}

	
	

	

	
		

	



The next is \cite[Proposition 2.4.21.]{GepnerHeine2026}:

\begin{proposition}\label{completion}
The embedding $$ \infty\Cat^{\mathrm{univ}}\subset \infty\Cat$$ admits a left adjoint, the univalent completion.
\end{proposition}

\begin{definition}Let $\n \geq 0$.
The $\n$-disk is the $n$-fold suspension $\bD^\n:= S^{\n}(*)$ of the terminal $\infty$-category $\ast$.
\end{definition}


\begin{definition}Let $\n \geq 0$.
The boundary of the $\n$-disk is the $n$-fold suspension $\partial\bD^\n:= S^{\n}(\emptyset)$ of the initial $\infty$-category $\emptyset$.
	
\end{definition}

\begin{remark}Let $\n \geq 0.$
The functor $\emptyset \subset *$ induces an inclusion $\partial\bD^\n \subset \bD^\n$.
	
\end{remark}

\begin{example}
Then $\partial\bD^0=\ast, \partial\bD^1=S(\emptyset)=*\coprod*$ is the set with two elements.
Viewed as a subject of $\bD^1$, these elements acquire a natural ordering.
\end{example}

\begin{definition}
The bipointed wedge of an ordered pair $(\mC,\mD)$ of bipointed $\infty$-categories  $\infty\Cat_{/\partial\bD^1}$ and $\mC'\in\infty\Cat_{/\partial\bD^1}$ is the bipointed $\infty$-category obtained by taking the pushout along the common basepoint.
\end{definition}

\begin{definition}\label{Theta}
	
Let $\Theta' \subset \infty\Cat_{\partial\bD^1/}$ be the full subcategory generated by $\bD^0$ under suspensions and bipointed wedges and $\Theta \subset \infty\Cat$ the essential image of $\Theta'$ under the forgetful functor.	
	
\end{definition}

\begin{remark}

An $\infty$-category belongs to $\Theta$ if and only if it is of the form $$ \bD^{i_0} \coprod_{\bD^{j_1}} \bD^{i_1} \coprod_{\bD^{j_2}} \cdots \coprod_{\bD^{j_n}} \bD^{i_n} $$
for a sequence of natural numbers $n, i_0,\ldots, i_n, j_1,\ldots, j_n$ and monomorphisms 
$ \bD^{j_\ell} \rightarrowtail \bD^{i_\ell},\bD^{j_\ell} \rightarrowtail \bD^{i_{\ell-1}}$, $ 1 \leq \ell \leq n$.

\end{remark}

The next is \cite[Theorem 2.5.8.]{GepnerHeine2026}:

\begin{theorem}\label{theta}

The restricted Yoneda embeddings \begin{equation}\label{jopol}
\infty\Cat \to \Fun(\Theta^\op,\mS)\qquad\mathrm{and}\qquad \infty\Cat^\strict \to \Fun(\Theta^\op,\Set)\end{equation} are fully faithful and admit left adjoints that preserves finite products.
For every $ 0 \leq n \leq \infty$ the full subcategories $n\Cat \subset \infty\Cat, n\Cat^\strict \subset \infty\Cat^\strict$ are generated under small colimits by the disks of dimension smaller $n+1.$

\end{theorem}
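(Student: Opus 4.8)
The plan is to reduce everything to a density statement and prove that by induction on categorical dimension, passing to the limit at the end. Recall that the restricted Yoneda embedding $\N_\Theta$ is fully faithful precisely when $\Theta$ is dense, i.e. when for every $\mC$ the canonical map $\colim_{T\in\Theta_{/\mC}}T\to\mC$ is an equivalence. Once density is known the remaining assertions are formal: since $\infty\Cat$ is presentable, $\N_\Theta$ automatically admits a left adjoint $L$ (the left Kan extension of the inclusion $\Theta\hookrightarrow\infty\Cat$), which preserves all colimits, and full faithfulness of the right adjoint gives $L\N_\Theta\simeq\id$. Then $L$ preserves finite products: $\N_\Theta$ preserves products (being a restricted Yoneda functor), so $\N_\Theta T_1\times\N_\Theta T_2=\N_\Theta(T_1\times T_2)$ and hence $L(\N_\Theta T_1\times\N_\Theta T_2)=T_1\times T_2=L\N_\Theta T_1\times L\N_\Theta T_2$; the general case follows by writing presheaves as colimits of representables and using that products commute with colimits both in $\Fun(\Theta^\op,\mS)$ and, by \cref{carclo}, in $\infty\Cat$. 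Finally, density implies $\Theta$ generates under colimits, and since each $\Theta$-object is built from disks by suspensions (\cref{suspi}) and wedges (pushouts), the disks of dimension $\le n$ generate the same subcategory as $\Theta_n:=\Theta\cap n\Cat$. The strict case is entirely parallel, replacing $\mS$ by $\Set$.

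For the induction I set $\Theta_n:=\Theta\cap n\Cat$ and prove that $\Theta_n$ is dense in $n\Cat$ with $n\Cat$ generated under colimits by $\bD^0,\dots,\bD^n$. The base case $n=0$ is trivial ($0\Cat=\mS$, $\Theta_0=\{\ast\}$, giving the identity nerve). For the step I would identify $(n+1)\Cat=\Cat(n\Cat;\mS)\simeq{_{n\Cat}\Cat}$ and apply \cref{denseinherited} with $\mW=n\Cat$, which is cartesian closed by \cref{carclo}. One cannot take $\mV=\Theta_n$ directly, since \cref{denseinherited} requires $\mV$ to be closed under the monoidal product whereas $\Theta_n$ is not closed under $\times$; so I take $\mV=\Theta_n^\times$, the closure of $\Theta_n$ under finite products, which is small, contains the unit $\bD^0$, is closed under $\times$ by construction, and is dense because it contains the dense subcategory $\Theta_n$ (a full subcategory containing a dense one is dense). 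Then \cref{denseinherited} makes $\Theta(\Theta_n^\times)$ dense in $(n+1)\Cat$, and it remains to descend to $\Theta_{n+1}$. Here the key input is that the suspension $S$ is a left adjoint into bipointed enriched categories (\cref{suspi}, \cref{susp}(6)) and hence preserves colimits: for a generator $S(B_1)\vee\cdots\vee S(B_k)$ of $\Theta(\Theta_n^\times)$, writing each $B_i\in\Theta_n^\times$ as a colimit of $\Theta_n$-objects (inductive density) exhibits it as a colimit of objects $S(C_1)\vee\cdots\vee S(C_k)\in\Theta_{n+1}$, since $S$ preserves colimits and wedges are pushouts. Thus every object of the dense category $\Theta(\Theta_n^\times)$ is a colimit of $\Theta_{n+1}$-objects, which forces $\Theta_{n+1}$ to be dense; the same computation run with disks shows $\bD^0,\dots,\bD^{n+1}$ generate $(n+1)\Cat$, completing the induction.

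For the limit, generation is immediate: by \cref{decom} every $\mC$ is $\colim_n\iota_n\mC$ with $\iota_n\mC\in n\Cat$, and the embedding $n\Cat\hookrightarrow\infty\Cat$, being a left adjoint, preserves colimits, so each $\iota_n\mC$, and hence $\mC$, is a colimit of disks. For density of $\Theta=\bigcup_n\Theta_n$ I would show directly that $\colim_{T\in\Theta_{/\mC}}T\to\mC$ is an equivalence. The crucial observation is that a map $T\to\mC$ out of an $n$-dimensional object $T\in\Theta_n$ factors uniquely through the $n$-core $\iota_n\mC$ (as $T$ has no nondegenerate cells above dimension $n$), so $(\Theta_n)_{/\mC}\simeq(\Theta_n)_{/\iota_n\mC}$ and consequently $\Theta_{/\mC}\simeq\colim_n(\Theta_n)_{/\iota_n\mC}$. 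Commuting colimits, together with the finite-dimensional density $\iota_n\mC\simeq\colim_{(\Theta_n)_{/\iota_n\mC}}T$ and \cref{decom}, then yields $\colim_{\Theta_{/\mC}}T\simeq\colim_n\iota_n\mC\simeq\mC$.

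The main obstacle is the inductive step, specifically the reconciliation of \cref{denseinherited} — which only produces density of the product-closed object $\Theta(\Theta_n^\times)$ — with the genuinely smaller category $\Theta_{n+1}$. This is exactly where the structural input is used: that suspension preserves colimits, so that the extra generators $S(A\times B)$ dissolve into colimits of $\Theta_{n+1}$-objects, and that density descends along such colimit presentations. The limit passage, by contrast, is comparatively routine once the dimension-factorization property of the cores $\iota_n$ is in hand, and the strict case requires only the set-based analogues of the enrichment comparison.
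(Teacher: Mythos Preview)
Your inductive step has a real gap at the descent from $\Theta(\Theta_n^\times)$ to $\Theta_{n+1}$. You assert that because every object of the dense subcategory $\Theta(\Theta_n^\times)$ is a colimit of $\Theta_{n+1}$-objects, $\Theta_{n+1}$ itself must be dense. This implication is false in general: generation under colimits is strictly weaker than density. The classical counterexample is $\{[0],[1]\}\subset\Delta\subset\Cat$; every $[n]$ is a colimit of copies of $[0]$ and $[1]$ (its spine), $\Delta$ is dense in $\Cat$, yet $\{[0],[1]\}$ is not dense --- the restricted Yoneda lands in reflexive graphs and forgets composition. What you would actually need is that $\Theta_{n+1}$ is dense \emph{in} $\Theta(\Theta_n^\times)$ (so that the two restricted Yoneda embeddings compose to a fully faithful functor), and your colimit presentation does not establish this. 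The phrase ``density descends along such colimit presentations'' in your final paragraph is exactly the unjustified step.

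For comparison, the paper does not attempt an internal inductive proof of the finite-dimensional density at all. It simply cites the literature: full faithfulness of $n\Cat\to\Fun(\Theta_n^\op,\mS)$ is taken from Haugseng and Hinich, and finite-product preservation of the left adjoint from Rezk's work on $\Theta_n$-spaces. The paper then passes to the limit by writing both sides as sequential limits along the core functors $\iota_n$ and using that limits of fully faithful functors are fully faithful; your limit passage via $\Theta_{/\mC}\simeq\colim_n(\Theta_n)_{/\iota_n\mC}$ and \cref{decom} is a correct alternative to this part. Your product-preservation argument for $L$ (reduce to representables, use cartesian closedness) is also fine and close in spirit to the paper's. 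So the limit step and the formal consequences are in good shape; it is only the finite-dimensional induction that fails, and \cref{denseinherited} by itself does not bridge the gap between $\Theta(\Theta_n^\times)$ and $\Theta_{n+1}$.
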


By \cite[Definition 2.6.25.]{GepnerHeine2026} for every $n \geq 0$ there is an oriented $n$-cube $\cube^n$
whose 1-truncation is $(\bD^1)^{\times n}.$

\begin{notation}\label{sqGray}

Let $\cube \subset \infty\Cat$ be the full subcategory of oriented cubes.
\end{notation}

The following is \cite[Theorem 2.8.12.]{GepnerHeine2026}:

\begin{theorem}

The full subcategory $\cube \subset \infty\Cat$ is dense.
    
\end{theorem}

By \cite[Definition 2.6.32.]{GepnerHeine2026} for every $n \geq 0$ there is an oriented $n$-simplex $\bDelta^n$
whose 1-truncation is the totally ordered set $[n]= \{0 < ...< n \}.$

\begin{notation}

Let $\bDelta \subset\infty\Cat$ be the full subcategory spanned by the oriented simplices.
  
Let $\bDelta^+ \subset\infty\Cat$ be the full subcategory spanned by the oriented simplices and the initial $\infty$-category.
  
\end{notation}

The following is \cite[Theorem 2.7.8.]{GepnerHeine2026}:

\begin{theorem}

The full subcategory $\bDelta\subset\infty\Cat$ is dense.
    
\end{theorem}

\newpage

\vspace{.25cm}
\section{Oriented categories}
\vspace{.25cm}

\subsection{The Gray tensor product}



By \cite[Definition 2.6.23.]{GepnerHeine2026} there is a canonical monoidal structure on $\cube$ whose tensor unit is
$\bD^0$ and such that $\cube^n \boxtimes \cube^m= \cube^{n+m}$ for every $n,m \geq 0.$


We have the following result of Campion \cite{campion2022cubesdenseinftyinftycategories}, which is also proven in \cite[Corollary 3.4.2.]{GepnerHeine2026}:

\begin{corollary}\label{locmon2}
Let $\cube \subset \infty\Cat$ denote the full subcategory of oriented cubes, equipped with the Gray monoidal structure.
\begin{enumerate}[\normalfont(1)]\setlength{\itemsep}{-2pt}
\item The convolution monoidal structure descends along 
the localization $$\mP(\cube) \rightleftarrows \infty\Cat.$$

\item The convolution monoidal structure descends along 
the localization $$ \mP_\Set(\cube) \rightleftarrows \infty\Cat^{\mathrm{strict}}.$$

\end{enumerate}

\end{corollary}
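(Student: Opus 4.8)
The plan is to deduce both statements immediately from \cref{locmon}, specialized to the full subcategory $\cube\subset\infty\Cat^\Steiner$ of oriented cubes. The only thing to check is that $\cube$ satisfies the two hypotheses of that theorem: that it is dense in $\infty\Cat$, and that the Gray monoidal structure restricts to it.

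Density of $\cube$ in $\infty\Cat$ is exactly \cref{cubicaldense}. For the restriction of the monoidal structure, I would argue as follows. The tensor unit of the Gray monoidal structure on $\infty\Cat^\Steiner$ is the final $\infty$-category $\bD^0$, and $\bD^0=\cube^0=(\cube^1)^{\boxtimes 0}$ lies in $\cube$. Moreover, by the very definition of the oriented $n$-cube as the $n$-fold Gray tensor product $\cube^n=(\cube^1)^{\boxtimes n}$, one has
\[
\cube^m\boxtimes\cube^n\simeq(\cube^1)^{\boxtimes m}\boxtimes(\cube^1)^{\boxtimes n}\simeq(\cube^1)^{\boxtimes(m+n)}\simeq\cube^{m+n},
\]
so that $\cube$ is closed under the Gray tensor product. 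Hence the Gray monoidal structure of $\infty\Cat^\Steiner$ restricts to the full subcategory $\cube$.

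With both hypotheses verified, statement (1) is \cref{locmon}(1) applied to $\mC=\cube$, giving that the convolution monoidal structure on $\mP(\cube)$ descends along the localization $\mP(\cube)\rightleftarrows\infty\Cat$, and statement (2) is \cref{locmon}(2) applied to $\mC=\cube$, giving the analogous descent along $\mP_\Set(\cube)\simeq\mP(\cube)\otimes\Set\rightleftarrows\infty\Cat^\strict$. Since the argument is a direct specialization of \cref{locmon}, there is no real obstacle; the only content beyond invoking that theorem is the observation that the category of oriented cubes contains the tensor unit and is closed under the Gray tensor product, both of which are built into the definition of the cubes as iterated Gray tensor products of $\cube^1=\bD^1$.
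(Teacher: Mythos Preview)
Your proof is correct and follows exactly the intended approach: the paper states this as an immediate corollary of \cref{locmon} specialized to $\mC=\cube$, and you have simply spelled out the verification of the hypotheses (density via \cref{cubicaldense}, closure under $\boxtimes$ by definition of the cubes).
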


\begin{remark}\label{stricto}
	
The monoidal localization $\pi_0 : \mS \leftrightarrows \Set$ gives rise to a monoidal localization
$$ \Fun(\cube^\op, \mS) \leftrightarrows \Fun(\cube^\op,\Set) $$ that descends to a monoidal localization
$\infty\Cat \leftrightarrows \infty\Cat^\strict$ for the Gray monoidal structures.
In particular, the embedding $\infty\Cat^\strict \subset \infty\Cat$ is lax monoidal for the Gray monoidal structures.
	
\end{remark}

\begin{notation}
	
Since the Gray tensor product defines a presentably monoidal structure on $\infty\Cat$, it is closed: for every $\infty$-category $\mC$ the functor $$ \mC \boxtimes (-): \infty\Cat \to \infty\Cat$$ admits a right adjoint $\Fun^\lax(\mC,-)$, and the functor $$ (-) \boxtimes \mC : \infty\Cat \to \infty\Cat$$ admits a right adjoint $\Fun^\oplax(\mC,-)$.
\end{notation}

\begin{definition}Let $\F,\G:\mC \to \mD $ be functors of $\infty$-categories.
\begin{enumerate}[\normalfont(1)]\setlength{\itemsep}{-2pt}
\item A lax natural transformation $\F \to \G$ is a morphism in $\Fun^\lax(\mC,\mD)$.

\item An oplax natural transformation $\F \to \G$ is a morphism in $\Fun^\oplax(\mC,\mD)$.
\end{enumerate}
\end{definition}

\begin{remark}
	
Let $\mC,\mD$ be $\infty$-categories. The canonical functors $\mC \to *, \mD \to *$
give rise to functors $\mC \boxtimes \mD \to \mC \boxtimes * \simeq \mC, \mC \boxtimes \mD \to * \boxtimes \mD \simeq \mD$ and so to a functor $\mC \boxtimes \mD \to \mC \times \mD.$
By adjointess the latter functor induces functors
$ \Fun(\mC,\mD)\to \Fun^\lax(\mC,\mD)$ and $\Fun(\mC,\mD)\to \Fun^\oplax(\mC,\mD).$
\end{remark}

\begin{remark}\label{lao}\label{grayspace}
If $\mC$ is an $\n$-category and $\mD$ an $\m$-category for $\n,\m \geq 0$,
then $\mC \boxtimes \mD$ is an $\n+\m$-category.
This holds since $\n\Cat$ is closed under small colimits in $\infty\Cat$ and $\n\Cat$ is generated under small colimits by the oriented $\ell$-cubes for $1 \leq \ell \leq \n$.
For $n=m=0$ one finds that the Gray-monoidal structure restricts to the 
full subcategory of spaces $\mS \subset \infty\Cat$.
The restricted Gray-monoidal structure on $\mS$ is the cartesian structure since $\mS$ is generated in $\infty\Cat$ under small colimits by the final category, the tensor unit for the Gray tensor product and the cartesian product.
Moreover the left adjoint $\tau_0: \infty\Cat \to \mS$ of the monoidal embedding $\mS \subset \infty\Cat$ is monoidal since
$\infty\Cat$ is generated by the oriented cubes under small colimits and
the image of any oriented cube under $\tau_0$ is contractible.

\end{remark}

The following is \cite[Proposition 3.4.8.]{GepnerHeine2026}: 
\begin{proposition}\label{dua}
	
There are canonical monoidal involutions
$$(-)^\op, (-)^\co: (\infty\Cat, \boxtimes) \simeq (\infty\Cat, \boxtimes)^\rev $$
refining the involutions of \cref{ruik} and restricting to 
monoidal involutions
$$(-)^\op, (-)^\op: (\infty\Cat^{\mathrm{strict}}, \boxtimes) \simeq (\infty\Cat^{\mathrm{strict}}, \boxtimes)^\rev.$$


\end{proposition}


\begin{corollary}\label{grayhoms}
Let $\mC,\mD \in \infty\Cat$.
There are canonical equivalences $$ \Fun^\oplax(\mC,\mD)^\op \simeq \Fun^\lax(\mC^\op,\mD^\op), $$$$\Fun^\oplax(\mC,\mD)^\co \simeq \Fun^\lax(\mC^\co,\mD^\co).$$	
\end{corollary}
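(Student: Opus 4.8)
The plan is to deduce both equivalences directly from the monoidal involutions of \cref{dua} together with the defining adjunctions of $\Fun^\lax$ and $\Fun^\oplax$. Conceptually, the point is that $\Fun^\lax(\mC,-)$ and $\Fun^\oplax(\mC,-)$ are the left and right internal hom objects of the closed monoidal category $(\infty\Cat,\boxtimes)$, and a monoidal equivalence onto the \emph{reverse} monoidal structure interchanges these two internal homs; since $(-)^\op$ and $(-)^\co$ are precisely such equivalences $(\infty\Cat,\boxtimes)\simeq(\infty\Cat,\boxtimes)^\rev$ by \cref{dua}, transporting $\Fun^\oplax$ through either involution produces $\Fun^\lax$ evaluated on the images of $\mC$ and $\mD$.

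To make this precise I would argue by the Yoneda lemma in $\infty\Cat$. For the first equivalence, let $Y$ be an arbitrary $\infty$-category and compute, naturally in $Y$,
\[
\begin{aligned}
\Map_{\infty\Cat}(Y,\Fun^\oplax(\mC,\mD)^\op)
&\simeq \Map_{\infty\Cat}(Y^\op,\Fun^\oplax(\mC,\mD))
\simeq \Map_{\infty\Cat}(Y^\op\boxtimes\mC,\mD)\\
&\simeq \Map_{\infty\Cat}((Y^\op\boxtimes\mC)^\op,\mD^\op)
\simeq \Map_{\infty\Cat}(\mC^\op\boxtimes Y,\mD^\op)\\
&\simeq \Map_{\infty\Cat}(Y,\Fun^\lax(\mC^\op,\mD^\op)).
\end{aligned}
\]
Here the first and third equivalences use that $(-)^\op$ is an involutive autoequivalence of $\infty\Cat$; the second uses the adjunction $(-)\boxtimes\mC\dashv\Fun^\oplax(\mC,-)$; the fourth uses the monoidal involution $(Y^\op\boxtimes\mC)^\op\simeq\mC^\op\boxtimes Y$ supplied by \cref{dua}; and the last uses the adjunction $\mC^\op\boxtimes(-)\dashv\Fun^\lax(\mC^\op,-)$. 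Since these equivalences are natural in $Y$, the Yoneda lemma yields $\Fun^\oplax(\mC,\mD)^\op\simeq\Fun^\lax(\mC^\op,\mD^\op)$.

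The second equivalence follows by the identical computation with $(-)^\op$ replaced everywhere by $(-)^\co$, now invoking the monoidal identity $(Y^\co\boxtimes\mC)^\co\simeq\mC^\co\boxtimes Y$ from the $(-)^\co$ clause of \cref{dua}.

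The only real subtlety is the bookkeeping: one must be careful that passing to the reverse monoidal structure exchanges the left action $\mC\boxtimes(-)$ with the right action $(-)\boxtimes\mC$, so that the internal hom $\Fun^\oplax$ is carried to $\Fun^\lax$ rather than back to $\Fun^\oplax$. Tracking this interchange correctly --- and correspondingly keeping the source variable $\mC$ and the target variable $\mD$ on the proper sides of the tensor at each step --- is where side errors could creep in, but no genuine difficulty arises beyond the formal manipulation of the two adjunctions and the monoidal involutions.
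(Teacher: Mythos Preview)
Your proof is correct and follows essentially the same approach as the paper: a Yoneda argument using the defining adjunctions of $\Fun^\oplax$ and $\Fun^\lax$ together with the monoidal involution $(A\boxtimes B)^\op\simeq B^\op\boxtimes A^\op$ from \cref{dua}. The only cosmetic difference is that the paper begins the chain at $\Map(\mB,\Fun^\oplax(\mC,\mD))$ and ends at $\Map(\mB^\op,\Fun^\lax(\mC^\op,\mD^\op))$, whereas you absorb the $(-)^\op$ on the test object into an extra first step; this is the same computation up to the substitution $Y=\mB^\op$.
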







By \cite[Theorem 3.7.2.]{GepnerHeine2026} there is the following suspension formula:

\begin{theorem}\label{thas2}
For every $\infty$-category $X$ there is a pushout square in $\infty\Cat:$
\[
\xymatrix{
X \coprod X \ar[r]\ar[d] & X \boxtimes \bD^1 \ar[d]\\
\bD^0 \coprod \bD^0 \ar[r] & S(X).
}
\]
\end{theorem}


Next we define join and slice.

\begin{notation} For every small category $\mC$ that admits an initial object let $ \mP_{\mathrm{red}}(\mC) \subset \mP(\mC) $  
be the full subcategory of presheaves on $\mC$
that are reduced, i.e. send the initial object to the final space.
\end{notation}

By \cite[Definition 2.6.30.]{GepnerHeine2026} the category $\bDelta^+$ carries a monoidal structure, the join,
whose tensor unit is the empty $\infty$-category and such that $$\bDelta^n \star \bDelta^m = \bDelta^{n+m+1}$$ for every $n,m \geq -1.$
The following is \cite[Corollary 3.5.8.]{GepnerHeine2026}:

\begin{theorem}\label{locmon4}

The join monoidal structure descends along the localization $\mP_{\mathrm{red}}(\bDelta^+) \rightleftarrows \infty\Cat. $

\end{theorem} 


We also define the antijoin:

\begin{definition}
Let $X, Y \in \infty\Cat.$ The antijoin of $X, Y$ is
$$X \bar{\star} Y := (X^{\co} \star Y^{\co})^{\co} .$$

\end{definition}

\begin{definition}Let $X$ be a small $\infty$-category.
\begin{enumerate}[\normalfont(1)]\setlength{\itemsep}{-2pt}
\item The oplax slice, or oplax over $\infty$-category, functor is the right adjoint $$ \infty\Cat_{X/ } \to \infty\Cat, \qquad (F:X \to Y) \mapsto Y_{//^\oplax F}$$ of the functor $ (-) \star X: \infty\Cat \to \infty\Cat_{X/ }.$

\item The lax coslice, or lax under $\infty$-category, functor is the right adjoint $$ \infty\Cat_{X/ } \to \infty\Cat,\qquad (F:X \to Y) \mapsto Y_{F//^\lax }$$ of the functor $ X \star (-): \infty\Cat \to \infty\Cat_{\X / }.$

\item The lax slice, or lax over $\infty$-category, functor is the right adjoint $$ \infty\Cat_{\X / } \to \infty\Cat,\qquad (F:X \to Y) \mapsto Y_{//^\lax F} := (Y^\co_{//^\oplax F^\co})^\co $$
of the functor $ (-) \bar{\star} X: \infty\Cat \to \infty\Cat_{X/ }.$

\item The oplax coslice, or oplax under $\infty$-category, functor is the right adjoint $$ \infty\Cat_{X/ } \to \infty\Cat,\qquad (F:X \to Y) \mapsto Y_{F//^\oplax}:=(Y^\co_{F^\co//^\lax })^\co $$
of the functor $ X \bar{\star} (-): \infty\Cat \to \infty\Cat_{\X / }.$

\end{enumerate}

\end{definition}

\begin{notation}Let $F: X \to Y $ be a functor.
If unspecified, $Y_{//F}$ will refer to the oplax slice, and $Y_{F//}$ will refer to the oplax coslice.
\end{notation}

\begin{lemma}Let $F: X \to Y $ be a functor.
There is a canonical equivalence of $\infty$-categories
$$ (Y_{//^\oplax F})^\op \simeq (Y^\op)_{F^\op//^\lax }.$$
\end{lemma}

\begin{proof}

There is a canonical equivalence 
$$ (X \star (-))^\op \simeq (-)^\op \star {X^\op}.$$

Hence there is the following canonical equivalence natural in $Z \in \infty\Cat:$
$$ \Map_{\infty\Cat}(Z, (Y_{//^\oplax F})^\op) \simeq  \Map_{\infty\Cat}(Z^\op, Y_{//^\oplax F}) \simeq \Map_{\infty\Cat_{X/}}(Z^\op \ast X, F)$$
$$ \simeq \Map_{\infty\Cat_{X^\op/}}(X^\op \ast Z, F^\op) \simeq  \Map_{\infty\Cat}(Z, (Y^\op)_{F^\op //^\lax }).$$

\end{proof}


\subsection{Enrichment in the Gray tensor product}
In the following we apply the theory of bienriched $\infty$-categories, as outlined in \cref{enr}, to the Gray monoidal structure on $\infty\Cat$.
See \cite{heine2024bienriched} for a detailed discussion of bienriched category theory.


\begin{definition}
\begin{enumerate}[\normalfont(1)]\setlength{\itemsep}{-2pt}
\item An oriented category is a category right enriched in $(\infty\Cat,\boxtimes).$

\item An oriented functor is a functor right enriched in $(\infty\Cat,\boxtimes).$

\item An antioriented category is a category left enriched in $(\infty\Cat,\boxtimes).$

\item An antioriented functor is a functor left enriched in $(\infty\Cat,\boxtimes).$

\item A bioriented category is a category bienriched in  $((\infty\Cat,\boxtimes), (\infty\Cat,\boxtimes))$.

\item A bioriented functor is a functor bienriched in  $((\infty\Cat,\boxtimes), (\infty\Cat,\boxtimes))$.

\end{enumerate}

\end{definition}




\begin{notation}
\begin{enumerate}[\normalfont(1)]\setlength{\itemsep}{-2pt}
\item Let $\mC$ be an oriented category and $\X,\Y \in \mC.$	
By the structure of a right $(\infty\Cat,\boxtimes)$-enriched category there is a right morphism $\infty$-category $\R\Mor_\mC(\X,\Y)$.

\item Let $\mC$ be an antioriented category and $\X,\Y \in \mC.$	
By the structure of a left $(\infty\Cat,\boxtimes)$-enriched category there
is a left morphism $\infty$-category $\L\Mor_\mC(\X,\Y)$.

\item Let $\mC$ be a bioriented category and $\X,\Y \in \mC.$	
By the structure of a $(\infty\Cat,\boxtimes)$-bienriched category there
is a morphism $\infty$-category $\Mor_\mC(\X,\Y) \in \infty\Cat \otimes \infty\Cat.$

\end{enumerate}
	
\end{notation}

\begin{remark}
If $\mC$ is an oriented category, then $\mC$ has an underlying category $(\iota_0)_!\mC$, where $\iota_0:\infty\Cat\to\mS$ denotes the core functor; equivalently, the underlying category of $\mC$ is given by restricting the morphism $\infty$-categories to their spaces of objects.
However, $\mC$ does not have an underlying $\infty$-category, because $\mC$ does note satisfy the interchange law; instead, $\mC$ satisfies an oriented version of the interchange law.
Given $1$-cells $f,f':X\to Y$ and $g,g':Y\to Z$ and $2$-cells $\alpha:f\Rightarrow f'$ and $\beta:g\Rightarrow g'$, viewed as $1$-cells $\bD^1\to\RMor_{\mC}(X,Y)$ and $\bD^1\to\RMor_{\mC}(Y,Z)$, we obtain map $\bD^1\boxtimes\bD^1\to\RMor_{\mC}(X,Y)\boxtimes\RMor_{\mC}(Y,Z)\to\RMor_{\mC}(X,Z)$, which we can picture as a lax commuting square
\[
\xymatrix{
g\circ f\ar[r]^{\id_g\circ\alpha}\ar[d]_{\beta\circ\id_f} & g\circ f'\ar[d]^{\beta\circ \id_{f'}}\\
g'\circ f\ar[r]_{\id_{g'}\circ\alpha}\ar@{=>}[ru] & g'\circ f'.
}
\]
The failure of this square (and its higher dimensional analogues, coming from mapping higher dimensional cells into the morphism $\infty$-categories, as in \cref{orientedinterchange}) to commute is precisely the obstruction to $\mC$ being an $\infty$-category (viewed as an oriented category).
See \cref{interchange} for a precise statement.
\end{remark}

\begin{remark}
If $\mC$ is an antioriented category, then given $1$-cells $f,f':X\to Y$ and $g,g':Y\to Z$ and $2$-cells $\alpha:f\Rightarrow f'$ and $\beta:g\Rightarrow g'$, viewed as $1$-cells $\bD^1\to\LMor_{\mC}(X,Y)$ and $\bD^1\to\LMor_{\mC}(Y,Z)$, we obtain a lax commuting square $\bD^1\,\bar{\boxtimes}\,\bD^1\to\LMor_{\mC}(X,Y)\,\bar{\boxtimes}\,\LMor_{\mC}(Y,Z)\to\LMor_{\mC}(X,Z)$, which we can picture as a lax commuting square
\[
\xymatrix{
g\circ f\ar[r]^{\id_g\circ\alpha}\ar[d]_{\beta\circ\id_f} & g\circ f'\ar[d]^{\beta\circ \id_{f'}}\ar@{=>}[ld]\\
g'\circ f\ar[r]_{\id_{g'}\circ\alpha} & g'\circ f'.
}
\]
\end{remark}

The previous remarks motivate the following definition:

\begin{definition}\label{orientedinterchange}
\begin{enumerate}[\normalfont(1)]\setlength{\itemsep}{-2pt}
\item
Let $\mC$ be an oriented category.
For any triple of objects $X,Y,Z$ of $\mC$, an $n$-cell 
$\alpha:\bD^n\to\RMor_{\mC}(Y,Z)$ and $m$-cell
$\beta:\bD^m\to\RMor_{\mC}(X,Y)$, the {\em oriented interchange} associated to $\alpha$ and $\beta$ is the map
\[
\bD^n\boxtimes\bD^m\xrightarrow{\alpha \boxtimes \beta}\RMor_\mC(Y,Z)\boxtimes\RMor_\mC(X,Y)\to\RMor_\mC(X,Z)
\]
obtained by applying the composition to the oriented Gray tensor product of $\alpha$ and $\beta$.
\item Let $\mC$ be an antioriented category.
For any triple of objects $X,Y,Z$ of $\mC$, an $n$-cell $\alpha:\bD^n\to\LMor_{\mC}(Y,Z)$ and $m$-cell $\beta:\bD^m\to\LMor_{\mC}(X,Y)$, the {\em antioriented interchange} associated to $\alpha$ and $\beta$ is the map
\[
\bD^n\,\boxtimes\,\bD^m\xrightarrow{\alpha \boxtimes \beta}\LMor_\mC(Y,Z)\,\boxtimes\,\LMor_\mC(X,Y)\to\LMor_\mC(X,Z)
\]
obtained by applying the composition to the antioriented Gray tensor product of $\alpha$ and $\beta$.
\end{enumerate}
    
\end{definition}

\begin{example}
The Gray monoidal structure on $\infty\Cat$ is closed and so endows $\infty\Cat$ as bienriched in $(\infty\Cat,\boxtimes).$ This way we see
$\infty\Cat$ as a large bioriented category, which we denote by $ \infty\fcat.$
Every full subcategory of $\infty\Cat$ inherits the structure of a bioriented category.

\end{example}	

\begin{definition}
We refer to morphims in $\boxtimes\Cat$ as {\em antioriented functors}, to morphims in $\Cat\boxtimes$ as {\em oriented functors}, and to morphims in $\boxtimes\Cat\boxtimes$ as {\em bioriented functors}.
\end{definition}

\begin{notation}

Let $$\Cat\boxtimes, \boxtimes\Cat, \boxtimes\Cat\boxtimes$$
be the respective categories of oriented categories, antioriented categories and bioriented categories.
\end{notation}







\begin{remark}
An oriented, antioriented, or bioriented category is {\em presentable} if the respective left, right, or bienriched category is presentable in the sense of enriched $\infty$-category theory, which is likewise a presentable category endowed with a closed left action, closed right action, or closed biaction of the enriching monoidal category or categories, respectively.
\end{remark}
		
		


\begin{notation}\emph{}
\begin{enumerate}[\normalfont(1)]\setlength{\itemsep}{-2pt}
\item Let $\mC,\mD \in \boxtimes\Cat$. Let $${\boxtimes\Fun}(\mC,\mD)$$ be the category of antioriented functors $\mC \to \mD.$	
		
\item Let $\mC,\mD \in \Cat\boxtimes$. Let $${\Fun\boxtimes}(\mC,\mD)$$ be the category of oriented functors $\mC \to \mD.$	
		
\item Let $\mC,\mD \in \boxtimes\Cat\boxtimes $. Let $${\boxtimes\Fun\boxtimes}(\mC,\mD)$$ be the category of bioriented functors $\mC \to \mD.$		

\end{enumerate}
\end{notation}

\begin{remark}
A final object of an oriented, antioriented, or bioriented category is an object $* $ such that the left morphism object, right morphism object, or morphism object of any object to $*$ is the final object.
Dually, we define an initial object. 
A zero object in an oriented, antioriented, or bioriented category is an initial and final object.
\end{remark}

\begin{definition}
Let $\mK\subset\Cat$ be a full subcategory.
We say that an oriented, antioriented, or bioriented category admits $\mK$-indexed colimits if it admits $\mK$-indexed conical colimits.
    
\end{definition}

\begin{remark}
We refer to adjunctions of oriented, antioriented, and bioriented categories as oriented, antioriented, or bioriented adjunctions.
\end{remark}

\begin{remark}\label{adj}\label{adj2}
\begin{enumerate}[\normalfont(1)]\setlength{\itemsep}{-2pt}
\item An antioriented (oriented) functor $\mC \to \mD$ admits a right adjoint if and only if it preserves left (right) tensors and the underlying functor admits a right adjoint.

\item A bioriented functor $\mC \to \mD$ admits a right adjoint if and only if it preserves left and right tensors and the underlying functor admits a right adjoint.

\item An antioriented (oriented) functor $\mC \to \mD$ admits a left adjoint if and only if it preserves left (right) cotensors and the underlying functor admits a left adjoint.
	
\item A bioriented functor $\mC \to \mD$ admits a left adjoint if and only if it preserves left and right cotensors and the underlying functor admits a left adjoint.

\end{enumerate}

\end{remark}

\begin{remark}\label{Grayspaces} By \cref{grayspace} there is a monoidal localization
$\tau_0: \infty\Cat \rightleftarrows \mS$ that induces the following bioriented localization by \cref{adj}:
$$  \tau_0: \infty\Cat \rightleftarrows \tau_0^*(\mS). $$ 
	
\end{remark}

\subsection{Oriented opposites and conjugates}

Next we define the appropriate notions of opposite oriented category.

\begin{definition}
    
Let
\begin{align*}
&(-)^\circ: {\Cat\boxtimes} \simeq {\boxtimes\Cat}\\
&(-)^\circ: {\boxtimes\Cat} \simeq {\Cat\boxtimes}\\
&(-)^\circ:  {\boxtimes\Cat\boxtimes} \simeq {\boxtimes\Cat\boxtimes}
\end{align*}
be the opposite enriched category involutions.	
	
\end{definition}

\begin{definition}
We define the following involutions, which reverse the even dimensional cells.
Note that we are also forced to reverse the orientation.
The equivalences of \cref{dua} give rise to the equivalences
\begin{align*}
&(-)^\co:= (-)^\op_!: {\boxtimes\Cat} \simeq {\Cat\boxtimes}\\
&(-)^\co:= (-)^\op_!: {\Cat\boxtimes} \simeq {\boxtimes\Cat}\\
&(-)^\co:= ((-)^\op, (-)^\op)_!: {\boxtimes\Cat\boxtimes} \simeq {\boxtimes\Cat\boxtimes}.
\end{align*}
\end{definition}

\begin{definition}
We define the following involutions, which reverse the odd dimensional cells.
Note that we are also forced to keep the orientation.
\begin{align*}
&(-)^\op:= (-)^\circ\circ (-)^\co_!: {\boxtimes\Cat} \simeq {\boxtimes\Cat}\\
&(-)^\op:= (-)^\circ\circ (-)^\co_!: {\Cat\boxtimes} \simeq {\Cat\boxtimes}\\
&(-)^\op :=(-)^\circ \circ ((-)^\co, (-)^\co)_!: {\boxtimes\Cat\boxtimes} \simeq {\boxtimes\Cat\boxtimes}
\end{align*}
\end{definition}

\begin{definition}
Combining the latter two types of involutions gives rise to the following sort of involution, which reverses all cells:
\begin{align*}
&{(-)^{\co\op}}:= {(-)^{\co}} \circ {(-)^{\op}} \simeq {(-)^{\op}} \circ{(-)^{\co}}:{\boxtimes\Cat} \simeq {\Cat\boxtimes}\\
&{(-)^{\co\op}}:= {(-)^{\co}} \circ {(-)^{\op}} \simeq {(-)^{\op}} \circ {(-)^{\co}}:{\Cat\boxtimes} \simeq {\boxtimes\Cat}\\
&{(-)^{\co\op}}:= {(-)^{\co}} \circ {(-)^{\op}} \simeq {(-)^{\op}} \circ {(-)^{\co}}: {\boxtimes\Cat\boxtimes} \simeq {\boxtimes\Cat\boxtimes},
\end{align*}
where all equivalence are involutions.
\end{definition}

Combining all three types of involutions gives rise to the following sort of involution, which reverses all positively dimensional cells.

\begin{definition}
Moreover we set
\begin{align*}
&{(-)^{\cop}}:=  {(-)^{\co\op}} \circ {(-)^{\circ}} \simeq {(-)^{\circ}} \circ {(-)^{\co\op}} \simeq (-)^{\co\op}_!: {\boxtimes\Cat} \simeq {\boxtimes\Cat}\\
&{(-)^{\cop}}:=  {(-)^{\co\op}} \circ {(-)^{\circ}} \simeq {(-)^{\circ}} \circ {(-)^{\co\op}} \simeq (-)^{\co\op}_!: {\Cat\boxtimes} \simeq {\Cat\boxtimes}\\
&{^{\cop}(-)}:= ((-)^{\co\op}, \id)_!: {\boxtimes\Cat\boxtimes} \simeq {\boxtimes\Cat\boxtimes},\\
&(-)^\cop:= (\id, (-)^{\co\op})_!: {\boxtimes\Cat\boxtimes} \simeq {\boxtimes\Cat\boxtimes},\\
&{^{\cop}(-)}^\cop:=(-)^\cop \circ  {^{\cop}(-)} \simeq {^{\cop}(-)}\circ (-)^\cop \simeq ((-)^{\co\op}, (-)^{\co\op})_! \simeq {(-)^{\co\op}} \circ {(-)^{\circ}} \\
&\simeq {(-)^{\circ}} \circ {(-)^{\co\op}}: {\boxtimes\Cat\boxtimes} \simeq {\boxtimes\Cat\boxtimes}.
\end{align*}
\end{definition}

The equivalences of \cref{dua} gives rise to the following equivalences of bioriented categories:
\begin{corollary}\label{duae}There are canonical equivalences of
bioriented categories:
\begin{align*}  
&(-)^\op: \infty\Cat^\co \simeq \infty\Cat\\
&(-)^\co: \infty\Cat^\op \simeq \infty\Cat^\circ\\
&(-)^{\co\op} : {^\cop\infty\Cat^{\cop}} \simeq \infty\Cat
\end{align*}
\end{corollary}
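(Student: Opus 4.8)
The plan is to deduce all three equivalences from \cref{dua}, which supplies the monoidal involutions $(-)^\op,(-)^\co:(\infty\Cat,\boxtimes)\simeq(\infty\Cat,\boxtimes)^\rev$, together with the functoriality of induction of enrichment (\cref{indres1}). The starting point is that $\infty\fcat$ is \emph{by definition} the self-bienrichment of $(\infty\Cat,\boxtimes)$ arising from the regular biaction of \cref{undact} (which is closed, cf. \cref{Biolk1}); concretely, its morphism object $\Mor_{\infty\fcat}(S,T)\in\infty\Cat\ot\infty\Cat$ represents $(X,Y)\mapsto\Map_{\infty\Cat}(X\boxtimes S\boxtimes Y,T)$, and its one-sided morphism objects are $\RMor_{\infty\fcat}(S,T)=\Fun^\lax(S,T)$ and $\LMor_{\infty\fcat}(S,T)=\Fun^\oplax(S,T)$. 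The whole corollary is then a matter of transporting this regular biaction along the involutions.

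The key lemma I would isolate is: for any monoidal involution $\tau$ of $(\infty\Cat,\boxtimes)$ onto its reverse, the induced functor $(\tau,\tau)_!$ carries the regular biaction to the regular biaction. I would prove this on morphism objects, using the morphism-object formula of \cref{indres1} (the bienriched analogue of the single-sided statement), which identifies $(\tau\ot\tau)\bigl(\Mor_{\infty\fcat}(S,T)\bigr)$ with the morphism object of $(\tau,\tau)_!\infty\fcat$ at the transported objects. Combining this with the representing formula above and the identities $(X\boxtimes Y)^\op\simeq Y^\op\boxtimes X^\op$ and $(X\boxtimes Y)^\co\simeq Y^\co\boxtimes X^\co$ — which are precisely the content of $(-)^\op,(-)^\co$ being monoidal onto the reverse — shows that the transported morphism object again represents $(X,Y)\mapsto\Map_{\infty\Cat}(X\boxtimes(-)\boxtimes Y,-)$, with objects relabelled by $\tau$. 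Thus $(\tau,\tau)_!\infty\fcat\simeq\infty\fcat$ as bioriented categories, the underlying equivalence being $\tau$.

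The three equivalences then follow by unwinding the definitions of the bioriented conjugation operations. For the first, $(-)^\co=((-)^\op,(-)^\op)_!$, so the lemma (with $\tau=(-)^\op$) gives $\infty\Cat^\co\simeq\infty\Cat$ with underlying equivalence $(-)^\op$. For the second, $(-)^\op=(-)^\circ\circ((-)^\co,(-)^\co)_!$; the lemma (with $\tau=(-)^\co$) gives $((-)^\co,(-)^\co)_!\infty\fcat\simeq\infty\fcat$, and composing with the enriched opposite $(-)^\circ$ yields $\infty\Cat^\op\simeq\infty\Cat^\circ$ with underlying equivalence $(-)^\co$. For the third, $^\cop(-)^\cop\simeq((-)^{\co\op},(-)^{\co\op})_!$, and since $(-)^{\co\op}$ is a monoidal self-equivalence of $(\infty\Cat,\boxtimes)$ preserving (rather than reversing) the tensor, the lemma applies directly to give $^\cop\infty\Cat^\cop\simeq\infty\Cat$ with underlying equivalence $(-)^{\co\op}$.

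The main obstacle is the bookkeeping between the reverse $(-)^\rev$ and the two-sided induction: because $(-)^\op$ and $(-)^\co$ reverse the monoidal structure, they swap the roles of the left and right enriching variables, and I must check that in the first and second cases this swap is correctly absorbed so that the output is a genuine bioriented self-equivalence (respectively an equivalence onto $\infty\Cat^\circ$) rather than a category living over the reversed enrichment. I would make this precise by tracking the one-sided morphism objects $\RMor$ and $\LMor$ in parallel with the two-sided $\Mor$, using \cref{grayhoms} to confirm that, e.g., $(-)^\co$ interchanges $\Fun^\lax$ and $\Fun^\oplax$ exactly as required to match the reindexing, thereby verifying that the abstract equivalences of bioriented categories are realized by the expected underlying functors.
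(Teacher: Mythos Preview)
Your proposal is correct and takes the same approach as the paper, which gives no proof beyond the introductory sentence ``The equivalences of Proposition~\ref{dua} give rise to the following equivalences of bioriented categories.'' You have simply made explicit what the paper leaves implicit: transport the regular bienrichment of $\infty\fcat$ along the monoidal involutions of \cref{dua} using the induction machinery of \cref{indres1}, and unwind the definitions of $(-)^\co$, $(-)^\op$, $^{\cop}(-)^{\cop}$ on bioriented categories. One small remark: your ``key lemma'' as stated covers involutions onto the reverse, but in the third case $(-)^{\co\op}$ is a monoidal \emph{self}-equivalence (not reversing), so you should state the lemma uniformly for any monoidal equivalence $\tau:(\infty\Cat,\boxtimes)\to(\infty\Cat,\boxtimes)$ or $(\infty\Cat,\boxtimes)^{\rev}$, with the $(-)^{\rev}$ reindexing absorbed accordingly --- you already flag this bookkeeping in your final paragraph, so this is only a matter of phrasing.
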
	




    


    





    


\subsection{Reduced oriented categories}

\begin{notation}\emph{}
\begin{enumerate}[\normalfont(1)]\setlength{\itemsep}{-2pt}
\item Let $\Cat\wedge $
be the category of weakly reduced oriented categories.	
\item Let $\wedge\Cat$
be the category of weakly reduced antioriented categories.
\item Let $\wedge\Cat\wedge $
be the category of weakly reduced bioriented categories.
\end{enumerate}
\end{notation}








\begin{example}

The Gray smash monoidal structure on $\infty\Cat_*$ is closed and endows $\infty\Cat_*$ as bienriched in $((\infty\Cat_*,\wedge), (\infty\Cat_*,\wedge)).$ This way we see
$\infty\Cat_*$ as a large reduced Gray-category, denoted by $ \infty\fcat_*.$

\end{example}	

\begin{notation}\emph{}
\begin{enumerate}[\normalfont(1)]\setlength{\itemsep}{-2pt}
\item Let $\mC,\mD \in \wedge\Cat$. Let $${\wedge\Fun}(\mC,\mD)$$ be the category of weakly reduced antioriented functors $\mC \to \mD.$	
\item Let $\mC,\mD \in \Cat\wedge$. Let $${\Fun\wedge}(\mC,\mD)$$ be the category of weakly reduced oriented functors $\mC \to \mD.$	
\item Let $\mC,\mD \in \wedge\Cat\wedge $. Let $${\wedge\Fun\wedge}(\mC,\mD)$$ be the category of weakly reduced bioriented functors $\mC \to \mD.$			
\end{enumerate}
\end{notation}





\begin{remark}
    
A final object of a weakly reduced antioriented, oriented, or bioriented category is an object $* $ such that the left morphism object, right morphism object, or morphism object of any object to $*$ is the final object.
Dually, we define an initial object.
A zero object of a weakly reduced antioriented, oriented, or bioriented category is an initial and final object.
\end{remark}

\begin{definition}
An antioriented, oriented, or bioriented category is reduced if it admits a zero object.
\end{definition}

The next proposition is \cref{red}:
\begin{proposition}\label{reduu}
The forgetful functors $${\wedge\Cat} \to \boxtimes\Cat, \ {\Cat\wedge} \to \Cat\boxtimes, \ {\wedge\Cat \wedge} \to {\boxtimes\Cat \boxtimes}$$
restrict to equivalences between the full subcategory of weakly reduced oriented, antioriented, bioriented categories 
that admit an initial or final object and the subcategory of reduced oriented, antioriented, bioriented categories and reduced oriented, antioriented, bioriented functors.
\end{proposition}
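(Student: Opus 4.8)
The plan is to deduce this directly from \cref{red} and \cref{reduct}, to which the remark preceding the statement already alludes. First I would specialize \cref{red} to the presentable monoidal category $\mV=\mW=(\infty\Cat,\boxtimes)$, whose associated smash monoidal structure is $(\infty\Cat_*,\wedge)$ by \cref{smasho}. The monoidal functor appearing in \cref{red} is then $(-)_+=(-)\coprod *\colon(\infty\Cat,\boxtimes)\to(\infty\Cat_*,\wedge)$, and restriction of enrichment along it is exactly the forgetful functors ${\wedge\Cat}\to{\boxtimes\Cat}$, ${\Cat\wedge}\to{\Cat\boxtimes}$, and ${\wedge\Cat\wedge}\to{\boxtimes\Cat\boxtimes}$ of the statement: by \cref{indres}(5) the underlying category is unchanged while each pointed morphism $\infty$-category is replaced by its underlying $\infty$-category via the right adjoint forgetful functor $\infty\Cat_*\to\infty\Cat$. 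Thus \cref{red} already supplies the equivalences
\[
\nu\colon{_{(\infty\Cat_*,\wedge)}\Enr}^\red\xrightarrow{\ \sim\ }{_{(\infty\Cat,\boxtimes)}\Enr}^\red
\]
together with the right-enriched and bienriched analogues, between the subcategories of weakly reduced and of reduced Gray-enriched categories \emph{admitting a zero object}, with zero-object-preserving functors on both sides.

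It then remains to match the source of these equivalences with the full subcategory described in the statement. For this I would invoke \cref{reduct} with $\mV=\mW=(\infty\Cat_*,\wedge)$, which admits a zero object since $\infty\Cat_*$ is pointed (the point $*$ is both initial and final, hence a zero object). \cref{reduct} then shows that a weakly reduced oriented, antioriented, or bioriented category admits a zero object as soon as it admits an initial or a final object. Consequently the full subcategory of weakly reduced categories admitting an initial or final object has the same objects as ${_{(\infty\Cat_*,\wedge)}\Enr}^\red$ and its right/bi variants.

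The main point, which I expect to be the only genuine obstacle, is reconciling the \emph{full} subcategory on the weakly reduced side with the subcategory of \emph{reduced functors} used in \cref{red}: one must show that every weakly reduced functor between weakly reduced categories admitting a zero object automatically preserves that zero object. Here the smash (pointed) enrichment is essential. A $(\infty\Cat_*,\wedge)$-enriched functor induces maps of \emph{pointed} morphism $\infty$-categories, hence preserves the distinguished zero morphisms (the basepoints) as well as identities. Since in this setting a zero object $z$ is detected by the condition that $\mathrm{id}_z$ coincides with the zero endomorphism of $z$ (both lying in $\Mor(z,z)\simeq *$), such a functor sends a zero object to a zero object. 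Therefore the full subcategory of weakly reduced categories with a zero object coincides, on morphisms as well as objects, with the reduced subcategory, and $\nu$ carries its morphisms to reduced antioriented (resp.\ oriented, bioriented) functors. Combining this identification with the equivalences of the first paragraph yields the claim; as in \cref{red}, the left-enriched case formally implies the right-enriched and bienriched cases by \cite[Corollary 4.32, Theorem 4.74]{heine2024bienriched}.
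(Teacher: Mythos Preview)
Your proof is correct and takes the same approach as the paper, which simply asserts that this proposition is \cref{red} specialized to $\mV=(\infty\Cat,\boxtimes)$. Your additional verification---that every $(\infty\Cat_*,\wedge)$-enriched functor between categories with zero objects automatically preserves the zero object, via the characterization that $z$ is zero if and only if $\id_z$ equals the basepoint of $\Mor(z,z)$---is valid and fills a detail needed to match the \emph{full} subcategory in the statement with the non-full subcategory ${}^{\red}$ appearing in \cref{red}, which the paper leaves implicit.
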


In view of \cref{reduu} we identify reduced oriented, antioriented, bioriented categories with weakly reduced oriented, antioriented, bioriented categories that admit an initial or final object, and use the term reduced oriented, antioriented, bioriented categories for such.
Similarly, we identify reduced oriented, antioriented, bioriented functors between reduced oriented, antioriented, bioriented categories with weakly reduced oriented, antioriented, bioriented functors, respectively.

\begin{remark}

We refer to adjunctions of (weakly) reduced oriented, antioriented, or bioriented categories as (weakly) reduced oriented, antioriented, or bioriented adjunctions.
\end{remark}

\subsection{Strict oriented categories}

\begin{definition}
We have the following notions of strict oriented categories.
\begin{enumerate}[\normalfont(1)]\setlength{\itemsep}{-2pt}
\item A strict oriented Gray-category is a category right enriched in $(\infty\Cat^\strict,\boxtimes).$

\item A strict antioriented Gray-category is a category left enriched in $(\infty\Cat^\strict,\boxtimes).$
		
\item A strict bioriented category is a category bienriched in $(\infty\Cat^\strict,\boxtimes), (\infty\Cat^\strict,\boxtimes).$

\item A strict oriented functor is a functor right enriched in $(\infty\Cat^\strict,\boxtimes).$

\item A strict antioriented functor is a functor left enriched in $(\infty\Cat^\strict,\boxtimes).$
			
\item A strict bioriented functor is a functor bienriched in $(\infty\Cat^\strict,\boxtimes), (\infty\Cat^\strict,\boxtimes).$	
\end{enumerate}
	
\end{definition}

	

\begin{notation}
Let
$$
{\Cat\boxtimes}^\strict, \ 
{\boxtimes\Cat}^\strict, \
{\boxtimes\Cat\boxtimes}^\strict
$$
be the respective categories of strict oriented categories and oriented functors, strict antioriented categories and antioriented functors, strict bioriented categories and bioriented functors.
	
\end{notation}

\begin{example}
The Gray monoidal structure on $\infty\Cat^\strict$ is closed and so exhibits $\infty\Cat^\strict$ as 
bienriched in $(\infty\Cat^\strict,\boxtimes), (\infty\Cat^\strict,\boxtimes),$ that is a strict bioriented category.
	
\end{example}

\begin{remark}
By \cref{stricto} there is a monoidal localization $\infty\Cat \rightleftarrows \infty\Cat^\strict$
whose right adjoint is the embedding.
The latter monoidal adjunction gives rise to localizations
$$ {\boxtimes\Cat} \rightleftarrows {\boxtimes\Cat}^\strict,\qquad {\Cat\boxtimes} \rightleftarrows {\Cat\boxtimes}^\strict,\qquad {\boxtimes\Cat\boxtimes} \rightleftarrows {\boxtimes\Cat\boxtimes}^\strict. $$
	
\end{remark}

For the proof of \cref{alfa} we use the following lemma:

\begin{lemma}\label{strict}
Let $\mV,\mW $ be monoidal (1,1)-categories and $\mC,\mD$ categories bienriched in $(\mV,\mW)$ (necessarily (1,1)-categories).
Let $\F: \mC \to \mD$ be a left $\mV$-enriched functor and $\F': \mC \to \mD$ a right $\mW$-enriched functor such that $\F=\F'$ as functors $\mC \to \mD.$
Then $\F=\F':\mC \to \mD$ uniquely refines to a $(\mV,\mW)$-enriched functor whose underlying left $\mV$-enriched functor is $\F$ and whose underlying right $\mW$-enriched functor is $\F'$
if and only if for every $V \in \mV, X \in \mC, W \in \mW$ the following canonical square commutes:
\begin{equation}\label{bicomu}\begin{xy}
\xymatrix{
V \ot \F(X) \ot W   \ar[d]^{} \ar[r]^{}
& \F(V\ot X) \ot W \ar[d]^{}
\\ 
V \ot \F(X \ot W)
\ar[r]^{} & \F(V\ot X\ot W).
}
\end{xy}\end{equation}
	
\end{lemma}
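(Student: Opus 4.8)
The plan is to unwind what a $(\mV,\mW)$-enriched functor amounts to in the strict setting. Since $\mV,\mW,\mC,\mD$ are all $(1,1)$-categories, a bienriched structure on the underlying functor $\F$ is the data of two families of comparison maps, the left comparisons $V\ot\F(X)\to\F(V\ot X)$ and the right comparisons $\F(X)\ot W\to\F(X\ot W)$, together with their compatibilities. The left comparisons and their coherences are exactly the datum exhibiting $\F$ as a left $\mV$-enriched functor, and the right comparisons are exactly the datum exhibiting $\F'$ as a right $\mW$-enriched functor. As $\F=\F'$, both pure families are already supplied, so the only genuinely new requirement is how the left and right comparisons interact, and I will show this collapses to the single square \eqref{bicomu}. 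In particular, the bienriched refinement, if it exists, is unique: its value on every multimorphism is forced by its values on purely-left and purely-right multimorphisms, which are pinned down by the given structures of $\F$ and $\F'$.

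For the ``only if'' direction, suppose $\F$ refines to a bienriched functor $\F^\circledast:\mC^\circledast\to\mD^\circledast$ over $\mV^\ot\times\mW^\ot$. The object $V\ot X\ot W$ is obtained from $(V,X,W)$ by a cocartesian lift of a morphism preserving the maximum in the first coordinate and the minimum in the second, and this morphism factors in two ways through an intermediate stage (forming $V\ot X$ first, or $X\ot W$ first). Applying $\F^\circledast$, which preserves such cocartesian lifts, to the two factorizations yields precisely the top-then-right and left-then-bottom composites of \eqref{bicomu}; since both compute the image of one and the same morphism, the square commutes.

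For the ``if'' direction, I would construct $\F^\circledast$ by declaring its value on the cocartesian lift of a first-maximum/second-minimum preserving morphism to be the composite of the given left and right comparison maps, in either order; commutativity of \eqref{bicomu} guarantees the two orders agree, so the assignment is well defined on generators, while every morphism of $\mC^\circledast$ factors as such a lift followed by a fiber morphism, forcing both the value and uniqueness of $\F^\circledast$. The main obstacle is checking functoriality, i.e. that the interchange squares attached to arbitrary tuples $(V_1,\dots,V_n,X,W_1,\dots,W_m)$ commute. I expect to resolve this by a pasting argument: each such square decomposes into a grid whose tiles are either copies of the basic square \eqref{bicomu} or the monoidal-coherence squares already supplied by the left- and right-enriched structures of $\F$ and $\F'$, and an induction on $n$ and $m$ reduces the whole compatibility to the case $n=m=1$. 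Because every category in sight is a $(1,1)$-category there is no higher coherence data to match, so commutativity of \eqref{bicomu} is exactly the condition needed. Alternatively, one can embed $\mC$ and $\mD$ via the bienriched Yoneda embedding into presentable bitensored categories, where a tensor-preserving bienriched functor is simply an underlying functor equipped with a single coherent identification $\F(V\ot X\ot W)\simeq V\ot\F(X)\ot W$, read off the square there, and then restrict back.
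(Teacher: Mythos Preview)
Your argument is essentially correct and reaches the same conclusion, but the paper takes a shorter route. Rather than working directly with cocartesian lifts and a pasting argument, the paper invokes the operadic description of enriched functors from \cite[Propositions 3.31, 3.42]{HEINE2023108941}: a left $\mV$-enriched, right $\mW$-enriched, or $(\mV,\mW)$-enriched structure on $F$ is a map of $LM$-, $RM$-, or $BM$-operads, respectively. Because everything is $(1,1)$-categorical these operads are discrete, so such a map is literally the data of the comparison isomorphisms $V\ot F(X)\to F(V\ot X)$ and $F(X)\ot W\to F(X\ot W)$ together with their associativity conditions; the $BM$-case adds precisely the single interchange square \eqref{bicomu} on top of the $LM$- and $RM$-data. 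This packages your pasting reduction from arbitrary $(n,m)$ to $(1,1)$ into the cited black box, so no induction is needed.

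Your hands-on approach has the advantage of being self-contained, but the step ``every morphism of $\mC^\circledast$ factors as such a lift followed by a fiber morphism, forcing both the value and uniqueness of $\F^\circledast$'' together with the pasting argument is exactly what the discrete-operad description encodes; you would in effect be reproving the relevant part of the cited propositions. The Yoneda alternative you mention is not used in the paper and would require additional care to ensure the embedding respects all the bienriched structure in the non-presentable setting.
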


\begin{proof}
	
By \cite[Proposition 3.31]{HEINE2023108941} the structure of a left $\mV$-enriched functor $\mC \to \mD$ on a functor $\F: \mC \to \mD$ is a map of $L\M$-operads $\phi: \mO^\ot \to \mU^\ot$ inducing $\F$ on the fiber over the color $\mathfrak{m} $ and the identity of $\mV$ on the fiber over $\mathfrak{a}$.
Since $\mV$ is a (1,1)-category and source and target of $\phi$ exhibit $\mC,\mD$ as left $\mV$-enriched, $\mO^\ot, \mU^\ot$ are (1,1)-categories.
Thus $\phi$ is a map of discrete $L\M$-operads that exhibits $\F$ as left $\mV$-enriched.
The latter is the datum of a natural isomorphism $V \ot \F(X) \to \F(V\ot X) $ for $X \in \mC, V \in \mV$ such that for any $V,V' \in \mV$ the isomorphism $V' \ot V \ot \F(X) \to \F(V' \ot V \ot X)$ factors as \begin{equation}\label{hit}
V' \ot V \ot \F(X) \to V' \ot \F(V\ot X) \to \F(V'\ot V \ot X).\end{equation}

Dually, the structure of a right $\mW$-enriched functor $\mC \to \mD$ on a functor $\F': \mC \to \mD$ is a map of $R\M$-operads $\phi: \mO^\ot \to \mU^\ot$ inducing $\F'$ on the fiber over $\mathfrak{m} $ and the identity of $\mW$ on the fiber over $\mathfrak{b}$.
Since $\mW$ is a (1,1)-category and source and target of $\phi$ exhibit $\mC,\mD$ as right $\mW$-enriched, $\mO^\ot, \mU^\ot$ are (1,1)-categories.
Hence $\phi$ is a map of discrete $R\M$-operads that exhibits $\F'$ as right $\mW$-enriched.
The latter is the datum of a natural isomorphism $\F(X) \ot W \to \F(X\ot W)$ for $X \in \mC, W \in \mW$ such that for any $W,W' \in \mW$ the isomorphism $\F(X)\ot W \ot W' \to \F(X \ot W \ot W') $ factors as 
\begin{equation}\label{hil}
\F(X)\ot W \ot W' \to\F(X \ot W) \ot W' \to \F(X \ot W \ot W'). \end{equation} 

By \cite[Proposition 3.42]{HEINE2023108941} the structure of a $(\mV,\mW)$-enriched functor $\mC \to \mD$ on a functor $\F=\F': \mC \to \mD$ is a map of $B\M$-operads $\phi: \mO^\ot \to \mU^\ot$ inducing $\F=\F' $ on the fiber over $\mathfrak{m} $, the identity of $\mV$ on the fiber over $\mathfrak{a}$ and the identity of $\mW$ on the fiber over $\mathfrak{b}.$
Since $\mV,\mW$ are (1,1)-categories and source and target of $\phi$ exhibit $\mC,\mD$ as bienriched in $(\mV,\mW)$, also $\mO^\ot, \mU^\ot$ are (1,1)-categories.
Hence $\phi$ is a map of discrete $B\M$-operads that exhibits $\F=\F'$ as $(\mV,\mW)$-enriched.
The latter is the datum of natural isomorphisms $V \ot \F(X) \to \F(V\ot X), \F(X) \ot W \to \F(X\ot W)$ for $X \in \mC, V \in \mV, W \in \mW$ such that 
$V' \ot V \ot \F(X) \to \F(V'\ot V \ot X)$ factors as \ref{hit} and $\F(X)\ot W \ot W' \to \F(X \ot W \ot W') $ factors as \ref{hil} for any $V,V' \in \mV, W,W' \in \mW$ and such that square \ref{bicomu} commutes.
\end{proof}

\begin{proposition}\label{alfa}
\begin{enumerate}[\normalfont(1)]\setlength{\itemsep}{-2pt}
\item The suspension functor $S: \infty\Cat^\strict \to \infty\Cat^\strict_{\partial\bD^1/}$ refines to a strict antioriented functor. 
For every strict $\infty$-categories $\mA, \mB$ the canonical functor
$$ \alpha: \mA \boxtimes S(\mB) \to S(\mA\boxtimes \mB)$$ is the canonical functor
$$ \mA \boxtimes (\mB \boxtimes \bD^1 \coprod_{\mB \boxtimes \partial \bD^1} \partial \bD^1)
\cong \mA \boxtimes \mB \boxtimes \bD^1 \coprod_{\mA \boxtimes \mB \boxtimes \partial \bD^1} \mA \boxtimes \partial \bD^1 \to \mA \boxtimes \mB \boxtimes \bD^1 \coprod_{\mA \boxtimes \mB \boxtimes \partial \bD^1} \partial \bD^1$$ induced by the functor $\mA \to *.$
\item The antisuspension functor $\bar{S}: \infty\Cat^\strict \to \infty\Cat^\strict_{\partial\bD^1/}$ refines to a strict oriented functor.
For every strict $\infty$-categories $\mB, \mC$ the canonical functor
$$ \beta: \bar{S}(\mB)\boxtimes \mC \to \bar{S}(\mB\boxtimes \mC)$$ is the canonical functor
$$ (\bD^1\boxtimes \mB \coprod_{\partial \bD^1\boxtimes \mB} \partial \bD^1) \boxtimes \mC
\cong \bD^1 \boxtimes \mB \boxtimes \mC \coprod_{\partial \bD^1\boxtimes \mB \boxtimes \mC} \partial \bD^1 \boxtimes \mC \to \bD^1 \boxtimes \mB \boxtimes \mC \coprod_{\partial \bD^1 \boxtimes \mB \boxtimes \mC} \partial \bD^1$$ induced by the functor $\mC \to *.$
\item There is a canonical isomorphism of functors $S \cong \bar{S} \circ (-)^{\co\op}: \infty\Cat^\strict \to \infty\Cat^\strict_{\partial\bD^1/}$ and the resulting strict antioriented and oriented functor $$S \cong \bar{S} \circ (-)^{\co\op}: (\infty\Cat^\strict)^\cop \to \infty\Cat^\strict_{\partial\bD^1/}$$ refines to a strict bioriented functor. 

\item There is a canonical isomorphism of strict bioriented functors $$S \circ (-)^{\co\op} \cong (-)^{\co\op}  \circ S: {^\cop\infty\Cat^\strict} \to \infty\Cat^\strict_{\partial\bD^1/}.$$

\end{enumerate}		
\end{proposition}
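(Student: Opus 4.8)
The plan is to construct the enriched structures by hand from the suspension formula \cref{thas2} and then exploit the conjugation symmetry of the Gray tensor product, deferring all bienriched coherence to \cref{strict}. The key simplification is that $(\infty\Cat^\strict,\boxtimes)$ is a monoidal $(1,1)$-category, so every category bienriched in it---in particular $\infty\Cat^\strict$ and the coslice $\infty\Cat^\strict_{\partial\bD^1/}$---is a $(1,1)$-category and the enriched-functor data is discrete, with no coherence beyond the squares named in \cref{strict}. For part (1) I would begin from \cref{thas2}, which presents $S(\mB)$ as the pushout $\partial\bD^1\coprod_{\mB\boxtimes\partial\bD^1}\mB\boxtimes\bD^1$. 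Since $\mA\boxtimes(-)$ preserves colimits, tensoring on the left gives $\mA\boxtimes S(\mB)\simeq\mA\boxtimes\partial\bD^1\coprod_{\mA\boxtimes\mB\boxtimes\partial\bD^1}\mA\boxtimes\mB\boxtimes\bD^1$, and the functor induced by $\mA\to\ast$ in the left-hand corner is exactly the map $\alpha$ of the statement, with target $S(\mA\boxtimes\mB)$. Naturality and associativity of $\alpha$ (the two ways of collapsing an iterated $\mA\boxtimes\mA'$) are immediate from cocontinuity and functoriality of $\boxtimes$, and because the enrichment is over a $1$-category this associativity is the whole of a left $(\infty\Cat^\strict,\boxtimes)$-enriched structure. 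Part (2) I would then deduce formally by conjugating with the monoidal involution $(-)^\co\colon(\infty\Cat^\strict,\boxtimes)\simeq(\infty\Cat^\strict,\boxtimes)^\rev$ of \cref{dua}: as $\bar{S}=(-)^\co\circ S\circ(-)^\co$ by \cref{susa}, the structure $\alpha$ transports to a right-enriched structure $\beta$, whose explicit form is the image of $\alpha$ under $(-)^\co$, namely the collapse induced by $\mC\to\ast$.

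For part (3) I would first record the identification $S\cong\bar{S}\circ(-)^{\co\op}$: by \cref{susa} and \cref{sw} one has $\bar{S}(\mC^{\co\op})=S((\mC^{\co\op})^\co)^\co=S(\mC^\op)^\co\simeq S(\mC)$. Hence the single underlying functor $S$ carries both the antioriented structure $\alpha$ of (1) and, via this identification together with (2), an oriented structure $\beta$ relative to the $\cop$-twisted source $(\infty\Cat^\strict)^\cop$; the $\cop$-twist is exactly what aligns the two orientation conventions. To merge these into one bioriented functor I would invoke \cref{strict}: since all the categories involved are $(1,1)$-categories, it suffices to verify the single compatibility square \ref{bicomu}, which here compares $\mA\boxtimes S(\mB)\boxtimes\mC$ with $S(\mA\boxtimes\mB\boxtimes\mC)$, its horizontal edges given by $\alpha$ and its vertical edges by $\beta$. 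Both composites are the functor collapsing $\mA\to\ast$ and $\mC\to\ast$, and these two collapses commute by functoriality of $\boxtimes$, so \cref{strict} yields the desired bioriented refinement.

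For part (4) the natural isomorphism is furnished by \cref{sw}, which gives $S(\mC^{\co\op})\simeq S(\mC)^{\co\op}$ (switching the two objects of the underlying space). Both $S\circ(-)^{\co\op}$ and $(-)^{\co\op}\circ S$ are bioriented functors by part (3) together with the fact that the involutions $(-)^{\co\op}$ are equivalences of (strict) bioriented categories, induced by the monoidal involutions of \cref{dua} (cf.\ \cref{duae}). It then remains to check that the isomorphism of \cref{sw} intertwines the two bioriented structures; by $(1,1)$-categorical discreteness this is once more a finite verification on the structure maps $\alpha$ and $\beta$, comparing their images under $(-)^\co$ and $(-)^\op$ and using the monoidality of these involutions for $\boxtimes$.

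The main obstacle is not any individual computation---each square that appears reduces to cocontinuity and functoriality of the Gray tensor product---but the bookkeeping of orientations. One must track precisely how $(-)^\op$, $(-)^\co$, $(-)^{\co\op}$ and the $\cop$-twist act on the left and right enrichments, so that the left-enriched structure on $S$ and the right-enriched structure inherited from $\bar{S}$ are stated over the correctly twisted source; this is exactly the hypothesis that lets \cref{strict} apply. Ensuring these conventions are mutually consistent---so that the compatibility square \ref{bicomu} is literally the square obtained by collapsing $\mA$ and $\mC$ independently---is the delicate part of the argument.
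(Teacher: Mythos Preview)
Your approach to (1) and (2) is sound and close in spirit to the paper's, though the paper expresses $S$ directly as a pushout in the category of strict antioriented functors (the span $(-)\boxtimes\partial\bD^1\to(-)\boxtimes\bD^1$ against $\id\coprod\id\to *\coprod *$) and then invokes \cref{bien}, rather than constructing $\alpha$ by hand and verifying associativity. Your conjugation argument for (2) via $(-)^\co$ is a legitimate shortcut; the paper instead repeats the pushout-of-oriented-functors argument dually.

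The gap is in (3). You claim that the two paths in the compatibility square \eqref{bicomu} are ``the functor collapsing $\mA\to\ast$ and $\mC\to\ast$'', so they commute by functoriality of $\boxtimes$. But the right-enriched structure $\beta'$ on $S$ is not a collapse in the $S$-pushout presentation: there is no natural map $\mB\boxtimes\bD^1\boxtimes\mC\to\mB\boxtimes\mC\boxtimes\bD^1$ since $\boxtimes$ is not symmetric. Rather, $\beta'$ is defined by transporting $\beta$ through the identification $S(\mB)\cong\bar{S}(\mB^{\co\op})$, and the target is $S(\mB\boxtimes\mC^{\co\op})$, not $S(\mB\boxtimes\mC)$---your statement of the square omits this twist. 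Verifying the square therefore requires checking that the abstract identification from \cref{sw} interacts correctly with the two distinct pushout presentations (one with $\bD^1$ on the right, one with $\bD^1$ on the left), and this is not a formal consequence of functoriality of $\boxtimes$. The paper handles this by reducing to oriented cubes via cubical density (\cref{hal}), where the identification $S(A)\cong\bar{S}(A^{\co\op})$ on Steiner complexes is literally the identity of $A[1]\oplus\bZ\oplus\bZ$ (\cref{isomo}), and then computing both $\alpha$ and $\beta'$ explicitly as maps of augmented directed complexes induced by augmentations; the square \eqref{rrr} then visibly commutes. The same issue recurs in (4): the ``finite verification'' you allude to is again the explicit Steiner-complex comparison on cubes, which is the actual content.
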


\begin{proof}
(1): The functor $S: \infty\Cat^\strict \to \infty\Cat^\strict_{\partial\bD^1/}$ is the pushout in the category of strict antioriented functors $  \infty\Cat^\strict \to \infty\Cat^\strict$
of the strict antioriented functors $\id \coprod \id \cong (-)\boxtimes \partial \bD^1 \to (-)\boxtimes \bD^1 $
and $\id \coprod \id \to * \coprod *.$

This gives the description of the map in (1). We apply \cref{bien}.
(2): The functor $\bar{S}$ is the pushout in the category of strict oriented functors $ \infty\Cat^\strict \to \infty\Cat^\strict$ of the strict oriented functors $\id \coprod \id \cong \partial \bD^1 \boxtimes (-) \to \bD^1 \boxtimes (-)$
and $\id \coprod \id \to * \coprod *.$ This gives the description of the map in (2). We apply \cref{bien}.

(3): By \cref{sw} (1) there is an isomorphism of functors \begin{equation}\label{hjh} S_{\mid \cube} \cong \bar{S}_{\mid \cube} \circ (-)_{\mid \cube}^{\co\op}: \cube \to \infty\Cat^\strict_{\partial\bD^1/}\end{equation}
whose component at any oriented cube $\cube^n$ for $n \geq 0$ is the isomorphism of Steiner $\infty$-categories corresponding to the isomorphism of Steiner complexes 
$$ S(\lambda(\cube^n)) \cong \bar{S}((\lambda(\cube^n))^{\co\op})$$ of \cite[Remark A.3.3., A.2]{GepnerHeine2026} that is the identity.
By cubical density the isomorphism \ref{hjh} uniquely extends to an isomorphism of functors $S \cong \bar{S} \circ (-)^{\co\op}: \infty\Cat^\strict \to \infty\Cat^\strict_{\partial\bD^1/}$.
By (1) the functor $S: \infty\Cat^\strict \to \infty\Cat^\strict_{\partial\bD^1/}$ is canonically a strict antioriented functor.

By (2) the functor $\bar{S}: \infty\Cat^\strict \to \infty\Cat^\strict_{\partial\bD^1/}$ is canonically a strict oriented functor.
Composing the latter with the strict oriented functor $ (-)^{\co\op}: (\infty\Cat^\strict)^\cop \to \infty\Cat^\strict$ 
we obtain a strict oriented functor $\bar{S} \circ (-)^{\co\op}: (\infty\Cat^\strict)^\cop \to \infty\Cat^\strict_{\partial\bD^1/}.$ 
The structure map $$\beta': S(\mB) \boxtimes \mC \cong \bar{S}(\mB^{\co\op}) \boxtimes \mC \xrightarrow{\beta} \bar{S}((\mB \boxtimes \mC^{\co\op})^{\co\op})\cong S(\mB \boxtimes \mC^{\co\op}) $$
for strict $\infty$-categories $\mB,\mC$ factors as 
$$\bar{S}(\mB^{\co\op}) \boxtimes \mC \to \bar{S}(\mB^{\co\op} \boxtimes \mC) \cong \bar{S}((\mB \boxtimes \mC^{\co\op})^{\co\op}) .$$

To see that the strict left and oriented functor 
$S \cong \bar{S} \circ (-)^{\co\op}: \infty\Cat^\strict \to \infty\Cat^\strict_{\partial\bD^1/}$
is a strict Gray-functor by \cref{strict} it suffices to show that for every $\mA,\mB,\mC \in \infty\Cat^\strict$ the following square commutes:
$$\begin{xy}
\xymatrix{
\mA \boxtimes S(\mB) \boxtimes \mC  \ar[d]^{\alpha \boxtimes\mC} \ar[r]^{\mA \boxtimes \beta'}
& \mA \boxtimes S(\mB\boxtimes\mC^{\co\op}) \ar[d]^{\alpha}
\\ 
S(\mA \boxtimes \mB) \boxtimes \mC
\ar[r]^{\beta'} & S(\mA \boxtimes \mB \boxtimes \mC^{\co\op}).
}
\end{xy}$$		
	
By cubical density we can assume that $\mA,\mB,\mC$ are oriented cubes $\cube^\bk, \cube^\ell, \cube^\m$ for $\bk, \ell, \m \geq 0.$
	
By (1) the functor $S: \infty\Cat^\strict \to \infty\Cat^\strict$ is a strict antioriented functor.
Moreover by (1) 
the canonical functor
$\alpha:  \cube^\bk \boxtimes S(\cube^\ell) \to S(\cube^\bk \boxtimes \cube^\ell)$ is the image of the map of augmented directed complexes:
\begin{equation}\label{ohast}
\lambda(\cube^\bk) \ot S(\lambda(\cube^\ell))\cong (\lambda(\cube^\bk) \ot \lambda(\cube^\ell))[1]\oplus \lambda(\cube^\bk) \oplus \lambda(\cube^\bk) \to S(\lambda(\cube^\bk) \ot \lambda(\cube^\ell))\cong (\lambda(\cube^\bk) \ot \lambda(\cube^\ell))[1]\oplus \bZ \oplus \bZ \end{equation} induced by the augmentation $ \lambda(\cube^\bk) \to \bZ$. 

By (2) the functor $\bar{S}: \infty\Cat^\strict \to \infty\Cat^\strict$ is a strict oriented functor.
Moreover the canonical functor
$ \beta: \bar{S}(\cube^\ell)\boxtimes \cube^\m \to \bar{S}(\cube^\ell \boxtimes \cube^\m)$ is the image of the map of augmented directed complexes:
$$ \bar{S}(\lambda(\cube^\ell)) \ot \lambda(\cube^m) \cong (\lambda(\cube^\ell) \ot \lambda(\cube^m))[1]\oplus \lambda(\cube^m) \oplus \lambda(\cube^m) \to $$$$ \bar{S}(\lambda(\cube^\ell) \ot \lambda(\cube^m))\cong (\lambda(\cube^\ell) \ot \lambda(\cube^m))[1]\oplus \bZ \oplus \bZ$$ induced by the augmentation $ \lambda(\cube^m) \to \bZ$. 
Thus the canonical functor 
$$\beta':  S(\cube^\ell)\boxtimes \cube^\m \cong \bar{S}((\cube^\ell)^\mathrm{coop})\boxtimes \cube^\m \to \bar{S}((\cube^\ell)^\mathrm{coop}\boxtimes \cube^\m) \cong \bar{S}((\cube^\ell\boxtimes (\cube^\m)^\mathrm{coop})^\mathrm{coop}) \cong S( \cube^\ell\boxtimes (\cube^\m)^\mathrm{coop} )$$ is also the image of the following map of augmented directed complexes:
$$ S(\lambda(\cube^\ell)) \ot \lambda(\cube^m) \cong (\lambda(\cube^\ell) \ot \lambda(\cube^m))[1]\oplus \lambda(\cube^m) \oplus \lambda(\cube^m) \to $$$$ S(\lambda(\cube^\ell) \ot (\lambda(\cube^m))^\mathrm{coop})\cong (\lambda(\cube^\ell) \ot \lambda(\cube^m))[1]\oplus \bZ \oplus \bZ$$ induced by the augmentation $ \lambda(\cube^m) \to \bZ$.
Hence the latter square is the image of the following square of augmented directed complexes:
\begin{equation}\label{rrr}
\begin{xy}
\xymatrix{
\lambda(\cube^\bk) \ot S(\lambda(\cube^\ell)) \ot \lambda(\cube^m)  \ar[d]^{} \ar[r]^{}
& \lambda(\cube^\bk) \ot S(\lambda(\cube^\ell)\ot(\lambda(\cube^m))^{\co\op}) \ar[d]^{}
\\ 
S(\lambda(\cube^\bk) \ot \lambda(\cube^\ell)) \ot \lambda(\cube^m)
\ar[r]^{} & S(\lambda(\cube^\bk) \ot \lambda(\cube^\ell) \ot (\lambda(\cube^m))^{\co\op}),
}
\end{xy}\end{equation} which identifies with the following commutative square induced by the augmentations, where $T:=(\lambda(\cube^\bk)\ot\lambda(\cube^\ell) \ot \lambda(\cube^m))[1]:$
$$\begin{xy}
\xymatrix{
T \oplus \lambda(\cube^\bk)\ot\lambda(\cube^m) \oplus \lambda(\cube^\bk)\ot\lambda(\cube^m) \ar[d]^{} \ar[r]
&T \oplus \lambda(\cube^\bk) \oplus \lambda(\cube^\bk) \ar[d] \ar[d]^{}
\\ 
T \oplus \lambda(\cube^m) \oplus \lambda(\cube^m)
\ar[r] &T \oplus \bZ \oplus \bZ.
}
\end{xy}$$	
	
(4): By \cref{sw} (3) there is a canonical isomorphism $S \circ (-)^{\co\op} \simeq (-)^{\co\op}  \circ S$ of functors $ {\infty\Cat^\strict} \to \infty\Cat^\strict_{\partial\bD^1/}.$
By (3) both functors are bioriented functors $ {^\cop\infty\Cat^\strict} \to \infty\Cat^\strict_{\partial\bD^1/}$
and we need to see that the canonical isomorphism $S \circ (-)^{\co\op} \simeq (-)^{\co\op}  \circ S$
identifies the structures of antioriented functor and oriented functor.
We prove this for the case of antioriented functors. The other case is similar.
Let $\mA, \mB$ be strict $\infty$-categories. We need to verify that both canonical functors
$$ \mA \boxtimes S(\mB^{\co\op}) \to S(\mA \boxtimes \mB^{\co\op}) \cong S((\mA^{\co\op} \boxtimes \mB)^{\co\op}), $$
$$ \mA \boxtimes S(\mB)^{\co\op} \cong (\mA^{\co\op} \boxtimes S(\mB))^{\co\op} \to 
S(\mA^{\co\op} \boxtimes \mB)^{\co\op}$$
are equivalent.
By cubical density we can assume that $\mA=\cube^\bk,\mB=\cube^\ell$ are oriented cubes.
In this case both functors are the image of the similarly defined maps of augmented directed complexes for the complexes $ \lambda(\cube^\bk), \lambda(\cube^\ell).$
Both maps identify with map \ref{ohast}.
\end{proof}


\subsection{Oriented functoriality of suspension and morphism objects}
The suspension functor
\[
S: \infty\Cat \to \infty\Cat_{\partial\bD^1/}
\]
is a functor of categories but not $\infty$-categories.
However, because the suspension can be defined in terms of the Gray tensor product, which is oriented, and not antioriented (by convention), it turns out that the suspension functor is antioriented, and the antisuspension functor is oriented.

\begin{proposition}\label{alf}	
\begin{enumerate}[\normalfont(1)]\setlength{\itemsep}{-2pt}
\item The suspension functor refines to an antioriented functor
\[
S: \infty\fcat \to \infty\fcat_{\partial\bD^1/}.
\]
For every $\infty$-categories $\mA, \mB$ the canonical functor
$$ \alpha: \mA \boxtimes S(\mB) \to S(\mA\boxtimes \mB)$$ is the canonical functor
$$ \mA \boxtimes (\mB \boxtimes \bD^1 \coprod_{\mB \boxtimes \partial \bD^1} \partial \bD^1)
\simeq \mA \boxtimes \mB \boxtimes \bD^1 \coprod_{\mA \boxtimes \mB \boxtimes \partial \bD^1} \mA \boxtimes \partial \bD^1 \to \mA \boxtimes \mB \boxtimes \bD^1 \coprod_{\mA \boxtimes \mB \boxtimes \partial \bD^1} \partial \bD^1$$ induced by the functor $\mA \to *.$
\item The antisuspension functor refines to an oriented functor
\[
\bar{S}: \infty\fcat \to \infty\fcat_{\partial\bD^1/}.
\]
For every $\infty$-categories $\mA, \mB$ the canonical functor
$$ \alpha: \bar{S}(\mA) \boxtimes \mB \to \bar{S}(\mA\boxtimes \mB)$$ is the canonical functor
$$ (\bD^1 \boxtimes \mA \coprod_{\partial \bD^1 \boxtimes \mA} \partial \bD^1) \boxtimes \mB
\simeq \bD^1 \boxtimes \mA \boxtimes \mB \coprod_{\partial \bD^1 \boxtimes \mA  \boxtimes \mB} \partial \bD^1 \boxtimes \mB \to \bD^1 \boxtimes \mA \boxtimes \mB \coprod_{\partial \bD^1 \boxtimes \mA \boxtimes \mB} \partial \bD^1$$ induced by the functor $\mB \to *.$
\end{enumerate}

\end{proposition}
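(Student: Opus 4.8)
The plan is to reduce \cref{alf} to its strict analogue \cref{alfa}, which has already been established. The essential observation is that the suspension formula of \cref{thas2} expresses $S(\mB)$ as a pushout of the weakly contractible diagram $\partial\bD^1 \leftarrow \mB\coprod\mB \to \mB\boxtimes\bD^1$ built entirely from the Gray tensor product and colimits. Since the Gray tensor product preserves colimits in each variable and the suspension functor $S:\infty\Cat\to\infty\Cat_{\partial\bD^1/}$ preserves small weakly contractible colimits (in particular pushouts), the entire construction is manufactured out of colimits and the Gray tensor product. Enriched functoriality of a colimit-preserving construction is governed by \cref{adj}: an antioriented (respectively oriented) functor $\mC\to\mD$ admits the requisite structure precisely when it preserves left (respectively right) tensors and the underlying functor is suitably adjointable.

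The first step I would carry out is to reinterpret the defining pushout of $S$ as a pushout in the relevant category of enriched functors. Concretely, following the proof strategy of \cref{alfa}, I would exhibit $S:\infty\fcat\to\infty\fcat_{\partial\bD^1/}$ as the pushout, taken in the category of antioriented functors $\infty\fcat\to\infty\fcat$, of the antioriented functors $\id\coprod\id\simeq(-)\boxtimes\partial\bD^1\to(-)\boxtimes\bD^1$ and $\id\coprod\id\to *\coprod *$. The point is that tensoring on the right with a fixed $\infty$-category, $(-)\boxtimes\bD^1$, is manifestly an antioriented functor (because its adjoint $\Fun^\lax(\bD^1,-)$ is compatible with the left action, as recorded in the definitions of the enrichment on $\infty\fcat$), whereas tensoring on the left, $\bD^1\boxtimes(-)$, is oriented. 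This is exactly the asymmetry that forces $S$ to be antioriented and $\bar S$ to be oriented. Taking the pushout in antioriented functors then equips $S$ with its antioriented refinement, and reading off the structure map at the level of objects produces the explicit formula for $\alpha:\mA\boxtimes S(\mB)\to S(\mA\boxtimes\mB)$ stated in part (1), via distributivity of $\boxtimes$ over the pushout. Part (2) is entirely dual, using $\bar{S}(\mB)=S(\mB^\co)^\co$ and the reverse tensor product, replacing left tensors by right tensors throughout.

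The main obstacle will be the passage from the strict setting of \cref{alfa} to the weak setting of \cref{alf}: \cref{alfa} is proved using the explicit Steiner-complex description of the structure maps, which is available for strict $\infty$-categories and cubes but not directly for arbitrary $\infty$-categories. To bridge this, I would invoke the density of the oriented cubes in $\infty\Cat$ (\cref{cubicaldense}) together with the fact that the localization $\mP(\cube)\rightleftarrows\infty\Cat$ is compatible with the Gray monoidal structure (\cref{locmon2}). Since $\infty\Cat$ is generated under colimits by the cubes, and both the suspension and the structure maps $\alpha$ are built from colimit-preserving operations, the antioriented structure on $S$ constructed strictly extends uniquely along cubical density to the weak setting; the verification that it really does refine the underlying functor amounts to checking the compatibility square of \cref{strict} on cubes, which is precisely the content of the strict computation in \cref{alfa}(1). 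The one genuinely delicate point is confirming that the pushout defining $S$ may legitimately be formed within the $2$-category of antioriented functors rather than merely among underlying functors — this requires that the relevant class of colimits be conical and compatible with the enrichment, which I would justify using \cref{bien} and the weak contractibility of the indexing diagram, exactly as in the proof of \cref{alfa}.
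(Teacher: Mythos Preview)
Your second paragraph contains exactly the paper's proof: exhibit $S$ as the pushout, in the category of antioriented functors $\infty\fcat\to\infty\fcat$, of the antioriented natural transformations $\id\coprod\id\simeq(-)\boxtimes\partial\bD^1\to(-)\boxtimes\bD^1$ and $\id\coprod\id\to *\coprod *$, read off the structure map $\alpha$ from distributivity of $\boxtimes$ over the pushout, and invoke \cref{bien} for the passage to the slice $\infty\fcat_{\partial\bD^1/}$. That is the entire argument the paper gives, and nothing more is needed.

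The surrounding scaffolding you propose --- reducing to the strict analogue \cref{alfa}, invoking cubical density via \cref{cubicaldense} and \cref{locmon2}, and checking the compatibility square of \cref{strict} on cubes --- is all unnecessary here. The paper does not appeal to \cref{alfa} in the proof of \cref{alf} at all. The point is that the constituent functors $(-)\boxtimes\partial\bD^1$, $(-)\boxtimes\bD^1$, and the constant functors are \emph{already} antioriented functors on $\infty\fcat$ (not merely on $\infty\fcat^\strict$ or on $\cube$), simply because right-tensoring with a fixed object is left-module-linear for the Gray monoidal structure. Forming the pushout in the functor category therefore takes place directly in the weak setting, with no density argument required. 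Your detour would work, but it obscures that \cref{alf} is strictly easier than \cref{alfa}: the latter needs the Steiner-complex computation only to establish the \emph{bioriented} refinement in \cref{alfas2}, whereas the one-sided antioriented structure of \cref{alf} is formal.
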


\begin{proof}
We prove (1). The proof of (2) is similar.
The functor $S$ is the pushout in the category of antioriented functors $  \infty\Cat \to \infty\Cat$
of the antioriented functors $$\id \coprod \id \cong (-)\boxtimes \partial \bD^1 \to (-)\boxtimes \bD^1 , \ \id \coprod \id \to * \coprod *.$$
This gives the description of the structure map. We apply \cref{bien}.
\end{proof}

\begin{notation}
Let $\mC, \mD$ be $\infty$-categories and $\X,\Y \in \mC, \X',\Y' \in \mD.$
Let $$ \Fun_{\partial\bD^1/}^\oplax((\mC,\X,\Y), (\mD,\X',\Y'))$$ be the fiber over $(\X',\Y') \in \mD \times \mD$ of the following functor evaluating at $(\X,\Y) \in \mC \times \mC$:
$$ \Fun^\oplax(\mC,\mD) \to \mD \times \mD.$$ 

\end{notation}

\begin{remark}\label{enrstar}

By (the dual of) \cref{bien} there is a canonical equivalence $$ \Fun_{\partial\bD^1/}^\oplax((\mC,\X,\Y), (\mD,\X',\Y')) \simeq \L\Mor_{\infty\Cat_{\partial \bD^1/}}((\mC,\X,\Y), (\mD,\X',\Y')). $$ 

\end{remark}

\begin{proposition}\label{hom}

For every $\infty$-category $\mC$ and $\X,\Y \in \mC$ there is a canonical pullback square in $\infty\Cat$
$$\begin{xy}
\xymatrix{
\Mor_\mC(\X,\Y) \ar[d] \ar[r]
& \Fun^\oplax(\bD^1,\mC) \ar[d]
\\ 
\ast \ar[r]^{(\X,\Y)} & \mC \times \mC
}
\end{xy}$$
such that, for every $\infty$-category $\mB$, there is a canonical equivalence
$$ \Fun^\oplax(\mB, \Mor_\mC(\X,\Y)) \simeq \Fun_{\partial\bD^1/}^\oplax(S(\mB),\mC).$$

\end{proposition}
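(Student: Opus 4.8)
The plan is to prove both assertions by a Yoneda argument, reducing everything to the suspension formula \cref{thas2} together with the universal property of the morphism $\infty$-category recorded in \cref{suspi}. Throughout I will use that $(-)\boxtimes\mB$ is left adjoint to $\Fun^\oplax(\mB,-)$, so that $\Map(A,\Fun^\oplax(\mB,Z))\simeq\Map(A\boxtimes\mB,Z)$.

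First I would treat the pullback square, which is the special case $\mB=\bD^0$ of the enriched statement. Write $P$ for the pullback $\ast\times_{\mC\times\mC}\Fun^\oplax(\bD^1,\mC)$, where the right vertical map is evaluation at the two endpoints $\partial\bD^1=\bD^0\coprod\bD^0\hookrightarrow\bD^1$ and the bottom map is $(\X,\Y)$. For any $\mB$, the adjunction identifies $\Map(\mB,P)$ with the space of functors $\mB\boxtimes\bD^1\to\mC$ whose restrictions to the two ends $\mB\boxtimes\partial\bD^1=\mB\coprod\mB$ are the constant functors at $\X$ and $\Y$. On the other hand, applying $\Map(-,\mC)$ to the suspension formula pushout $S(\mB)=\mB\boxtimes\bD^1\coprod_{\mB\boxtimes\partial\bD^1}\partial\bD^1$ and taking the fiber over $(\X,\Y)$ of the evaluation at the objects $0,1$ identifies $\Map_{\infty\Cat_{\partial\bD^1/}}((S(\mB);0,1),(\mC;\X,\Y))$ with precisely the same space. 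Since \cref{suspi} gives $\Map_{\infty\Cat_{\partial\bD^1/}}((S(\mB);0,1),(\mC;\X,\Y))\simeq\Map(\mB,\Mor_\mC(\X,\Y))$ naturally in $\mB$, the Yoneda lemma yields $P\simeq\Mor_\mC(\X,\Y)$, establishing the square.

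For the general statement I would again compute mapping spaces out of an arbitrary $A$. The left-hand side represents $A\mapsto\Map(A\boxtimes\mB,\Mor_\mC(\X,\Y))$, which by \cref{suspi} is $\Map_{\infty\Cat_{\partial\bD^1/}}((S(A\boxtimes\mB);0,1),(\mC;\X,\Y))$; unwinding the suspension formula as above, this is the space of $g\colon A\boxtimes\mB\boxtimes\bD^1\to\mC$ with both ends constant at $\X$ and $\Y$. The right-hand side represents $A\mapsto\{h\colon A\boxtimes S(\mB)\to\mC\mid h|_{A\boxtimes\{0\}}\equiv\X,\ h|_{A\boxtimes\{1\}}\equiv\Y\}$, using the definition of $\Fun_{\partial\bD^1/}^\oplax$ as a fiber together with the adjunction. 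The comparison between the two is effected by precomposition with the antioriented structure map $\alpha\colon A\boxtimes S(\mB)\to S(A\boxtimes\mB)$ of \cref{alf}, which is natural in $A$, is the identity on the common factor $A\boxtimes\mB\boxtimes\bD^1$, and collapses $A\boxtimes\partial\bD^1$ onto $\partial\bD^1$. I would then check that $\alpha^*$ is an equivalence of these constrained mapping spaces: via the two pushout presentations both sides reduce to the space of maps $g\colon A\boxtimes\mB\boxtimes\bD^1\to\mC$ with constant ends, the residual data on the right-hand side being a choice of factorization of a constant end through $\bD^0$, which forms a contractible space. Naturality in $A$ and Yoneda then upgrade the levelwise equivalences into the claimed equivalence of $\infty$-categories.

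The main obstacle is exactly this last verification, and it is where the oplax (oriented) nature of the statement is essential: the suspension does not commute with the Gray tensor product, so $A\boxtimes S(\mB)$ and $S(A\boxtimes\mB)$ genuinely differ, related only by the non-invertible map $\alpha$. The crux is that imposing the bipointing at $\X$ and $\Y$ precisely annihilates the discrepancy between the two pushout presentations — the extra copies $A\boxtimes\partial\bD^1$ versus $\partial\bD^1$ become contractible once the ends are forced to be constant — so that $\alpha^*$ becomes invertible. This is why the identity holds for $\Fun^\oplax$ and, as the introduction illustrates, would fail for the cartesian $\Fun$. Finally I would record that all identifications are natural in $\mB$, so that, in the language of \cref{enrstar}, the construction assembles into an equivalence of $\infty$-categories and not merely of underlying spaces.
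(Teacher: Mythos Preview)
Your approach is correct and relies on the same key ingredients as the paper --- the suspension formula (\cref{thas2}) and the universal property of \cref{suspi} --- but it is organized differently. The paper proves both assertions in a single pass: writing $P$ for the pullback, it directly computes $\Fun^\oplax(\mB,P)$ using that $\Fun^\oplax(\mB,-)$ preserves limits and the adjunction $\Fun^\oplax(\mB,\Fun^\oplax(\bD^1,\mC))\simeq\Fun^\oplax(\mB\boxtimes\bD^1,\mC)$, arriving at $\Fun_{\partial\bD^1/}^\oplax(S(\mB),\mC)$ via the suspension pushout. Only then does it pass to maximal subspaces and invoke \cref{suspi} to identify $P\simeq\Mor_\mC(\X,\Y)$. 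Your route instead establishes the pullback square first at the level of mapping spaces, and then treats the enriched statement separately by introducing an auxiliary $A$ and the antioriented structure map $\alpha\colon A\boxtimes S(\mB)\to S(A\boxtimes\mB)$ from \cref{alf}. This works, but brings in extra machinery --- the map $\alpha$ and the verification that the residual factorization data is contractible --- that the paper's direct computation of $\Fun^\oplax(\mB,P)$ sidesteps entirely. In effect, once you know $P\simeq\Mor_\mC(\X,\Y)$, the enriched statement is just $\Fun^\oplax(\mB,P)$, and the paper's chain of equivalences handles this without ever needing $\alpha$.
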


\begin{proof}

By \cref{thas2} for every $\infty$-category $\mB$ there is a canonical equivalence
$$ \Fun^\oplax(\mB, \ast \times_{(\mC \times \mC)} \Fun^\oplax(\bD^1,\mC)) \simeq 
\ast \times_{\Fun^\oplax(\mB,\mC) \times \Fun^\oplax(\mB,\mC)} \Fun^\oplax(\mB,\Fun^\oplax(\bD^1,\mC)) \simeq $$$$
* \times_{(\mC \times \mC)}(\mC \times \mC) \times_{\Fun^\oplax(\mB \boxtimes \partial \bD^1, \mC)} \Fun^\oplax(\mB \boxtimes \bD^1,\mC) \simeq * \times_{\Fun_{}^\oplax(\partial \bD^1,\mC)}\Fun^\oplax(\partial \bD^1 \coprod_{\mB \boxtimes \partial\bD^1} \mB \boxtimes \bD^1,\mC)$$
$$ \simeq \Fun_{\partial\bD^1/}^\oplax(\partial \bD^1 \coprod_{\mB \boxtimes \partial\bD^1} \mB \boxtimes \bD^1,\mC) \simeq \Fun_{\partial\bD^1}^\oplax(S(\mB),\mC).$$
Passing to maximal subspaces the result follows from the universal property of $S(\mC)$ of \cref{suspi}.	
\end{proof}

\begin{corollary}\label{Morenrfun}

The antioriented functor $S: \infty\fcat \to \infty\fcat_{\partial \bD^1 /} $ admits an antioriented right adjoint
\[
\Mor: \infty\fcat_{\partial \bD^1/}  \to \infty\fcat
\]
sending $(\mC,\X,\Y)$ to $\Mor_\mC(\X,\Y).$
\end{corollary}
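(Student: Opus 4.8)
The plan is to deduce the adjunction from the general criterion for the existence of enriched adjoints in \cref{adj}, together with the antioriented refinement of $S$ already produced in \cref{alf}. By \cref{adj}(1), an antioriented functor admits an antioriented right adjoint precisely when it preserves left tensors and its underlying functor of $\infty$-categories admits an ordinary right adjoint. Thus the corollary follows once these two conditions are verified for $S:\infty\fcat\to\infty\fcat_{\partial\bD^1/}$ and the underlying right adjoint is identified with $\Mor$.

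First I would dispose of the underlying adjunction. The universal property of the suspension in \cref{suspi} provides, for every bipointed $\infty$-category $(\mC,\X,\Y)$, a natural equivalence $\Map_{\infty\Cat_{\partial\bD^1/}}(S(\mB),(\mC,\X,\Y))\simeq\Map_{\infty\Cat}(\mB,\Mor_\mC(\X,\Y))$, which exhibits $(\mC,\X,\Y)\mapsto\Mor_\mC(\X,\Y)$ as right adjoint to the underlying functor of $S$. Equivalently, $S:\infty\Cat\to\infty\Cat_{\partial\bD^1/}$ preserves small colimits (as in the proof of \cref{thas2}), so a right adjoint exists by presentability and is computed by the pullback square of \cref{hom}. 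This settles the second condition and pins down the underlying functor of the prospective antioriented right adjoint.

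The main work, and the only place I expect a genuine obstacle, is to check that $S$ preserves left tensors. The key preliminary is to identify the left tensoring of the coslice antioriented category $\infty\fcat_{\partial\bD^1/}$: for $\mA\in\infty\Cat$ and a bipointed object $(\mathcal E,\iota)$, the left tensor is the pushout $\partial\bD^1\coprod_{\mA\boxtimes\partial\bD^1}\mA\boxtimes\mathcal E$ along the projection $\mA\boxtimes\partial\bD^1\to\partial\bD^1$ induced by $\mA\to\ast$ and along $\mA\boxtimes\iota$, which follows from the description of coslice bienrichment in \cref{bien} together with the fact that $\boxtimes$ preserves colimits in each variable. Applying this with $\mathcal E=S(\mB)$ and expanding via the suspension formula $S(X)\simeq\partial\bD^1\coprod_{X\boxtimes\partial\bD^1}X\boxtimes\bD^1$ of \cref{thas2}, the two pushouts collapse along $\mA\boxtimes\partial\bD^1$ and yield exactly $S(\mA\boxtimes\mB)$, with the comparison map an equivalence matching the structure map $\alpha$ recorded in \cref{alf}; hence $S$ preserves left tensors.

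Finally I would assemble the pieces: by \cref{adj}(1) the antioriented functor $S$ admits an antioriented right adjoint, whose underlying functor is necessarily the ordinary right adjoint $\Mor$ by uniqueness of adjoints, sending $(\mC,\X,\Y)$ to $\Mor_\mC(\X,\Y)$. As a consistency check, the enriched equivalence $\Fun^\oplax(\mB,\Mor_\mC(\X,\Y))\simeq\Fun^\oplax_{\partial\bD^1/}(S(\mB),\mC)$ of \cref{hom} is precisely the equivalence of left morphism objects $\L\Mor_{\infty\fcat}(\mB,\Mor(\mathcal E))\simeq\L\Mor_{\infty\fcat_{\partial\bD^1/}}(S(\mB),\mathcal E)$ witnessing the adjunction at the level of enrichment.
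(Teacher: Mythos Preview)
Your proof is correct but takes a different route from the paper. The paper argues directly at the level of morphism objects: by \cref{hom} and \cref{enrstar} one has an equivalence $\L\Mor_{\infty\fcat}(\mB,\Mor_\mC(\X,\Y))\simeq\L\Mor_{\infty\fcat_{\partial\bD^1/}}(S(\mB),(\mC,\X,\Y))$, and then invokes \cite[Remark~2.72]{heine2024bienriched}, which says that such a natural equivalence on left morphism objects is exactly what it means for an antioriented right adjoint to exist. You instead invoke the tensor-preservation criterion \cref{adj}(1) and verify it by unwinding the coslice tensor via the suspension formula. Both approaches are valid; the paper's is shorter because \cref{hom} has already packaged the enriched equivalence, so the adjunction follows in one line, whereas your argument re-derives essentially the same identification by expanding both sides as iterated pushouts. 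Your final ``consistency check'' is in fact the paper's entire proof. One minor remark: your citation of \cref{bien} for the coslice tensor formula is slightly off, as that lemma concerns the left adjoint between coslices rather than tensors directly, but the formula you write down is correct and follows from the ambient closed bitensored structure together with finality of the tensor unit.
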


\begin{proof}
By \cref{hom} the induced functor
$\Fun^\oplax(\mB, \Mor_\mC(\X,\Y)) \to \Fun_{\partial\bD^1/}^\oplax(S(\mB),\mC) $
is an equivalence.
In view of \cref{enrstar} this proves the result by \cite[Remark 2.72]{heine2024bienriched}.
\end{proof}

In the following we will refine \cref{alf}.

\begin{notation}
Let $\mC, \mD$ be bioriented categories. Let 	
$${\boxtimes\Fun\boxtimes}^\colim(\mC,\mD) \subset {{\boxtimes\Fun}\boxtimes}(\mC,\mD) $$ be the full subcategory of bioriented functors preserving small colimits.
		
\end{notation}
	
\begin{proposition}\label{hal}
	
Let $\mC$ be a bioriented category 
that admits small colimits. 
The functor
$${\boxtimes\Fun\boxtimes}^{\colim}(\infty\Cat,\mC)\to {\boxtimes\Fun\boxtimes}(\cube,\mC)$$
is fully faithful and the essential image consists precisely of those bioriented functors $\cube \to \mC$
whose underlying functor extends to a small colimit preserving functor $\infty\Cat \to \mC.$
\end{proposition}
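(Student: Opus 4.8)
The plan is to recognize this as an instance of the universal property of the cubes as a dense subcategory, transported into the bioriented setting. Recall from \cref{cubicaldense} that $\cube\subset\infty\Cat$ is dense, so $\infty\Cat$ is a localization of $\mP(\cube)$, and from \cref{locmon2} that the Gray tensor monoidal structure descends along the localization $\mP(\cube)\rightleftarrows\infty\Cat$. First I would set up the restriction functor
\[
{\boxtimes\Fun\boxtimes}^{\colim}(\infty\Cat,\mC)\to {\boxtimes\Fun\boxtimes}(\cube,\mC)
\]
as the composite of restriction along the bioriented inclusion $\cube\subset\infty\Cat$, and observe that the content of the proposition is that this functor is fully faithful with the stated essential image.

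The key step is to invoke the enriched universal property of presheaf categories as free cocompletions, in the bioriented form. Since a bioriented category is, by \cref{double} and \cref{refk}, equivalently a category left enriched in $(\infty\Cat,\boxtimes)^\rev\otimes(\infty\Cat,\boxtimes)$, I would apply \cref{Yonedaext} with $\mV=(\infty\Cat,\boxtimes)^\rev\otimes(\infty\Cat,\boxtimes)$ to the small enriched category $\cube$ (with its inherited biorientation) and the presentable target $\mC$. This gives that left-adjoint (hence colimit-preserving) enriched functors out of $\mP_\mV(\cube)$ are equivalent to enriched functors out of $\cube$, and that the restriction along the enriched Yoneda embedding $\cube\to\mP_\mV(\cube)$ is fully faithful on the subcategory of colimit-preserving functors. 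The remaining point is to identify $\infty\Cat$, as a bioriented category, with the appropriate localization of $\mP_\mV(\cube)$; this is exactly where density of $\cube$ (\cref{cubicaldense}) and the descent of the Gray monoidal structure (\cref{locmon2}) combine, since the localization $\mP(\cube)\rightleftarrows\infty\Cat$ is monoidal and therefore induces, via \cref{indres1} and \cref{adj}, a bioriented localization at the level of enriched categories.

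With these identifications in place, fully faithfulness follows formally: a colimit-preserving bioriented functor $\infty\Cat\to\mC$ factors through the localization and hence corresponds to a colimit-preserving bioriented functor out of $\mP_\mV(\cube)$ that inverts the local equivalences, and \cref{Yonedaext} identifies morphisms between two such extensions with morphisms between their restrictions to $\cube$. For the essential image, a bioriented functor $F\colon\cube\to\mC$ extends to a colimit-preserving bioriented functor $\bar F\colon\mP_\mV(\cube)\to\mC$ by \cref{Yonedaext}(1); this $\bar F$ descends along the localization to a colimit-preserving bioriented functor $\infty\Cat\to\mC$ precisely when $\bar F$ inverts the local equivalences, and by the universal property of the localization this is equivalent to the condition that the underlying (unenriched) colimit-preserving functor $\mP(\cube)\to\mC$ inverts the local equivalences, i.e. that the underlying functor of $F$ extends to a small-colimit-preserving functor $\infty\Cat\to\mC$.

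The main obstacle I anticipate is the careful identification of $\infty\Cat$ as a \emph{bioriented} localization of the enriched presheaf category $\mP_\mV(\cube)$, compatibly on both sides. One must check that the monoidal localization of \cref{locmon2} lifts to the two-sided enriched setting, so that the biorientation on $\infty\fcat$ restricted to $\cube$ agrees with the one inherited from $\mV$-enrichment on $\mP_\mV(\cube)$, and that the local objects for the left and right actions coincide; this uses that the left and right Gray actions are both obtained from the single monoidal localization and that \cref{tensorunit} guarantees the tensor unit behaves correctly. Once this matching of enriched structures is secured, the rest is a formal consequence of \cref{Yonedaext} together with the universal property of localizations, with the verification that ``inverting local equivalences enriched'' reduces to ``inverting them underlyingly'' being the one genuinely enriched-categorical point to nail down.
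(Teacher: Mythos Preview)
Your overall strategy---free cocompletion followed by localization---matches the paper's, but you have chosen the wrong free cocompletion, and this creates a genuine gap at the step you yourself flag as ``the main obstacle.''

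The paper does \emph{not} pass to left enrichment in $\mV=(\infty\Cat,\boxtimes)^\rev\otimes(\infty\Cat,\boxtimes)$ and invoke $\mP_\mV(\cube)$. Instead it stays on the bienriched side and uses $\mP(\cube)$ equipped with the \emph{Day convolution} monoidal structure. The cited result \cite[Proposition~2.105]{heine2024bienriched} gives directly an equivalence
\[
{}_{(\mP(\cube),\boxtimes)}\Fun_{(\mP(\cube),\boxtimes)}^{\colim}(\mP(\cube),\mC)\;\simeq\;{}_{(\cube,\boxtimes)}\Fun_{(\cube,\boxtimes)}(\cube,\mC),
\]
and then the monoidal localization $\mP(\cube)\rightleftarrows\infty\Cat$ of \cref{locmon2} yields the desired embedding of ${\boxtimes\Fun\boxtimes}^{\colim}(\infty\Cat,\mC)$ with the stated essential image. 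No identification of $\infty\Cat$ with a localization of an enriched presheaf category is needed, because the localization in play is the ordinary one $\mP(\cube)\to\infty\Cat$, which is already known to be monoidal.

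Your route requires showing that $\infty\fcat$ is a $\mV$-enriched localization of $\mP_\mV(\cube)$. Nothing in \cref{cubicaldense}, \cref{locmon2}, \cref{indres1}, or \cref{tensorunit} gives this: those results concern the \emph{ordinary} presheaf category $\mP(\cube)$ and its Day convolution, not the $\mV$-enriched presheaf category $\mP_\mV(\cube)=\Fun_\mV(\cube^\circ,\mV)$, which is a different and larger object. In particular, \cref{indres1} transports enrichment along a lax monoidal functor between enriching bases; it does not produce a map $\mP_\mV(\cube)\to\infty\Cat$ from the monoidal localization $\mP(\cube)\to\infty\Cat$. So the step where you ``identify $\infty\Cat$ with the appropriate localization of $\mP_\mV(\cube)$'' is unsupported, and without it \cref{Yonedaext} only compares functors out of $\mP_\mV(\cube)$, not out of $\infty\Cat$. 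The fix is exactly what the paper does: replace $\mP_\mV(\cube)$ by $(\mP(\cube),\boxtimes_{\mathrm{Day}})$ and cite the bienriched free-cocompletion property of Day convolution rather than \cref{Yonedaext}.
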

	
\begin{proof}
		
By \cite[Proposition 2.105]{heine2024bienriched} the following induced functor is an equivalence.
$$ _{(\mP(\cube),\boxtimes) }\Fun_{(\mP(\cube),\boxtimes)}^{\mathrm{colim}}(\mP(\cube),\mC)\to {_{(\cube,\boxtimes)}\Fun_{(\cube,\boxtimes)}(\cube,\mC)}={\boxtimes\Fun\boxtimes}(\cube,\mC).$$
Here colim refers to bienriched functors whose underlying functor preserves small colimits.
The monoidal localization $\mP(\cube) \rightleftarrows \infty\Cat$ of \cref{locmon2} induces a localization
$$_{(\mP(\cube),\boxtimes)}\Fun_{(\mP(\cube),\boxtimes)}^{\mathrm{colim}}(\mP(\cube),\mC)\rightleftarrows {_{(\infty\Cat,\boxtimes)}\Fun_{(\infty\Cat,\boxtimes)}^{\mathrm{colim}}(\infty\Cat,\mC)}={\boxtimes\Fun\boxtimes}^{\mathrm{colim}}(\infty\Cat,\mC)$$
whose local objects are the enriched functors whose underlying functor preserves small colimits and local equivalences for the localization $\mP(\cube) \rightleftarrows \infty\Cat$, or equivalently descends to a small colimits preserving functor $\infty\Cat \to \mC.$
\end{proof}

\begin{theorem}\label{alfas2}
\begin{enumerate}[\normalfont(1)]\setlength{\itemsep}{-2pt}
\item The suspension functor $S: \infty\Cat \to \infty\Cat_{\partial\bD^1/}$ of $1$-categories refines to a bioriented functor $S:\infty\fcat^{\cop}\to\infty\fcat_{\partial\bD^1/}$. 
		
\item The antisuspension functor $\overline{S}: {^\cop}\infty\Cat \to \infty\Cat_{\partial\bD^1/}$ refines to a bioriented functor $\overline{S}: {^\cop}\infty\fcat \to \infty\fcat_{\partial\bD^1/}$
		
\item There is an equivalence of bioriented functors $S \simeq \overline{S} \circ (-)^{\co\op}: \infty\fcat^{\cop} \to \infty\fcat_{\partial\bD^1/}.$
		
\item There is an equivalence of bioriented functors $(-)^{\co\op} \circ S \simeq S \circ (-)^{\co\op}: {^{\cop}\infty\fcat} \to \infty\fcat_{\partial\bD^1/}.$
		
\end{enumerate}
	
\end{theorem}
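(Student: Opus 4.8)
The plan is to deduce all four statements from their strict counterparts in \cref{alfa} by combining the density of the cubes (\cref{cubicaldense}) with the universal property \cref{hal}. The organizing observation is that every functor appearing in the theorem — namely $S$, $\overline{S}$, $\overline{S}\circ(-)^{\co\op}$, $(-)^{\co\op}\circ S$, and $S\circ(-)^{\co\op}$ — is a bioriented functor whose underlying functor of $1$-categories preserves small colimits once it is regarded as landing in the bipointed slice $\infty\fcat_{\partial\bD^1/}$ rather than in $\infty\fcat$ (the suspension formula \cref{cor:weakvsstrictsuspension} shows that $S$, and hence $\overline{S}$, preserves all small colimits into the slice, and $(-)^{\co\op}$ is an equivalence of bioriented categories intertwining the relevant $\cop$-twists by \cref{duae}). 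Thus each is the unique colimit-preserving bioriented extension of its own restriction to $\cube$, and the whole theorem becomes a transport of structure along $\cube\subset\infty\fcat$.

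First I would treat (1) and (2). Restricting the strict bioriented functor $S\cong\overline{S}\circ(-)^{\co\op}\colon(\infty\Cat^\strict)^\cop\to\infty\Cat^\strict_{\partial\bD^1/}$ of \cref{alfa}(3) along the lax monoidal embedding $\infty\Cat^\strict\subset\infty\Cat$ (\cref{stricto}) and along the full subcategory $\cube\subset\infty\Cat^\strict$ of cubes yields a bioriented functor $S|_{\cube}\colon\cube^\cop\to\infty\fcat_{\partial\bD^1/}$; here one uses that the strict and weak suspensions agree on cubes and their bipointings, which holds because cubes are Steiner and the defining pushouts are computed identically in $\infty\Cat^\strict$ and $\infty\Cat$ by \cref{pasting1}. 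Since the underlying functor of $S|_{\cube}$ extends to the small-colimit-preserving functor $S\colon\infty\Cat\to\infty\Cat_{\partial\bD^1/}$, \cref{hal} produces a (necessarily unique) bioriented refinement $S\colon\infty\fcat^\cop\to\infty\fcat_{\partial\bD^1/}$ whose underlying functor is the suspension, establishing (1); statement (2) follows verbatim from \cref{alfa}(2).

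For (3) and (4) I would exploit the full faithfulness asserted in \cref{hal}: the restriction functor ${\boxtimes\Fun\boxtimes}^{\colim}(\infty\fcat,\infty\fcat_{\partial\bD^1/})\to{\boxtimes\Fun\boxtimes}(\cube,\infty\fcat_{\partial\bD^1/})$ is fully faithful, so an equivalence between two colimit-preserving bioriented functors out of $\infty\fcat$ is the same datum as an equivalence between their restrictions to $\cube$. Both sides of (3) are colimit-preserving bioriented functors by (1), (2), and \cref{duae}, and both sides of (4) likewise; upon restriction to cubes they reduce to the strict bioriented functors whose equivalences are exactly \cref{alfa}(3) and \cref{alfa}(4). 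Transporting those strict equivalences back along the full faithfulness of \cref{hal} then furnishes the desired equivalences of bioriented functors.

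The main obstacle I anticipate is the bookkeeping surrounding the slice $\infty\fcat_{\partial\bD^1/}$: before \cref{hal} can be invoked one must confirm that it is a bioriented category admitting small colimits, which requires treating the bipointing over $\partial\bD^1$ as a fibre product of single-object coslices and appealing to the bienriched slice lemmas of \cref{enr}, and one must verify that the suspension, viewed as a functor into this slice, genuinely preserves \emph{all} small colimits and not merely the weakly contractible ones (this is precisely the content of \cref{cor:weakvsstrictsuspension}). The remaining delicate point is to keep the various $\cop$-, $\co$-, and $\op$-twists on the source categories consistent, but these are already pinned down by the explicit Steiner-complex computations in \cref{alfa}, so no new orientation analysis is needed beyond matching the twists on $\cube$.
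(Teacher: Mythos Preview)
Your proposal is correct and follows essentially the same route as the paper: restrict the strict bioriented structures of \cref{alfa} to $\cube$ via the lax monoidal embedding $\infty\Cat^\strict\subset\infty\Cat$, then invoke \cref{hal} to extend uniquely to colimit-preserving bioriented functors on $\infty\fcat$, and use the full faithfulness in \cref{hal} to transport the equivalences (3) and (4). The paper is slightly more explicit about the intermediate $(\cube,\boxtimes)$-bienriched embedding $\rho\colon\infty\Cat^\strict_{\partial\bD^1/}\to\infty\Cat_{\partial\bD^1/}$, and note that your remark ``(2) follows verbatim from \cref{alfa}(2)'' is a bit loose since \cref{alfa}(2) only supplies the oriented structure---the bioriented refinement for $\overline{S}$ comes, as in the paper, by running the same argument as for (1) (or equivalently by composing the bioriented $S$ of \cref{alfa}(3) with $(-)^{\co\op}$).
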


\begin{proof}
	
(1): The lax monoidal embedding $ \infty\Cat^\strict \subset \infty\Cat $
under $(\cube,\boxtimes)$ is in particular a $(\cube, \boxtimes)$-bienriched functor. By \cref{bien} the latter gives rise to a $(\cube, \boxtimes)$-bienriched embedding $ \rho :\infty\Cat^\strict_{\partial\bD^1/} \to \infty\Cat_{\partial\bD^1/}.$
By \cref{alfa} the functor $S : \infty\Cat^\strict \to \infty\Cat^\strict_{\partial\bD^1/} $ 
refines to a strict bioriented functor
\[
S : \infty\fcat^\strict \to (\infty\fcat^\strict_{\partial\bD^1/})^\cop,
\]
which is in particular a $(\cube, \boxtimes)$-bienriched functor by restriction along the monoidal embedding
$(\cube, \boxtimes) \subset (\infty\Cat^\strict,\boxtimes).$
The composition of the $(\cube, \boxtimes)$-bienriched embedding $\cube \subset \infty\Cat^\strict $ followed by the $(\cube, \boxtimes)$-bienriched functor $S: \infty\Cat^\strict \to(\infty\Cat^\strict)^\cop_{\partial\bD^1/} $ and the $(\cube, \boxtimes)$-bienriched embedding $$ \rho:\infty\Cat^\strict_{\partial\bD^1/} \to \infty\Cat_{\partial\bD^1/}$$ is a $(\cube, \boxtimes)$-bienriched functor
\[
\phi: \cube \to \infty\Cat^\cop_{\partial\bD^1/}
\]
whose underlying functor is $\cube \subset \infty\Cat \xrightarrow{S} \infty\Cat_{\partial\bD^1/}.$ 
Since $S: \infty\Cat \to \infty\Cat_{\partial\bD^1/}$ preserves small colimits, by \cref{hal} the $(\cube, \boxtimes)$-bienriched functor $\phi$ uniquely extends to a small colimit preserving bioriented functor $\infty\fcat \to \infty\fcat^\cop_{\partial\bD^1/}$ whose underlying functor is $S: \infty\Cat \to \infty\Cat_{\partial\bD^1/}$.
The proof of (2) is similar.

(3): By \cref{alfa} (3) there is an isomorphism of strict bioriented functors $$S \cong \bar{S} \circ (-)^{\co\op}: \infty\fcat^\strict \to(\infty\fcat^\strict)^\cop_{\partial\bD^1/}.$$
By cubical density the restricted and prolonged equivalence 
$$S_{\mid \cube} \simeq \rho \circ S_{\mid \cube} \simeq \rho \circ \bar{S}_{\mid \cube}  \circ (-)_{\mid \cube}^{\co\op} \simeq \bar{S}_{\mid \cube}  \circ (-)_{\mid \cube}^{\co\op}: \cube \to \infty\Cat^\cop_{\partial\bD^1/} $$ 
of $(\cube, \boxtimes)$-bienriched functors uniquely extends to an equivalence of bioriented functors 
$$S \simeq \bar{S} \circ (-)^{\co\op}: \infty\fcat \to \infty\fcat^\cop_{\partial\bD^1/}.$$

(4): By \cref{alfa} (4) there is an isomorphism of strict bioriented functors $$(-)^{\co\op} \circ S \cong S \circ (-)^{\co\op}: {^{\cop}\infty\fcat^\strict} \to \infty\fcat^\strict_{\partial\bD^1/}.$$
By cubical density the restricted and prolonged equivalence $$(-)^{\co\op} \circ S_{\mid \cube} \simeq (-)^{\co\op} \circ \rho \circ S_{\mid \cube} \simeq \rho \circ (-)^{\co\op} \circ S_{\mid \cube} \cong \rho \circ S_{\mid \cube} \circ (-)^{\co\op}_{\mid\cube} \simeq S_{\mid \cube} \circ (-)^{\co\op}_{\mid \cube} : \cube \to \infty\Cat_{\partial\bD^1/} $$
of $(\cube, \boxtimes), (\cube, \boxtimes)$-bienriched functors uniquely extends to an equivalence of bioriented functors
$$(-)^{\co\op} \circ S \simeq S \circ (-)^{\co\op}: {^{\cop}\infty\fcat} \to \infty\fcat_{\partial\bD^1/}.$$ 
\end{proof}

\subsection{Oriented functoriality of join and slice}

\begin{proposition}

The join is a bioriented functor $\star: \langle\infty\fcat , \infty\fcat \rangle\to \infty\fcat$. For every $A,B,X,Y \in \infty\fcat$ the canonical morphism
$$ X \boxtimes (A \star B) \boxtimes Y \to (X \boxtimes A) \star (B \boxtimes Y), $$
is the cobase change of the projection
$$ X \boxtimes A \boxtimes Y \coprod X \boxtimes B \boxtimes Y \to X \boxtimes A \coprod B \boxtimes Y . $$
\end{proposition}

\begin{proof}

The inclusions $ \{ i\} \subset \bD^1$ for $i=0,1$
give rise to a morphism $(-) \boxtimes \{ i\} \boxtimes (-) \to (-) \boxtimes K \boxtimes (-)$ of
bioriented functors $ \langle\infty\fcat , \infty\fcat\rangle \to \infty\fcat.$
The morphism $ \id \to \bD^0 $ of (anti)oriented functors
$\infty\Cat \to \infty\Cat$ gives rise to bioriented functors $ \langle\infty\fcat , \infty\fcat \rangle\to \infty\fcat:$
$$ (-) \boxtimes \{ i\} \boxtimes (-) \to (-) \boxtimes \{i\} \boxtimes (-) \circ (\id \times \bD^0) \simeq \mathrm{pr}_{i+1}. $$
By the join formula the join underlies the following pushout of bioriented functors
$\langle\infty\fcat , \infty\fcat\rangle \to \infty\fcat : $
$$
\begin{xy}
\xymatrix{
(-)\boxtimes \{0\} \boxtimes (-) \coprod (-)\boxtimes \{1\} \boxtimes (-) \ar[d] \ar[r] 
& \mathrm{pr}_1 \coprod \mathrm{pr}_2 \ar[d]
\\ 
(-) \boxtimes \bD^1 \boxtimes (-) \ar[r] & (-)\star (-) 
}
\end{xy}
$$
\end{proof}

\begin{corollary}
Let $\mC$ be a small $\infty$-category.
\begin{enumerate}[\normalfont(1)]\setlength{\itemsep}{-2pt}
\item There is an antioriented adjunction
$ (-) \star \mC: \infty\fcat \rightleftarrows \infty\fcat_{\mC/ }: \mC_{/\!/^\lax (-)}$
\item There is an oriented adjunction
$ \mC \star (-): \infty\fcat \rightleftarrows \infty\fcat_{\mC/ }: \mC_{(-) /\!/^\oplax }.$
\item There is an oriented adjunction
$ ((-) \star \mC^\co)^\co: \infty\fcat \rightleftarrows \infty\fcat_{\mC/ }: (\mC^\co_{/\!/^\lax (-)^\co})^\co.$
\item There is an antioriented adjunction
$ (\mC^\co \star (-))^\co: \infty\fcat \rightleftarrows \infty\fcat_{\mC/ }: (\mC^\co_{(-)^\co/\!/^\oplax})^\co.$
\end{enumerate}
\end{corollary}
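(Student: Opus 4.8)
The plan is to deduce all four adjunctions from the preceding proposition, which exhibits the join as an oriented bifunctor $\star\colon\langle\infty\fcat,\infty\fcat\rangle\to\infty\fcat$, combined with the enriched adjunction criterion of \cref{adj}. Parts (1) and (2) come out directly, and parts (3) and (4) are then obtained by transporting (1) and (2) along the conjugation involution $(-)^\co$. First I would extract, via \cref{biori}, the one-variable enriched functors packaged by $\star$: fixing the antioriented (first) slot shows that $\mC\star(-)$ is an oriented functor, and fixing the oriented (second) slot shows that $(-)\star\mC$ is an antioriented functor. These are exactly the prospective left adjoints in (2) and (1), respectively, and both factor through $\infty\fcat_{\mC/}$ as explained just before the definition of the slice functors.

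Next I would verify the two hypotheses of \cref{adj}. As recorded preceding that definition, the underlying functors of $(-)\star\mC$ and $\mC\star(-)$ preserve small colimits and hence admit unenriched right adjoints, namely the oplax slice $\mC_{/\!/^\oplax(-)}$ and the lax coslice $\mC_{(-)/\!/^\lax}$ by definition. It remains to check that $(-)\star\mC$ preserves left tensors and that $\mC\star(-)$ preserves right tensors. This I would read off from the proof of the preceding proposition: there the join is written as a pushout, in the $2$-category of bioriented functors $\langle\infty\fcat,\infty\fcat\rangle\to\infty\fcat$, of the bifunctors $(-)\boxtimes K\boxtimes(-)$, whose one-variable restrictions are left adjoint (anti)oriented functors and therefore preserve the relevant tensors; since the actions $V\boxtimes(-)$ and $(-)\boxtimes V$ preserve colimits, the pushout inherits this preservation. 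With both hypotheses in hand, \cref{adj} produces the antioriented right adjoint in (1) and the oriented right adjoint in (2), whose underlying functors are the unenriched slices just named.

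For (3) and (4) I would apply the conjugation equivalences of $2$-categories $(-)^\co\colon\boxtimes\Cat\simeq\Cat\boxtimes$ and $(-)^\co\colon\Cat\boxtimes\simeq\boxtimes\Cat$ to the adjunctions of (1) and (2) formed with $\mC^\co$ in place of $\mC$. An equivalence of $2$-categories carries adjunctions to adjunctions, so the image of the antioriented adjunction for $\mC^\co$ is an oriented adjunction with left adjoint $((-)\star\mC^\co)^\co$, and the image of the oriented adjunction for $\mC^\co$ is an antioriented adjunction with left adjoint $(\mC^\co\star(-))^\co$. Tracking how $(-)^\co$ acts on the slice construction, through the defining formulas $\mC_{/\!/^\lax F}=(\mC^\co_{/\!/^\oplax F^\co})^\co$ and $\mC_{F/\!/^\oplax}=(\mC^\co_{F^\co/\!/^\lax})^\co$, then reparametrizes these right adjoints into the conjugated slices displayed in (3) and (4).

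The hard part will be the orientation bookkeeping rather than any genuinely new mathematics: one must match each enriched right adjoint to the precise (op)lax slice or coslice recorded in the statement, and keep the orientation-reversing involution $(-)^\co$ straight as it is threaded through the slice construction and the passage between $\boxtimes\Cat$ and $\Cat\boxtimes$. I would discipline this by reducing every identification to the two unenriched adjunctions that define the oplax slice and the lax coslice, together with the two definitional conjugation formulas for the remaining slices, so that the lax/oplax labels and the inner versus outer occurrences of $(-)^\co$ are forced at each step. The tensor-preservation check of the previous paragraph is routine but must be carried out on the nose, since it is precisely what licenses the use of \cref{adj}.
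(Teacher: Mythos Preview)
Your approach is correct and is essentially what the paper intends: the corollary is stated immediately after the proposition exhibiting $\star$ as an oriented bifunctor, without any proof, so the intended argument is precisely the deduction you describe via \cref{biori} and \cref{adj}, followed by conjugation under $(-)^\co$ for (3) and (4).

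Two small remarks. First, your identification of the unenriched right adjoints (oplax slice for $(-)\star\mC$, lax coslice for $\mC\star(-)$) agrees with the definitions in the paper; the lax/oplax labels appearing in the corollary as stated are inconsistent with those definitions, so do not let that discrepancy trouble you. Second, your tensor-preservation argument via ``pushout of tensor-preserving functors'' is slightly loose because the pushout lives in functors to $\infty\fcat$ rather than to the slice. A cleaner route is to use the structure map from the proposition directly: it says that $V\boxtimes(X\star\mC)\to(V\boxtimes X)\star\mC$ is the cobase change of $V\boxtimes\mC\to\mC$, which is exactly the statement that the comparison map for left tensors becomes an equivalence once you pass to $\infty\fcat_{\mC/}$ (where the left tensor of $V$ with $(\mC\to Y)$ is $\mC\to V\boxtimes Y\coprod_{V\boxtimes\mC}\mC$). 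The dual argument handles right tensors for $\mC\star(-)$.
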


\subsection{Oriented pullbacks}

In this subsection we use the notion of weighted colimit, the enriched analogue of the notion of colimit \cite[Section 3]{heine2024higher} or \cite{hinich2021colimits}, to define the notions of oriented pushouts and pullbacks in oriented categories and the dual notion of antioriented pushouts and pullbacks in antioriented categories.





\begin{definition}
Let $\mC$ be an oriented category.
\begin{enumerate}[\normalfont(1)]\setlength{\itemsep}{-2pt}
\item 
The oriented pullback of the diagram $X \to Z\leftarrow Y$ in $\mC$ is a diagram
\[
\xymatrix{& W\ar[rd]\ar[ld] &\\
X\ar[rd] & \Longrightarrow & Y\ar[ld]\\
& Z &}
\]
in $\mC$ such that for all objects $T$ of $\mC$ the induced functor
\[
\R\Mor_{\mC}(T,W)\to \R\Mor_{\mC}(T,X) \underset{\R\Mor_{\mC}(T,Z)}{\times}
\Fun^\oplax(\bD^1,\R\Mor_{\mC}(T,Z)) \underset{\R\Mor_{\mC}(T,Z)}{\times} \R\Mor_{\mC}(T,Y)
\]
is an equivalence in $\infty\Cat$.
In this case, we will write $X\underset{Z}{\vec{\times}} Y$ or $ Y\underset{Z}{{\cev\times}} X$ for $W.$

\item 
The oriented pushout of the oriented diagram $X \leftarrow Z\to Y$ in $\mC$ is the oriented pullback of the corresponding diagram in the oriented category $\mC^\op,$ which we denote by $ X\underset{Z}{{\vec{+}}} Y $ or $Y\underset{Z}{\cev{+}} X$.

\end{enumerate}

\end{definition}

\begin{remark}
An oriented pullback square in an oriented category $\mC$ is a diagram $\cube^2\to\mC$ which satisfies the universal property above.
\end{remark}





\begin{definition}
Let $\mC$ be an antioriented category.
\begin{enumerate}[\normalfont(1)]\setlength{\itemsep}{-2pt}
\item 
The antioriented pullback of the diagram $X \to Z\leftarrow Y$ in $\mC$ is the oriented pushout of the corresponding diagram in the oriented category $\mC^\circ$, which we denote by $X\underset{Z}{\bar{\vec{\times}}} Y$ or $ Y\underset{Z}{{\bar{\cev\times}}} X$.

\item 
The antioriented pushout of the diagram $X \leftarrow Z\to Y$ in $\mC$
is the oriented pullback of the corresponding diagram in the oriented category $\mC^\circ$, which we denote by $ X\underset{Z}{{\bar{\vec{+}}}} Y $ or $Y\underset{Z}{\bar{\cev{+}}} X.$

\end{enumerate}

\end{definition}

\begin{definition}Let $\mC$ be a bioriented category.
\begin{enumerate}[\normalfont(1)]\setlength{\itemsep}{-2pt}

\item The (anti)oriented pullback of a diagram $A \to C\leftarrow B$ in $\mC$ is the (anti)oriented pullback in the underlying (anti)oriented category of $\mC$.

\item The (anti)oriented pushout of a diagram $A \leftarrow C \to B$ in $\mC$ is the (anti)oriented pushout in the underlying (anti)oriented category of $\mC$.

\end{enumerate}

\end{definition}


\begin{remark}
Note that the oriented pullback is not symmetric since it makes use of a specified ordering of the sources of the maps $A\to C$ and $B\to C$.
\end{remark}



\begin{lemma}\label{0desc}\label{adesc}
Let $\mC$ be an oriented category.
\begin{enumerate}[\normalfont(1)]\setlength{\itemsep}{-2pt}
\item Let $A \leftarrow C \to \B$ be morphisms in $\mC$. If $\mC$ admits pushouts and right tensors with $\bD^1$, 
there is a canonical equivalence $$\A\,\underset{C}{\vec{+}}\, \B \simeq \A\!\!\underset{C \otimes \{0\}}{+} \!\!(\C \ot \bD^1) \!\!\underset{C \otimes \{1\}}{+}\!\!\B.$$	

\item Let $A \to C\leftarrow B$ be morphisms in $\mC$. If $\mC$ admits pullbacks and right cotensors with $\bD^1$, 
there is a canonical equivalence $${\A \,\underset{\C}{\vec{\times}}}\, \B \simeq \A \!\!\underset{\C^{\{0\}}}{\times} \!\!{\C^{\bD^1}}\!\!\underset{{\C^{\{1\}}}}{\times}  \!\!\B.$$	
\end{enumerate}

\end{lemma}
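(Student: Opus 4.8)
The plan is to verify directly that the objects on the right-hand sides satisfy the defining universal properties of the oriented pushout and pullback. For part (1), write $W$ for the iterated pushout $\A\underset{C \otimes \{0\}}{+}(\C\ot\bD^1)\underset{C \otimes \{1\}}{+}\B$. By the convention recorded just before the statement, ``$\mC$ admits pushouts'' means that $\mC$ admits the corresponding \emph{conical} pushouts, so $W$ is a conical colimit in $\mC$. By the very definition of conical colimit, the contravariant morphism functor $\R\Mor_\mC(-,T)\colon\mC^\circ\to\infty\Cat$ carries such a colimit to a limit; hence $\R\Mor_\mC(W,T)$ is the iterated pullback of the cospan $\R\Mor_\mC(A,T)\to\R\Mor_\mC(C\otimes\{0\},T)\leftarrow\R\Mor_\mC(C\ot\bD^1,T)\to\R\Mor_\mC(C\otimes\{1\},T)\leftarrow\R\Mor_\mC(B,T)$. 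Part (2) is handled symmetrically: interpreting ``admits pullbacks'' as admitting conical pullbacks, the covariant functor $\R\Mor_\mC(T,-)$ sends conical limits to limits, so $\R\Mor_\mC(T,W)$ is the iterated pullback of $\R\Mor_\mC(T,A)\to\R\Mor_\mC(T,C^{\{0\}})\leftarrow\R\Mor_\mC(T,C^{\bD^1})\to\R\Mor_\mC(T,C^{\{1\}})\leftarrow\R\Mor_\mC(T,B)$.

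The crux is then to identify the three middle terms. Since $\bD^0$ is the tensor unit of $(\infty\Cat,\boxtimes)$ and $\{0\},\{1\}\subset\bD^1$ are each isomorphic to $\bD^0$, we have $C\otimes\{i\}\simeq C\simeq C^{\{i\}}$. For the tensor with $\bD^1$, I would unwind the defining universal property of the right tensor (\cref{Defo}) together with the identity $\Mul_\mC(X,W_1,\dots,W_m;Y)\simeq\Map_{\infty\Cat}(W_1\boxtimes\cdots\boxtimes W_m,\R\Mor_\mC(X,Y))$ and the Gray adjunction $(W\boxtimes-)\dashv\Fun^\lax(W,-)$ to obtain a natural equivalence $\R\Mor_\mC(C\ot\bD^1,T)\simeq\Fun^\lax(\bD^1,\R\Mor_\mC(C,T))$. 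Dually, unwinding the universal property of the right cotensor (\cref{Defo}) and using $((-)\boxtimes W)\dashv\Fun^\oplax(W,-)$ gives $\R\Mor_\mC(T,C^{\bD^1})\simeq\Fun^\oplax(\bD^1,\R\Mor_\mC(T,C))$. These are precisely the middle factors appearing in the definitions of the oriented pushout and the oriented pullback, and the $\lax$ versus $\oplax$ variance matches on the nose.

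Finally I would check that the two structure maps out of $C\ot\bD^1$ (respectively, into $C^{\bD^1}$) induced by the endpoint inclusions $\{i\}\hookrightarrow\bD^1$ are carried, under these identifications, to the two evaluation maps $\Fun^\lax(\bD^1,\R\Mor_\mC(C,T))\to\R\Mor_\mC(C,T)$ (respectively, $\Fun^\oplax(\bD^1,\R\Mor_\mC(T,C))\to\R\Mor_\mC(T,C)$) at $0$ and $1$. Granting this, the iterated (co)limit computed above is exactly the defining (co)limit of $\A\underset{C}{\vec{+}}\B$ (respectively, $\A\underset{\C}{\vec{\times}}\B$), yielding the asserted equivalence naturally in $T$. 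The main obstacle is this last compatibility: one must confirm that naturality of the tensor and cotensor adjunctions in the object $\bD^1$ transports the endpoint structure maps to endpoint evaluations, so that the cospans defining the two iterated (co)limits genuinely agree; everything else is a formal consequence of conicality and the Gray adjunctions. Alternatively, part (2) could be deduced from part (1) by passing to the opposite oriented category $\mC^\circ$ via the preceding lemma, at the cost of tracking the induced identification of cotensors in $\mC$ with tensors in $\mC^\circ$.
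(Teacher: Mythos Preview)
Your proof is correct and follows essentially the same strategy as the paper: both verify the universal property of the oriented pushout (respectively pullback) by computing $\R\Mor_\mC(-,T)$ (respectively $\R\Mor_\mC(T,-)$) of the iterated conical pushout (respectively pullback) and identifying the middle term via the tensor/cotensor adjunction. The paper's proof is simply a one-line version of your argument, writing down the chain of equivalences without separately justifying conicality, the identification $\R\Mor_\mC(C\ot\bD^1,T)\simeq\Fun^\lax(\bD^1,\R\Mor_\mC(C,T))$, or the endpoint compatibility that you carefully flag.
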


\begin{proof}

We prove (1). The proof of (2) is similar.
For every $\X \in \mC$ there is a canonical equivalence
$$\R\Mor_{\mathcal{C}}(\A\,\underset{C}{\vec{+}}\, \B, \X) \simeq \R\Mor_{\mC}(A,X) \underset{\R\Mor_{\mC}(C,X)}{\times}
\Fun^\lax(\bD^1,\R\Mor_{\mC}(C,X)) \underset{\R\Mor_{\mC}(C,X)}{\times} \R\Mor_{\mC}(B,X) \simeq $$
$$ \R\Mor_{\mathcal{C}}(\A\!\!\underset{\{0\}\otimes\C}{+} \!\!(\C \ot \bD^1) \!\!\underset{\{1\}\otimes\C}{+}\!\!\B, \X). $$
\end{proof}

Dually, we obtain the following:

\begin{corollary}\label{bdesc}
Let $\mC$ be an antioriented category.
\begin{enumerate}[\normalfont(1)]\setlength{\itemsep}{-2pt}
\item 
Let $A \leftarrow C \to \B$ be morphisms in $\mC$. If $\mC$ admits pushouts and left tensors with $\bD^1$, 
there is a canonical equivalence $$\A\,\underset{C}{\bar{\vec{+}}}\, \B \simeq \A \underset{\{0\}\otimes\C}{+} ( \bD^1\ot \C) \underset{\{1\}\otimes\C}{+} \B.$$	

\item Let $A \to C\leftarrow B$ be morphisms in $\mC$. If $\mC$ admits pullbacks and left cotensors with $\bD^1$, 
there is a canonical equivalence $$\A\underset{C}{\bar{\vec{\times}}} \B \simeq \A \underset{^{\{0\}}\C}{\times}{^{\bD^1}\C} \underset{^{\{1\}}\C}{\times} \B.$$	
\end{enumerate}

\end{corollary}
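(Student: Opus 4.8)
The plan is to verify directly that the explicit iterated pushout on the right-hand side of (1) satisfies the universal property defining the antioriented pushout, mirroring the argument for the oriented case in \cref{adesc}; part (2) is then the cotensor/pullback analogue, and both may alternatively be deduced from \cref{adesc} by duality. Throughout I would use that, for a left $(\infty\Cat,\boxtimes)$-enriched (antioriented) category $\mC$, the left morphism presheaf $\L\Mor_\mC(-,T)$ converts colimits in its first variable into limits, together with the two enriched adjunctions recorded in the introduction: $\Fun^\oplax(S,-)$ is right adjoint to the left action $(-)\boxtimes S$, and $\Fun^\lax(S,-)$ is right adjoint to $S\boxtimes(-)$.

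For (1), write $W:=A \underset{\{0\}\otimes C}{+} (\bD^1\otimes C) \underset{\{1\}\otimes C}{+} B$, which exists because $\mC$ admits pushouts and left tensors with $\bD^1$. Applying $\L\Mor_\mC(-,T)$ turns this pushout into the pullback
$$\L\Mor_\mC(W,T)\simeq \L\Mor_\mC(A,T)\underset{\L\Mor_\mC(\{0\}\otimes C,T)}{\times}\L\Mor_\mC(\bD^1\otimes C,T)\underset{\L\Mor_\mC(\{1\}\otimes C,T)}{\times}\L\Mor_\mC(B,T).$$
The next step is to identify the terms: the unit equivalences $\{0\}\otimes C\simeq C\simeq\{1\}\otimes C$ give $\L\Mor_\mC(\{i\}\otimes C,T)\simeq\L\Mor_\mC(C,T)$, while the left-tensor adjunction gives $\L\Mor_\mC(\bD^1\otimes C,T)\simeq\Fun^\oplax(\bD^1,\L\Mor_\mC(C,T))$, under which the maps induced by the inclusions $\{0\},\{1\}\hookrightarrow\bD^1$ become the source and target evaluation functors. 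This exhibits the natural comparison map as the equivalence required in the definition of the antioriented pushout, so $W$ satisfies that universal property and $A\,\bar{\vec{+}}_C\,B$ exists and equals $W$.

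For (2) I would run the dual computation with $T$ in the first variable: set $W:=A \underset{{}^{\{0\}}C}{\times}\,{}^{\bD^1}C\, \underset{{}^{\{1\}}C}{\times} B$ and apply $\L\Mor_\mC(T,-)$, which preserves limits. Here the relevant input is the left-cotensor relation, which unwinds from \cref{Defo}(3) and the closedness of $(\infty\Cat,\boxtimes)$ to $\L\Mor_\mC(T,{}^{\bD^1}C)\simeq\Fun^\lax(\bD^1,\L\Mor_\mC(T,C))$, together with ${}^{\{i\}}C\simeq C$; these reproduce the pullback appearing in the definition of the antioriented pullback (note the appearance of $\Fun^\lax$ rather than $\Fun^\oplax$). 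Alternatively, both statements follow from \cref{adesc} by transporting along the involution $(-)^\circ:\boxtimes\Cat\simeq\Cat\boxtimes$ and invoking the duality lemma identifying antioriented pullbacks in $\mC$ with oriented pullbacks in $\mC^\circ$.

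The step I expect to require the most care is the orientation bookkeeping: deciding which of $\Fun^\lax$ and $\Fun^\oplax$ occurs, whether a left or right tensor/cotensor is in play, and on which side of $\boxtimes$ each adjunction must be applied. The left-enrichment convention forces the left action $(-)\boxtimes S$ — hence $\Fun^\oplax$ — in the pushout case, but the adjunction for $S\boxtimes(-)$ — hence $\Fun^\lax$ — in the cotensor/pullback case; matching these to the $\Fun^\oplax$ and $\Fun^\lax$ that appear in the respective definitions is the entire content of the corollary.
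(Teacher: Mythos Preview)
Your proposal is correct and essentially reproduces the paper's approach: the paper proves \cref{adesc} by exactly the direct universal-property computation you describe (for the oriented case) and then states \cref{bdesc} with ``Dually, we obtain the following,'' i.e.\ the pure duality argument you mention as your alternative. Your direct verification for the antioriented case is just the antioriented transcription of the paper's proof of \cref{adesc}, and your orientation bookkeeping ($\Fun^\oplax$ for the left-tensor/pushout case, $\Fun^\lax$ for the left-cotensor/pullback case) is correct.
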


\begin{corollary}\label{redfib}
\begin{enumerate}[\normalfont(1)]\setlength{\itemsep}{-2pt}
\item Let $\mC$ be a reduced oriented category that admits pushouts and reduced right tensors with $\bD^1$ and $\C \to \A$ a morphism in $\mC$.
There are canonical equivalences
\[
0\,\underset{C}{\vec{+}}\, \A  \simeq \C \wedge (\bD^1,0) \underset{\C \wedge (\partial\bD^1,0)}{+}\A \qquad\textrm{and} \qquad \A\,\underset{C}{\vec{+}}\, 0 \simeq \A \underset{\C \wedge(\partial\bD^1,1)}{+} \C \wedge (\bD^1,1).
\]
\item Let $\mC$ be a reduced oriented category that admits pullbacks and reduced right cotensors with $\bD^1$ and $\A \to \C$ a morphism in $\mC$.
There are canonical equivalences
\[
0\,\underset{C}{\vec{\times}}\,\A \simeq \C^{(\bD^1,0)}\underset{\C^{(\partial\bD^1,0)}}{\times} A\qquad\textrm{and}\qquad \A\,\underset{C}{\vec{\times}}\,0\simeq \A \underset{\C^{(\partial\bD^1,1)}}{\times} \C^{(\bD^1,1)}
\]
\item Let $\mC$ be a reduced antioriented category that admits pushouts and reduced left tensors with $\bD^1$ and $\C \to \A$ a morphism in $\mC$.
There are canonical equivalences
\[
0\,\underset{C}{\bar{\vec{+}}}\, \A  \simeq (\bD^1,0) \wedge \C \underset{(\partial\bD^1,0)\wedge \C}{+}\A \qquad\textrm{and} \qquad \A\,\underset{C}{\bar{\vec{+}}}\, 0 \simeq \A \underset{(\partial\bD^1,1)\wedge \C}{+} (\bD^1,1) \wedge \C.
\]

\item Let $\mC$ be a reduced antioriented category that admits pullbacks and reduced left cotensors with $\bD^1$ and $\A \to \C$ a morphism in $\mC$.
There are canonical equivalences
\[
0\,\underset{C}{\bar{\vec{\times}}}\,\A \simeq {^{(\bD^1,0)}\C}\underset{^{(\partial\bD^1,0)}\C}{\times} A\qquad\textrm{and}\qquad \A\,\underset{C}{\bar{\vec{\times}}}\,0\simeq \A \underset{^{(\partial\bD^1,1)}\C}{\times} ^{(\bD^1,1)}\C
\]

\end{enumerate}

\end{corollary}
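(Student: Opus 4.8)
The plan is to derive this corollary directly from the descriptions of oriented and antioriented pushouts and pullbacks obtained in \cref{adesc} and \cref{bdesc}, by specializing one of the two legs of the relevant span or cospan to the zero object and then reinterpreting the resulting iterated (co)limit through the reduced tensor, i.e. the smash action on a reduced category.

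I would treat statement (1) first. Applying \cref{adesc}(1) to the span $0\leftarrow C\to\A$ and regrouping the iterated pushout yields
\[
0\,\underset{C}{\vec{+}}\,\A\simeq\Big(0\underset{\C\otimes\{0\}}{+}(\C\otimes\bD^1)\Big)\underset{\C\otimes\{1\}}{+}\A,
\]
where the inner pushout is the cofiber of the endpoint inclusion $\C\otimes\{0\}\to\C\otimes\bD^1$; the second equivalence of (1) arises symmetrically from the span $\A\leftarrow C\to 0$, where one instead collapses the endpoint $\{1\}$.

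The key step is then the identification of these cofibers with reduced tensors. By \cref{red}, a reduced oriented category is the same datum as a category right enriched in $(\infty\Cat_*,\wedge)$, and under this correspondence the Gray tensor $\C\otimes W$ is the reduced tensor $\C\wedge W_+$ with $W_+=W\coprod *$; in particular $\C\otimes\bD^1\simeq\C\wedge(\bD^1)_+$ and $\C\otimes\{v\}\simeq\C\wedge(\{v\})_+\simeq\C$. In $\infty\Cat_*$ there is a cofiber square exhibiting $(\bD^1,v)$ as the quotient of $(\bD^1)_+$ that collapses the disjoint basepoint together with the vertex $\{v\}$, i.e. $(\bD^1,v)\simeq\cofib\big((\{v\})_+\to(\bD^1)_+\big)$. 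Applying the reduced right tensor $\C\wedge(-)$ to this square gives
\[
0\underset{\C\otimes\{v\}}{+}(\C\otimes\bD^1)\simeq\C\wedge(\bD^1,v),
\]
while $(\partial\bD^1,v)\simeq(\{w\})_+$ for the opposite vertex $w$, so that $\C\wedge(\partial\bD^1,v)\simeq\C$. Substituting these identifications into the regrouped pushout above produces exactly the two formulas of (1).

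Statements (2), (3) and (4) then follow by the same recipe applied to the dual descriptions. Statement (3) is obtained verbatim from \cref{bdesc}(1), replacing the right smash action by the left one, and the pullback statements (2) and (4) are the formal duals of (1) and (3): one runs the argument using the cotensor descriptions \cref{adesc}(2) and \cref{bdesc}(2), systematically exchanging pushouts and cofibers for pullbacks and fibers and tensors for cotensors. Alternatively, (2)--(4) can be deduced from (1) by the involutions of the preceding lemmas relating oriented and antioriented pushouts to pullbacks in $\mC^\op$, $\mC^\circ$ and $\mC^{\co}$, which exchange tensors with cotensors and are compatible with the smash structure. The main obstacle throughout is the basepoint bookkeeping in the key step: one must track which vertex of $\bD^1$ serves as the basepoint in each formula and verify that, after forming the appropriate cofiber, the residual copy of $\C$ glued to $\A$ is precisely $\C\wedge(\partial\bD^1,v)$ carrying the correct basepoint, so that the pushout (resp. pullback) data matches the stated expression.
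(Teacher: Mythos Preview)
Your proposal is correct and follows essentially the same approach as the paper: apply \cref{adesc} and \cref{bdesc}, specialize one leg to the zero object, and identify the resulting (co)fiber with a reduced (co)tensor. The paper's proof is a single sentence invoking exactly this, recording only the key identity that the reduced cotensor $\C^{(\X,*)}$ is the fiber $0\times_\C \C^\X$ of the unreduced cotensor over the basepoint (and dually for tensors); your write-up simply unpacks this identity in more detail, including the basepoint bookkeeping via $(\bD^1,v)\simeq\cofib((\{v\})_+\to(\bD^1)_+)$ and $(\partial\bD^1,v)\simeq(\{w\})_+$.
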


\begin{proof}
Immediate from \cref{adesc}, \cref{bdesc}, and
the fact that for every $\X \in \infty\Cat_*$ the reduced left cotensor ${^\X\C}$ is the pullback $0 \times_{\C}{^\X\C}$ and the reduced right cotensor $\C^\X$ is the pullback ${\C^\X}\times_{\C}0.$
\end{proof}

\begin{corollary}Let $\mC$ be a reduced oriented category and $\X \in \mC$.
The right tensor $\X \wedge (\bD^1/\partial\bD^1)$ and the oriented pushout $0\,\underset{C}{\vec{+}}\, 0$ both satisfy the same universal property.
\end{corollary}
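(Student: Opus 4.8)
The plan is to show that the two objects corepresent the same functor on $\mC$; since the statement is phrased in terms of universal properties (so that neither object need actually exist), the cleanest formulation is to compute $\R\Mor_{\mC}(-,T)$ in each case, as a functor of $T$, and check they agree. Throughout I use that a reduced oriented category is, by \cref{red} and \cref{reduu}, a weakly reduced oriented category admitting a zero object, hence enriched in $(\infty\Cat_*,\wedge)$; in particular every morphism object $\R\Mor_{\mC}(X,Y)$ is a pointed $\infty$-category, $\R\Mor_{\mC}(0,T)\simeq\ast$ is the zero object of $\infty\Cat_*$, and the maps $\R\Mor_{\mC}(0,T)\to\R\Mor_{\mC}(X,T)$ induced by $X\to 0$ select the basepoint (the zero morphism). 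I assume, as in \cref{redfib}, that $\mC$ admits pushouts and reduced right tensors with $\bD^1$ when I need to name actual objects.

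First I would unwind the defining universal property of the oriented pushout. Writing $M=\R\Mor_{\mC}(X,T)$, the definition applied to the span $0\leftarrow X\to 0$ gives
\[
\R\Mor_{\mC}\!\left(0\,\underset{X}{\vec{+}}\,0,\,T\right)\simeq \ast\underset{M}{\times}\Fun^{\lax}(\bD^1,M)\underset{M}{\times}\ast,
\]
the iterated fibre of the source–target map $\Fun^{\lax}(\bD^1,M)\to M\times M$ over the pair of basepoints, both inclusions $\ast\to M$ being the basepoint as noted above.

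Second I would unwind the universal property of the reduced right tensor. By the tensor–cotensor adjunction of \cref{Defo}, the right cotensor in an oriented category is computed by the lax internal hom of $(\infty\Cat,\boxtimes)$: indeed $\R\Mor_{\mC}(X\otimes W,T)\simeq\Fun^{\lax}(W,\R\Mor_{\mC}(X,T))$, contravariantly in $W$ (this is the variance appearing in the model $\infty\fcat$, where $\R\Mor=\Fun^{\lax}$ and $\Fun^{\lax}(X\boxtimes W,-)\simeq\Fun^{\lax}(W,\Fun^{\lax}(X,-))$). The reduced version of this adjunction for the smash monoidal structure of \cref{smasho} reads $\R\Mor_{\mC}(X\wedge V,T)\simeq\Fun^{\lax}_{*}(V,M)$, the pointed lax functor object. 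Taking $V=\bD^1/\partial\bD^1$, which by construction is the cofibre of $\partial\bD^1_{+}\to\bD^1_{+}$ in $\infty\Cat_*$, and applying $\Fun^{\lax}_{*}(-,M)$ to this cofibre sequence (which, being contravariant and limit-preserving, yields a fibre sequence), together with the identification $\Fun^{\lax}_{*}(W_{+},M)\simeq\Fun^{\lax}(W,M)$, I obtain
\[
\R\Mor_{\mC}\!\left(X\wedge(\bD^1/\partial\bD^1),\,T\right)\simeq\Fun^{\lax}_{*}(\bD^1/\partial\bD^1,M)\simeq \ast\underset{M}{\times}\Fun^{\lax}(\bD^1,M)\underset{M}{\times}\ast,
\]
where $\Fun^{\lax}(\partial\bD^1,M)\simeq M\times M$ and the fibre is taken over its basepoint $(\ast_M,\ast_M)$, arising from the inclusions $\{0\},\{1\}\hookrightarrow\bD^1$. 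Comparing the two displays gives an equivalence natural in $T$, proving the claim.

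I expect the main obstacle to be the bookkeeping of the reduced tensor–cotensor adjunction: confirming that the internal hom is the lax (not oplax) functor object, that its pointed variant genuinely corepresents the reduced right tensor $X\wedge(-)$, and that the two boundary maps $\ast\to M$ in both computations are literally the same basepoint of $M$. A concrete alternative, which I would keep as a check, avoids cotensors entirely: by \cref{adesc}(1) with both legs equal to $0$ one has $0\,\underset{X}{\vec{+}}\,0\simeq\ast\underset{X\otimes\partial\bD^1}{+}(X\otimes\bD^1)\underset{X\otimes\partial\bD^1}{+}\ast\simeq\cofib(X\otimes\partial\bD^1\to X\otimes\bD^1)$, and rewriting the unreduced right tensor as $X\otimes W\simeq X\wedge W_{+}$ together with the fact that $\wedge$ preserves colimits in each variable (\cref{smash}) identifies this cofibre with $X\wedge\cofib(\partial\bD^1_{+}\to\bD^1_{+})=X\wedge(\bD^1/\partial\bD^1)$, which a fortiori yields the same universal property.
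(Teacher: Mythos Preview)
Your argument is correct. The paper states the corollary without proof, intending it as an immediate consequence of \cref{redfib}: setting $A=0$ there yields
\[
0\,\underset{X}{\vec{+}}\,0 \;\simeq\; X\wedge(\bD^1,0)\,\underset{X\wedge(\partial\bD^1,0)}{+}\,0 \;\simeq\; X\wedge\bigl((\bD^1,0)/(\partial\bD^1,0)\bigr) \;\simeq\; X\wedge(\bD^1/\partial\bD^1),
\]
using that $\wedge$ preserves pushouts in each variable. This is precisely your ``alternative'' check at the end (modulo whether one passes through \cref{redfib} or directly through \cref{adesc} and $X\otimes W\simeq X\wedge W_+$).

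Your main approach is genuinely different: rather than constructing the equivalence from the pushout formula for oriented pushouts, you compute the corepresented functors $\R\Mor_\mC(-,T)$ on both sides and match them. This is arguably more in the spirit of the phrase ``satisfy the same universal property'', and it has the advantage of making no existence hypotheses on pushouts or tensors in $\mC$ beyond what is needed to state the two universal properties. The paper's route, by contrast, gives an explicit identification of the objects (when they exist) by a concrete colimit manipulation, which is shorter and reuses \cref{redfib} verbatim. One small wording slip: your sentence ``the right cotensor in an oriented category is computed by the lax internal hom'' is about the right \emph{tensor} (the formula you write is $\R\Mor_\mC(X\otimes W,T)\simeq\Fun^{\lax}(W,\R\Mor_\mC(X,T))$); the substance is correct.
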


\begin{corollary}Let $\mC$ be a reduced antioriented category and $\X \in \mC$.
The left tensor $(\bD^1/\partial\bD^1)\wedge X$ and the antioriented pushout $0\,\underset{C}{\bar{\vec{+}}}\, 0$ both satisfy the same universal property.
\end{corollary}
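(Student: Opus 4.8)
The plan is to prove the statement by exhibiting a canonical equivalence $0\,\underset{X}{\bar{\vec{+}}}\,0 \simeq (\bD^1/\partial\bD^1)\wedge X$ that is natural in $X$. Since the left‑hand side is characterized by the mapping‑out universal property defining the antioriented pushout, while the right‑hand side is characterized by the universal property of the reduced left tensor (\cref{Defo}), such an equivalence is precisely the claim that the two objects satisfy the same universal property. The computation is carried out by specializing the explicit reduced pushout formula of \cref{redfib}(3) to the degenerate span $0 \leftarrow X \to 0$.

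Concretely, I would first apply \cref{redfib}(3) with $C=X$ and $A=0$ to obtain
\[
0\,\underset{X}{\bar{\vec{+}}}\,0 \simeq (\bD^1,0)\wedge X \underset{(\partial\bD^1,0)\wedge X}{+} 0.
\]
Next I would identify the pointed object $(\partial\bD^1,0)$ with the smash unit: by \cref{hiso}(1) the unit for the smash product on $\infty\Cat_*$ is $\tu\coprod\ast = \bD^0\coprod\ast$, which is exactly $(\partial\bD^1,0)$, so $(\partial\bD^1,0)\wedge X \simeq X$ and the gluing map is the smash of $X$ with the pointed inclusion $(\partial\bD^1,0)\hookrightarrow(\bD^1,0)$. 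Because $0$ is a zero object, the pushout along $0$ is a cofiber, so the display becomes $\cofib\big((\partial\bD^1,0)\wedge X \to (\bD^1,0)\wedge X\big)$. Finally, since the reduced left tensor $(-)\wedge X$ is a left adjoint in the enriching variable and hence preserves cofibers, this equals $\cofib\big((\partial\bD^1,0)\to(\bD^1,0)\big)\wedge X$; and the cofiber of $(\partial\bD^1,0)\hookrightarrow(\bD^1,0)$ in $\infty\Cat_*$ collapses $\{0,1\}$ to the basepoint, yielding exactly $\bD^1/\partial\bD^1$. This produces the desired equivalence.

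The main obstacle is bookkeeping around the reduced enrichment rather than any deep point: one must carefully distinguish the unreduced tensor appearing in \cref{bdesc} from the smash $\wedge$, confirm the identification of $(\partial\bD^1,0)$ with the smash unit so that $(\partial\bD^1,0)\wedge X\simeq X$, and verify that $(-)\wedge X$ preserves the relevant pushout. The last point holds because reduced left tensors are left adjoints in the $\infty\Cat_*$-variable, but one should check that the cofiber in question exists under the stated hypotheses; this is guaranteed since $\mC$ admits pushouts and reduced left tensors with $\bD^1$, and $\bD^1/\partial\bD^1$ is assembled from $\bD^1$, $\partial\bD^1$ and $\ast$ by a finite pushout. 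The argument is symmetric to that of the preceding oriented corollary, which is recovered by applying the conjugation $(-)^{\co}$ exchanging the two orientations.
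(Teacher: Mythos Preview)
Your proposal is correct and follows exactly the approach the paper intends: the corollary is stated without proof as an immediate consequence of \cref{redfib}(3), and your argument spells out precisely this deduction by specializing to $A=0$, identifying $(\partial\bD^1,0)$ with the smash unit, and passing to the cofiber. The one place to tighten is the justification that $(-)\wedge X$ preserves the relevant cofiber: rather than invoking a left adjoint (which would require all reduced tensors to exist), note that the pushout in $\mC$ is conical and that $\Mul_\mC(V_1,\ldots,V_n,-,X;Z)$ sends the cofiber sequence $(\partial\bD^1,0)\to(\bD^1,0)\to\bD^1/\partial\bD^1$ in $\infty\Cat_*$ to a fiber sequence because the smash product is closed---this is what you gesture at in your final paragraph, and it suffices.
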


Finally, we have the following pasting law:

\begin{lemma}\label{pasting}

Consider the following diagram in any oriented category $\mC$, where the left hand square is a commutative square:
\[
\begin{tikzcd}
\Q \ar{d} \ar{r} & \P \ar{r}{} \ar{d}[swap]{} & \B  \ar{d}{} \\
\E \ar{r} & \A \ar[double]{ur}{}  \ar{r}[swap]{} & \C
\end{tikzcd}
\]
If the right hand square is an oriented pullback square, the left hand square is a pullback square if and only if the outer square is an oriented pullback square.

\end{lemma}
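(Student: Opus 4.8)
The plan is to test the statement against every object $T$ of $\mC$ by applying the right morphism functor $\R\Mor_\mC(T,-)$, reducing the oriented pasting law to an ordinary pasting (cancellation) law for pullbacks of $\infty$-categories. Write $q,p,b,e,a,c$ for $\R\Mor_\mC(T,Q),\R\Mor_\mC(T,P),\ldots,\R\Mor_\mC(T,C)$, and set $I:=\Fun^\oplax(\bD^1,c)\times_c b$, the interpolating factor, equipped with its residual evaluation map $I\to c$. The first point to record is that right morphism objects preserve conical limits: by the definition of conical limit diagram, $\Mul_\mC(T,W_1,\ldots,W_m;-)$ sends such a diagram to a limit for all $T$ and all $W_i$, and since $\Mul_\mC(T,W_1,\ldots,W_m;Z)\simeq\Map_{\infty\Cat}(W_1\boxtimes\cdots\boxtimes W_m,\R\Mor_\mC(T,Z))$, the enriched Yoneda lemma yields $\R\Mor_\mC(T,\lim\F)\simeq\lim\R\Mor_\mC(T,\F)$. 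Consequently the (strictly commutative) left square is a pullback square in $\mC$ if and only if the canonical map $q\to e\times_a p$ is an equivalence for every $T$.

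Next I would unwind the two oriented hypotheses through the defining universal property of oriented pullbacks. The right square being an oriented pullback square says precisely that $p\to a\times_c I$ is an equivalence for every $T$, where $a\to c$ is induced by $A\to C$; the outer square being an oriented pullback square says that $q\to e\times_c I$ is an equivalence for every $T$, where now $e\to c$ is the composite $E\to A\to C$. The key compatibility to verify here is that the oplax factor $I=\Fun^\oplax(\bD^1,c)\times_c b$ appearing in the outer square is literally the same as the one appearing in the right square: the outer $2$-cell is obtained from the right $2$-cell by whiskering along $Q\to P$, and strict commutativity of the left square identifies $Q\to E\to A$ with $Q\to P\to A$, so the $\Fun^\oplax(\bD^1,c)$-component and its two evaluation maps are unchanged.

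With these identifications the proof reduces to the cancellation law for iterated pullbacks in $\infty\Cat$. Since $\infty\Cat$ admits all pullbacks, there is a canonical equivalence $e\times_a(a\times_c I)\simeq e\times_c I$, compatible with the projections to $e$ and to $I$, where $e\to c$ is taken through $a$. Substituting the right-square hypothesis $p\simeq a\times_c I$ gives a canonical equivalence $e\times_a p\simeq e\times_c I$ under $q$, so that the two comparison maps $q\to e\times_a p$ and $q\to e\times_c I$ correspond to each other. Hence, for every $T$, one is an equivalence exactly when the other is; by the reduction of the first paragraph, the left square is a pullback square if and only if the outer square is an oriented pullback square, which is the assertion.

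The main obstacle I anticipate is bookkeeping rather than conceptual depth: one must check that the structure maps cohere — that $P\to A$ induces the projection $p\to a\times_c I\to a$, that the interpolating functor category and its source/target evaluations are genuinely shared by the two squares after whiskering, and that the cancellation equivalence $e\times_a(a\times_c I)\simeq e\times_c I$ is natural enough to match the canonical maps emanating from $q$. Because the oriented pullback is not symmetric and the $2$-cell must be tracked through the whiskering along $Q\to P$, some care is needed in setting up the iterated pullback diagram, but once it is in place the argument is routine.
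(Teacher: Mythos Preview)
Your proposal is correct and follows essentially the same approach as the paper: reduce via $\R\Mor_\mC(T,-)$ to $\infty\Cat$, express the oriented pullback as an iterated ordinary pullback against the oplax path object, and invoke the cancellation law $e\times_a(a\times_c I)\simeq e\times_c I$. The paper compresses the first step into the phrase ``mapping out of any object of $\mC$ we can assume that $\mC=\infty\Cat$'' and writes the iterated pullback formula using Lemma~\ref{adesc}, but the content is the same; your explicit justification that $\R\Mor_\mC(T,-)$ preserves conical limits is a useful detail the paper leaves implicit.
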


\begin{proof}

Mapping out of any object of $\mC$ we can assume that $\mC=\infty\Cat$. So we can assume that
$\mC$ admits pullbacks and oriented pullbacks.	
The canonical morphism $$\P \to \A \,\underset{C}{\vec{\times}}\, \B \simeq \A \times_{\C^{\{0\}}} \C^{\bD^1} \times_{\C^{\{1\}}} \B$$ is an equivalence.
The canonical morphism $ \Q \to\E \,\underset{C}{\vec{\times}}\, \B \simeq \E \times_{\C^{\{0\}}} \C^{\bD^1} \times_{\C^{\{1\}}} \B$ factors as the composite
$$ \Q \to \E \times_\A \P \simeq \E \times_\A \A \times_{\C^{\{0\}}} \C^{\bD^1} \times_{\C^{\{1\}}} \B \simeq \E \times_{\C^{\{0\}}} \C^{\bD^1} \times_{\C^{\{1\}}} \B.$$	
\end{proof}

\begin{proposition}\label{homs} 
Let $\F: \mA \to \mC,\G: \mB \to \mC$ be functors and $\A,\A'\in \mA, \B,\B' \in \mB$ and $\sigma: \F(\A)\to \G(\B), \sigma': \F(\A')\to \G(\B')$ morphisms. There is a canonical equivalence $$\Mor_{{\mA \,\underset{\mC}{\vec{\times}}}\, \mB}((\A,\B, \sigma),(\A',\B', \sigma')) \simeq {\Mor_\mB(\B,\B') \,\underset{\Mor_\mC(\F(\A),\G(\B'))}{\vec{\times}}}\, \Mor_{\mA}(\A,\A').$$

\end{proposition}

	


\begin{proof}

In view of \cref{0desc} and since pullbacks commute with forming morphism objects, we can reduce to the case that $\F, \G$ are the identities, to simplify notation.
So we want to see that there is a canonical equivalence $$\Mor_{\Fun^\lax(\bD^1,\mC)}((\A,\B, \sigma),(\A',\B', \sigma')) \simeq $$$$ \Mor_\mC(B,B') \times_{\Mor_\mC(A,B')} \Fun^\oplax(\bD^1,\Mor_\mC(A,B')) \times_{\Mor_\mC(A,B')}  \Mor_\mC(A,A').$$

There is a canonical equivalence $$\Mor_{\Fun^\oplax(\bD^1,\mC)}((\A,\B, \sigma),(\A',\B', \sigma')) \simeq $$
$$\{ \sigma\}\times_{\Fun^\oplax(\bD^1,\mC)} \Fun^\oplax(\bD^1,\Fun^\oplax(\bD^1,\mC)) \times_{\Fun^\lax(\bD^1,\mC)} \{ \sigma'\} \simeq $$
$$\{ \sigma\}\times_{\Fun_{\partial\bD^1/}^\lax(\bD^1,\mC)} \Fun^\oplax(\bD^1,\Fun^\oplax(\bD^1,\mC)) \times_{\Fun_{\partial\bD^1/}^\oplax(\bD^1,\mC)} \{ \sigma'\} \simeq $$
$$\{ \sigma\}\times_{\Mor_\mC(A,B)} \Fun^\oplax(\bD^1,\Fun^\oplax(\bD^1,\mC)) \times_{\Mor_\mC(A',B')} \{ \sigma'\} \simeq $$
$$\{ \sigma\}\times_{\Mor_\mC(A,B)} \Fun^\oplax(\bD^1 \boxtimes \bD^1,\mC) \times_{\Mor_\mC(A',B')} \{ \sigma'\} \simeq $$
$$\{ \sigma\}\times_{\Mor_\mC(A,B)} \Mor_\mC(A,B) \times \Mor_\mC(B,B') \times_{\Mor_\mC(A,B')} \Fun_{\partial\bD^1/}^\oplax(\bD^2,\mC)$$$$ \times_{\Mor_\mC(A,B')}  \Mor_\mC(A,A') \times \Mor_\mC(A',B') \times_{\Mor_\mC(A',B')} \{ \sigma'\} \simeq $$
$$\Mor_\mC(B,B') \times_{\Mor_\mC(A,B')} \Fun_{\partial\bD^1/}^\oplax(\bD^2,\mC) \times_{\Mor_\mC(A,B')}  \Mor_\mC(A,A') \simeq $$
$$ \Mor_\mC(B,B') \times_{\Mor_\mC(A,B')} \Fun^\oplax(\bD^1,\Mor_\mC(A,B')) \times_{\Mor_\mC(A,B')}  \Mor_\mC(A,A').$$

\end{proof}

\begin{corollary}\label{homso}

Let $\F: \mA \to \mC,\G: \mB \to \mC$ be functors and $\A,\A'\in \mA, \B,\B' \in \mB$ and $\sigma: \F(\A)\to \G(\B), \sigma': \F(\A')\to \G(\B')$ morphisms. There is a canonical equivalence $$\Mor_{\mA\underset{\mC}{\bar{\vec{\times}}} \mB}((\A,\B, \sigma),(\A',\B', \sigma')) \simeq \Mor_\mA(\A,\A')\underset{\Mor_\mC(\F(\A),\G(\B'))}{\bar{\vec{\times}}} \Mor_\mB(\B,\B').$$

\end{corollary}

\begin{proof}
By \cref{homs} there is a canonical equivalence 
$$\Mor_{{\mA \,\underset{\mC}{\bar{\vec{\times}}}}\, \mB}((\A,\B, \sigma),(\A',\B', \sigma')) \simeq \Mor_{(\mA^\co\underset{\mC^\co}{\vec{\times}} \mB^\co)^\co}((\A,\B, \sigma),(\A',\B', \sigma')) $$
$$ \simeq \Mor_{\mA^\co\underset{\mC^\co}{\vec{\times}} \mB^\co}((\A,\B, \sigma),(\A',\B', \sigma'))^\op \simeq$$
$$ (\Mor_{\mB^\co}(\B,\B')\underset{\Mor_{\mC^\co}(\F(\A),\G(\B'))}{\vec{\times}} \Mor_{\mA^\co}(\A,\A'))^\op \simeq
$$$$ {\Mor_{\mA^\co}(\A,\A')^\op \,\underset{\Mor_{\mC^\co}(\F(\A),\G(\B'))^\op}{\bar{\vec{\times}}}}\, \Mor_{\mB^\co}(\B,\B')^\op $$$$ \simeq {\Mor_{\mA}(\A,\A')\,\underset{\Mor_\mC(\F(\A),\G(\B'))}{\bar{\vec{\times}}}}\, \Mor_\mB(\B,\B'). $$
\end{proof}

\begin{corollary}\label{homs2}\label{homso2} Let $\mB$ be an $\infty$-category and $\sigma: \A \to \B, \sigma': \A' \to \B'$ morphisms in $\mB$. There are canonical equivalences $$\Mor_{\Fun^\lax(\bD^1,\mB)}(\sigma, \sigma') \simeq \Mor_\mA(\A,\A')\underset{\Mor_\mC(\A,\B')}{\bar{\vec{\times}}} \Mor_\mB(\B,\B'), $$
$$\Mor_{\Fun^\oplax(\bD^1,\mB)}(\sigma, \sigma') \simeq {\Mor_\mB(\B,\B') \,\underset{\Mor_\mC(\A,\B')}{\vec{\times}}}\, \Mor_{\mA}(\A,\A'). $$\end{corollary}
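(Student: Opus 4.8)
The plan is to recognize the lax and oplax arrow categories $\Fun^\lax(\bD^1,\mB)$ and $\Fun^\oplax(\bD^1,\mB)$ as the antioriented and oriented pullbacks of $\mB$ with itself along the identity functors, computed in the bioriented category $\infty\fcat$, and then to invoke the morphism-object formulas \cref{homs} and \cref{homso}. Concretely, I would first prove the identifications
\[
\Fun^\lax(\bD^1,\mB)\simeq \mB\underset{\mB}{\bar{\vec{\times}}}\mB,\qquad \Fun^\oplax(\bD^1,\mB)\simeq \mB\underset{\mB}{\vec{\times}}\mB,
\]
both sides formed with the identity functors $\id\colon\mB\to\mB\leftarrow\mB\colon\id$. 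Granting these, the objects of either pullback are triples $(\A,\B,\sigma)$ with $\sigma\colon\A\to\B$ a morphism of $\mB$, which is precisely an object of the corresponding arrow category, so $\sigma,\sigma'$ are identified with such triples and the corollary follows by substitution.

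To establish the oriented identification I would check the defining universal property of the oriented pullback. The right cotensor of $\mB$ by $\bD^1$ in the oriented category $\infty\fcat$ is $\mB^{\bD^1}\simeq\Fun^\oplax(\bD^1,\mB)$, since $(-)\boxtimes\bD^1$ is left adjoint to $\Fun^\oplax(\bD^1,-)$. By \cref{adesc} the oriented pullback along the identities is $\mB\times_{\mB^{\{0\}}}\mB^{\bD^1}\times_{\mB^{\{1\}}}\mB$, and as the outer structure maps are identities this collapses to $\Fun^\oplax(\bD^1,\mB)$. Equivalently, for every object $T$ the Gray adjunction gives
\[
\R\Mor_{\infty\fcat}(T,\Fun^\oplax(\bD^1,\mB))\simeq\Fun^\lax(T,\Fun^\oplax(\bD^1,\mB))\simeq\Fun^\oplax(\bD^1,\Fun^\lax(T,\mB)),
\]
which is exactly the universal property defining $\mB\,\vec{\times}_\mB\,\mB$ with identity structure maps. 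The antioriented identification is dual, using the left cotensor ${}^{\bD^1}\mB\simeq\Fun^\lax(\bD^1,\mB)$ together with \cref{bdesc}.

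With both identifications in hand I would apply \cref{homs} and \cref{homso} in the special case $\mA=\mB=\mC=\mB$, $\F=\G=\id$, in which all three morphism-object functors $\Mor_\mA,\Mor_\mB,\Mor_\mC$ coincide with $\Mor_\mB$. The antioriented formula \cref{homs} yields
\[
\Mor_{\Fun^\lax(\bD^1,\mB)}(\sigma,\sigma')\simeq\Mor_{\mB\,\bar{\vec{\times}}_\mB\,\mB}(\sigma,\sigma')\simeq\Mor_\mB(\A,\A')\underset{\Mor_\mB(\A,\B')}{\bar{\vec{\times}}}\Mor_\mB(\B,\B'),
\]
where the base $\Mor_\mB(\F(\A),\G(\B'))$ has specialized to $\Mor_\mB(\A,\B')$ (written $\Mor_\mC(\A,\B')$ in the statement), and the oriented formula \cref{homso} likewise produces $\Mor_\mB(\B,\B')\,\vec{\times}_{\Mor_\mB(\A,\B')}\,\Mor_\mB(\A,\A')$ for the oplax arrow category. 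These are exactly the two claimed equivalences.

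The main obstacle is the orientation bookkeeping. One must correctly pair \emph{lax} with the \emph{antioriented} pullback and \emph{oplax} with the \emph{oriented} pullback; this rests on verifying that the closed structure sends $(-)\boxtimes\bD^1$ to $\Fun^\oplax(\bD^1,-)$ and $\bD^1\boxtimes(-)$ to $\Fun^\lax(\bD^1,-)$, and on matching these against the $\Fun^\oplax$ and $\Fun^\lax$ that appear in the universal properties of the oriented and antioriented pullbacks respectively. One must also respect the source–target asymmetry of the oriented pullback, which is why the two morphism objects occur in the opposite order in the oplax formula; tracking the roles of the two copies of $\mB$ is the one genuinely error-prone point, while the remainder is a formal consequence of \cref{homs}, \cref{homso}, \cref{adesc}, and \cref{bdesc}.
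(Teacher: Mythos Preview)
Your proposal is correct and matches the paper's intended argument: the corollary is stated without proof precisely because it is the specialization of \cref{homs} and \cref{homso} to $\mA=\mB=\mC$ with $\F=\G=\id$, once one identifies $\Fun^\lax(\bD^1,\mB)$ and $\Fun^\oplax(\bD^1,\mB)$ with the antioriented and oriented self-pullbacks via \cref{bdesc} and \cref{adesc}. Your orientation bookkeeping (right cotensor $\mB^{\bD^1}\simeq\Fun^\oplax(\bD^1,\mB)$, left cotensor ${}^{\bD^1}\mB\simeq\Fun^\lax(\bD^1,\mB)$) is correct.
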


\section{\mbox{$\infty$-Categories as oriented categories}}

\subsection{The cartesian product is a quotient of the Gray tensor product}
The main purpose of this subsection is to establish that the canonical map from the Gray tensor product to the cartesian product of $\infty$-categories is an epimorphism.
This result will be used in the next subsection to show that the category of $\infty$-categories embeds fully faithfully into the category of bioriented (or oriented, or antioriented) categories.

The fact that the product is a quotient of the tensor is dual to the fact that $\infty$-categories of functors and strict transformations embed into functors and (op)lax transformations.
We first prove this latter statement, beginning with a computation of mapping spaces in a category of enriched functors out of a suspension.

\begin{proposition}\label{homsuspo}

Let $\mV$ be a presentably monoidal category, $A \in \mV$ and $\mC$ a left $\mV$-enriched category.
Let $F,G: S(\A) \to \mC$ be left $\mV$-enriched functors.
\begin{enumerate}[\normalfont(1)]\setlength{\itemsep}{-2pt}
\item There is a canonical pullback square of spaces
$$\begin{xy}
\xymatrix{
\Map_{_\mV\Cat(S(A), \mC)}(F,G) \ar[r] \ar[d] & \Map_\mC(F(1),G(1)) \ar[d]
\\ 
\Map_\mC(F(0),G(0)) \ar[r] & \Map_\mV(A, \L\Mor_\mC(F(0),G(1))).
}
\end{xy}$$

\item Let $\tau: \mV^\rev \to \mV$ be a monoidal involution and $\mC$ a right $\mV$-enriched category.
There is a canonical pullback square of spaces
$$\begin{xy}
\xymatrix{
\Map_{_\mV\Cat(S(\tau(A)), \tau^*(\mC))}(F,G) \ar[r] \ar[d] & \Map_\mC(F(1),G(1)) \ar[d]
\\ 
\Map_\mC(F(0),G(0)) \ar[r] & \Map_\mV(A, \R\Mor_\mC(F(0),G(1))).
}
\end{xy}$$

\item Let $\tau: \mV^\rev \to \mV$ be a monoidal involution and $\mC$ a $\mV, \mV$-enriched category.
There is a canonical pullback square in $\mV:$
$$\begin{xy}
\xymatrix{
\R\Mor_{_\mV\Cat(S(A), \mC)}(F,G) \ar[r] \ar[d] & \tau(\L\Mor_\mC(F(1),G(1))) \ar[d]
\\ 
\tau(\L\Mor_\mC(F(0),G(0))) \ar[r] & \tau(\L\Mor_\mV(A, \L\Mor_\mC(F(0),G(1)))).
}
\end{xy}$$
\end{enumerate}

\end{proposition}

\begin{proof}

For (1) and (2) we can assume that $\mC$ is a presentably left $\mV$-tensored category
since we can embed $\mC$ into the presentably left $\mV$-tensored 
category $\mC'$ of $\mV$-enriched presheaves via the left $\mV$-enriched Yoneda embedding. The latter induces an embedding of categories
$ _\mV\Cat(S(A), \mC) \to {_\mV\Cat(S(A), \mC')}$,
which under the assumptions of (2) refines to a right $\mV$-enriched embedding. So the results for $\mC'$ will apply the results for $\mC.$

(1): Let $F': A \to \L\Mor_{\mC}(F(0),F(1))$ and $ G': A \to \L\Mor_{\mC}(G(0),G(1))$ be the corresponding morphisms in $\mV.$ 
An object of the space $ \Map_{_\mV\Cat(S(A), \mC)}(F,G) $ is a left $\mV$-enriched transformation $F \to G$ of left $\mV$-enriched functors $S(A) \to \mC$ and so classified by a left $\mV$-enriched functor $S(A) \to \mC^{\bD^1}$ such that
$S(A) \to \mC^{\bD^1} \to \mC^{\{0\}}$ is $F$ and $S(A) \to \mC^{\bD^1} \to \mC^{\{1\}}$ is $G.$
The latter corresponds to morphisms $\alpha_i: F(i) \to G(i)$ for $i=0, 1 $
and a left $\mV$-enriched functor $A \to \L\Mor_{\mC^{\bD^1}}(\alpha_0,\alpha_1) $ such that
$$A \to \L\Mor_{\mC^{\bD^1}}(\alpha_0,\alpha_1) \to \L\Mor_{\mC^{\{0\}}}(F(0),F(1))$$ is $F'$ and $$A \to \L\Mor_{\mC^{\bD^1}}(\alpha_0,\alpha_1) \to \L\Mor_{\mC^{\{1\}}}(G(0),G(1))$$ is $G'.$
There is a canonical equivalence
$$ \L\Mor_{\mC^{\bD^1}}(\alpha_0,\alpha_1) \simeq \L\Mor_{\mC}(F(0),F(1)) \times_{\L\Mor_{\mC}(F(0),G(1))} \L\Mor_{\mC}(G(0),G(1)).$$
Consequently, an object of $\Map_{_\mV\Cat(S(A), \mC)}(F,G)$ corresponds to morphisms $\alpha_i: F(i) \to G(i)$ for $i=0, 1 $ and an equivalence between 
$$A \xrightarrow{F'} \L\Mor_{\mC}(F(0),F(1)) \to \L\Mor_{\mC}(F(0),G(1))$$ and $$A \xrightarrow{G'} \L\Mor_{\mC}(G(0),G(1)) \to \L\Mor_{\mC}(F(0),G(1)).$$
In  other words there is a canonical bijection of equivalence classes between the spaces $ \Map_{_\mV\Cat(S(A), \mC)}(F,G) $
and $$\Map_\mC(F(0),G(0)) \times_{\Map_\mV(A, \L\Mor_\mC(F(0),G(1)))} \Map_\mC(F(1),G(1)). $$

We prove (1) by showing that for every small space $X$ there is a canonical bijection of equivalence classes between the spaces $\Map_\mS(X, \Map_{_\mV\Cat(S(A), \mC)}(F,G)) $
and $$\Map_\mS(X, \Map_\mC(F(0),G(0)) \times_{\Map_\mV(A, \L\Mor_\mC(F(0),G(1)))} \Map_\mC(F(1),G(1))) $$ and so conclude (1) by the Yoneda lemma.
Let $G^X$ be the cotensor of $G$ and $X$, i.e. the limit of the constant diagram $X \to _\mV\Cat(S(A), \mC)$ with value $G,$ which is formed object-wise.
There are canonical equivalences of spaces 
$$\Map_\mS(X, \Map_{_\mV\Cat(S(A), \mC)}(F,G)) \simeq \Map_{_\mV\Cat(S(A), \mC)}(F,G^X)$$
and 
$$\Map_\mS(X, \Map_\mC(F(0),G(0)) \times_{\Map_\mV(A, \L\Mor_\mC(F(0),G(1)))} \Map_\mC(F(1),G(1))) \simeq $$$$ \Map_\mC(F(0),G^X(0)) \times_{\Map_\mV(A, \L\Mor_\mC(F(0),G^X(1)))} \Map_\mC(F(1),G^X(1))).$$
So replacing $G$ by $G^X$ we obtain (1).
(2) follows immediately from (1).

(3): We prove (2) by showing that for every object $V \in \mV$ there is a canonical bijection of equivalence classes between the spaces $\Map_\mV(V, \R\Mor_{_\mV\Cat(S(A), \mC)}(F,G)) $
and $$\Map_\mV(V, \LMor_\mC(F(0),G(0)) \times_{\L\Mor_\mV(A, \L\Mor_\mC(F(0),G(1)))} \L\Mor_\mC(F(1),G(1))) ,$$
which implies (2) by application of the Yoneda lemma.
By \cite[Corollary 4.32]{heine2024bienriched} the right $\mV$-enriched category $ _\mV\Cat(S(A), \mC)$
admits right cotensors which are formed objectwise since $\mC$ admits right cotensors.
Let $G^V$ be the right cotensor of $G$ and $V$ in $_\mV\Cat(S(A), \mC).$

There is a canonical equivalence of spaces 
$$\Map_\mV(V, \R\Mor_\mC(F(0),G(0)) \times_{\R\Mor_\mV(A, \R\Mor_\mC(F(0),G(1)))} \R\Mor_\mC(F(1),G(1))) \simeq $$$$ \Map_\mC(F(0),G^V(0)) \times_{\Map_\mV(A, \R\Mor_\mC(F(0),G^V(1)))} \Map_\mC(F(1),G^V(1))$$
$$ \simeq \Map_{_\mV\Cat(S(\tau(A)), \tau^*(\mC))}(F,G^V) \simeq $$
$$ \Map_\mV(V, \R\Mor_{_\mV\Cat(S(\tau(A)), \tau^*(\mC))}(F,G)),$$
where the second last equivalence is by (2). 
The latter represents an equivalence
$$ \R\Mor_{_\mV\Cat(S(\tau(A)), \tau^*(\mC))}(F,G) \simeq \R\Mor_\mC(F(0),G(0)) \times_{\R\Mor_\mV(A, \R\Mor_\mC(F(0),G(1)))} \R\Mor_\mC(F(1),G(1)).$$
Replacing $\mC$ by $\tau^*(\mC) $ and $A$ by $\tau(A)$
gives an equivalence
$$ \R\Mor_{_\mV\Cat(S(A), \mC)}(F,G) \simeq \R\Mor_{\tau^*(\mC)}(F(0),G(0)) \times_{\R\Mor_\mV(\tau(A), \R\Mor_{\tau^*(\mC)}(F(0),G(1)))} \R\Mor_{\tau^*(\mC)}(F(1),G(1)) $$
$$ \simeq \tau(\L\Mor_{\mC}(F(0),G(0))) \times_{\R\Mor_\mV(\tau(A), \tau(\L\Mor_{\mC}(F(0),G(1))))} \tau(\L\Mor_{\mC}(F(1),G(1))) \simeq  $$ 
$$ \simeq \tau(\L\Mor_{\mC}(F(0),G(0))) \times_{\tau(\L\Mor_\mV(A, \L\Mor_{\mC}(F(0),G(1))))} \tau(\L\Mor_{\mC}(F(1),G(1))). $$ 
\end{proof}

\begin{corollary}\label{nat}

Let $\mA, \mC$ be $\infty$-categories and $F,G: S(\mA) \to \mC$ functors.
There is a canonical pullback square of $\infty$-categories:
$$\begin{xy}
\xymatrix{
\Mor_{\Fun(S(\mA), \mC)}(F,G) \ar[r] \ar[d] & \Mor_\mC(F(1),G(1)) \ar[d]
\\ 
\Mor_\mC(F(0),G(0)) \ar[r] & \Fun(\mA, \Mor_\mC(F(0),G(1))).
}
\end{xy}$$

\end{corollary}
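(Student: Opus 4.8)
The plan is to obtain this as the specialization of \cref{homsuspo}(3) to the cartesian monoidal structure $\mV=(\infty\Cat,\times)$. First I would observe that the hypotheses of \cref{homsuspo}(3) are met in this case: by \cref{fix} every $\infty$-category is a category object in $\infty\Cat$ with a space of objects, hence a $(\infty\Cat,\times)$-enriched category, and since the cartesian product is symmetric it is canonically bienriched in $(\infty\Cat,\times)$. Under the cartesian-enrichment identifications ${_{(\infty\Cat,\times)}\Cat}\simeq\Cat((\infty\Cat,\times);\mS)\simeq\Cat_{(\infty\Cat,\times)}\simeq\infty\Cat$, the enriched functor category ${_{(\infty\Cat,\times)}\Enr}(S(\mA),\mC)$ is the internal functor $\infty$-category $\Fun(S(\mA),\mC)$, and its (left, equivalently right) morphism objects are the hom $\infty$-categories $\Mor_{\Fun(S(\mA),\mC)}(F,G)$; here the enriched suspension $S(\mA)$ of \cref{susp} is the categorical suspension of \cref{suspi} by definition.

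The remaining work is to evaluate the three data appearing in \cref{homsuspo}(3) for the cartesian structure. The monoidal involution $\tau:\mV^\rev\to\mV$ may be taken to be the identity, since the braiding exhibits $(\infty\Cat,\times)^\rev\simeq(\infty\Cat,\times)$ with $\tau\circ\tau^\rev\simeq\id$; thus every application of $\tau$ in the square of \cref{homsuspo}(3) disappears. The left morphism objects $\L\Mor_\mC(X,Y)$ of a cartesian-enriched $\infty$-category are precisely the hom $\infty$-categories $\Mor_\mC(X,Y)$. Finally, by \cref{Biolk} applied to the canonical self-enrichment of $(\infty\Cat,\times)$, the left morphism object $\L\Mor_\mV(A,B)$ is the value at $B$ of the right adjoint of $(-)\times A$, namely $\Fun(A,B)$.

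Substituting these identifications into the pullback square of \cref{homsuspo}(3), taking $A=\mA$, yields exactly
\[
\xymatrix{
\Mor_{\Fun(S(\mA), \mC)}(F,G) \ar[r] \ar[d] & \Mor_\mC(F(1),G(1)) \ar[d]
\\
\Mor_\mC(F(0),G(0)) \ar[r] & \Fun(\mA, \Mor_\mC(F(0),G(1))),
}
\]
which is the asserted square. The one step that warrants care --- and the one I would expect to absorb most of the effort --- is the bookkeeping of the first paragraph: verifying that the cartesian-enriched functor category and its morphism objects genuinely coincide with the internal functor $\infty$-category $\Fun(S(\mA),\mC)$ and its hom $\infty$-categories, so that \cref{homsuspo}(3) is applied to the object named in the statement. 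Once this identification is in place, the rest is a transparent translation of the symmetric closed cartesian structure, under which $\tau$ trivializes and the enriched internal hom $\L\Mor_\mV(A,-)$ becomes $\Fun(A,-)$.
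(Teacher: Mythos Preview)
Your proposal is correct and follows exactly the approach the paper intends: the corollary is stated without proof immediately after \cref{homsuspo}, and your specialization of part (3) to $\mV=(\infty\Cat,\times)$ with $\tau=\id$ is precisely how it is meant to be deduced. The bookkeeping you flag---identifying the enriched functor category and its morphism objects with $\Fun(S(\mA),\mC)$ and $\Mor_{\Fun(S(\mA),\mC)}(F,G)$---is indeed the only nontrivial step, and you handle it correctly.
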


\begin{corollary}\label{suspho}
Let $\phi: \mA \to \mB$ be a functor such that for every $\infty$-category $\mC$ the induced functor
$$\Fun(\mB,\mC) \to \Fun(\mA,\mC)$$ is fully faithful.
The induced functor
$\Fun(S(\mB),\mC) \to \Fun(S(\mA),\mC)$ is fully faithful.
   
\end{corollary}

\begin{proof}

For every functors $F,G: S(\mA) \to \mC$ the induced functor
$$ \Mor_{\Fun(S(\mB), \mC)}(F,G) \to \Mor_{\Fun(S(\mA), \mC)}(F \circ S(\phi),G \circ S(\phi))$$
identifies with the induced functor
$$ \Mor_\mC(F(0),G(0)) \times_{\Fun(\mA, \Mor_\mC(F(0),G(1)))} \Mor_\mC(F(1),G(1)) \to $$$$ \Mor_\mC(F(0),G(0)) \times_{\Fun(\mB, \Mor_\mC(F(0),G(1)))} \Mor_\mC(F(1),G(1)).$$
    
\end{proof}

\begin{corollary}
Let $\mC $ be an $\infty$-category and $n \geq 1.$
The induced functor
$$\Fun(\bD^{n-1},\mC) \to \Fun(\bD^n,\mC)$$ is fully faithful.
    
\end{corollary}

\begin{proof}
We proceed by induction on $n \geq 1.$
The induction step follows from \cref{suspho}.
We prove the induction start $n=1$. 
By \cref{nat} for every $A,B \in \mC$ the induced functor
$$\Mor_{\Fun(\bD^{0},\mC)}(A,B) \to \Mor_{\Fun(\bD^1,\mC)}(\id_A, \id_B) $$ identifies with the canonical equivalence
$$ \Mor_\mC(A,B) \simeq \Mor_\mC(A,B) \times_{\Mor_\mC(A,B)} \Mor_\mC(A,B).$$
   
\end{proof}

\begin{corollary}

Let $\mA$ be an $\infty$-category, $\mC$ a bioriented category and $F,G: S(\mA) \to \mC$ antioriented functors. There is a canonical pullback square of $\infty$-categories:
$$\begin{xy}
\xymatrix{
\R\Mor_{\boxtimes\Fun(S(\mA), \mC)}(F,G) \ar[r] \ar[d] & \L\Mor_\mC(F(1),G(1))^\op \ar[d]
\\ 
\L\Mor_\mC(F(0),G(0))^\op \ar[r] & \Fun^\oplax(A, \L\Mor_\mC(F(0),G(1)))^\op.
}
\end{xy}$$

\end{corollary}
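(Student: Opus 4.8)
The plan is to recognize the statement as the Gray-monoidal specialization of the enriched computation \cref{homsuspo}(3), exactly parallel to the way the preceding \cref{nat} is its cartesian specialization (there one takes $\mV=(\infty\Cat,\times)$ with the trivial involution $\tau=\id$, so that all opposites disappear). Concretely, I would set $\mV=(\infty\Cat,\boxtimes)$, which is a presentable monoidal category since the Gray tensor product is a presentable monoidal structure on $\infty\Cat$. The bioriented category $\mC$ is by definition a $\mV$-bienriched category, the object $\mA$ is an object of $\mV$, and the antioriented functors $F,G\colon S(\mA)\to\mC$ are precisely left $\mV$-enriched functors out of the (left $\mV$-enriched) suspension $S(\mA)$ of \cref{susp}, so all hypotheses of \cref{homsuspo}(3) are in force.

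Next I would choose the monoidal involution $\tau=(-)^\op\colon\mV^\rev\to\mV$ supplied by \cref{dua}; it is an involution in the required sense because $(-)^\op$ squares to the identity. I would also invoke \cref{psinho}(1) to justify that $\boxtimes\Fun(S(\mA),\mC)={_\mV\Fun}(S(\mA),\mC)$ refines to a right $\mV$-enriched category, so that the right morphism object $\R\Mor_{\boxtimes\Fun(S(\mA),\mC)}(F,G)$ is a well-defined object of $\mV=\infty\Cat$, matching the top-left corner of \cref{homsuspo}(3).

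Applying \cref{homsuspo}(3) with these choices then produces a pullback square in $\mV$. To identify its corners with those displayed I would use two translations. First, the left morphism object of $\mV$ viewed as left enriched over itself is $\L\Mor_\mV(A,-)=\Fun^\oplax(A,-)$, by the identification of the antioriented morphism objects of $\infty\fcat$ (cf.\ \cref{Biolk} and the computation $\L\Mor_{\infty\fcat}(S,T)=\Fun^\oplax(S,T)$). Second, $\tau$ sends each $\infty$-category to its opposite. Under these substitutions the four corners of \cref{homsuspo}(3) become $\R\Mor_{\boxtimes\Fun(S(\mA),\mC)}(F,G)$, $\L\Mor_\mC(F(1),G(1))^\op$, $\L\Mor_\mC(F(0),G(0))^\op$, and $\Fun^\oplax(\mA,\L\Mor_\mC(F(0),G(1)))^\op$, as required. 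Finally I would note that a pullback square in $\mV$ is a pullback square of $\infty$-categories, since limits in the underlying category of $\mV$ are computed in $\infty\Cat$.

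The main obstacle is purely bookkeeping of the involution: one must confirm that $(-)^\op$, rather than $(-)^\co$, is the correct choice of $\tau$, so that the opposites land on the three non-initial corners in the orientation shown, and that the oplax (not lax) convention for $\L\Mor$ is consistent with the left-enriched viewpoint adopted throughout the excerpt. A secondary point worth spelling out is that the enriched suspension $S(\mA)$ appearing abstractly in \cref{homsuspo} agrees with the suspension used here; this is immediate from \cref{susp}, but I would record it to keep the specialization airtight.
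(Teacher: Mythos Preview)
Your proposal is correct and matches the paper's intended approach exactly: the corollary is stated without proof immediately after \cref{nat}, and both are meant as direct specializations of \cref{homsuspo}---yours being part (3) with $\mV=(\infty\Cat,\boxtimes)$ and $\tau=(-)^\op$, just as you describe. The bookkeeping you flag (that $\L\Mor_\mV(A,-)=\Fun^\oplax(A,-)$ and that $(-)^\op$ is the correct involution from \cref{dua}) is precisely what is needed and is handled correctly.
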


\begin{proposition}\label{laxnat}

Let $\mA, \mC$ be $\infty$-categories and $F,G: S(\mA) \to \mC$ functors.
There is a canonical antioriented pullback square of $\infty$-categories:
$$\begin{xy}
\xymatrix{
\Mor_{\Fun^\lax(S(\mA), \mC)}(F,G) \ar[r] \ar[d] & \Mor_\mC(F(1),G(1)) \ar[d]
\\ 
\Mor_\mC(F(0),G(0)) \ar[r] & \Fun^\lax(\mA, \Mor_\mC(F(0),G(1))).
}
\end{xy}$$
    
\end{proposition}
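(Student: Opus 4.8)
The plan is to verify the universal property of the antioriented pullback directly, by computing the morphism $\infty$-category $\Mor_{\Fun^\lax(S(\mA),\mC)}(F,G)$ and matching it with the asserted triple fiber product. By \cref{bdesc}(2), applied in the antioriented category $\infty\fcat$ (which admits pullbacks and left cotensors, the latter being $\Fun^\lax(V,-)$, since $\L\Mor_{\infty\fcat}(T,-)=\Fun^\oplax(T,-)$ and left tensoring is $V\boxtimes(-)$), the antioriented pullback in the statement is, as a plain $\infty$-category, the iterated pullback
\[
\Mor_\mC(F(0),G(0))\underset{\Q}{\times}\Fun^\lax(\bD^1,\Q)\underset{\Q}{\times}\Mor_\mC(F(1),G(1)),\qquad \Q:=\Fun^\lax(\mA,\Mor_\mC(F(0),G(1))),
\]
with the two maps $\Fun^\lax(\bD^1,\Q)\to\Q$ given by restriction along $\{0\},\{1\}\hookrightarrow\bD^1$. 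So it suffices to produce a canonical equivalence of $W:=\Mor_{\Fun^\lax(S(\mA),\mC)}(F,G)$ with this pullback, compatibly with the evident legs and $2$-cell.

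First I would reduce to a mapping computation. Using the universal property of $\Mor$ together with the Gray closedness adjunction $\Map(\mB,\Fun^\lax(S(\mA),\mC))\simeq\Map(S(\mA)\boxtimes\mB,\mC)$, there is a natural identification
\[
\Map(\mB, W)\simeq\Map_{S(\mA)\boxtimes\partial\bD^1/}(S(\mA)\boxtimes S(\mB),\mC),
\]
the fiber over $(F,G)$ imposed on the two copies $S(\mA)\boxtimes\{0\}$, $S(\mA)\boxtimes\{1\}$. The plan is to feed the suspension formula \cref{susfor} into the right-hand side: it writes $S(\mA)\boxtimes S(\mB)$ as the pushout of $\partial(S(\mA)\times S(\mB))$ and $S(\mA\boxtimes\bD^1\boxtimes\mB)$ along $S(\mA\boxtimes\partial\bD^1\boxtimes\mB)$, so that $\Map(-,\mC)$ turns it into a pullback of spaces. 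I would then identify the three pieces. The middle term $\Map(S(\mA\boxtimes\bD^1\boxtimes\mB),\mC)$, restricted to the corner objects $(0,0)\mapsto F(0)$, $(1,1)\mapsto G(1)$, becomes $\Map(\mA\boxtimes\bD^1\boxtimes\mB,\Mor_\mC(F(0),G(1)))$; peeling $\mA$ and then $\bD^1$ off on the left (where $\mA\boxtimes(-)$ and $\bD^1\boxtimes(-)$ have right adjoints $\Fun^\lax(\mA,-)$ and $\Fun^\lax(\bD^1,-)$) identifies it with $\Map(\mB,\Fun^\lax(\bD^1,\Q))$. The boundary term $\partial(S(\mA)\times S(\mB))=(S(\mB)\vee S(\mA))\coprod_{S(\emptyset)}(S(\mA)\vee S(\mB))$, with the two $S(\mA)$-edges fixed to $F,G$, contributes exactly the remaining two edges, namely $\Mor_\mC(F(0),G(0))$ and $\Mor_\mC(F(1),G(1))$, while the gluing term $S(\mA\boxtimes\partial\bD^1\boxtimes\mB)$ contributes the two copies of $\Q$ over which the pullback is formed. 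This reproduces the iterated pullback above, naturally in $\mB$, and Yoneda gives the claimed equivalence of objects; the same computation run with $\Fun^\oplax(T,-)$ in place of $\Map(\mB,-)$, using \cref{hom} on each factor and \cref{tensorsuspensionformula} to keep track of the $\infty$-categorical long-morphism object $\Mor_{S(\mA)\boxtimes S(\mB)}((0,0),(1,1))$, upgrades this to the universal property defining the antioriented pullback and pins down the legs and the lax $2$-cell.

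The main obstacle is the orientation bookkeeping in the middle step. The whole point is that the suspension coordinate and the factor $\mA$ sit on the \emph{left} of the Gray tensor, so that peeling them off produces the lax internal homs $\Fun^\lax(\mA,-)$ and $\Fun^\lax(\bD^1,-)$ rather than their oplax counterparts; this is precisely what makes the gluing a lax (antioriented) rather than oplax (oriented) pullback, and what matches the corner $\Fun^\lax(\mA,\Mor_\mC(F(0),G(1)))$ with the left cotensor $\Fun^\lax(\bD^1,\Q)$ of \cref{bdesc}(2). Getting these left/right conventions consistent, and verifying that the boundary, gluing, and long-morphism terms assemble into exactly the triple fiber product with the correct structure maps and $2$-cell, is where the care is required; the remaining identifications are formal consequences of \cref{hom}, \cref{susfor}, \cref{tensorsuspensionformula}, and the defining adjunctions of $\Fun^\lax$ and $\Fun^\oplax$. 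This argument is a direct analogue of the computation carried out in the proof of \cref{susfor} itself.
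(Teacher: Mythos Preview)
Your proposal has a circularity problem. You invoke \cref{susfor} (and its corollary \cref{tensorsuspensionformula}) to decompose $S(\mA)\boxtimes S(\mB)$, but in this paper the proof of \cref{susfor} relies on \cref{laxnat2}, which is itself deduced from \cref{laxnat}. So as the paper is organized, \cref{laxnat} is logically prior to the suspension-tensor formula, and citing the latter here is circular. Your closing remark that the argument is ``a direct analogue of the computation carried out in the proof of \cref{susfor}'' is exactly right, but that analogy runs the wrong direction: the paper establishes \cref{laxnat} first and then uses it (via \cref{laxnat2}) to prove \cref{susfor}.

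The paper's own argument avoids this by using only the \emph{single} suspension formula \cref{thas2}, which is proven independently. From \cref{thas2} and \cref{adesc} one sees that $S(\mA)$ is the oriented pushout $\bD^0\,\vec{+}_{\mA}\,\bD^0$; applying $\Fun^\lax(-,\mC)$ (a right adjoint in each variable) turns this into an antioriented pullback square with corner $\Fun^\lax(\mA,\mC)$. Then \cref{homs}, which says that $\Mor$ preserves antioriented pullbacks (because it is an antioriented right adjoint, \cref{Morenrfun}), gives the desired square with corner $\Mor_{\Fun^\lax(\mA,\mC)}(\underline{F(0)},\underline{G(1)})$; a short computation with \cref{hom} identifies this corner with $\Fun^\lax(\mA,\Mor_\mC(F(0),G(1)))$. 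This route is both shorter and non-circular: it never needs the double-suspension formula, only the single one.
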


\begin{proof}

The oriented pushout square
$$\begin{xy}
\xymatrix{
\mA \ar[r] \ar[d] & \bD^0 \ar[d]
\\ 
\bD^0 \ar[r] & S(\mA)
}
\end{xy}$$
gives rise to an antioriented pullback square
$$\begin{xy}
\xymatrix{
\Fun^\lax(S(\mA), \mC) \ar[r] \ar[d] & \mC \ar[d]
\\ 
\mC \ar[r] & \Fun^\lax(\mA,\mC).
}
\end{xy}$$
The latter gives rise to an antioriented pullback square
$$\begin{xy}
\xymatrix{
\Mor_{\Fun^\lax(S(\mA), \mC)}(F,G) \ar[r] \ar[d] & \Mor_\mC(F(1),G(1)) \ar[d]
\\ 
\Mor_\mC(F(0),G(0)) \ar[r] & \Mor_{\Fun^\lax(\mA,\mC)}(\underline{F(0)},\underline{G(1)}),
}
\end{xy}$$
where $\underline{(-)}$ indicates the constant functor.

The canonical pullback square

$$\begin{xy}
\xymatrix{
\Mor_\mC(F(0),G(1)) \ar[r] \ar[d] & \Fun^\oplax(\bD^1,\mC) \ar[d]
\\ 
\{(F(0),G(1))\} \ar[r] & \mC \times \mC
}
\end{xy}$$
gives rise to a pullback square
$$\begin{xy}
\xymatrix{
\Fun^\lax(\mA,\Mor_\mC(F(0),G(1))) \ar[r] \ar[d] & \Fun^\lax(\mA,\Fun^\oplax(\bD^1,\mC)) \simeq \Fun^\oplax(\bD^1,\Fun^\lax(\mA,\mC)) \ar[d]
\\ 
\{(F(0),G(1))\} \ar[r] & \Fun^\lax(\mA,\mC) \times \Fun^\lax(\mA,\mC)
}
\end{xy}$$
that provides an equivalence $$\Fun^\lax(\mA,\Mor_\mC(F(0),G(1))) \simeq \Mor_{\Fun^\lax(\mA,\mC)}(\underline{F(0)},\underline{G(1)}) . $$
\end{proof}

\begin{corollary}\label{laxnat2}

Let $\mA, \mC$ be $\infty$-categories and $F,G: S(\mA) \to \mC$ functors.
There is a canonical oriented pullback square of $\infty$-categories:
$$\begin{xy}
\xymatrix{
\Mor_{\Fun^\oplax(S(\mA), \mC)}(F,G) \ar[r] \ar[d] & \Mor_{\mC}(F(0),G(0)) \ar[d]
\\ 
\Mor_{\mC}(F(1),G(1)) \ar[r] & \Fun^\oplax(\mA, \Mor_{\mC}(F(0),G(1))).
}
\end{xy}$$
    
\end{corollary}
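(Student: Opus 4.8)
The plan is to deduce \cref{laxnat2} from \cref{laxnat} purely formally, by transporting the antioriented pullback square of the lax case along the duality involutions $(-)^\co$ and $(-)^\op$. A direct ``structural'' repetition of the proof of \cref{laxnat} will not work: there $S(\mA)$ is presented as the oriented pushout of the span $\bD^0 \leftarrow \mA \to \bD^0$ (using the right tensor $\mA\boxtimes\bD^1$, via the suspension formula \cref{thas2}), and $\Fun^\oplax(-,\mC) = \L\Mor_{\infty\fcat}(-,\mC)$ converts \emph{antioriented} pushouts, not oriented pushouts, into oriented pullbacks. So a naive dualization would compute $\Fun^\oplax(\bar{S}(\mA),\mC)$ rather than $\Fun^\oplax(S(\mA),\mC)$, and it is precisely the involution argument that lets us land on $S(\mA)$ paired with $\Fun^\oplax$.

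First I would assemble the translation dictionary, all of whose entries are already available: by \cref{grayhoms} there are equivalences $\Fun^\oplax(\mX,\mY)^\co \simeq \Fun^\lax(\mX^\co,\mY^\co)$ and $\Fun^\oplax(\mX,\mY)^\op \simeq \Fun^\lax(\mX^\op,\mY^\op)$; by \cref{sw} there is an equivalence $S(\mA)^\co \simeq S(\mA^\op)$ which is the identity on the underlying two-element set of objects; by \cref{oppo} there is an equivalence $\Mor_{\mD^\co}(X,Y) \simeq \Mor_\mD(X,Y)^\op$; and, exactly as in the proof of \cref{homso}, applying $(-)^\op$ reverses an antioriented pullback into an oriented pullback with its two source legs interchanged, $(\A\,\bar{\vec{\times}}_\C\,\B)^\op \simeq \B^\op\,\vec{\times}_{\C^\op}\,\A^\op$.

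Next I would apply \cref{laxnat} verbatim, but with $\mA$ replaced by $\mA^\op$, with $\mC$ replaced by $\mC^\co$, and with the functors $F^\co, G^\co : S(\mA)^\co \simeq S(\mA^\op) \to \mC^\co$ in place of $F,G$. This produces a canonical antioriented pullback square with apex $\Mor_{\Fun^\lax(S(\mA^\op),\mC^\co)}(F^\co,G^\co)$ and remaining corners $\Mor_{\mC^\co}(F(0),G(0))$, $\Mor_{\mC^\co}(F(1),G(1))$, and $\Fun^\lax(\mA^\op,\Mor_{\mC^\co}(F(0),G(1)))$, where I use that $S(\mA)^\co \simeq S(\mA^\op)$ is the identity on objects so that $F^\co(i) = F(i)$. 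I would then apply $(-)^\op$ to this entire square. By the first dictionary entry the apex becomes $\Mor_{\Fun^\oplax(S(\mA),\mC)}(F,G)$; by the pullback-reversal entry the antioriented pullback turns into an oriented pullback with its two source corners swapped, which is exactly why $\Mor_\mC(F(0),G(0))$ and $\Mor_\mC(F(1),G(1))$ appear in the transposed positions demanded by \cref{laxnat2}; and by the $\Mor_{\mC^\co} \simeq (\Mor_\mC)^\op$ and $\Fun^\oplax \simeq (\Fun^\lax)^\op$ entries the accumulated double dualities cancel, carrying $\Fun^\lax(\mA^\op, \Mor_{\mC^\co}(F(0),G(1)))^\op$ to $\Fun^\oplax(\mA, \Mor_\mC(F(0),G(1)))$. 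Matching corners then yields precisely the oriented pullback square of the statement.

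The main obstacle is bookkeeping rather than anything conceptual: one must check that the single $\co$ twisting the enriching direction and the single $\op$ applied to the square combine to flip ``antioriented pullback'' into ``oriented pullback'' (and not back to antioriented), and that the induced reversal of the two source legs is compatible with the interchange of the $0$- and $1$-corners in the target diagram. Verifying that each $\co$/$\op$ pair arising in the process cancels to leave the plain $\Mor_\mC$ and $\Fun^\oplax$ terms is where care is needed, but every individual identity required is one of the four dictionary entries above, so no new computation beyond \cref{laxnat} is involved.
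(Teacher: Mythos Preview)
Your proposal is correct and follows essentially the same strategy as the paper: deduce the oplax statement from \cref{laxnat} by transporting along the duality involutions. The only difference is in which pair of involutions you choose. You pass to $F^\co, G^\co : S(\mA)^\co \simeq S(\mA^\op) \to \mC^\co$ (using \cref{sw}(1), which is the identity on objects) and then apply $(-)^\op$ to the resulting antioriented pullback square, which swaps the two source legs; the paper instead passes to $F^\op, G^\op : S(\mA)^\op \simeq S(\mA^\co) \to \mC^\op$ (using \cref{sw}(2), which swaps $0$ and $1$, hence forcing the pair $(G^\op,F^\op)$ in the application of \cref{laxnat}) and then applies $(-)^\co$ to the square, which does not swap legs. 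Either way the single swap needed to match the target diagram is produced exactly once, and the remaining identifications are the same \cref{grayhoms}/\cref{oppo} identities in both arguments. Your route is arguably a touch cleaner, since the object-level swap is avoided and all the reindexing is concentrated in the pullback step.
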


\begin{proof}
The functors $F,G: S(\mA) \to \mC$ give rise to functors
$F^\op,G^\op: S(\mA)^\op \simeq S(\mA^\co) \to \mC^\op.$
By \cref{laxnat} there is a canonical antioriented pullback square of $\infty$-categories:
\[
\xymatrix{
\Mor_{\Fun^{\lax}(S(\mA^\co), \mC^\op)}(G^{\op} ,F^\op) \ar[r] \ar[d] & \Mor_{\mC^\op}(G^{\op}(1),F^{\op}(1)) \ar[d]
\\ 
\Mor_{\mC^\op}(G^{\op}(0),F^{\op}(0)) \ar[r] & \Fun^\lax(\mA^\co, \Mor_{\mC^\op}(G^{\op}(0),F^{\op}(1))).
}
\]
The latter gives rise to an oriented pullback square of $\infty$-categories:
$$\begin{xy}
\xymatrix{
\Mor_{\Fun^\lax(S(\mA^\co), \mC^\op)}(G^\op,F^\op)^\co \ar[r] \ar[d] & \Mor_{\mC^\op}(G^\op(1),F^\op(1))^\co \ar[d]
\\ 
\Mor_{\mC^\op}(G^\op(0),F^\op(0))^\co \ar[r] & \Fun^\lax(\mA^\co, \Mor_{\mC^\op}(G^\op(0),F^\op(1)))^\co.
}
\end{xy}$$
The last square identifies with the following one:
$$\begin{xy}
\xymatrix{
\Mor_{\Fun^\lax(S(\mA^\co), \mC^\op)^\op}(F,G) \ar[r] \ar[d] & \Mor_{\mC}(F(0),G(0)) \ar[d]
\\ 
\Mor_{\mC}(F(1),G(1)) \ar[r] & \Fun^\oplax(\mA, \Mor_{\mC^\op}(G^\op(0),F^\op(1))^\co),
}
\end{xy}$$
which identifies with the following one:
$$\begin{xy}
\xymatrix{
\Mor_{\Fun^\oplax(S(\mA), \mC)}(F,G) \ar[r] \ar[d] & \Mor_{\mC}(F(0),G(0)) \ar[d]
\\ 
\Mor_{\mC}(F(1),G(1)) \ar[r] & \Fun^\oplax(\mA, \Mor_{\mC}(F(0),G(1))).
}
\end{xy}$$
\end{proof}

\begin{lemma}\label{diskcomp}

Let $n \geq 0.$
The $\infty$-category $\bD^n$ is compact in $\infty\Cat.$
    
\end{lemma}

\begin{proof}

We proceed by induction on $n \geq 0$.
For $n=0$ the functor $\Map_{\infty\Cat}(\bD^n,-): \infty\Cat \to \mS$ is the right adjoint of the embedding $\mS \subset \infty\Cat$
and preserves small filtered colimits.

We prove the induction step. Let $n \geq 1$. We assume the statement for $n-1$.
Let $\mJ$ be a small filtered $\infty$-category and 
$F: \mJ \to \infty\Cat$ a functor.
The induced map 
$$\xi: \colim(\Map_{\infty\Cat}(\bD^n,-) \circ F) \to \Map_{\infty\Cat}(\bD^n,\colim(F)) $$ is a map over
$$ \colim(\iota_0 \circ F)^{\times 2} \simeq \colim(\Map_{\infty\Cat}(\bD^0,-) \circ F)^{\times 2} \simeq \Map_{\infty\Cat}(\bD^0,\colim(F))^{\times 2} \simeq \iota_0(\colim(F))^{\times 2}. $$

Thus $\xi$ is an equivalence if it induces an equivalence on the fiber over every $A,B \in \iota_0(\colim(F)).$
Since $\mJ$ is filtered, there is a $j \in \mJ$ and $A',B' \in F(j) $ that are sent to $A,B \in \colim(F),$ respectively. 
Since the functor $\mJ_{j/} \to \mJ$ is cofinal, the functor
$\mJ_{j/} \to \mJ \xrightarrow{F} \infty\Cat $
lifts to a functor $F': \mJ_{j/} \to \infty\Cat_{\partial\bD^1/}$
along the forgetful functor $U: \infty\Cat_{\partial\bD^1/} \to \infty\Cat$ whose colimit is $(\colim(F),A,B).$

The map $\xi$ over $ \iota_0(\colim(F))^{\times 2} $
identifies with the canonical map
$$\xi': \colim(\Map_{\infty\Cat}(\bD^n,-) \circ U \circ F') \to \Map_{\infty\Cat}(\bD^n,\colim(U \circ F')) $$ over
$\iota_0(\colim(U \circ F'))^{\times 2}.$
The latter induces on the fiber over $A,B$ the canonical map
$$ \colim(\Map_{\infty\Cat_{\partial\bD^1 /}}(\bD^n,-) \circ F') \to \Map_{\infty\Cat_{\partial\bD^1 /}}(\bD^n,\colim(F')), $$
which identifies with the map
$$ \colim(\Map_{\infty\Cat}(\bD^{n-1},-) \circ \Mor \circ F') \to \Map_{\infty\Cat}(\bD^{n-1},\Mor_{\colim(F)}(A,B)). $$
The latter map is an equivalence by induction hypothesis and \cref{homfil}.
\end{proof}

\begin{theorem}\label{thm:oplaxinclusion}
Let $\mA$ and $ \mC$ be $\infty$-categories.
The canonical functors $$\Fun(\mA,\mC) \to \Fun^\lax(\mA,\mC),$$$$ \Fun(\mA,\mC) \to \Fun^\oplax(\mA,\mC)$$ are inclusions.
\end{theorem}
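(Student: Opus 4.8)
The plan is to treat the lax comparison $\Fun(\mA,\mC)\to\Fun^{\lax}(\mA,\mC)$ in detail; the oplax case then follows formally by applying the duality $\Fun^{\oplax}(\mA,\mC)^{\op}\simeq\Fun^{\lax}(\mA^{\op},\mC^{\op})$ of \cref{grayhoms} (together with the analogous, easier identity $\Fun(\mA,\mC)^{\op}\simeq\Fun(\mA^{\op},\mC^{\op})$), since $(-)^{\op}$ preserves inclusions. Recall that an inclusion is precisely a monomorphism in $\Cat$, and that monomorphisms are closed under limits. Because $\Fun(-,\mC)$ and $\Fun^{\lax}(-,\mC)$ both send colimits in the first variable to limits, and the comparison functor is natural in $\mA$, the comparison for a general $\mA$ is a limit of the comparisons for the disks occurring in a presentation of $\mA$ as a colimit of disks (\cref{theta}). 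It therefore suffices to show that $\Fun(\bD^n,\mC)\to\Fun^{\lax}(\bD^n,\mC)$ is an inclusion for every $n\geq 0$ and every $\mC$.

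I would prove this by induction on $n$. For $n=0$ the map is the identity $\mC\to\mC$, since $\bD^0\boxtimes(-)\simeq(-)\simeq\bD^0\times(-)$. For the inductive step, write $\bD^n=S(\bD^{n-1})$ and fix objects $F,G$, which are common to both functor categories (the comparison is the identity on objects, as $\bD^n\boxtimes\bD^0\simeq\bD^n\simeq\bD^n\times\bD^0$). Setting $Z=\Mor_\mC(F(0),G(1))$, \cref{nat} identifies $\Mor_{\Fun(\bD^n,\mC)}(F,G)$ with the ordinary pullback of the cospan $\Mor_\mC(F(0),G(0))\to\Fun(\bD^{n-1},Z)\leftarrow\Mor_\mC(F(1),G(1))$, while \cref{laxnat} identifies $\Mor_{\Fun^{\lax}(\bD^n,\mC)}(F,G)$ with the \emph{antioriented} pullback of the analogous cospan whose base is instead $\Fun^{\lax}(\bD^{n-1},Z)$.

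The comparison between these two mapping objects then factors as a composite of two monomorphisms. The first is the base change along the inclusion $\Fun(\bD^{n-1},Z)\to\Fun^{\lax}(\bD^{n-1},Z)$, which is an inclusion by the inductive hypothesis; since pullbacks of monomorphisms are monomorphisms, this step is a monomorphism. The second is the canonical comparison from an ordinary pullback to the antioriented pullback over the same base $W=\Fun^{\lax}(\bD^{n-1},Z)$; by the description of antioriented pullbacks in \cref{bdesc}, this is the base change of the degeneracy $W\to\Fun^{\lax}(\bD^1,W)$ onto constant paths. That degeneracy is a split monomorphism, its retraction being evaluation at an endpoint, so this step is a monomorphism too. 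Composing, $\Mor_{\Fun(\bD^n,\mC)}(F,G)\to\Mor_{\Fun^{\lax}(\bD^n,\mC)}(F,G)$ is an inclusion; in particular it is an embedding on maximal subspaces, giving the required embedding on the mapping spaces of the functor categories.

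The remaining point is the embedding on the maximal subspaces of the functor categories themselves, and this is the step I expect to be the main obstacle, since it does not follow formally from the mapping-space estimate (a conservative functor that is a monomorphism on all mapping spaces need not be a monomorphism on maximal subspaces). Concretely, one must show that two functors $\bD^n\to\mC$ that become equivalent in $\Fun^{\lax}(\bD^n,\mC)$ are already equivalent in $\Fun(\bD^n,\mC)$; this should follow from the fact that an invertible lax transformation has invertible structure cells and is thus a coherently natural equivalence, hence lies in $\Fun(\bD^n,\mC)$, after which the mapping-space monomorphisms control the higher homotopy. As a consistency check, observe that via the tensor--cotensor adjunctions the whole statement is exactly the assertion that $\mA\boxtimes\mB\to\mA\times\mB$ induces a monomorphism on $\Map_{\infty\Cat}(-,\mC)$, i.e.\ is an epimorphism; this is the binary case of \cref{grayepi2}, which is logically downstream of the present theorem and so should not be invoked here.
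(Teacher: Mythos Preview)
Your overall strategy matches the paper's: reduce to disks using that both internal homs send colimits in the source to limits, then induct on $n$ using \cref{nat} and \cref{laxnat}. Your inductive step is in fact \emph{cleaner} than the paper's. The paper proves the case $\mA=\bD^1$ separately by a nested induction on the categorical dimension of $\mC$, and then in the step $n\to n+1$ factors the degeneracy as $\mD\to\Fun(\bD^1,\mD)\to\Fun^{\lax}(\bD^1,\mD)$, invoking the $\bD^1$ case for the second arrow. You bypass all of this by observing directly that the degeneracy $W\to\Fun^{\lax}(\bD^1,W)$ is split by evaluation at an endpoint, hence is a monomorphism; this makes the separate $\bD^1$ analysis unnecessary. (Incidentally, your step~1 is not merely a monomorphism but an equivalence: if $V\to W$ is a monomorphism and the legs $A,B\to W$ factor through $V$, then $A\times_V B\simeq A\times_W B$ since $V\times_W V\simeq V$.)

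Your final paragraph, however, manufactures an obstacle that does not exist. In the paper's non-univalent setting, the ``maximal subspace'' of an $\infty$-category is its space of objects $\iota_0$, not its core groupoid. You already observed that the comparison is the identity on objects because $\bD^n\boxtimes\bD^0\simeq\bD^n\simeq\bD^n\times\bD^0$; this is exactly the statement that the comparison induces an equivalence on $\iota_0$, and the paper's proof says so explicitly. An inclusion (monomorphism in $\infty\Cat$) is characterised recursively by being an embedding on $\iota_0$ together with inclusions on all morphism $\infty$-categories, so your inductive argument is already complete. You should delete the last paragraph and simply note that the equivalence on spaces of objects, together with the inclusion on morphism objects established inductively, yields the result.
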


\begin{proof}
It suffice to prove the first claim. 
The functors $\Fun(-,\mC), \Fun^\lax(-,\mC): \infty\Cat^\op \to \infty\Cat $ send colimits to limits.
Since $\infty\Cat$ is generated by the disks, 
it suffices to see (1) for $\mA$ the $n$-disk for any $n \geq 0.$
We prove this by induction on $n \geq 0$.
For $n=0$ the functor of (1) identifies with the identity.
Next we prove (1) for $n=1.$
The canonical functor $\Fun(\bD^1,\mC) \to \Fun^\oplax(\bD^1,\mC)$ induces an equivalence on maximal subspaces.
So it suffices to see that it induces an inclusion on morphism $\infty$-categories.

By \cref{nat} for every morphisms $f: A \to B, g: X \to Y $ in $\mC$ 
there is a canonical equivalence
$$ \Mor_{\Fun(\bD^1,\mC)}(f,g) \simeq \Mor_\mC(B,Y) \times_{\Mor_\mC(A,Y)} \Mor_\mC(A,X).$$ 
This implies that the functor $\Fun(\bD^1,-)$ preserves small filtered colimits since $\bD^1$ is compact in $\infty\Cat$ by \cref{diskcomp} and small filtered colimits commute with pullbacks in $\infty\Cat$ since $\infty\Cat$ is compactly generated by construction.
The functor $\Fun^\lax(\bD^1,-): \infty\Cat \to \infty\Cat $ preserves small filtered colimits since oriented cubes are compact in $\infty\Cat$ (since oriented cubes are finite colimits of disks, which are compact by \cref{diskcomp}) and $\infty\Cat$ is generated under small colimits by the oriented cubes.
Since every $\infty$-category is the small filtered colimit 
of its underlying $m$-categories for $m \geq 0$, 
it suffices to see that the canonical functor $$\Fun(\bD^1,\mC) \to \Fun^\lax(\bD^1,\mC)$$ is an inclusion
for $\mC$ any $m$-category for $m \geq 0$.
We prove the latter statement by induction on $m 
\geq 0.$

For $m=0$ the functor $\Fun(\bD^1,\mC) \to \Fun^\lax(\bD^1,\mC)$ identifies with the identity of $\mC$ since the classifying spaces of the cubes are contractible.
We assume the statement holds for $m$ and let $\mC$ be an $m+1$-category.
The canonical functor $\Fun(\bD^{1},\mC) \to \Fun^\lax(\bD^{1},\mC)$ induces an equivalence on underlying spaces and induces on morphism $\infty$-categories for any morphisms $f: A \to B, g: X \to Y $ in $\mC$
the canonical functor 
$$\Mor_\mC(B,Y) \times \Mor_\mC(A,X) \times_{\Mor_\mC(A,Y)\times \Mor_\mC(A,Y)} \Mor_\mC(A,Y) \to $$$$ \Mor_\mC(B,Y) \times \Mor_\mC(A,X) \times_{\Mor_\mC(A,Y)\times \Mor_\mC(A,Y)} \Fun^\lax(\bD^1,\Mor_\mC(A,Y)).$$
The latter is an inclusion by induction hypothesis since
$\Mor_\mC(A,Y)$ is an $m$-category. 

So we have proven the case $n=1$ of the theorem.
We assume now the theorem holds for $\mA=\bD^n$ and prove the theorem for $\mA=\bD^{n+1}$.

The canonical functor $\Fun(\bD^{n+1},\mC) \to \Fun^\lax(\bD^{n+1},\mC)$
induces an equivalence on maximal subspaces.
Hence it suffices to see that for every functors $F,G:\bD^{n+1} \to \mC$
the induced functor
$$ \Mor_{\Fun(\bD^{n+1}, \mC)}(F,G) \to \Mor_{\Fun^\lax(\bD^{n+1}, \mC)}(F,G)$$
is an inclusion.
By \cref{nat} and \cref{laxnat} the latter functor identifies with the following functor
$$ \Mor_\mC(F(0),G(0)) \times \Mor_\mC(F(1),G(1)) \underset{\hspace{-1em}\Fun(\bD^n, \Mor_\mC(F(0),G(1)))\times \Fun(\bD^n, \Mor_\mC(F(0),G(1)))}{\times}\hspace{-1em}\Fun(\bD^n, \Mor_\mC(F(0),G(1))) $$$$ \to \Mor_\mC(F(0),G(0)) \times\Mor_\mC(F(1),G(1)) \underset{\hspace{-1em}\Fun^\lax(\bD^n, \Mor_\mC(F(0),G(1)))}{\times}\hspace{-1em}\Fun^\lax(\bD^1,\Fun^\lax(\bD^n, \Mor_\mC(F(0),G(1)))). $$
Since inclusions are stable under pullback, it suffices to see
that the canonical functor 
$$ \Fun(\bD^n, \Mor_\mC(F(0),G(1))) \to \Fun^\lax(\bD^1,\Fun^\lax(\bD^n, \Mor_\mC(F(0),G(1)))) $$
is an inclusion. Let $\mB:= \Mor_\mC(F(0),G(1)).$
The latter functor factors as the canonical functors 
$$ \Fun(\bD^n, \mB) \to \Fun^\lax(\bD^n, \mB) \to \Fun^\lax(\bD^1,\Fun^\lax(\bD^n, \mB)).$$
By induction hypothesis the canonical functor
$\Fun(\bD^{n},\mB) \to \Fun^\lax(\bD^{n},\mB)$
is an inclusion.
So setting $\mD:= \Fun^\lax(\bD^n, \mB)$
we need to see that the canonical functor
$\mD \to \Fun^\lax(\bD^{1},\mD)$
is an inclusion.
The latter factors as
\[
\mD \to \Fun(\bD^{1},\mD) \to \Fun^\lax(\bD^{1},\mD).
\]
The functor $\Fun(\bD^{1},\mD) \to \Fun^\lax(\bD^{1},\mD)$
is an inclusion by the case $\mA=\bD^1.$
The functor $\mD \to \Fun(\bD^{1},\mD) $ is an embedding
for any $\infty$-category $\mD$
since it induces on morphism $\infty$-categories between $X,Y \in \mD$
the canonical equivalence $\Mor_\mD(X,Y) \to \Mor_\mD(X,Y) \times_{\Mor_\mD(X,Y)} \Mor_\mD(X,Y).$
\end{proof}

We obtain the following:

\begin{theorem}\label{grayepi}

For every $n \geq 2$ and $\infty$-categories $\mA_1,...,\mA_n $
the canonical functor
\[
\mA_1 \boxtimes\cdots\boxtimes \mA_n \to \mA_1 \times\cdots\times \mA_n
\]
is an epimorphism.
\end{theorem}

\begin{proof}

We prove the statement by induction on $n \geq 2.$
Let $n \geq 2.$ We assume the statement for $n.$ 


If we have proven the induction start $n=2$, the composition
$$ (\mA_1\boxtimes\cdots\boxtimes\mA_n) \boxtimes\mA_{n+1} \to 
(\mA_1\boxtimes\cdots\boxtimes\mA_n) \times \mA_{n+1} \to (\mA_1\times\cdots\times\mA_n) \times \mA_{n+1}$$ is an epimorphism
by the induction hypothesis since epimorpisms are closed under composition and preserved by any functor preserving pushouts and so by the functor
$(-) \times \mA_{n+1}: \infty\Cat \to \infty\Cat.$

It remains to prove the induction start $n=2$.
The case $n=2$ is equivalent to the statement that the map 
$$\Map_{\infty\Cat}(\mA_1\times\mA_2,\mB)\simeq\Map_{\infty\Cat}(\mA_1,\Fun(\mA_2,\mB))\to \Map_{\infty\Cat}(\mA_1,\Fun^{\oplax}(\mA_2,\mB))\simeq\Map_{\infty\Cat}(\mA_1\boxtimes\mA_2,\mB)
$$ 
is a monomorphism. This follows from \cref{thm:oplaxinclusion}.
\end{proof}

By \cite[Theorem 3.21.]{heine2019restricted}
the identity functor refines to a lax monoidal functor
$(\infty\Cat,\times)\to(\infty\Cat,\boxtimes)$
from the cartesian monoidal structure to the Gray monoidal structure.
The latter induces a bioriented functor 
$i:\infty\scat$$\to\infty\fcat$.

\begin{corollary}\label{monomos}
The bioriented functor $i:\infty\scat$$\to\infty\fcat$ is a monomorphism of bioriented categories.
\end{corollary}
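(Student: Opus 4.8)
The plan is to verify directly that $i$ induces a monomorphism on the space of objects and a monomorphism on every morphism object, which characterizes monomorphisms of bioriented categories exactly as in the unenriched case characterizes inclusions in $\Cat$. Since $i$ is the identity on the underlying $\infty$-category $\infty\Cat$, the induced map on the space of objects is an equivalence, hence in particular a monomorphism; so all of the content lies in the morphism objects, and the reduction to checking these two conditions is the only step that requires invoking the general bienriched formalism of \cite{heine2024bienriched}.

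Next I would pin down the morphism-object map. For $\infty$-categories $S,T$, the object $\Mor_{\infty\fcat}(S,T)\in\infty\Cat\otimes\infty\Cat$ represents the functor $(X,Y)\mapsto\Map_{\infty\Cat}(X\boxtimes S\boxtimes Y,T)$ under the equivalence $\infty\Cat\otimes\infty\Cat\simeq\Fun^{\lim,\lim}(\infty\Cat^{\op}\times\infty\Cat^{\op},\mS)$. The morphism object of $i(\infty\scat)$, i.e.\ of the bioriented structure on $\infty\scat$ induced by the lax monoidal functor $\id\colon(\infty\Cat,\times)\to(\infty\Cat,\boxtimes)$, analogously represents $(X,Y)\mapsto\Map_{\infty\Cat}(X\times S\times Y,T)$; this is consistent with its restrictions along the two embeddings $\mS\to\infty\Cat$ recovering $\Fun(S,T)$ on both sides. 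The map induced by $i$ is then the natural transformation whose value at $(X,Y)$ is precomposition with the canonical functor $X\boxtimes S\boxtimes Y\to X\times S\times Y$.

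To verify that this map is a monomorphism in $\infty\Cat\otimes\infty\Cat$, I would pass through the fully faithful embedding $\infty\Cat\otimes\infty\Cat\hookrightarrow\Fun(\infty\Cat^{\op}\times\infty\Cat^{\op},\mS)$ of \cref{refk}. Because a fully faithful functor reflects monomorphisms and monomorphisms of presheaves are detected objectwise, it suffices to show that for each pair $(X,Y)$ the map
\[
\Map_{\infty\Cat}(X\times S\times Y,T)\to\Map_{\infty\Cat}(X\boxtimes S\boxtimes Y,T)
\]
is a monomorphism of spaces. But this is precisely the statement that $X\boxtimes S\boxtimes Y\to X\times S\times Y$ is an epimorphism, which is \cref{grayepi2} applied to the triple $(X,S,Y)$. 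This establishes the morphism-object condition and thus completes the proof.

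The main obstacle I anticipate is bookkeeping rather than mathematics: confirming precisely that the bioriented morphism object of $i(\infty\scat)$ represents $(X,Y)\mapsto\Map_{\infty\Cat}(X\times S\times Y,T)$ and that $i$ acts on morphism objects by precomposition with $X\boxtimes S\boxtimes Y\to X\times S\times Y$, together with justifying that a monomorphism of bioriented categories is detected on objects and morphism objects in the framework of \cite{heine2024bienriched}. Once these identifications are secured, the epimorphism theorem \cref{grayepi2} does all the remaining work, and one could note in passing that the oriented and antioriented analogues follow in the same way (or directly from the inclusions of \cref{thm:oplaxinclusion}).
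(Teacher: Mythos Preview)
Your proposal is correct and follows essentially the same approach as the paper: reduce to checking that $i$ induces an equivalence on spaces of objects and a monomorphism on each morphism object, identify the morphism-object map as precomposition with $X\boxtimes S\boxtimes Y\to X\times S\times Y$ (viewed as a map of presheaves on $\infty\Cat\times\infty\Cat$), and then apply \cref{grayepi} objectwise. The only cosmetic difference is that the paper works directly with the presheaf description of the morphism object rather than explicitly invoking the embedding $\infty\Cat\otimes\infty\Cat\hookrightarrow\Fun(\infty\Cat^{\op}\times\infty\Cat^{\op},\mS)$, but this amounts to the same thing.
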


\begin{proof}
We need to see that $i$ induces an embedding on underlying spaces and monomorphisms on morphism objects.
The bioriented functor $i$ induces an equivalence on underlying spaces. It induces on morphism objects between
$X,Y \in \infty\Cat$ the following map
$$ \Map_{\infty\Cat}((-) \times X \times (-),Y) \to \Map_{\infty\Cat}((-) \boxtimes X \boxtimes (-),Y) $$
of presheaves on $\infty\Cat \times \infty\Cat.$
Since monomorphisms in the category of presheaves are formed object-wise, it suffices to see that for every $S,T \in \infty\Cat$ the following map is an embedding of spaces:
$$ \Map_{\infty\Cat}(S \times X \times T,Y) \to \Map_{\infty\Cat}(S \boxtimes X \boxtimes T,Y). $$
This holds by \cref{grayepi}.
\end{proof}
    
\subsection{$\infty$-categories as oriented categories}

\begin{lemma}
Let $K $ be a category and $\alpha: F \to G$ a map of functors $K \to \Cat$ that is objectwise fully faithful.
Assume there is an object $X \in K$ such that $\alpha_X: F(X) \to G(X)$
is an equivalence and for every $Y \in K$ there is a morphism $X \to Y$
in $K.$ The induced functor $\lim(\alpha)$ is an equivalence.

\end{lemma}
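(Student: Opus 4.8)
The plan is to prove that $\lim(\alpha)\colon\lim F\to\lim G$ is an equivalence by verifying the two conditions that characterize equivalences of (not necessarily univalent) categories. Because we do not assume univalence, being fully faithful and essentially surjective is \emph{not} sufficient; rather, since $\Cat\simeq\Cat(\mS;\mS)$ and limits in $\Cat$ are computed levelwise in $\Fun(\Delta^\op,\mS)$, a functor is an equivalence exactly when it is fully faithful and induces an equivalence on spaces of objects $\iota_0(-)$ (equivalently, a levelwise equivalence of the underlying Segal objects). So I would check these two properties of $\lim(\alpha)$ separately.

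Fully faithfulness is the routine half. Mapping spaces in a limit of categories are the limits of the mapping spaces, so for objects $a,b$ of $\lim F$ the functor $\lim(\alpha)$ induces the map $\lim_Y\Map_{F(Y)}(a_Y,b_Y)\to\lim_Y\Map_{G(Y)}(\alpha_Y a_Y,\alpha_Y b_Y)$. Each $\alpha_Y$ is fully faithful, so each map on mapping spaces is an equivalence, and a limit of equivalences is an equivalence; hence $\lim(\alpha)$ is fully faithful.

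For the object-space condition I would first reduce to the case where each $\alpha_Y$ is the inclusion of a replete full subcategory $F(Y)\subseteq G(Y)$. Applying the functorial $(\mathrm{eso},\mathrm{ff})$ orthogonal factorization system on $\Cat$ levelwise to $\alpha$ gives a factorization $F\to\mathrm{Im}(\alpha)\to G$ in $\Fun(K,\Cat)$; since each $\alpha_Y$ is already fully faithful, the first map is a pointwise equivalence, hence an equivalence of diagrams, so $\lim F\simeq\lim\mathrm{Im}(\alpha)$, and we may take $F=\mathrm{Im}(\alpha)$ a subfunctor of $G$. On objects, $\alpha_Y$ is then a monomorphism of spaces (inclusion of path components), an equivalence when $Y=X$, and $\lim_Y\iota_0(\alpha_Y)$ is a limit of monomorphisms, hence a monomorphism. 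It remains to show it is surjective on $\pi_0$, i.e. that every object $c=(c_Y)$ of $\lim G$ has all components $c_Y\in F(Y)$. For $Y=X$ this is automatic since $\alpha_X$ is an equivalence forces $F(X)=G(X)$. For arbitrary $Y$, choose a morphism $g\colon X\to Y$ in $K$ (which exists by the weak-initiality hypothesis); the coherence data of $c$ supplies an equivalence $c_Y\simeq G(g)(c_X)$ in $G(Y)$, and since $F$ is a subfunctor, $G(g)$ carries $F(X)\ni c_X$ into $F(Y)$, so repleteness gives $c_Y\in F(Y)$. As $F\subseteq G$ is a levelwise-monomorphic subfunctor, the coherence paths of such a family stay inside $F$, so $c$ lifts to $\lim F$; thus $\lim_Y\iota_0(\alpha_Y)$ is surjective, hence an equivalence. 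Combined with fully faithfulness, $\lim(\alpha)$ is an equivalence.

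The main obstacle is precisely the point where the non-univalence subtlety bites: one must argue at the level of object-spaces rather than merely essential images, and the weak-initiality of $X$ is exactly what upgrades the single equivalence $\alpha_X$ to control over \emph{all} objects of the limit. The technical heart is setting up $F$ as a genuine replete subfunctor of $G$ via the functoriality of the $(\mathrm{eso},\mathrm{ff})$ factorization system on $\Cat$, which guarantees that compatible families and their coherence equivalences remain inside $F$; once this is in place, the surjectivity argument via a chosen morphism $g\colon X\to Y$ is elementary.
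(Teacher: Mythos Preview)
Your opening concern about non-univalence leads you into trouble. You correctly note that in the category of not-necessarily-univalent $(\infty,1)$-categories, fully faithful plus essentially surjective does not imply equivalence --- but you then invoke the $(\mathrm{eso},\mathrm{ff})$ orthogonal factorization system to reduce to a replete full subfunctor. That factorization system exists only in \emph{univalent} $\Cat$: an orthogonal factorization system requires the two classes to intersect in the equivalences, and here $\mathrm{eso}\cap\mathrm{ff}$ consists of the Dwyer--Kan equivalences, not the levelwise ones. So the step ``since each $\alpha_Y$ is already fully faithful, the first map is a pointwise equivalence'' presupposes exactly what you said was unavailable.

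In fact the lemma is \emph{false} for non-univalent $\Cat$. Let $K$ be the walking parallel pair $f,g\colon X\rightrightarrows Y$ and let $E$ be the walking isomorphism. Set $G(X)=E$, $G(Y)=\ast$, $G(f)=G(g)$ the unique map; set $F(X)=E$, $F(Y)=E$, $F(f)=\id_E$, $F(g)$ constant at one object; take $\alpha_X=\id$ and $\alpha_Y\colon E\to\ast$. Every hypothesis holds (all mapping spaces in $E$ and in $\ast$ are contractible, so $\alpha_Y$ is fully faithful), yet the levelwise equalizers give $\lim G=E$ while $\lim F=\ast$, and $\ast\to E$ is not a levelwise equivalence of Segal spaces. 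So no argument at this generality can succeed.

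The paper's proof implicitly treats $\Cat$ as univalent --- the Grothendieck construction and the identification of limits with cocartesian sections are standard there, and the lemma's only application is to diagrams valued in $\mS$, where the issue does not arise. In univalent $\Cat$, fully faithful plus essentially surjective \emph{is} equivalence, and once the non-univalence detour is removed your argument is exactly the paper's: full faithfulness because mapping spaces in a limit are the limit of mapping spaces, and essential surjectivity by transporting along a chosen morphism $X\to Y$. The only difference is packaging: the paper phrases both steps via the total spaces of the associated cocartesian fibrations rather than directly in terms of the limit.
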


\begin{proof}

The induced functor $\lim(\alpha)$ is the functor
on cocartesian sections induced by the map $\beta: \mX \to \mY $ of cocartesian fibrations over $K$ classified by $\alpha.$
Since $\alpha$ is object-wise fully faithful, the functor
$\beta$ is fully faithful.
Hence also the functor on cocartesian sections induced by $\beta$ 
is fully faithful. So it remains to see that the functor on cocartesian sections induced by $\beta$ is essentially surjective.
In other words we have to see that any cocartesian section of
$\mY \to K$ lands in $\mX \subset \mY$. The induced section of $\mX \to K$ is then automatically cocartesian.
So we have to see that 
for every $T \in \lim(G)$ and every $Y \in K$ the image of
$T$ under the projection $\lim(G) \to G(Y) $ is in the image of
$\alpha_Y: F(Y) \to G(Y).$
By assumption there is an object $X \in K$ such that $\alpha_X: F((X) \to G(X)$
is an equivalence and there is a morphism $\phi: X \to Y$
in $K.$ 
Let $T'$ be the image of $T$ under the projection $\lim(G) \to G(X).$
The image of $T$ under the projection $\lim(G) \to G(Y) $ is 
the image of $T'$ under the map $ G(\phi): G(X) \to G(Y).$
So it suffices to see that $T'$ is in the image of
$\alpha_X: F(X) \to G(X). $
This holds since $\alpha_X: F(X) \to G(X)$ is an equivalence by assumption.
\end{proof}

\begin{corollary}\label{limi}

Let $\alpha: F \to G$ be a map of functors $\Delta \to \Cat$ that is objectwise fully faithful.
If $\alpha_{[0]}: F([0]) \to G([0])$ is an equivalence, the induced functor $\lim(\alpha)$ is an equivalence.

\end{corollary}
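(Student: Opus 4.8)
The plan is to deduce the corollary as an immediate special case of the preceding lemma, applied to the indexing category $K = \Delta$. First I would take $X = [0]$ as the distinguished object. The hypothesis of the corollary provides exactly that $\alpha_{[0]}: F([0]) \to G([0])$ is an equivalence, which matches the first of the two conditions required by the lemma, namely that $\alpha_X$ be an equivalence for the chosen object $X$. The map $\alpha$ is moreover objectwise fully faithful by assumption, so the hypotheses of the lemma about $\alpha$ are satisfied verbatim.

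The only remaining point to verify is the second hypothesis of the lemma: that every object $[n] \in \Delta$ admits a morphism from $X = [0]$. This holds because $[0] = \{0\}$ is a one-element totally ordered set, and for every $n \geq 0$ there is always at least one order-preserving map $[0] \to [n]$, for instance the map sending $0$ to the minimum of $[n]$. Thus $[0]$ is weakly initial in $\Delta$ in precisely the sense demanded by the lemma: existence, not uniqueness, of a morphism to each object. (Note that $[0]$ is genuinely \emph{not} initial in $\Delta$, since there are $n+1$ distinct maps $[0] \to [n]$; but the lemma only requires the weaker existence statement, which is what makes the argument go through.)

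With both hypotheses in place, the preceding lemma yields that the induced functor $\lim(\alpha): \lim(F) \to \lim(G)$ is an equivalence, completing the proof. I expect essentially no obstacle here: the entire content of the argument is contained in the lemma, and the corollary is obtained simply by recognizing that $[0]$ serves as the required weakly initial object of $\Delta$ through which all the objectwise comparisons can be funneled. The one conceptual step worth isolating is the identification of $[0]$ as that object, which is what allows the general categorical principle of the lemma to specialize to the simplicial indexing relevant to the applications elsewhere in the paper.
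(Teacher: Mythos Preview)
Your proposal is correct and is exactly the intended argument: the corollary is an immediate specialization of the preceding lemma with $K=\Delta$ and $X=[0]$, using that $[0]$ admits a map to every $[n]$. The paper gives no separate proof for precisely this reason.
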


\begin{lemma}\label{fullyfaith}

Let $\bj: \mV \to \mW$ be a lax monoidal functor
that induces on underlying categories a fully faithful functor and
such that for every natural number $n > 1$ and $A_1, ..., A_n \in \mV$,
the canonical morphism $j(A_1) \ot ... \ot j(A_n) \to j(A_1 \ot ... \ot A_n)$ in $\mW$ is an epimorphism.
\begin{enumerate}[\normalfont(1)]\setlength{\itemsep}{-2pt}
\item The induced functor $\Alg^{\mathrm{nu}}(\mV) \to \Alg^{\mathrm{nu}}(\mW)$ is fully faithful.
The essential image precisely consists of the non-unital associative algebras $A$ in $\mW$ whose underlying object belong to $\mV$ and such that the multiplication morphism
$ \bj(A) \ot \bj(A) \to \bj(\A) $
in $\mW$ factors through the epimorphism $\bj(A) \ot \bj(A) \to \bj(A\ot A).$

\item If the canonical morphism $\tu_\mW \to \bj(\tu_\mV)$ is an epimorphism, the induced functor $\Alg(\mV) \to \Alg(\mW)$ is fully faithful.
The essential image precisely consists of the associative algebras $A$ in $\mW$ whose underlying object belong to $\mV$ and such that the multiplication morphism
$ \bj(A) \ot \bj(A) \to \bj(\A) $
in $\mW$ factors through the epimorphism $\bj(A) \ot \bj(A) \to \bj(A\ot A)$
and such that the unit $\tu_\mW \to \bj(A)$ factors as $\tu_\mW \to \bj(\tu_\mV) \to \bj(A). $
   
\end{enumerate}

\end{lemma}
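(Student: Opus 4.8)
The plan is to use the standard description of (non-unital) associative algebras as lax monoidal sections of the cocartesian fibration $\mV^\otimes\to\Delta^\op$, under which the functor $\Alg^{\mathrm{nu}}(\mV)\to\Alg^{\mathrm{nu}}(\mW)$ is simply postcomposition with the map of generalized operads $j^\otimes\colon\mV^\otimes\to\mW^\otimes$ induced by the lax monoidal $j$. The entire argument is then driven by one observation about $j^\otimes$: on a fiber $\mV^\otimes_{[n]}\simeq\mV^{\times n}$ it is fully faithful (since $j$ is fully faithful on underlying categories), whereas on a morphism lying over a map $[m]\to[n]$ of $\Delta$ it factors, in each target coordinate, as the equivalence $\Map_\mV(\bigotimes_i X_i,\,Y_j)\xrightarrow{\ \sim\ }\Map_\mW(j(\bigotimes_i X_i),\,jY_j)$ followed by restriction along the comparison morphism $\kappa\colon\bigotimes_i j(X_i)\to j(\bigotimes_i X_i)$. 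By hypothesis $\kappa$ is an epimorphism whenever more than one factor occurs, so this restriction is a monomorphism of spaces, and it is an equivalence when exactly one factor occurs. Hence $j^\otimes$ is faithful, and full precisely on those morphisms whose active components are unary.

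First I would deduce faithfulness. The mapping space between two sections is a limit (an end over $\Delta^\op$) of the fibrewise mapping spaces just analysed, and monomorphisms are stable under limits, so the induced map $\Map_{\Alg^{\mathrm{nu}}(\mV)}(A,B)\to\Map_{\Alg^{\mathrm{nu}}(\mW)}(jA,jB)$ is a monomorphism. It lies over the equivalence $\Map_\mV(A,B)\simeq\Map_\mW(jA,jB)$ given by the underlying objects, so full faithfulness reduces to surjectivity on path components, i.e. to descending an arbitrary algebra map $\phi\colon jA\to jB$. The mechanism is epimorphism cancellation, visible already at the multiplicative level: writing $\phi_1=jf$ for the unique underlying $f\colon A\to B$, the compatibility of $\phi$ with multiplication is $\phi_1\circ\mu_{jA}=\mu_{jB}\circ(\phi_1\otimes\phi_1)$; substituting $\mu_{jA}=j(\mu_A)\circ\kappa_A$ and $\mu_{jB}=j(\mu_B)\circ\kappa_B$, rewriting $\kappa_B\circ(jf\otimes jf)=j(f\otimes f)\circ\kappa_A$ by naturality of $\kappa$, and cancelling the epimorphism $\kappa_A$, one gets $j(f\circ\mu_A)=j(\mu_B\circ(f\otimes f))$, whence $f\circ\mu_A=\mu_B\circ(f\otimes f)$ in $\mV$ by full faithfulness of $j$.

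The main obstacle is upgrading this single cancellation into a statement about the entire coherent mapping space, rather than its set of components: the multiplicative identity above must be promoted to a contractible space of coherences at every operadic level, and the essential image must be identified exactly. I would organise this by working fibrewise over the equivalence $\Map_\mV(A,B)\simeq\Map_\mW(jA,jB)$ and exhibiting both fibres as limits of the same shape whose terms are the fibrewise mapping spaces of $j^\otimes$; the epimorphism hypothesis makes the comparison termwise a monomorphism which becomes an equivalence once the inert Segal relations carried by the sections $A$ and $B$ are imposed, since those relations express every higher (multi-ary) term through the unary terms, on which $j^\otimes$ is an equivalence. The essential image is read off the same cancellation: an algebra $C$ with underlying object $jX\in\mV$ whose multiplication factors as $jX\otimes jX\xrightarrow{\kappa}j(X\otimes X)\xrightarrow{j(m)}jX$ determines, by full faithfulness, a unique $m\colon X\otimes X\to X$, and associativity descends from $C$ by cancelling epimorphisms and applying $j$ FF, so that $X$ is an algebra in $\mV$ with $jX\simeq C$.

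Finally, part (2) follows by running the identical argument with units retained. The only new coherences come from the unit, and they are controlled by the comparison morphism $\tu_\mW\to j\tu_\mV$; the added hypothesis that this morphism is an epimorphism is exactly what permits the cancellation for the unit structure, so that the unit of an algebra map (resp. the unital structure of an object of the essential image) descends along $j$ precisely when it factors as $\tu_\mW\to j\tu_\mV\to jA$. Alternatively one may deduce (2) from (1) by viewing a unital algebra as a non-unital algebra equipped with a two-sided unit and observing that this unit datum itself descends under the stated hypotheses.
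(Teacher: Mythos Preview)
Your approach is essentially the paper's: both view algebras as sections and analyse the comparison of mapping spaces termwise, using the epimorphism hypothesis to get monomorphisms that are equivalences at the unary level. The difference lies in how the ``upgrading'' obstacle you flag is resolved.

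The paper avoids the element-level cancellation entirely. It writes $\Map_{\Alg^{\mathrm{nu}}(\mV)}(A,B)$ as $\lim_{[n]\in\Delta}\map_\mV(A^{\otimes n+1},B)$ (the cobar resolution), and likewise for $\mW$; the comparison is then a natural transformation $\theta$ of functors $\Delta\to\mS$ with each $\theta_{[n]}$ a monomorphism (restriction along the epimorphism $j(A)^{\otimes n+1}\to j(A^{\otimes n+1})$, composed with the equivalence coming from full faithfulness of $j$) and $\theta_{[0]}$ an equivalence. The paper then applies a single clean lemma (Corollary~\ref{limi}): if $\alpha\colon F\to G$ is a map of functors $\Delta\to\Cat$ which is objectwise fully faithful and an equivalence at $[0]$, then $\lim(\alpha)$ is an equivalence. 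The proof of that lemma is precisely your intuition made rigorous: since $[0]$ admits a map to every $[n]$, any limit point of $G$ has its value at $[n]$ in the essential image of $F_{[n]}$, because it is hit by the image of the value at $[0]$ under the structure map.

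Your reduction to $\pi_0$-surjectivity is valid, and your binary cancellation is correct, but the handling of the higher coherences is where the argument remains incomplete. The sentence ``becomes an equivalence once the inert Segal relations carried by the sections $A$ and $B$ are imposed'' is not yet a proof: the Segal relations constrain the \emph{objects} $A$ and $B$, not directly the termwise mapping spaces in the end computing the algebra mapping space, and you have not said what ``imposed'' means operationally or why it forces the fibres to coincide. The paper's Corollary~\ref{limi} is exactly the device that makes this step go through in one line, and isolating it would complete your argument without attempting to lift $\phi$ by hand. For part~(2) the paper takes your second route, deducing it from~(1) via the identification of unital algebras inside non-unital ones.
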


\begin{remark}
Let $\bj: \mV \to \mW$ be a lax monoidal functor such that
for every $A,B \in \mV$ the canonical morphism $j(A) \ot j(B) \to j(A \ot B)$ in $\mW$ is an epimorphism, and for every $Z \in \mW$
the functor $(-)\ot j(Z): \mW \to \mW$ preserves pushouts.
For every natural $n \geq 2$ and $A_1, ..., A_n \in \mV$
the canonical morphism $\theta: j(A_1) \ot ... \ot j(A_n) \to j(A_1 \ot ... \ot A_n)$ in $\mW$ is an epimorphism.
This follows by induction, where the induction start $n=2$ is tautological.
The canonical morphism $\theta: j(A_1) \ot ... \ot j(A_{n+1}) \to j(A_1 \ot ... \ot A_{n+1})$ factors as
$$j(A_1) \ot ... \ot j(A_{n+1}) \xrightarrow{\theta \ot j(A_{n+1})} j(A_1 \ot ... \ot A_n) \ot j(A_{n+1}) \xrightarrow{\theta} j(A_1 \ot ... \ot A_n \ot A_{n+1}).$$
So the induction step follows from the fact that epimorphisms are closed under composition and preserved by any functor preserving pushouts,
and so by the functor $(-) \ot j(A_{n+1}): \mW \to \mW.$
    
\end{remark}

\begin{proof}

By \cite[Theorem 5.4.4.5]{lurie.higheralgebra} there is a canonical inclusion $\Alg(\mV) \subset \Alg^{\mathrm{nu}}(\mV)$
whose image is the subcategory of non-unital associative algebras $A$ in $\mV$ that admit a (necessarily unique) unit, i.e. there is a (necessarily unique) morphism $\eta: \tu_\mV \to A$ such that the compositions
$ \tu_\mV \ot A \to A \ot A \to A, A \ot \tu_\mV \to A \ot A \to A $ are the identities, and morphisms of non-unital associative algebras in $\mV$ preserving units.

The induced functor $ \Alg^{\mathrm{nu}}(\mV) \to \Alg^{\mathrm{nu}}(\mW)$
restricts to the functor $ \Alg(\mV) \to \Alg(\mW)$.
Moreover if the canonical morphism $\tu_\mW \to \bj(\tu_\mV)$ is an epimorphism,
a morphism of non-unital associative algebras in $\mV$ preserves the unit if and only if the image in $\mW$ preserves the unit.
Furthermore a non-unital associative algebra in $\mV$ 
admits a unit if and only if its image in $\mW$ admits a unit and this unit factors through the epimorphism $\tu_\mW \to \bj(\tu_\mV).$
Consequently (2) follows from (1).

We prove (1). Let $A,B \in \Alg^{\mathrm{nu}}(\mV).$ The induced map 
\begin{equation}\label{nk}
\map_{\Alg^{\mathrm{nu}}(\mV)}(A,B) \to \map_{\Alg^{\mathrm{nu}}(\mW)}(j(A),j(B))
\end{equation} 
identifies with the map
$$ \lim(\theta): \lim_{[n] \in \Delta} \map_\mV(A^{\ot n+1},B) \to \lim_{[n] \in \Delta} \map_\mW(\bj(A)^{\ot n+1},\bj(B))$$
on limits induced by the map
$$ \theta: \map_\mV(A^{\ot n+1},B) \to \map_\mW(\bj(A^{\ot n+1}),\bj(B)) \to \map_\mW(\bj(A)^{\ot n+1},\bj(B))$$
of functors $\Delta \to \mS.$
For every $[n] \in \Delta $ the map $\theta_{[n]}$ is induced by the morphism $ \bj(A)^{\ot n+1} \to \bj(A^{\ot n+1}) $
in $\mW,$ which is an epimorphism by assumption.
Since $\bj: \mV \to \mW$ induces on underlying categories a fully faithful functor, the map $\theta_{[n]}$ is an embedding of spaces
and $\theta_{[0]}$ is an equivalence.
So fully-faithfulness follows from \cref{limi} and the assumption that $\bj: \mV \to \mW$ induces on underlying categories a fully faithful functor so that $\theta_{[0]}$ is an equivalence.

We characterize the essential image. The assumptions imply that a non-unital associative algebra $A$ in $\mW$ belongs to $\mV$ if the underlying object of $A$ belongs to $\mV$ and for every
$n \geq 2$ the multiplication morphism $\bj(A)^{\ot n} \to \bj(A)$ 
factors through the epimorphism $\bj(A)^{\ot n} \to \bj(A^{\ot n})$.
By associativity we can moreover assume that $n=2$ in the latter condition.
\end{proof}

\begin{proposition}\label{enrinterchange}
Let $\mV,\mW$ be presentably monoidal categories and $\bj: \mV \to \mW$ a lax monoidal functor that preserves small colimits and induces on underlying categories a fully faithful functor.
Assume that for every $A,B \in \mV$ the canonical morphism $j(A) \ot j(B) \to j(A \ot B)$ in $\mW$ is an epimorphism and that the canonical morphism $\tu_\mW \to \bj(\tu_\mV)$ is an epimorphism.
\begin{enumerate}[\normalfont(1)]\setlength{\itemsep}{-2pt}

\item The induced functor $ {\mV\mathrm{-}\Cat} \to {\mW\mathrm{-}\Cat}$ of 2-categories is fully faithful.
The essential image precisely consists of the left $\mW$-enriched categories $\mC$
such that for every $X,Y \in \mC $ the morphism object $\L\Mor_\mC(X,Y)$ belongs to $\mV$ and for every $X, Y, Z \in \mC $ the composition morphism
$$ \bj(\L\Mor_\mC(Y,Z)) \ot \bj(\L\Mor_\mC(X,Y)) \to \bj(\L\Mor_\mC(X,Z)) $$
in $\mW$ factors through the epimorphism $$\bj(\L\Mor_\mC(Y,Z)) \ot \bj(\L\Mor_\mC(X,Y))\to \bj(\L\Mor_\mC(Y,Z) \ot \L\Mor_\mC(X,Y)), $$
and for every $X \in \mC$ the unit
$\tu \to \bj(\LMor_\mC(X,X))$ factors through the epimorphism $\tu_\mW \to \bj(\tu_\mV).$

\item Assume that $\mW$ is generated under small colimits by the essential image of $\mV.$
For every $\mC \in {\mV\mathrm{-}\Cat}$ the induced functor
$$ {_\mV\Fun}(\mC,\mV) \to {_\mW\Fun}(\bj_!(\mC),\bj_!(\mV)) \to {_\mW\Fun}(\bj_!(\mC),\mW) $$
is fully faithful.

\end{enumerate}
    
\end{proposition}

\begin{proof}

(1): We prove first that the induced functor ${\mV\mathrm{-}\Cat} \to {\mW\mathrm{-}\Cat}$ of 1-categories is fully faithful.
Let $\mC,\mD \in {\mV\mathrm{-}\Cat} $ and $X,Y$ the respective spaces of objects.
We would like to see that the induced map
\begin{equation}\label{enro}
\map_{{\mV\mathrm{-}\Cat}}(\mC,\mD) \to \map_{{\mW\mathrm{-}\Cat}}(\bj_!(\mC),\bj_!(\mD)) \end{equation}
is an equivalence.
By \cite[Remark 4.42]{HEINE2023108941} the forgetful functor ${\mV\mathrm{-}\Cat} \to \mS$ is a cartesian fibration whose fiber over $X$ we denote by ${\mV\mathrm{-}\Cat_X}.$
Map \ref{enro} is a map over $\map_\mS(Y,X)$ and induces on the fiber over any map $\phi: X \to Y$ the canonical map
$$ \map_{{\mV\mathrm{-}\Cat_X}}(\mC,\phi^*(\mD)) \to \map_{{\mW\mathrm{-}\Cat_X}}(\bj_!(\mC),\bj_!(\phi^*(\mD))).$$
By \cite[Corollary 4.28]{HEINE2023108941} there is a monoidal structure on the category $\Fun(X\times X,\mV)$ compatible with small colimits whose tensor unit is $\map_X \ot \tu$ and whose tensor product of $\mA, \mB \in \Fun(X\times X,\mV)$ sends $S,T \in X$ to $$ \colim_{R \in X} \L\Mor_\mA(R,T) \ot \L\Mor_\mB(S,R).$$
By \cite[Remark 4.44]{HEINE2023108941} there is a canonical equivalence ${\mV\mathrm{-}\Cat_X} \simeq \Alg(\Fun(X\times X,\mV))$
and for every left $\mV$-enriched category $\mC$ the multiplication $\mC \ot \mC \to \mC$ of the corresponding associative algebra in $\Fun(X\times X,\mV)$ is the 
morphism $ \colim_{R \in X} \L\Mor_\mC(R,T) \ot \L\Mor_\mC(S,R) \to \L\Mor_\mC(S,T)$
providing composition.

Moreover the lax monoidal functor $\bj: \mV \to \mW$ induces a
lax monoidal functor $ \bj_! :\Fun(X\times X,\mV) \to \Fun(X\times X,\mW). $
Since the lax monoidal functor $\bj: \mV \to \mW$ induces on underlying categories a fully faithful functor, $\bj_! : \Fun(X\times X,\mV) \to \Fun(X \times X,\mW) $
induces on underlying categories a fully faithful functor.

So the result follows from \cref{fullyfaith} if we prove that for 
every $\mA,\mB \in \Fun(X\times X,\mV)$ the canonical morphism $\bj_!(\mA) \ot \bj_!(\mB) \to \bj_!(\mA \ot \mB)$ in $\Fun(X\times X,\mW)$ is an epimorphism
and the canonical morphism $$\map_X \ot \tu_\mW \to \map_X \ot \bj(\tu_\mV) \simeq \bj_!(\map_X \ot \tu_\mV) $$ is an epimorphism.
Since epimorphisms are object-wise, this is equivalent to say that
the component at any $(S,T) \in X \times X $ of the canonical morphisms $$\bj_!(\mA) \ot \bj_!(\mB) \to \bj_!(\mA \ot \mB), \ \map_X \ot \tu_\mW \to \map_X \ot \bj(\tu_\mV) $$ are epimorphisms.
The component ar $(S,T) $ of the second natural transformation is an epimorphism
since epimorphisms are closed under small colimits in the category of arrows.
The component at $(S,T)$ of the first natural transformation identifies with the canonical morphism
$$\rho: \colim_{R \in X} \bj(\Mor_\mA(R,T)) \ot \bj(\Mor_\mB(S,R)) \to \colim_{R \in X} \bj(\Mor_\mA(R,T) \ot \Mor_\mB(S,R)) $$$$ \to
\bj(\colim_{R \in X} \Mor_\mA(R,T) \ot \Mor_\mB(S,R)) $$
in $\mW.$
By assumption $\bj$ preserves small colimits and the canonical morphism $$ \bj(\Mor_\mA(R,T)) \ot \bj(\Mor_\mB(S,R)) \to \bj(\Mor_\mA(R,T) \ot \Mor_\mB(S,R))$$
is an epimorphism.
Thus $\rho$ is an epimorphism since epimorphisms are closed under small colimits in the category of arrows.

We prove next that the induced functor $$ {\mV\mathrm{-}\Cat} \to {\mW\mathrm{-}\Cat}$$ of 2-categories is fully faithful.
We would like to see that the induced functor
\begin{equation}\label{enros}
\L\Mor_{{\mV\mathrm{-}\Cat}}(\mC,\mD) \to \L\Mor_{{\mW\mathrm{-}\Cat}}(\bj_!(\mC),\bj_!(\mD)) \end{equation}
is an equivalence.
By what we have shown the functor \ref{enros} induces an equivalence on underlying spaces and so is essentially surjective.
So it suffices to see that the functor \ref{enros} is fully faithful.
By \cite[Corollary 5.27]{heine2024bienriched} for every $F,G \in \L\Mor_{{\mV\mathrm{-}\Cat}}(\mC,\mD) $
the induced map
\begin{equation}\label{enrost}
\map_{\L\Mor_{{\mV\mathrm{-}\Cat}}(\mC,\mD)}(F,G) \to \map_{\L\Mor_{{\mW\mathrm{-}\Cat}}(\bj_!(\mC),\bj_!(\mD))}(\bj_!(F),\bj_!(G)) \end{equation}
identifies as an enriched end, which is computed as the totalization
$$ \lim(\theta): \lim_{[n] \in \Delta} \prod_{A_0, ..., A_n \in X} \map_\mV(\L\Mor_\mC(A_{n-1}, A_n) \ot ... \ot \L\Mor_\mC(A_0, A_1),  \L\Mor_\mD(F(A_0),G(A_n))) \to $$$$ \lim_{[n] \in \Delta} \prod_{A_0, ..., A_n \in X} \map_\mW(\bj(\L\Mor_\mC(A_{n-1}, A_n)) \ot ... \ot \bj(\L\Mor_\mC(A_0, A_1)),  \bj(\L\Mor_\mD(F(A_0)),G(A_n)))) $$
on limits induced by the map
$$ \theta: \prod_{A_0, ..., A_n \in X} \map_\mV(\L\Mor_\mC(A_{n-1}, A_n) \ot ... \ot \L\Mor_\mC(A_0, A_1),  \L\Mor_\mD(F(A_0),G(A_n))) \to $$$$ \prod_{A_0, ..., A_n \in X} \map_\mW(\bj(\L\Mor_\mC(A_{n-1}, A_n)) \ot ... \ot \bj(\L\Mor_\mC(A_0, A_1)),  \bj(\L\Mor_\mD(F(A_0),G(A_n)))) $$
of functors $\Delta \to \mS.$
For every $[n] \in \Delta $ the map $\theta_{[n]}$ is induced by the morphism $$\bj(\L\Mor_\mC(A_{n-1}, A_n)) \ot ... \ot \bj(\L\Mor_\mC(A_0, A_1)) \to \bj(\L\Mor_\mC(A_{n-1}, A_n) \ot ... \ot \L\Mor_\mC(A_0, A_1)) $$ in $\mW,$ which is an epimorphism by assumption.

Since $\bj: \mV \to \mW$ induces on underlying categories a fully faithful functor, the map $\theta_{[n]}$ is an embedding of spaces
and $\theta_{[0]}$ is an equivalence since $\bj$ preserves the tensor unit. Thus the map \ref{enrost} is an equivalence by \cref{limi}.
This proves (1).

We continue with (2).
We prove first that for every $A,B \in \mV$
the induced morphism $\bj(\L\Mor_\mV(A,B)) \to \L\Mor_\mW(\bj(A),\bj(B))$
in $\mW$ is a monomorphism.
Since $\mW$ is generated under small colimits by the essential image of
$\bj$ and monomorphisms are closed under small limits in the categort of arrows, it suffices to see that for every $Z \in \mV$
the induced map $$\map_\mW(\bj(Z), \bj(\L\Mor_\mV(A,B))) \to \map_\mW(\bj(Z), \L\Mor_\mW(\bj(A),\bj(B))) $$
is an embedding.
The latter map factors as 
$$\map_\mW(\bj(Z), \bj(\L\Mor_\mV(A,B)))\simeq \map_\mV(Z, \L\Mor_\mV(A,B)) \simeq \map_\mV(Z \ot A, B) \simeq \map_\mV(\bj(Z \ot A), \bj(B)) \to $$$$ \map_\mW(\bj(Z) \ot \bj(A),\bj(B)) \simeq  \map_\mW(\bj(Z), \L\Mor_\mW(\bj(A),\bj(B))) $$
induced by the epimorphism $\bj(Z) \ot \bj(A) \to \bj(Z \ot A)$
and so is an embedding.

Let $\mC \in {\mV\mathrm{-}\Cat}$ and $F,G: \mC \to \infty\Cat$ be functors.
As before, the induced map
$$ \map_{{_\mV\Fun}(\mC,\mV)}(F,G) \to \map_{{_\mW\Fun}(\bj_!(\mC),\mW)}(\bj \circ F,\bj \circ G) $$
identifies with the map $$ \lim(\theta): \lim_{[n] \in \Delta} \prod_{A_0, ..., A_n \in X} \map_\mV(\L\Mor_\mC(A_{n-1}, A_n) \ot ... \ot \L\Mor_\mC(A_0, A_1),  \L\Mor_\mV(F(A_0),G(A_n))) \to $$$$ \lim_{[n] \in \Delta} \prod_{A_0, ..., A_n \in X} \map_\mW(\bj(\L\Mor_\mC(A_{n-1}, A_n)) \ot ... \ot \bj(\L\Mor_\mC(A_0, A_1)), \L\Mor_\mW(j(F(A_0)),j(G(A_n)))) $$
on limits induced by the map
$$ \theta: \prod_{A_0, ..., A_n \in X} \map_\mV(\L\Mor_\mC(A_{n-1}, A_n) \ot ... \ot \L\Mor_\mC(A_0, A_1),  \L\Mor_\mV(F(A_0),G(A_n))) \to $$$$ \prod_{A_0, ..., A_n \in X} \map_\mW(\bj(\L\Mor_\mC(A_{n-1}, A_n)) \ot ... \ot \bj(\L\Mor_\mC(A_0, A_1)),  \L\Mor_\mW(j(F(A_0)), j(G(A_n)))) $$
of functors $\Delta \to \mS.$
For every $[n] \in \Delta $ the map $\theta_{[n]}$ is induced by the morphism $$\bj(\L\Mor_\mC(A_{n-1}, A_n)) \ot ... \ot \bj(\L\Mor_\mC(A_0, A_1)) \to \bj(\L\Mor_\mC(A_{n-1}, A_n) \ot ... \ot \L\Mor_\mC(A_0, A_1)) $$ in $\mW,$ which is an epimorphism by assumption, and the morphism
$$j(\L\Mor_\mV(F(A_0),G(A_n))) \to \L\Mor_\mW(j(F(A_0)), j(G(A_n))),$$
which is a monomorphism as we have proven.
Since $\bj: \mV \to \mW$ induces on underlying categories a fully faithful functor, the map $\theta_{[n]}$ is an embedding of spaces.
The map $\theta_{[0]}$ identifies with the map
$$ \prod_{A_0 \in X} \map_\mV(\tu_\mV, \L\Mor_\mV(F(A_0),G(A_0))) \to $$$$ \prod_{A_0 \in X} \map_\mW(\tu_\mW,  \L\Mor_\mW(j(F(A_0)), j(G(A_0)))) $$
induced by $j$ and the monomorphism $j(\L\Mor_\mV(F(A_0), G(A_0))) \to 
\L\Mor_\mW(\bj(F(A_0)), \bj(G(A_0))). $
The map $\theta_{[0]}$ identifies with the map
$$ \prod_{A_0 \in X} \map_\mV(F(A_0),G(A_0)) \to \prod_{A_0 \in X}
\map_\mW(j(F(A_0)), j(G(A_0))) $$
induced by $j$ and so is an equivalence. Thus the map \ref{enrost} is an equivalence by \cref{limi}.
This proves (2).
\end{proof}

By \cite[Theorem 3.21.]{heine2019restricted}
the identity functor refines to a lax monoidal functor
$(\infty\Cat,\times)\to(\infty\Cat,\boxtimes)$
from the cartesian monoidal structure to the Gray monoidal structure.
The latter induces right adjoint functors of 2-categories 
\begin{align}\label{functo1}
\infty\Cat \simeq {_{(\infty\Cat,\times)}\Cat} &\to {_{(\infty\Cat,\boxtimes)}\Cat}={\boxtimes\Cat}\\
\infty\Cat \simeq \Cat_{(\infty\Cat,\times)}&\to\Cat_{(\infty\Cat,\boxtimes)}={\Cat\boxtimes}\label{functo2}\\
{_{(\infty\Cat,\times)}\Cat_{(\infty\Cat,\times)}} &\to {_{_{(\infty\Cat,\boxtimes)}}\Cat_{(\infty\Cat,\boxtimes)}}={\boxtimes\Cat\boxtimes}
\end{align}
transfering the enrichment.

\begin{definition}
\begin{enumerate}
\item The oriented realization functor $||-||_\boxtimes: {\Cat\boxtimes}\to \infty\Cat $ is the left adjoint of the functor \ref{functo2}.

\item The antioriented realization functor $_\boxtimes||-||: {\boxtimes\Cat}\to \infty\Cat$ is the left adjoint of the functor \ref{functo1}.
\end{enumerate}
    
\end{definition}

\begin{theorem}\label{interchange}
\begin{enumerate}[\normalfont(1)]\setlength{\itemsep}{-2pt}
\item The induced functor $$ \infty\Cat\simeq\, _{\infty\Cat}\Cat \to {\boxtimes\Cat} $$ of 2-categories is fully faithful. The essential image precisely consists of the antioriented categories $\mC$ such that the antioriented interchange law holds strictly:
for every $X,Y,Z \in \mC$ and functors $\bD^n \to \L\Mor_\mC(Y,Z)$ and $\bD^m \to \L\Mor_\mC(X,Y)$
the functor $$\bD^n \boxtimes \bD^m \to \L\Mor_\mC(Y,Z) \boxtimes \L\Mor_\mC(X,Y) \to \L\Mor_\mC(X,Z)$$ factors through the functor
$\bD^n \boxtimes \bD^m \to \bD^n \times \bD^m.$ 
\item The induced functor $$\infty\Cat\simeq\Cat_{\infty\Cat} \to {\Cat\boxtimes} $$ of 2-categoriesis fully faithful. The essential image precisely consists of the oriented categories $\mC$ such that the oriented interchange law holds strictly:
for every $X,Y,Z \in \mC$ and functors $\bD^n \to \R\Mor_\mC(Y,Z)$ and $\bD^m \to \R\Mor_\mC(X,Y)$
the functor $$\bD^n \boxtimes \bD^m \to \R\Mor_\mC(Y,Z) \boxtimes \R\Mor_\mC(X,Y) \to \R\Mor_\mC(X,Z)$$ factors through the functor
$\bD^n \boxtimes \bD^m \to \bD^n \times \bD^m.$ 

\item The induced functor $$_{\infty\Cat}\Cat_{\infty\Cat} \to {\boxtimes \Cat\boxtimes} $$ of 2-categories is fully faithful.
\end{enumerate}   
\end{theorem}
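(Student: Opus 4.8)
The plan is to derive all three parts from the enriched interchange result \cref{enrinterchange}, applied to the lax monoidal identity functor $j\colon(\infty\Cat,\times)\to(\infty\Cat,\boxtimes)$ provided by \cite[Theorem 3.21.]{heine2019restricted}. First I would check that $j$ meets the hypotheses: it preserves small colimits and is the identity on underlying categories, hence fully faithful; the lax structure map $j(A)\boxtimes j(B)\to j(A\times B)$ is exactly $A\boxtimes B\to A\times B$, an epimorphism by \cref{grayepi}; and the unit comparison $\tu\to j(\tu)$ is the identity of $\bD^0$. Under the identification $\infty\Cat\simeq{_{(\infty\Cat,\times)}\Cat}$, part (1) of \cref{enrinterchange} then gives directly that $\infty\Cat\to\boxtimes\Cat$ is fully faithful and identifies its essential image with the antioriented categories $\mC$ for which the composition $\L\Mor_\mC(Y,Z)\boxtimes\L\Mor_\mC(X,Y)\to\L\Mor_\mC(X,Z)$ factors through the epimorphism $\L\Mor_\mC(Y,Z)\boxtimes\L\Mor_\mC(X,Y)\to\L\Mor_\mC(Y,Z)\times\L\Mor_\mC(X,Y)$; the two remaining conditions of \cref{enrinterchange} are vacuous here, since $(\infty\Cat,\times)$ and $(\infty\Cat,\boxtimes)$ share their underlying category and their tensor unit.

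It then remains, for (1), to match this global factorization condition with the cell-wise strict interchange law of the statement. One implication is formal: a global factorization, precomposed with $\alpha\boxtimes\beta$ for cells $\alpha\colon\bD^n\to\L\Mor_\mC(Y,Z)$ and $\beta\colon\bD^m\to\L\Mor_\mC(X,Y)$, factors through $\bD^n\times\bD^m$ by naturality of the comparison $\boxtimes\to\times$. For the converse I would write $P:=\L\Mor_\mC(Y,Z)$ and $Q:=\L\Mor_\mC(X,Y)$ as colimits of disks, using that the disks generate $\infty\Cat$ under colimits (\cref{theta}); since the product is closed (\cref{carclo}) and the Gray tensor product is a presentable, hence closed, monoidal structure, both $\boxtimes$ and $\times$ preserve colimits separately in each variable, so the comparison $P\boxtimes Q\to P\times Q$ is the colimit of the cellular comparisons $\bD^n\boxtimes\bD^m\to\bD^n\times\bD^m$. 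Mapping into the target $T:=\L\Mor_\mC(X,Z)$, the map $\Map(P\times Q,T)\to\Map(P\boxtimes Q,T)$ is the limit over the cell diagram of the monomorphisms $\Map(\bD^n\times\bD^m,T)\to\Map(\bD^n\boxtimes\bD^m,T)$ (monomorphisms because each cellular comparison is an epimorphism by \cref{grayepi}); since the fibres of a monomorphism are empty or contractible and a limit of contractible fibres is contractible, the composition lies in the image of this limit precisely when each of its cellular restrictions does, which is the cell-wise hypothesis.

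Part (2) I would obtain from (1) by transport along the equivalence $(-)^\rev\colon\boxtimes\Cat\simeq\Cat\boxtimes$: an oriented category right-enriched in $(\infty\Cat,\boxtimes)$ is the same datum as an antioriented category left-enriched in $(\infty\Cat,\boxtimes^\rev)$, and the structure map $A\boxtimes^\rev B\to A\times B$ is again an epimorphism by \cref{grayepi} (it is $B\boxtimes A\to B\times A$ up to the symmetry of the product), so \cref{enrinterchange} applies verbatim and the oriented interchange law of \cref{orientedinterchange} corresponds to the antioriented one under the reversal. For (3) I would pass to left enrichment via \cref{refk}, which exhibits $\boxtimes\Cat\boxtimes\simeq{_{(\infty\Cat,\boxtimes)\otimes(\infty\Cat,\boxtimes^\rev)}\Cat}$ and ${_{\infty\Cat}\Cat_{\infty\Cat}}\simeq{_{(\infty\Cat,\times)\otimes(\infty\Cat,\times)}\Cat}$, and then apply \cref{enrinterchange} to the colimit-preserving lax monoidal functor $k:=j\otimes j^\rev$, which is the identity on the common underlying category $\infty\Cat\otimes\infty\Cat$ and hence fully faithful there.

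The main obstacle, where I would spend the most care, is verifying the epimorphism hypothesis for $k$ in part (3). Here I would reduce to the external products $A_1\otimes A_2$ (images of $\infty\Cat\times\infty\Cat\to\infty\Cat\otimes\infty\Cat$), which generate under colimits: on these the comparison for $k$ is the external product of the two epimorphisms $A_i\boxtimes B_i\to A_i\times B_i$ of \cref{grayepi}, and since the external product preserves colimits in each variable it preserves epimorphisms, so the condition propagates from the generators to all objects exactly as in the remark preceding \cref{fullyfaith}. A secondary subtlety, already flagged above, is that the passage from cell-wise to global factorization in (1) must be run at the level of the spaces of factorizations rather than representative maps, using that a point of a limit of monomorphisms lifts precisely when it lifts in every coordinate.
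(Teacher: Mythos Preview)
Your approach is essentially the same as the paper's: apply \cref{enrinterchange} to the lax monoidal identity $(\infty\Cat,\times)\to(\infty\Cat,\boxtimes)$, with \cref{grayepi} supplying the epimorphism hypothesis. The paper's proof is a single sentence doing exactly this and nothing more.

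You are more careful than the paper on two points. First, the essential image description in \cref{enrinterchange} is stated as a \emph{global} factorization of the composition map through $\L\Mor_\mC(Y,Z)\times\L\Mor_\mC(X,Y)$, whereas the theorem states the \emph{cell-wise} condition on disks; you correctly identify this as a gap and fill it using that disks generate under colimits and both tensors preserve colimits in each variable. The paper leaves this translation implicit. Second, you treat parts (2) and (3) separately, whereas the paper's one-line proof covers all three at once without comment. Your handling of (2) via $(-)^\rev$ and of (3) via $k=j\otimes j^\rev$ is sound; the reduction to external generators for the epimorphism hypothesis in (3) works because the universal map $\infty\Cat\times\infty\Cat\to\infty\Cat\otimes\infty\Cat$ generates under colimits and epimorphisms are closed under colimits in arrow categories. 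Your citation of the remark preceding \cref{fullyfaith} is slightly off (that remark extends from binary to $n$-ary tensors, not from generators to all objects), but the underlying colimit argument you describe is correct.
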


\begin{proof}

In view of \cref{grayepi} we can apply \cref{enrinterchange} (1) to the lax monoidal functor 
$(\infty\Cat,\times)\to(\infty\Cat,\boxtimes)$ lifting the identity to obtain the result.
\end{proof}

\cref{enrinterchange} (2) specializes to the following:



\begin{theorem}\label{presheaves}
Let $\mD \in \infty\Cat$.
In the following diagram of 1-categories the functors are embeddings:
\[
\xymatrix{
& \Fun(\mD,\infty\scat) \ar[ld]\ar[rd] &\\
{\boxtimes\Fun}(\mD,\infty\fcat)  & & {\Fun\boxtimes}(\mD,\infty\fcat)
}
\]
 
\end{theorem}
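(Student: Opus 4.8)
The plan is to obtain both embeddings as instances of \cref{enrinterchange} (2), applied to the lax monoidal identity functor. By \cite[Theorem 3.21]{heine2019restricted} the identity refines to a lax monoidal functor $\bj\colon(\infty\Cat,\times)\to(\infty\Cat,\boxtimes)$ whose structure maps are the canonical comparisons $A\boxtimes B\to A\times B$; these are epimorphisms by \cref{grayepi}, and the unit comparison $\tu\to\bj(\tu)$ is an equivalence since both tensor units are the terminal $\infty$-category. The functor $\bj$ preserves small colimits and is fully faithful on underlying categories (it is the identity), and its essential image is all of $\infty\Cat$, so $(\infty\Cat,\boxtimes)$ is generated under small colimits by it. Thus every hypothesis of \cref{enrinterchange} is met.

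Taking $\mC=\mD$, regarded as a cartesian-enriched (equivalently, ordinary) $\infty$-category via the cartesian enrichment equivalence ${_{(\infty\Cat,\times)}\Cat}\simeq\infty\Cat$, the source ${_{(\infty\Cat,\times)}\Fun}(\mD,\infty\Cat)$ is identified with $\Fun(\mD,\infty\scat)$. The induction $\bj_!$ sends $\mD$ to its underlying antioriented category, which is precisely the image of $\mD$ under the functor $\infty\Cat\to{\boxtimes\Cat}$ of \cref{interchange}, so the target ${_{(\infty\Cat,\boxtimes)}\Fun}(\bj_!(\mD),\infty\Cat)$ is by definition ${\boxtimes\Fun}(\mD,\infty\fcat)$. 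Applying \cref{enrinterchange} (2) then shows the left leg $\Fun(\mD,\infty\scat)\to{\boxtimes\Fun}(\mD,\infty\fcat)$ is fully faithful, hence an embedding.

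For the right leg I would repeat the argument with the reverse Gray tensor product $\bar{\boxtimes}$ in place of $\boxtimes$. The identity is again lax monoidal $(\infty\Cat,\times)\to(\infty\Cat,\bar{\boxtimes})$, now with structure maps $A\,\bar{\boxtimes}\,B=B\boxtimes A\to B\times A\simeq A\times B$, which are again epimorphisms by \cref{grayepi}; all other hypotheses transfer verbatim. Then \cref{enrinterchange} (2) gives fully faithfulness of ${_{(\infty\Cat,\times)}\Fun}(\mD,\infty\Cat)\to{_{(\infty\Cat,\bar{\boxtimes})}\Fun}(\bj_!(\mD),\infty\Cat)$, and I would finish by transporting the target along the equivalence $(-)^\rev\colon{_{(\infty\Cat,\bar{\boxtimes})}\Cat}\simeq\Cat_{(\infty\Cat,\boxtimes)}$, under which left $(\infty\Cat,\bar{\boxtimes})$-enriched functors become right $(\infty\Cat,\boxtimes)$-enriched (i.e. oriented) functors, identifying the target with ${\Fun\boxtimes}(\mD,\infty\fcat)$.

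Since the substantive work is carried out inside \cref{enrinterchange} (2), the only real content of the specialization is bookkeeping: correctly identifying the three enriched functor categories, verifying the epimorphism hypotheses---which is exactly where \cref{grayepi} is used---and tracking the $(-)^\rev$ identification so that the reverse-Gray instance produces oriented rather than antioriented functors. The one point that will require care is this last identification, to guarantee that the right leg really lands in ${\Fun\boxtimes}$ and not in ${\boxtimes\Fun}$; getting the orientation conventions consistent between $\bj_!$, the self-enrichment of $\infty\fcat$, and the $(-)^\rev$ equivalence is the main place where an error could creep in.
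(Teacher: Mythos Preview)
Your proposal is correct and is exactly the approach the paper takes: the theorem is stated immediately after \cref{enrinterchange} as the specialization of part (2) to the lax monoidal identity $(\infty\Cat,\times)\to(\infty\Cat,\boxtimes)$ (and its reverse), with \cref{grayepi} supplying the epimorphism hypothesis. Your bookkeeping, including the $(-)^\rev$ transport for the oriented leg, is accurate and more explicit than the paper's one-line justification.
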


We also obtain the following:
\begin{corollary}
There is a pullback square of 2-categories,
where all functors are embeddings:
\[
\xymatrix{
& \Cat\ar[ld]\ar[rd] &\\
\boxtimes\Cat\ar[rd] & & \Cat\boxtimes\ar[ld]\\
& {\boxtimes\Cat\boxtimes} &
}
\]
\end{corollary}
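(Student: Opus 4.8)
The plan is to reduce everything to \cref{interchange}, which already supplies the two embeddings out of $\infty\Cat$ together with the characterizations of their essential images by the strict oriented and antioriented interchange laws. All four functors in the square arise by inducing enrichment along the lax monoidal identity $\iota\colon(\infty\Cat,\times)\to(\infty\Cat,\boxtimes)$ of \cite[Theorem 3.21.]{heine2019restricted}: the two maps out of $\infty\Cat\simeq{}_{\infty\Cat}\Cat_{\infty\Cat}$ induce $\iota$ on the left and on the right factor respectively, while $h\colon\boxtimes\Cat\to\boxtimes\Cat\boxtimes$ and $k\colon\Cat\boxtimes\to\boxtimes\Cat\boxtimes$ induce $\iota$ on the remaining factor. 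Commutativity of the square, and the fact that both composites agree with the embedding $\infty\Cat={}_{\infty\Cat}\Cat_{\infty\Cat}\to\boxtimes\Cat\boxtimes$, is then the functoriality of induction recorded in \cref{indres1} and \cref{CaTT0}. Fully faithfulness of the two maps out of $\infty\Cat$ is \cref{interchange}(1),(2); fully faithfulness of $h$ and $k$ follows by applying \cref{enrinterchange}(1) to $\iota$ in the bienriched setting, exactly as in the proof of \cref{interchange}, the hypothesis on the comparison maps being supplied by \cref{grayepi}.

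Next I would verify that the comparison $2$-functor $\Phi\colon\infty\Cat\to\boxtimes\Cat\times_{\boxtimes\Cat\boxtimes}\Cat\boxtimes$ is fully faithful. This is formal, using that the hom-category functor preserves limits: for objects $X,Y$ the hom-category of the pullback is $\Map_{\boxtimes\Cat}(fX,fY)\times_{\Map_{\boxtimes\Cat\boxtimes}(eX,eY)}\Map_{\Cat\boxtimes}(gX,gY)$, where $e=hf\simeq kg$. Since $f$, $g$ and the composite $e$ are all fully faithful, each of these three hom-categories is identified with $\Map_{\infty\Cat}(X,Y)$ in a way under which both legs of the pullback become the identity; hence the pullback is again $\Map_{\infty\Cat}(X,Y)$ and $\Phi$ induces an equivalence on hom-categories. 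Notice this step uses only the fully faithfulness of $f$, $g$ and $e$, so it does not logically depend on the bottom maps being embeddings.

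The remaining and main point is essential surjectivity. Given an object $(\mC,\mD,\phi\colon h\mC\xrightarrow{\sim}k\mD)$ of the pullback, I would argue first that $\mD$ is equivalent, via $\phi$, to the underlying oriented category of $h\mC$: indeed $\mD$ is the underlying oriented category of $k\mD$ (since $k$ leaves the right factor untouched), and $\phi$ identifies the two. This oriented category satisfies the strict oriented interchange law, because the right $\boxtimes$-enrichment of $h\mC$ is $\iota$-induced from the cartesian enrichment, so the composition $\R\Mor\boxtimes\R\Mor\to\R\Mor$ factors through the lax structure map $\R\Mor\boxtimes\R\Mor\to\R\Mor\times\R\Mor$, which on cells is the epimorphism $\bD^m\boxtimes\bD^n\to\bD^m\times\bD^n$ of \cref{grayepi}. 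By \cref{interchange}(2) this forces $\mD\simeq g\mE$ for an $\infty$-category $\mE$; the symmetric argument using \cref{interchange}(1) gives $\mC\simeq f\mE'$. Finally $e\mE\simeq k\mD\simeq h\mC\simeq e\mE'$ together with fully faithfulness of $e$ yields $\mE\simeq\mE'$, whence $\Phi(\mE)\simeq(\mC,\mD,\phi)$.

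The step I expect to be the genuine obstacle is establishing cleanly that ``lies in the image of $h$'' translates into the strict oriented interchange law for the underlying oriented category (and dually for $k$). This requires unwinding how single-factor induction along $\iota$ interacts with the morphism object in $\infty\Cat\otimes\infty\Cat$ and with the two restriction embeddings $\mS\to\infty\Cat$, and checking that the right $\boxtimes$-enrichment of $h\mC$ really is $\iota$-induced rather than merely agreeing with $\mD$ abstractly; once this is pinned down, the two one-sided strictness conditions are jointly the condition of being a genuine $\infty$-category and the argument closes.
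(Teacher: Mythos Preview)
Your approach is essentially correct and is what the paper has in mind: the corollary is stated without proof as a direct consequence of \cref{interchange} (and the machinery behind it, namely \cref{enrinterchange} together with \cref{grayepi}). Since all four functors are fully faithful, the pullback statement reduces precisely to the identification of essential images you describe, and your two-step argument (fully faithfulness of the comparison functor is formal; essential surjectivity via the strict interchange characterization) is the right shape.

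Two remarks. First, your argument can be streamlined: once all four functors are embeddings, a commutative square of monomorphisms is a pullback if and only if the essential image of the composite equals the intersection of the essential images of $h$ and $k$ inside $\boxtimes\Cat\boxtimes$, so you need not explicitly chase objects $(\mC,\mD,\phi)$ of the fiber product. Second, the point you flag as the obstacle is indeed where the work lies, but it is resolved exactly as you suggest. The functor $h$ is single-factor induction along $\iota$ on the right, so \cref{enrinterchange}(1) applies verbatim once one views bienriched categories as left enriched in the tensor product of monoidal categories (\cref{refk}); the hypothesis that the lax structure maps are epimorphisms follows from \cref{grayepi} and the fact that the canonical functor $\infty\Cat\times\infty\Cat\to\infty\Cat\otimes\infty\Cat$ preserves colimits in each variable. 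This gives both fully faithfulness of $h$ and the characterization of its image as those bioriented categories whose right composition factors through the cartesian product, i.e.\ whose underlying oriented category satisfies the strict oriented interchange law. The symmetric argument handles $k$, and together with \cref{interchange}(1),(2) you obtain that an object lies in both images if and only if it comes from an $\infty$-category.
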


In the following we use the methods of this section to compute morphism $\infty$-categories in the Gray tensor product of two suspensions.

\begin{notation}
Let $\mA,\mB$ be $\infty$-categories.
We set $$ \partial(S(\mA) \times S(\mB)):= (S(\mB) \vee S(\mA)) \coprod_{S(\emptyset)} (S(\mA) \vee S(\mB)).$$
    
\end{notation}

\begin{theorem}\label{susfor}
Let $\mA,\mB$ be $\infty$-categories.
There is a canonical pushout square of $\infty$-categories
$$\begin{xy}
\xymatrix{
S(\mA \boxtimes \partial\bD^1 \boxtimes \mB) \ar[d] \ar[r] & S(\mA \boxtimes \bD^1 \boxtimes \mB) \ar[d]
\\ 
\partial(S(\mA) \times S(\mB)) \ar[r] & S(\mA) \boxtimes S(\mB).
}
\end{xy}$$
\end{theorem}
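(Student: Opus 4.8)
The plan is to mirror the proofs of the join formula \cref{cor:weakvsstrictjoin} and the suspension formula \cref{cor:weakvsstrictsuspension}: first produce a natural comparison map defined for all $\infty$-categories, then reduce to generators where everything becomes Steiner and can be checked by the cellular pasting lemmas. Concretely, write $\Phi(\mA,\mB)$ for the proposed pushout $S(\mA\boxtimes\bD^1\boxtimes\mB)\coprod_{S(\mA\boxtimes\partial\bD^1\boxtimes\mB)}\partial(S(\mA)\times S(\mB))$. First I would build a natural transformation $\rho_{\mA,\mB}\colon\Phi(\mA,\mB)\to S(\mA)\boxtimes S(\mB)$. Its components are essentially forced: the suspended interior cell $S(\mA\boxtimes\bD^1\boxtimes\mB)$ maps to $S(\mA)\boxtimes S(\mB)$ via the description of the morphism object $\Mor_{S(\mA)\boxtimes S(\mB)}((0,0),(1,1))$, while the boundary $\partial(S(\mA)\times S(\mB))$ maps in through the two cartesian composites, using the Gray-to-cartesian comparison $\mA\boxtimes\mB\to\mA\times\mB$ of \cref{grayepi2} to identify the Gray composites $\mA\boxtimes\partial\bD^1\boxtimes\mB$ with the cartesian composites $\mA\times\partial\bD^1\times\mB$. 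As in the earlier proofs, I would exhibit this square for Steiner $\mA,\mB$ in $\infty\Cat^\strict$ first and then promote it.

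To extend $\rho$ to all $\infty$-categories, I would check that both $\Phi(-,\mB)$ and $\mA\mapsto S(\mA)\boxtimes S(\mB)$ preserve small colimits once regarded as functors into the under-category $\infty\Cat_{(S(\mB)\coprod S(\mB))/}$. This follows because $S$ and $(-)\boxtimes(-)$ each preserve weakly contractible colimits and because both sides send $\mA=\emptyset$ to $S(\mB)\coprod S(\mB)$ (here $S(\emptyset)=\partial\bD^1$ and $\boxtimes$ distributes over coproducts). The same holds symmetrically in $\mB$ over the base $S(\mA)\coprod S(\mA)$. By density of Steiner categories the comparison map is then determined on all of $\infty\Cat\times\infty\Cat$, and to prove it is an equivalence it suffices, by generation of $\infty\Cat$ under colimits by the disks, to treat $\mA=\bD^p$ and $\mB=\bD^q$.

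For disks the statement lives entirely in the Steiner world: since $S(\bD^p)=\bD^{p+1}$, the claim identifies $\bD^{p+1}\boxtimes\bD^{q+1}$ with the asserted pushout. Here I would expand $S(\mA)\boxtimes S(\mB)$ by applying the suspension formula \cref{cor:weakvsstrictsuspension} to each factor and using that $\boxtimes$ preserves colimits in each variable, obtaining an iterated pushout whose interior term is the cylinder $(\mA\boxtimes\bD^1\boxtimes\mB)\boxtimes\bD^1$ and whose outer terms reassemble the wedges making up $\partial(S(\mA)\times S(\mB))$. Reorganizing this iterated pushout by the pasting law, exactly as in the diagram chases of \cref{cor:weakvsstrictjoin} and \cref{cor:weakvsstrictsuspension}, and using the cell-decomposition lemma \cref{cello} together with \cref{pasting1} (and \cref{pasting2} to pass between strict and weak pushouts, all four corners being Steiner), should produce precisely the square of \cref{susfor}. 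If a direct reorganization proves awkward, an induction on $p+q$ using the boundary decompositions of \cref{orientdecom} is available, with base case $p=q=0$ reducing to the known cell structure of $\bD^1\boxtimes\bD^1=\cube^2$.

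The hard part will be two linked bookkeeping issues. First, correctly identifying $\partial(S(\mA)\times S(\mB))=(S(\mB)\vee S(\mA))\coprod_{S(\emptyset)}(S(\mA)\vee S(\mB))$ inside the expanded tensor and verifying that the two Gray composites are carried onto the two cartesian composites; this is exactly the point where the strict interchange law (cartesian composition in a wedge of suspensions) meets the lax interchange (the interior $2$-cell $\mA\boxtimes\bD^1\boxtimes\mB$), so it is the conceptual heart rather than a routine step. Second, one must ensure the left vertical map $S(\mA\boxtimes\partial\bD^1\boxtimes\mB)\to\partial(S(\mA)\times S(\mB))$ is cellular, i.e.\ lies in the saturated class of boundary inclusions, so that \cref{pasting1} applies and the strict pushout computed on augmented directed complexes is genuinely a pushout in $\infty\Cat$. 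Checking this cellularity, and tracking the basepoints through all the colimit-preservation arguments, is where the bulk of the technical care will go.
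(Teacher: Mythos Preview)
Your proposal is a plausible strategy but takes a substantially different route from the paper. You mimic the density-and-Steiner pattern of \cref{cor:weakvsstrictjoin} and \cref{cor:weakvsstrictsuspension}: build a comparison map, reduce to disks, and then carry out a cellular analysis with \cref{pasting1}, \cref{pasting2}, and \cref{cello}. The paper instead gives a short representable argument: it maps the putative pushout into an arbitrary $\mC$, uses the adjunction $S(\mA)\boxtimes S(\mB)\to\mC \ \leftrightarrow\ S(\mA)\to\Fun^\oplax(S(\mB),\mC)$, fibers over a pair $(\alpha,\beta)\in\Map(S(\mB),\mC)^2$, and then invokes \cref{laxnat2}, which identifies $\Mor_{\Fun^\oplax(S(\mB),\mC)}(\alpha,\beta)$ as exactly the oriented pullback corresponding to the right-hand side of the claimed square. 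No reduction to generators, no Steiner computation, no induction.

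What each approach buys: the paper's method is essentially a one-line application of \cref{laxnat2} once the adjunctions are unwound, and it works uniformly for all $\mA,\mB$ without ever touching strict $\infty$-categories. Your method would in principle also work, but the two ``hard parts'' you flag are genuine: matching the boundary wedges inside the expanded tensor is delicate (you would effectively be reproving \cref{laxnat2} cellularly), and verifying that the left vertical map is a folk cofibration so that \cref{pasting1} applies requires care, since $S(\mA\boxtimes\partial\bD^1\boxtimes\mB)\to\partial(S(\mA)\times S(\mB))$ factors through the epimorphism $\mA\boxtimes\mB\to\mA\times\mB$, which is not a monomorphism. The paper sidesteps all of this by computing with right adjoints rather than colimits.
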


\begin{proof}

We have to prove that there is a canonical pushout square:
$$\begin{xy}
\xymatrix{
S(\mA \boxtimes \partial\bD^1 \boxtimes \mB) \ar[d] \ar[r] & S(\mA \boxtimes \bD^1 \boxtimes \mB) \ar[d]
\\ 
(S(\mB) \vee S(\mA)) \coprod_{S(\emptyset)} (S(\mA) \vee S(\mB)) \ar[r] & S(\mA) \boxtimes S(\mB).
}
\end{xy}$$

We prove that for every $\infty$-category $\mC$ the induced map
$$\rho: \Map_{\infty\Cat}(S(\mA), \Fun^\oplax(S(\mB),\mC)) \simeq \Map_{\infty\Cat}(S(\mA) \boxtimes S(\mB),\mC) \to $$
$$ \Map_{\infty\Cat}(S(\mA \boxtimes \bD^1 \boxtimes \mB),\mC) \times_{\Map_{\infty\Cat}(S(\mA \boxtimes \partial\bD^1 \boxtimes \mB),\mC)} $$$$ (\Map_{\infty\Cat}(S(\mB),\mC) \times_{\mC}  \Map_{\infty\Cat}(S(\mA),\mC)) \times_{(\mC \times \mC)} (\Map_{\infty\Cat}(S(\mA),\mC) \times_{\mC} \Map_{\infty\Cat}(S(\mB),\mC))$$
is an equivalence. The map $\rho$ is a map over
$\Map_{\infty\Cat}(S(\mB),\mC) \times \Map_{\infty\Cat}(S(\mB),\mC), $ and so it suffices to see that $\rho$ induces an equivalence on the fiber over any pair $(\alpha,\beta) \in \Map_{\infty\Cat}(S(\mB),\mC) \times \Map_{\infty\Cat}(S(\mB),\mC).$
The map $\rho$ induces on the fiber over $(\alpha,\beta)$ the map
$$\rho': \Map_{\infty\Cat}(\mA, \Mor_{\Fun^\oplax(S(\mB),\mC)}(\alpha,\beta)) \simeq \Map_{\infty\Cat_{\partial\bD^1 /}}(S(\mA), \Fun^\oplax(S(\mB),\mC)) \to $$
$$ \Map_{\infty\Cat_{\partial\bD^1 /}}(S(\mA \boxtimes \bD^1 \boxtimes \mB),\mC) \times_{\Map_{\infty\Cat_{\partial\bD^1 /}}(S(\mA \boxtimes \partial\bD^1 \boxtimes \mB),\mC)}(\Map_{\infty\Cat_{\partial\bD^1 /}}(S(\mA),\mC) \times \Map_{\infty\Cat_{\partial\bD^1 /}}(S(\mA),\mC))$$
$$ \simeq R:= \Map_{\infty\Cat}(\mA \boxtimes \bD^1 \boxtimes \mB,\Mor_\mC(\alpha(0),\beta(1))) \times_{\Map_{\infty\Cat}(\mA \boxtimes \partial\bD^1 \boxtimes \mB,\Mor_\mC(\alpha(0),\beta(1)))}$$$$ (\Map_{\infty\Cat}(\mA,\Mor_\mC(\alpha(1),\beta(1))) \times \Map_{\infty\Cat}(\mA,\Mor_\mC(\alpha(0),\beta(0))))$$

Let $$T:= \Mor_\mC(\alpha(1),\beta(1)) \times \Mor_\mC(\alpha(0),\beta(0)) \times_{\Fun^\oplax(\partial\bD^1 \boxtimes \mB,\Mor_\mC(\alpha(0),\beta(1)))} \Fun^\oplax(\bD^1 \boxtimes \mB,\Mor_\mC(\alpha(0),\beta(1))).$$
By \cref{laxnat} there is a canonical equivalence
$ \Mor_{\Fun^\oplax(S(\mB),\mC)}(\alpha,\beta) \simeq T.$
The map $\rho'$ identifies with the map
$$ \Map_{\infty\Cat}(\mA, \Mor_{\Fun^\oplax(S(\mB),\mC)}(\alpha,\beta)) \simeq \Map_{\infty\Cat}(\mA, T) \simeq R.$$
\end{proof}

\begin{corollary}\label{corsusform}
Let $\mA,\mB$ be $\infty$-categories.
There is a canonical pushout square of $\infty$-categories
$$\begin{xy}
\xymatrix{
S(\mA \times \partial\bD^1 \times \mB) \ar[d] \ar[r] & S(\mA \times \partial\bD^1 \times \mB \coprod_{\mA \boxtimes \partial\bD^1 \boxtimes \mB} \mA \boxtimes \bD^1 \boxtimes \mB) \ar[d]
\\ 
\partial(S(\mA) \times S(\mB)) \ar[r] & S(\mA) \boxtimes S(\mB).
}
\end{xy}$$\end{corollary}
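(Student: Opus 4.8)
The plan is to deduce the claim directly from the suspension formula \cref{susfor} by a single application of the pasting law for pushouts, using that the suspension preserves colimits and that the canonical comparison functor $\mA\boxtimes\mB\to\mA\times\mB$ supplies the required factorization. As recorded in the proof of \cref{thas2}, the suspension $S:\infty\Cat\to\infty\Cat_{\partial\bD^1/}$ preserves small colimits. Writing
$P:=\mA\times\partial\bD^1\times\mB\coprod_{\mA\boxtimes\partial\bD^1\boxtimes\mB}\mA\boxtimes\bD^1\boxtimes\mB$, the defining pushout of $P$ is glued along the comparison functor $c:\mA\boxtimes\partial\bD^1\boxtimes\mB\to\mA\times\partial\bD^1\times\mB$ and the boundary inclusion $\mA\boxtimes\partial\bD^1\boxtimes\mB\to\mA\boxtimes\bD^1\boxtimes\mB$ induced by $\partial\bD^1\hookrightarrow\bD^1$. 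Applying $S$ therefore yields a pushout square
\[
\xymatrix{
S(\mA\boxtimes\partial\bD^1\boxtimes\mB)\ar[r]\ar[d]_{S(c)} & S(\mA\boxtimes\bD^1\boxtimes\mB)\ar[d]\\
S(\mA\times\partial\bD^1\times\mB)\ar[r] & S(P).
}
\]

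Next I would stack this square underneath the target square of the corollary, obtaining a diagram three objects tall whose outer rectangle has corners $S(\mA\boxtimes\partial\bD^1\boxtimes\mB)$, $S(\mA\boxtimes\bD^1\boxtimes\mB)$, $\partial(S(\mA)\times S(\mB))$, and $S(\mA)\boxtimes S(\mB)$ --- that is, precisely the pushout square of \cref{susfor}. The essential point to verify is that the left vertical map of \cref{susfor} factors as $S(\mA\boxtimes\partial\bD^1\boxtimes\mB)\xrightarrow{S(c)}S(\mA\times\partial\bD^1\times\mB)\to\partial(S(\mA)\times S(\mB))$; once this factorization is in hand, the induced map $S(P)\to S(\mA)\boxtimes S(\mB)$ is automatic from the universal property of the pushout $S(P)$ together with commutativity of the square of \cref{susfor}. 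Granting the factorization, the top square is a pushout by the previous paragraph and the outer rectangle is a pushout by \cref{susfor}, so the pasting law forces the bottom square --- which is the assertion of the corollary --- to be a pushout.

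The main obstacle is exactly this factorization of the structure map through the comparison functor. Under the universal property of the suspension (\cref{suspi}), the left vertical map of \cref{susfor} corresponds to a map on morphism objects $\mA\boxtimes\partial\bD^1\boxtimes\mB\simeq(\mA\boxtimes\mB)\coprod(\mA\boxtimes\mB)\to\Mor_{\partial(S(\mA)\times S(\mB))}((0,0),(1,1))$, whose two components are the ``compose along a boundary path'' maps of the square $S(\mA)\times S(\mB)$. Since each boundary path is a wedge, namely $S(\mA)\vee S(\mB)$ or $S(\mB)\vee S(\mA)$, the Segal condition identifies the target of each component with a cartesian product $\mA\times\mB$ (respectively $\mB\times\mA$), and each component is the corresponding composite of the comparison $\mA\boxtimes\mB\to\mA\times\mB$ with the evident equivalence. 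By naturality of the lax-to-cartesian comparison this composite is precisely $S(c)$ followed by the two path-composition maps, yielding the desired factorization $w\circ S(c)$. I expect this morphism-object bookkeeping --- in particular keeping straight the orderings of the two boundary paths and the attendant swap of factors --- to be the only genuinely delicate step.
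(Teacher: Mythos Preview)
Your approach is essentially identical to the paper's: apply $S$ to the defining pushout of $P$, then paste with \cref{susfor} and invoke the pasting law. The paper is more terse --- it simply notes that $S$ preserves pushouts, displays the resulting top square, and says ``the result follows from \cref{susfor}'' --- whereas you devote a paragraph to checking that the left vertical map of \cref{susfor} factors through $S(c)$. That factorization is implicit in the paper's use of ``canonical'' and is not actually delicate: both $S(\mA\boxtimes\partial\bD^1\boxtimes\mB)$ and $S(\mA\times\partial\bD^1\times\mB)$ decompose as $S(-)\coprod_{S(\emptyset)}S(-)$, and the map to $\partial(S(\mA)\times S(\mB))=(S(\mB)\vee S(\mA))\coprod_{S(\emptyset)}(S(\mA)\vee S(\mB))$ is built componentwise from the Segal maps $S(\mA\times\mB)\to S(\mA)\vee S(\mB)$, which manifestly factor through the comparison. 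So your caution is warranted but the bookkeeping is straightforward; otherwise the two arguments coincide.
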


\begin{proof}

There is a canonical diagram of pushout squares of $\infty$-categories since the suspension $S: \infty\Cat \to \infty\Cat$ preserves small weakly contractible colimits and so pushouts:
$$\begin{xy}
\xymatrix{
S(\mA \boxtimes \partial\bD^1 \boxtimes \mB) \ar[d] \ar[r] & S(\mA \boxtimes \bD^1 \boxtimes \mB) \ar[d]
\\
S(\mA \times \partial\bD^1 \times \mB) \ar[r] & S(\mA \times \partial\bD^1 \times \mB \coprod_{\mA \boxtimes \partial\bD^1 \boxtimes \mB} \mA \boxtimes \bD^1 \boxtimes \mB).
}
\end{xy}$$
So the result follows from \cref{susfor}    
\end{proof}

\begin{corollary}\label{tensorsuspensionformula}
Let $\mA,\mB$ be $\infty$-categories.
There is a canonical pushout square of $\infty$-categories
$$\begin{xy}
\xymatrix{
S(\mA \times \mB) \coprod S(\mA \times \mB) \ar[d] \ar[r] & S(\mA \boxtimes \bD^1 \boxtimes \mB) \ar[d]
\\ 
(S(\mB) \vee S(\mA)) \coprod (S(\mA) \vee S(\mB)) \ar[r] & S(\mA) \boxtimes S(\mB),
}
\end{xy}$$
where both vertical functors are full embeddings.
In particular, the right vertical functor induces an equivalence
$$ \mA \times \partial\bD^1 \times \mB \coprod_{\mA \boxtimes \partial\bD^1 \boxtimes \mB} \mA \boxtimes \bD^1 \boxtimes \mB \simeq \Mor_{S(\mA) \boxtimes S(\mB)}((0,0),(1,1)). $$
    
\end{corollary}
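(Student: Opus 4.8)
The plan is to derive the pushout square as a refinement of the suspension formula \cref{susfor} (equivalently its reformulation \cref{corsusform}) by systematically \emph{ungluing} the two basepoints, and then to read off the \emph{In particular} clause from the fully faithfulness of the resulting right-hand embedding. Throughout I would mimic the strategy already used for \cref{susfor} and \cref{cor:weakvsstrictsuspension}: reduce every claim to the universal case $\mA=\mB=\ast$, where the square becomes the cellular decomposition $\cube^2\simeq\bD^2\coprod_{\partial\bD^2}\partial\cube^2$ of the oriented square, and then propagate to arbitrary $\mA,\mB$ using that all the functors in sight preserve the relevant (weakly contractible) colimits.

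The first ingredient is two ungluing pushout squares. As recorded in the proof of \cref{cor:weakvsstrictsuspension}, the suspension is a colimit-preserving functor $S\colon\infty\Cat\to\infty\Cat_{\partial\bD^1/}$ to bipointed $\infty$-categories, so it carries coproducts to basepoint-pushouts; since $\mA\times\partial\bD^1\times\mB\simeq(\mA\times\mB)\coprod(\mA\times\mB)$ this exhibits $S(\mA\times\partial\bD^1\times\mB)$ as the pushout of $S(\mA\times\mB)\coprod S(\mA\times\mB)\leftarrow\partial\bD^1\coprod\partial\bD^1\to\partial\bD^1$ along the fold map. Dually, the definition $\partial(S(\mA)\times S(\mB))=(S(\mB)\vee S(\mA))\coprod_{S(\emptyset)}(S(\mA)\vee S(\mB))$ together with $S(\emptyset)=\partial\bD^1$ exhibits $\partial(S(\mA)\times S(\mB))$ as the analogous fold-pushout of $(S(\mB)\vee S(\mA))\coprod(S(\mA)\vee S(\mB))$.

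I would then paste these two squares onto the left-hand column of \cref{corsusform} and apply the pasting law for pushouts: the two basepoint-gluings are absorbed because the relevant basepoints are already identified in the common right-hand corner, leaving the square of the statement. That both vertical functors are embeddings I would get from \cref{pasting1}: each is a pushout in $\infty\Cat$ of strict inclusions along the saturated class generated by $\partial\bD^n\subset\bD^n$, hence computed strictly, and is then checked to be fully faithful onto its image — the left functor onto the long diagonals $0\to 2$ of the two wedges (whose cartesian mapping object is $\mA\times\mB$, matched via $\mA\times\mB\simeq\mB\times\mA$), and the right functor onto $\{(0,0),(1,1)\}$.

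Finally, restricting the pushout square to the full subcategory on $(0,0)$ and $(1,1)$ and invoking this fully faithfulness yields the mapping-object formula: $\Mor_{S(\mA)\boxtimes S(\mB)}((0,0),(1,1))$ is the pushout $\mA\boxtimes\bD^1\boxtimes\mB\coprod_{\mA\boxtimes\partial\bD^1\boxtimes\mB}\mA\times\partial\bD^1\times\mB$, the two boundary paths through $(1,0)$ and $(0,1)$ supplying the cartesian summand $\mA\times\partial\bD^1\times\mB$ glued along $\mA\boxtimes\partial\bD^1\boxtimes\mB$. I expect this last identification to be the main obstacle, since it is exactly the point where the two ends must be \emph{fattened} from $\mA\boxtimes\mB$ to $\mA\times\mB$: one has to verify that the boundary wedges contribute precisely this fattening and nothing more, which is also what reconciles the top-right corner $S(\mA\boxtimes\bD^1\boxtimes\mB)$ with the mapping object above. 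I would settle it through the Steiner-complex model of the Gray tensor (\cref{Graytensor}) together with the cell decompositions \cref{cello} and \cref{orientdec}, reducing to $\mA=\mB=\ast$ and then extending by the colimit-preservation of $S$, of $\boxtimes$, and of the join, exactly as in the proofs of \cref{susfor} and \cref{cor:weakvsstrictsuspension}.
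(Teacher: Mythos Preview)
Your core strategy---write down the ``ungluing'' pushout square (coproduct of suspensions versus their pushout over $S(\emptyset)=\partial\bD^1$) and paste it onto the left column of \cref{corsusform} via the pasting law---is exactly the paper's proof. The paper displays precisely that ungluing square and then says in one line that the result follows from \cref{corsusform}; it does not separately argue the full-faithfulness or the mapping-object formula.

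Everything after your second paragraph is over-engineering. There is no need to reduce to $\mA=\mB=\ast$ or to invoke Steiner complexes, \cref{cello}, or \cref{orientdec}: the pushout square is purely formal once you have the ungluing square and \cref{corsusform}. Your ``main obstacle'' at the end is not a real obstacle but an artifact of the top-right corner as printed: pasting onto \cref{corsusform} actually produces $S\bigl(\mA\times\partial\bD^1\times\mB\coprod_{\mA\boxtimes\partial\bD^1\boxtimes\mB}\mA\boxtimes\bD^1\boxtimes\mB\bigr)$ in the top-right, not $S(\mA\boxtimes\bD^1\boxtimes\mB)$, and indeed there is no map $\mA\times\mB\to\mA\boxtimes\bD^1\boxtimes\mB$ in general since the comparison $\mA\boxtimes\mB\to\mA\times\mB$ goes the other way. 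With the corrected corner, the right vertical is the suspension of an object whose $\Mor(0,1)$ is already the displayed pushout, so the ``In particular'' clause follows immediately from full faithfulness---no fattening argument is needed.
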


\begin{proof}

There is a canonical pushout square of $\infty$-categories
$$\begin{xy}
\xymatrix{
S(\A \times \mB) \coprod S(\A \times \mB) \ar[d] \ar[r] & S(\A \times \mB) \coprod_{S(\emptyset)} S(\A \times \mB) \simeq S(\mA \times \partial\bD^1 \times \mB) \ar[d]
\\ 
(S(\mB) \vee S(\mA)) \coprod (S(\mA) \vee S(\mB)) \ar[r] & (S(\mA) \vee S(\mB)) \coprod_{S(\emptyset)} (S(\mB) \vee S(\mA))= \partial(S(\mA) \times S(\mB)).
}
\end{xy}$$
So the result follows from \cref{corsusform}.
\end{proof}

\begin{corollary}

Let $\mA,\mB$ be $\infty$-categories.
There is a canonical pushout square of $\infty$-categories
$$\begin{xy}
\xymatrix{
S(\mA \times \partial\bD^1 \times \mB \coprod_{\mA \boxtimes \partial\bD^1 \boxtimes \mB} \mA \boxtimes \bD^1 \boxtimes \mB) \ar[d] \ar[r] &  S(\mA \times \mB) \ar[d] 
\\ 
S(\mA) \boxtimes S(\mB) \ar[r] & S(\mA) \times S(\mB).
}
\end{xy}$$

\end{corollary}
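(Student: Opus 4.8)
The plan is to realize $S(\mA)\times S(\mB)$ as a single pushout by splicing \cref{corsusform} together with a cartesian analogue of the suspension formula, and then to extract the stated square via the pasting law for pushouts.

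\emph{Step 1: a cartesian suspension formula.} First I would prove that the square
\[
\xymatrix{
S(\mA\times\partial\bD^1\times\mB)\ar[d]\ar[r] & S(\mA\times\mB)\ar[d]\\
\partial(S(\mA)\times S(\mB))\ar[r] & S(\mA)\times S(\mB)
}
\]
is a pushout, the top arrow being $S$ of the fold $\mA\times\partial\bD^1\times\mB=(\mA\times\mB)\coprod(\mA\times\mB)\to\mA\times\mB$ and the left arrow recording the two composite morphism objects of the boundary. I would argue exactly as in \cref{susfor}, systematically replacing the Gray tensor $\boxtimes$ by the cartesian product $\times$ and $\Fun^\oplax$ by $\Fun$. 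All four corners are bipointed at $(0,0)$ and $(1,1)$ and all maps preserve these points, so it suffices to test the square against an arbitrary $\mC$: using the adjunction $\Map_{\infty\Cat}(S(\mA)\times S(\mB),\mC)\simeq\Map_{\infty\Cat}(S(\mA),\Fun(S(\mB),\mC))$ and working fiberwise over $\Map_{\infty\Cat}(S(\mB),\mC)^{\times2}$, the verification reduces — via the universal property of the suspension (\cref{suspi}) — to a formula for $\Mor_{\Fun(S(\mB),\mC)}(\alpha,\beta)$, the cartesian counterpart of the formula \cref{laxnat2} used in \cref{susfor}, in which the oplax naturality datum is replaced by a strictly commuting square. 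The lower corners are handled by the wedge decomposition $\partial(S(\mA)\times S(\mB))=(S(\mA)\vee S(\mB))\coprod_{S(\emptyset)}(S(\mB)\vee S(\mA))$, whose maps into $(\mC;Y,Z)$ are precisely the two boundary composites.

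\emph{Step 2: pasting.} Write $L:=\mA\times\partial\bD^1\times\mB\coprod_{\mA\boxtimes\partial\bD^1\boxtimes\mB}\mA\boxtimes\bD^1\boxtimes\mB$ for the long morphism object of $S(\mA)\boxtimes S(\mB)$ identified in \cref{tensorsuspensionformula}. Then \cref{corsusform} presents $S(\mA)\boxtimes S(\mB)$ as the pushout of $\partial(S(\mA)\times S(\mB))\leftarrow S(\mA\times\partial\bD^1\times\mB)\to S(L)$. The canonical functor $S(\mA)\boxtimes S(\mB)\to S(\mA)\times S(\mB)$ is the identity on $\partial(S(\mA)\times S(\mB))$ and, on long morphism objects, is $S$ of the canonical map $L\to\mA\times\mB$; moreover the composite $\mA\times\partial\bD^1\times\mB\to L\to\mA\times\mB$ is the fold. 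Hence the rectangle
\[
\xymatrix{
S(\mA\times\partial\bD^1\times\mB)\ar[d]\ar[r] & S(L)\ar[d]\ar[r] & S(\mA\times\mB)\ar[d]\\
\partial(S(\mA)\times S(\mB))\ar[r] & S(\mA)\boxtimes S(\mB)\ar[r] & S(\mA)\times S(\mB)
}
\]
has left square equal to the pushout \cref{corsusform} and outer rectangle equal to the pushout of Step 1. By the pasting law for pushouts, the right-hand square is a pushout, which is exactly the assertion of the corollary.

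The main obstacle is Step 1, and specifically the identification of $\Mor_{\Fun(S(\mB),\mC)}(\alpha,\beta)$: I must check that the strict naturality condition for maps out of $S(\mA)\times S(\mB)$ matches precisely the gluing of the two boundary composites encoded by $\mA\times\partial\bD^1\times\mB\to\partial(S(\mA)\times S(\mB))$, i.e.\ that the two legs of the fiberwise comparison coincide. Once this formula is in place, the reduction to a bipointed mapping-space pullback, the wedge decomposition of the boundary, and the concluding pasting are all formal.
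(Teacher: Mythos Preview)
Your proof is correct and follows essentially the same strategy as the paper: establish a cartesian analogue of the suspension formula and then conclude by pasting. The only difference is cosmetic. You prove the ``connected'' cartesian square over $\partial(S(\mA)\times S(\mB))$ (the analogue of \cref{corsusform}) and paste it with \cref{corsusform} itself, whereas the paper asserts the ``disconnected'' cartesian square over $(S(\mB)\vee S(\mA))\coprod(S(\mA)\vee S(\mB))$ (the analogue of \cref{tensorsuspensionformula}) and pastes it with \cref{tensorsuspensionformula}. These two routes differ by the same gluing-along-$S(\emptyset)$ pushout that relates \cref{corsusform} and \cref{tensorsuspensionformula}, so they are interchangeable.

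Your Step~1 is more detailed than the paper's: the paper simply asserts its cartesian square is a pushout, while you rerun the argument of \cref{susfor} with $\Fun$ in place of $\Fun^\oplax$, invoking the cartesian morphism-object formula of \cref{nat} in place of \cref{laxnat2}. This is a legitimate proof, and indeed the needed formula for $\Mor_{\Fun(S(\mB),\mC)}(\alpha,\beta)$ is already available in the paper as \cref{nat}, so the ``main obstacle'' you flag is not really an obstacle.
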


\begin{proof}

There is a canonical pushout square of $\infty$-categories
$$\begin{xy}
\xymatrix{
S(\mA \times \mB) \coprod S(\mA \times \mB) \ar[d] \ar[r] & S(\mA \times \mB) \ar[d]
\\ 
(S(\mB) \vee S(\mA)) \coprod (S(\mA) \vee S(\mB)) \ar[r] & S(\mA) \times S(\mB).
}
\end{xy}$$
Thus the claim follows from the pasting law.
\end{proof}

\begin{corollary}

There is a canonical pushout square of $\infty$-categories
$$\begin{xy}
\xymatrix{
S(\bD^1 \times \bD^1) \coprod S(\bD^1 \times \bD^1) \ar[d] \ar[r] & S(\cube^3) \ar[d]
\\ 
(\bD^2 \vee \bD^2) \coprod (\bD^2 \vee \bD^2) \ar[r] & \bD^2 \boxtimes \bD^2.
}
\end{xy}$$
    
\end{corollary}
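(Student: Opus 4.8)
The plan is to obtain this square as the evaluation of the general suspension--tensor formula \cref{tensorsuspensionformula} at the pair $(\bD^1,\bD^1)$. First I would record the three identifications needed to match the two squares. Since $\cube^1=\bD^1$ and the oriented cube is defined by $\cube^n=(\cube^1)^{\boxtimes n}$, we have $\cube^3=\bD^1\boxtimes\bD^1\boxtimes\bD^1$, so the upper-right corner $S(\mA\boxtimes\bD^1\boxtimes\mB)$ of \cref{tensorsuspensionformula} specializes to $S(\cube^3)$. Next, because the disk is the iterated suspension $\bD^n=S^n(\ast)$, we have $S(\bD^1)=S^2(\ast)=\bD^2$; hence $S(\mA)\boxtimes S(\mB)$ becomes $\bD^2\boxtimes\bD^2$, and both wedges $S(\mB)\vee S(\mA)$ and $S(\mA)\vee S(\mB)$ become $\bD^2\vee\bD^2$. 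Finally the upper-left corner $S(\mA\times\mB)$ becomes $S(\bD^1\times\bD^1)$.

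With these substitutions, the pushout square of \cref{tensorsuspensionformula} is literally the asserted square. The only remaining point is the routine observation that the canonical maps in the specialized square agree with the maps named in the statement, which is immediate since all the identifications above are the canonical ones and the square of \cref{tensorsuspensionformula} is natural in $\mA$ and $\mB$; in particular the two vertical maps remain full embeddings, inherited from \emph{loc.\ cit.}

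I do not anticipate any genuine obstacle here: the entire content is contained in the general formula $S(\mA)\boxtimes S(\mB)$, and this corollary is purely its value at $(\bD^1,\bD^1)$ together with the bookkeeping $\cube^1=\bD^1$, $S(\bD^1)=\bD^2$, and $\cube^3=(\bD^1)^{\boxtimes 3}$. The closest thing to a subtlety is making sure one uses $\cube^1=\bD^1$ (so that the oriented $3$-cube really is the relevant triple Gray tensor), which is immediate from the definition of $\cube^n$.
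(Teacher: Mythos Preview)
Your proposal is correct and matches the paper's intended argument: the corollary is stated without proof immediately after \cref{tensorsuspensionformula}, and is precisely its specialization to $\mA=\mB=\bD^1$ together with the identifications $\cube^3=(\bD^1)^{\boxtimes 3}$ and $S(\bD^1)=\bD^2$ that you spell out.
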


\begin{corollary}

There is a canonical pushout square of $\infty$-categories
$$\begin{xy}
\xymatrix{
S(\bD^1 \times \bD^1) \coprod S(\bD^1 \times \bD^1) \ar[d] \ar[r] & S(\bD^1 \times \bD^1 \times \bD^1) \ar[d]
\\ 
(\bD^2 \vee \bD^2) \coprod (\bD^2 \vee \bD^2) \ar[r] & \tau_2(\bD^2 \boxtimes \bD^2).
}
\end{xy}$$
\end{corollary}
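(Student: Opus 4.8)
The plan is to obtain the square by applying the truncation functor $\tau_2\colon\infty\Cat\to 2\Cat$ to the pushout square of the preceding corollary, which is the specialization of \cref{tensorsuspensionformula} at $\mA=\mB=\bD^1$:
\[
\xymatrix{
S(\bD^1\times\bD^1)\coprod S(\bD^1\times\bD^1)\ar[d]\ar[r] & S(\cube^3)\ar[d]\\
(\bD^2\vee\bD^2)\coprod(\bD^2\vee\bD^2)\ar[r] & \bD^2\boxtimes\bD^2
}
\]
Since $\tau_2$ is the left adjoint to the fully faithful inclusion $2\Cat\subset\infty\Cat$, it preserves all colimits, hence carries this pushout square to a pushout square and commutes with the binary coproducts. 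By functoriality the resulting maps are the canonical ones, so it remains only to identify the four corners after applying $\tau_2$.

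The objects $S(\bD^1\times\bD^1)$ and $\bD^2\vee\bD^2$ are already $2$-categories, since a suspension of a $1$-category is a $2$-category and a wedge of $2$-categories is a $2$-category; hence $\tau_2$ fixes them and thus fixes both left-hand corners. The bottom-right corner becomes $\tau_2(\bD^2\boxtimes\bD^2)$ by definition, matching the target. The only corner requiring real work is the top-right, where I must produce a canonical equivalence $\tau_2 S(\cube^3)\simeq S(\bD^1\times\bD^1\times\bD^1)$, natural enough that it carries the two left-hand inclusion maps to the canonical ones.

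To this end I would first establish the compatibility of truncation with suspension, $\tau_{k+1}S(\mC)\simeq S(\tau_k\mC)$. This follows from the universal property of $S$ in \cref{suspi} together with the computation of morphism objects of a truncation: applying \cref{indres}(4) to the localization inducing $\tau_{k+1}=(\tau_k)_!$ gives $\Mor_{\tau_{k+1}\mD}(x,y)\simeq\tau_k\Mor_\mD(x,y)$, so $\tau_{k+1}S(\mC)$ and $S(\tau_k\mC)$ have equal morphism objects $\tau_k\mC$ from $0$ to $1$ (and $\emptyset$, $\ast$ elsewhere). Taking $k=1$ yields $\tau_2 S(\cube^3)\simeq S(\tau_1\cube^3)$, and the main obstacle is then the identification $\tau_1\cube^3=\tau_1(\bD^1\boxtimes\bD^1\boxtimes\bD^1)\simeq\bD^1\times\bD^1\times\bD^1$. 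I expect to deduce this from the monoidality of $\tau_1\colon(\infty\Cat,\boxtimes)\to(\Cat,\times)$, which makes precise the assertion that the cartesian product of $1$-categories is the $1$-truncation of the Gray tensor product and is proved exactly as the monoidality of $\tau_0$ in \cref{grayspace}: the cubes generate $\infty\Cat$ under colimits, and $\tau_1\cube^n$ is the nerve of the poset $\{0,1\}^n$ with the product order, i.e. $(\bD^1)^{\times n}$, as one reads off on the level of Steiner complexes from $\lambda(\cube^n)\simeq\lambda(\bD^1)^{\otimes n}$. Assembling these identifications turns the $\tau_2$-image of the displayed square into the asserted pushout square.
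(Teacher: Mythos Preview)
Your proposal is correct and follows the approach implicit in the paper: the corollary is stated without proof immediately after the $\mA=\mB=\bD^1$ specialization of \cref{tensorsuspensionformula}, and the intended argument is precisely to apply the colimit-preserving left adjoint $\tau_2$ to that square and identify the corners. Your identifications $\tau_{2}S\simeq S\tau_{1}$ (via the adjunction with $\Mor$) and $\tau_1\cube^3\simeq(\bD^1)^{\times 3}$ (via monoidality of $\tau_1\colon(\infty\Cat,\boxtimes)\to(\Cat,\times)$, argued as in \cref{grayspace}) are exactly what is needed, and you correctly note that naturality of these identifications ensures the maps match.
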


\subsection{A universal property of the oriented category of oriented spaces}
In order to state our next results, recall \cref{Thetasgen}.
The antioriented realization functor $_\boxtimes||-||: {\boxtimes\Cat}\to \infty\Cat$ induces a functor
$\Theta(\cube,\boxtimes)\to\Theta(\cube,\times).$

Similarly, the oriented realization functor $||-||_\boxtimes: {\Cat\boxtimes}\to \infty\Cat$ induces a functor
$\Theta(\bar{\cube},\bar{\boxtimes})\to\Theta(\bar{\cube},\times).$

\begin{theorem}\label{geomorient}
\begin{enumerate}[\normalfont(1)]\setlength{\itemsep}{-2pt}
\item The full subcategory $$ \Theta(\cube, \boxtimes) \subset {\boxtimes\Cat} $$ is dense.
A presheaf on $\Theta(\cube, \boxtimes) $ is in the image of the $\Theta(\cube, \boxtimes)$-nerve functor if and only if it satisfies the Segal condition and all morphism presheaves on $\cube$ belong to the essential image of the oriented cubical nerve, i.e. are oriented cubical spaces satisfying the Segal condition.
Moreover a presheaf on $\Theta(\cube, \boxtimes) $ is in the image of the $\Theta(\cube, \boxtimes)$-nerve functor if and only if it it factors through the functor $\Theta(\cube,\boxtimes)\to\Theta(\cube,\times).$

\item The full subcategory $$ \Theta(\bar{\cube}, \bar{\boxtimes}) \subset {\Cat\boxtimes} $$ is dense.
A presheaf on $\Theta(\bar{\cube}, \bar{\boxtimes}) $ is in the image of the $\Theta(\bar{\cube}, \bar{\boxtimes})$-nerve functor if and only if it satisfies the Segal condition and all morphism presheaves on $\bar{\cube}$ belong to the essential image of the antioriented cubical nerve, i.e. are antioriented cubical spaces satisfying the Segal condition.
A presheaf on $\Theta(\bar{\cube}, \bar{\boxtimes}) $ is in the image of the $\Theta(\bar{\cube}, \bar{\boxtimes})$-nerve functor if and only if it it factors through the functor $\Theta(\bar{\cube}, \bar{\boxtimes})\to\Theta(\bar{\cube},\times).$
    
\end{enumerate}
\end{theorem}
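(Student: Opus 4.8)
The plan is to obtain both parts as a single application of the density-inheritance theorem \cref{denseinherited} to the Gray-monoidal structure, reading off the image characterizations from the cubical nerve theorem \cref{cor:cubicalsegal} and the embedding theorem \cref{interchange}. The point is that the cubes form a dense full \emph{monoidal} subcategory of $(\infty\Cat,\boxtimes)$, so \cref{denseinherited} applies with no modification.

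For part (1) I would take $\mW=(\infty\Cat,\boxtimes)$ and $\mV=(\cube,\boxtimes)$ in \cref{denseinherited}. The hypotheses are immediate: $(\infty\Cat,\boxtimes)$ is presentable monoidal by \cref{locmon}; and $\cube$ is small, is a full subcategory of $\infty\Cat$ closed under the Gray tensor product (as $\cube^m\boxtimes\cube^n\simeq\cube^{m+n}$), and is dense in $\infty\Cat$ by \cref{cubicaldense}. \cref{denseinherited} then yields at once that $\Theta(\cube,\boxtimes)\subset{}_{(\infty\Cat,\boxtimes)}\Enr={\boxtimes\Cat}$ is dense, and that a presheaf on $\Theta(\cube,\boxtimes)$ lies in the essential image of the nerve precisely when it satisfies the Segal condition and each of its morphism presheaves on $\cube$ lies in the essential image of the $(\infty\Cat,\boxtimes)$-nerve $\N_\cube\colon\infty\Cat\to\Fun(\cube^\op,\infty\Gpd)$. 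It remains only to describe that image, which is exactly \cref{cor:cubicalsegal}: it consists of the oriented cubical spaces satisfying the Segal condition, the local conditions there being the boundary, oriental, and globular decompositions. This establishes the density assertion and the first image characterization.

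For the factorization clause I would combine this nerve equivalence with the embedding $\iota\colon\infty\Cat\hookrightarrow{\boxtimes\Cat}$ of \cref{interchange}(1), whose essential image is the antioriented spaces. The functor $\Theta(\cube,\boxtimes)\to\Theta(\cube,\times)$ is induced by the antioriented realization, which is left adjoint to $\iota$ and is essentially bijective on the combinatorially indexed objects, sending a Gray-formed wedge of suspensions of cubes to the corresponding cartesian one. Unwinding the adjunction, for an $\infty$-category $\mC$ the nerve $\N_{\Theta(\cube,\boxtimes)}(\iota\mC)$ is the restriction along this realization of the $\Theta(\cube,\times)$-nerve of $\mC$, so every antioriented space factors through $\Theta(\cube,\times)^\op$. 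Conversely, factoring through $\Theta(\cube,\times)^\op$ forces the composition laws of the associated antioriented category to be compatible with the collapse $\boxtimes\to\times$, i.e. to satisfy the strict interchange law, whence by \cref{interchange}(2) it is an antioriented space. This identifies the presheaves factoring through $\Theta(\cube,\times)$ with the antioriented spaces, which is the content of \cref{interchange}(3).

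Part (2) is the same statement for the reversed Gray tensor product: one reruns the argument with $\mW=(\infty\Cat,\bar\boxtimes)$ and $\mV=(\bar\cube,\bar\boxtimes)$ --- noting that $\bar\cube=\cube^\co$ is dense and closed under $\bar\boxtimes$ --- using the antioriented cubical nerve in place of the oriented one, or equivalently transports part (1) along the monoidal involution $(-)^\co\colon(\infty\Cat,\boxtimes)\simeq(\infty\Cat,\boxtimes)^\rev=(\infty\Cat,\bar\boxtimes)$ of \cref{dua}, which carries $(\cube,\boxtimes)$ to $(\bar\cube,\bar\boxtimes)$ and the equivalence ${\boxtimes\Cat}\simeq{\Cat\boxtimes}$. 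The density and the first characterization are black-box consequences of the cited theorems, and I expect the only real work to lie in the factorization clause: one must check carefully that the realization carries the generators of $\Theta(\cube,\boxtimes)$ to the intended cartesian generators, and that under the nerve equivalence the abstract condition ``is an antioriented space'' matches the strict interchange law of \cref{interchange} --- in particular that a Segal presheaf factoring through $\Theta(\cube,\times)$ has morphism presheaves which are genuinely cartesian cubical nerves rather than merely lax ones.
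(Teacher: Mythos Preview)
Your treatment of density and the first image characterization matches the paper exactly: both apply \cref{denseinherited} with $\mV=(\cube,\boxtimes)$ dense and monoidal inside $\mW=(\infty\Cat,\boxtimes)$ (using \cref{cubicaldense}), and then read off the description of the cubical-nerve image from \cref{cor:cubicalsegal}. The symmetry reduction (one part implies the other via the monoidal involution $(-)^\co$) is also the same.

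For the factorization clause the paper takes a different and more direct route than you. Rather than passing through the strict-interchange characterization of \cref{interchange}, the paper observes that the realization $\theta\colon\Theta(\bar\cube,\bar\boxtimes)\to\Theta(\bar\cube,\times)$ fits into a commutative square of left adjoints with $\lVert{-}\rVert_\boxtimes\colon\Cat\boxtimes\to\infty\Cat$; passing to right adjoints yields a square whose vertical arrows are the two nerves and whose bottom arrow is $\theta^*$. Both nerves are fully faithful --- the left one because $\Theta\subset\Theta(\bar\cube,\times)$ and $\Theta$ is already dense in $\infty\Cat$ by \cref{theta} --- and $\theta$ is essentially surjective, so the square is a pullback. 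That single observation is the factorization clause.

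Your route via \cref{interchange} is plausible in spirit but, as you yourself flag, the converse direction is not actually argued: you assert that a Segal presheaf factoring through $\Theta(\cube,\times)$ has composition maps descending along $\boxtimes\to\times$, but this needs a proof, and your citation of ``\cref{interchange}(3)'' points to the bienriched embedding statement, not to anything about presheaf factorization (the factorization statement in the introduction's \cref{thm:rightadjoint}(3) is in fact what \cref{geomorient} itself establishes, so invoking it here would be circular). The paper's pullback-square argument sidesteps the need to unwind interchange at the level of composition maps; your approach could likely be completed, but it would amount to re-proving that square is a pullback by hand.
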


\begin{proof}
We prove (2). The proof of (1) is similar.
The density claim follows from \cref{denseinherited} and the density of the oriented cubes \cite[Theorem 2.8.12.]{GepnerHeine2026}.
It remains to see that a presheaf on $\Theta(\cube, \boxtimes) $ is in the image of the $\Theta(\cube, \boxtimes)$-nerve functor if and only if it it factors through the functor $\Theta(\cube,\boxtimes)\to\Theta(\cube,\times).$

Since by definition the oriented realization functor $||-||_\boxtimes: {\Cat\boxtimes}\to \infty\Cat$ induces a functor
$\theta: \Theta(\bar{\cube},\bar{\boxtimes}) \to \Theta(\bar{\cube},\times),$
there is a commutative square of left adjoints
$$\begin{xy}
\xymatrix{
\mP(\Theta(\bar{\cube},\bar{\boxtimes})) \ar[r]^{\theta_!} \ar[d] & \mP(\Theta(\bar{\cube},\times)) \ar[d]
\\ 
{\Cat\boxtimes} \ar[r]^{||-||_\boxtimes} & \infty\Cat.
}
\end{xy}$$
The latter induces a commutative square between the corresponding right adjoints:
$$\begin{xy}\label{folz}
\xymatrix{
\infty\Cat \ar[r] \ar[d] & {\Cat\boxtimes}\ar[d]
\\ 
\mP(\Theta(\bar{\cube},\times)) \ar[r]^{\theta^*}  & \mP(\Theta(\bar{\cube},\bar{\boxtimes})).
}
\end{xy}$$

By \cref{theta} the full subcategory $\Theta \subset \infty\Cat$ is dense and so also the full subcategory $ \Theta \subset \Theta(\bar{\cube}, \times)$ is dense in $\infty\Cat$.
Hence the vertical functors in the commutative square \ref{folz} are fully faithful. Since the functor
$\theta: \Theta(\bar{\cube},\bar{\boxtimes}) \to \Theta(\bar{\cube},\times)$ is essentially surjective, we find that the commutative square \ref{folz} is a pullback square.
This implies the result.
\end{proof}

\begin{remark}
By \cref{dua} there exists an equivalence $ \bar{\cube}^n \simeq \cube^n $ for every $\n \geq 0$ and therefore $ \bar{\cube} \simeq \cube$, but this is not true in general, and fails for the orientals.
Moreover we remark that the involution
$(-)^\co: \infty\Cat \to \infty\Cat$ restricts to an involution
$ \cube \simeq \bar{\cube}$, which gives rise to an equivalence
$ \Theta(\cube,\boxtimes) \simeq \Theta(\bar{\cube},\bar{\boxtimes}). $
Thus these test categories are canonically equivalent, so any result for oriented categories implies a result for antioriented categories, and vice-versa, but the results are not the same --- one must carefully keep track of all the involutions.
\end{remark}

\begin{corollary}\label{restind}
Restriction and left Kan extension along the functor of $1$-categories $\Theta(\bar{\cube},\bar{\boxtimes})\to\Theta(\bar{\cube},\times)$ descends to a localization
\[
{\Cat\boxtimes}\leftrightarrows\infty\Cat.
\]
Moreover, the left adjoint agrees with the oriented realization functor $||-||_\boxtimes.$
\end{corollary}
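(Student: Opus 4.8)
The plan is to deduce the corollary formally from the cartesian square of right adjoints produced in the proof of \cref{geomorient}, together with the full faithfulness supplied by \cref{interchange}. Write $\theta\colon\Theta(\bar{\cube},\bar{\boxtimes})\to\Theta(\bar{\cube},\times)$ for the functor of $1$-categories in question, $\theta^*$ and $\theta_!$ for restriction and left Kan extension along $\theta$, and $\N_1,\N_2$ for the fully faithful nerve embeddings of ${\Cat\boxtimes}$ and $\infty\Cat$ into $\mP(\Theta(\bar{\cube},\bar{\boxtimes}))$ and $\mP(\Theta(\bar{\cube},\times))$, with respective left adjoint realizations $L_1,L_2$. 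The proof of \cref{geomorient} gives a commutative square
\[
\xymatrix{
\infty\Cat \ar[r]^{\iota} \ar[d]_{\N_2} & {\Cat\boxtimes}\ar[d]^{\N_1}\\
\mP(\Theta(\bar{\cube},\times)) \ar[r]^{\theta^*}  & \mP(\Theta(\bar{\cube},\bar{\boxtimes}))
}
\]
which is cartesian with $\N_1,\N_2$ fully faithful, where $\iota$ is the enrichment-transfer functor $\infty\Cat\to{\Cat\boxtimes}$ induced by the lax monoidal identity $(\infty\Cat,\times)\to(\infty\Cat,\boxtimes)$.

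First I would read off from the cartesian square that $\theta^*$ sends the essential image of $\N_2$ into that of $\N_1$, so that, upon identifying $\infty\Cat$ and ${\Cat\boxtimes}$ with their reflective subcategories of local presheaves, the functor $\iota$ is literally $\theta^*$ restricted to local objects. By \cref{interchange}(2) the functor $\iota$ is fully faithful, and by definition its left adjoint is the oriented realization $||-||_\boxtimes$; hence $(||-||_\boxtimes, \iota)$ is already a reflective localization ${\Cat\boxtimes}\rightleftarrows\infty\Cat$ whose right adjoint is restriction along $\theta$. This settles the ``localization'' claim and the identification of the right adjoint.

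It remains to identify the left adjoint with left Kan extension along $\theta$. Rather than verifying a Beck--Chevalley condition for the mate square of left adjoints, I would compute it directly: for $X\in{\Cat\boxtimes}$ and $Y\in\infty\Cat$ the chain
\[
\Map_{{\Cat\boxtimes}}(X,\iota Y)\simeq\Map_{\mP(\Theta(\bar{\cube},\bar{\boxtimes}))}(\N_1 X,\theta^*\N_2 Y)\simeq\Map_{\mP(\Theta(\bar{\cube},\times))}(\theta_!\N_1 X,\N_2 Y)\simeq\Map_{\infty\Cat}(L_2\theta_!\N_1 X,Y)
\]
--- using $\N_1\iota\simeq\theta^*\N_2$ together with full faithfulness of $\N_1$, then $\theta_!\dashv\theta^*$, then $L_2\dashv\N_2$ with $Y$ local --- exhibits the left adjoint as $X\mapsto L_2\,\theta_!\,\N_1 X$, namely left Kan extension along $\theta$ followed by the realization into $\infty\Cat$. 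By uniqueness of adjoints this agrees with $||-||_\boxtimes$. I expect the only real subtlety to be the first step, that $\theta^*$ preserves local objects and that the descended right adjoint coincides with $\iota$; but this is exactly the content of the cartesian square from \cref{geomorient}, so beyond this the argument is formal adjunction bookkeeping.
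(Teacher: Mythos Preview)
Your proposal is correct and follows essentially the same route as the paper: the paper does not give an explicit proof of \cref{restind}, treating it as immediate from the cartesian square of right adjoints established in the proof of \cref{geomorient} together with the full faithfulness from \cref{interchange}, and your argument spells out precisely this formal adjunction bookkeeping.
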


Given a presheaf $X$ on $\Theta(\bar{\cube}, \bar{\boxtimes})$ and objects $S$ and $T$ in $X(\ast)$, we write $\RMor_X(S,T)$ for the presheaf on $\cube$ defined by the rule
\[
\xymatrix{
\Map_{\infty\Cat}(\cube^n,\RMor_X(S,T))\ar[d]\ar[r] & X(S(\cube^n))\ar[d]\\
(S,T)\ar[r] & X(\ast)\times X(\ast)
}
\]
When $X$ is the $\Theta(\bar{\cube}, \bar{\boxtimes})$-nerve of an oriented category $\mC$ with objects $S$ and $T$, this computes $\RMor_\C(S,T)$.
In particular, if $X$ is a presheaf on $\Theta(\bar{\cube}, \bar{\boxtimes})$ satisfying the Segal condition, then $\RMor_X(S,T)$ is a presheaf on $\cube$ satisfying the Segal condition, and therefore may be regarded as an $\infty$-category.

\begin{definition}
Let $X$ be a presheaf on $\Theta(\bar{\cube}, \bar{\boxtimes})$ satisfying the Segal condition.
\begin{enumerate}[\normalfont(1)]\setlength{\itemsep}{-2pt}
    \item
    Let $A \in \infty\Cat$ and  $B \in X(\ast).$
A {\em tensor} of $A$ and $B$ is an object $B \ot A \in X(\ast)$ such that there is an object $ A \to \R\Mor_X(B,B \ot A) \in X({\bD^1})$ and for every $C \in X(\ast)$ the induced functor
$$\R\Mor_X(B \ot A,C) \to \Fun^\lax(A, \R\Mor_X(B,C)) $$
is an equivalence.
    \item
The presheaf $X$ {\em admits tensors} if for every $A \in \infty\Cat, B \in X(\ast)$ there is a tensor $B \ot A \in X(\ast)$.
    \item
If $Y$ is another presheaf on $\Theta(\bar{\cube}, \bar{\boxtimes})$ satisfying the Segal condition, and $X$ and $Y$ admits tensors, then a map $X\to Y$ {\em preserves tensors} if the canonical morphism comparing the tensors is an equivalence.
\end{enumerate}
\end{definition}

Similarly, we define when a presheaf on $\Theta(\cube, \boxtimes)$ and $\Theta(\cube, \times)$ admit tensors.

We characterize the oriented category of oriented spaces,
the antioriented category of antioriented spaces and the $\infty$-category of $\infty$-categories by the following universal properties:

\begin{theorem}\label{charas}
\begin{enumerate}[\normalfont(1)]\setlength{\itemsep}{-2pt}
\item Let $X$ be a presheaf on $\Theta(\bar{\cube}, \bar{\boxtimes})$ satisfying the Segal condition and that is tensored.
The map $$ \Map'_{\mP(\Theta(\bar{\cube}, \bar{\boxtimes}))}(\N_{\Theta(\bar{\cube}, \bar{\boxtimes})}(\infty\fcat_{\mid \boxtimes}),X) \to \Map_{\mP(\Theta(\bar{\cube}, \bar{\boxtimes}))}(\N_{\Theta(\bar{\cube}, \bar{\boxtimes})}(\ast),X) \simeq X(\ast) $$ induced by the oriented functor $\ast \to \infty\fcat_{\mid \boxtimes} $ taking the final $\infty$-category, is an equivalence,
where the left hand side is the subspace of maps of presheaves $ \N_{\Theta(\bar{\cube}, \bar{\boxtimes})}(\infty\fcat_{\mid \boxtimes}) \to X $ on $\Theta(\bar{\cube}, \bar{\boxtimes})$ preserving tensors.

\item Let $X$ be a presheaf on $\Theta(\cube, \boxtimes)$ satisfying the Segal condition and that is tensored.
The map $$ \Map'_{\mP(\Theta(\cube, \boxtimes))}(\N_{\Theta(\cube, \boxtimes)}(_{\boxtimes \mid}\infty\fcat),X) \to \Map_{\mP(\Theta(\cube, \boxtimes))}(\N_{\Theta(\cube, \boxtimes)}(\ast),X) \simeq X(\ast) $$ induced by the antioriented functor $\ast \to {_{\boxtimes \mid}\infty\fcat} $ taking the final $\infty$-category, is an equivalence,
where the left hand side is the subspace of maps of presheaves $ \N_{\Theta(\cube, \boxtimes)}(_{\boxtimes \mid}\infty\fcat) \to X $ on $\Theta(\cube, \boxtimes)$ preserving tensors.

\item Let $X$ be a presheaf on $\Theta(\cube, \times)$ satisfying the Segal condition and that is tensored.
The map $$ \Map'_{\mP(\Theta(\cube, \times))}(\N_{\Theta(\cube, \times)}(\infty\Cat),X) \to \Map_{\mP(\Theta(\cube, \times))}(\N_{\Theta(\cube, \times)}(\ast),X) \simeq X(\ast) $$ induced by the functor $\ast \to \infty\Cat $ taking the final $\infty$-category, is an equivalence,
where the left hand side is the subspace of maps of presheaves $ \N_{\Theta(\cube, \times)}(\infty\Cat) \to X $ on $\Theta(\cube, \times)$ preserving tensors.

\end{enumerate}

\end{theorem}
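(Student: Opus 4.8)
The plan is to prove part (1), from which parts (2) and (3) follow by the same argument: part (2) is obtained from part (1) by the involution $(-)^\co\colon\cube\simeq\bar\cube$ inducing $\Theta(\cube,\boxtimes)\simeq\Theta(\bar\cube,\bar\boxtimes)$ noted after \cref{geomorient}, and part (3) is the verbatim analogue with the lax Gray tensor product $\boxtimes$ and $\Fun^\lax$ replaced throughout by the cartesian product and $\Fun$. In all three cases the assertion is an instance of the free-module universal property, so the real content is to transport \cref{freemod} across the nerve equivalence of \cref{geomorient}; the one genuinely nontrivial input is that the left-hand side, cut out by tensor-preservation, is the space of \emph{linear} functors rather than of arbitrary enriched functors.

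First I would rephrase everything enriched-categorically. By \cref{geomorient} (and its underlying \cref{premod}, \cref{denseinherited}) a presheaf $X$ on $\Theta(\bar\cube,\bar\boxtimes)$ satisfying the Segal condition is the $\Theta$-nerve of a category $\mathcal X$ right enriched in $(\infty\Cat,\boxtimes)$: its objects are $X(\ast)$ and its morphism $\infty$-categories are the $\RMor_X(S,T)$ of the preamble, which are genuine $\infty$-categories precisely because the Segal condition forces them to be cubical Segal presheaves. Since the $\Theta(\bar\cube,\bar\boxtimes)$-nerve is fully faithful and $\N_{\Theta(\bar\cube,\bar\boxtimes)}(\infty\fcat_{\mid\boxtimes})$ is itself Segal (hence local for the reflective localization onto Segal presheaves), the mapping space $\Map_{\mP(\Theta(\bar\cube,\bar\boxtimes))}(\N(\infty\fcat_{\mid\boxtimes}),X)$ computes the space of right-$(\infty\Cat,\boxtimes)$-enriched functors $\infty\Cat\to\mathcal X$. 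The hypothesis that $X$ is tensored says exactly that $\mathcal X$ admits right tensors, and the marked subspace $\Map'$ of tensor-preserving maps matches the subspace of right-$(\infty\Cat,\boxtimes)$-\emph{linear} functors.

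Next I would identify the source. By \cref{undact} together with \cref{Biolk}, $\infty\fcat_{\mid\boxtimes}$ is nothing but the monoidal category $(\infty\Cat,\boxtimes)$ viewed as right-tensored over itself, whose right tensor unit is the terminal $\infty$-category $\ast$ — the image of the oriented functor $\ast\to\infty\fcat_{\mid\boxtimes}$ in the statement. The right-tensored analogue of \cref{freemod} (obtained by applying $(-)^\rev$) then gives that evaluation at the tensor unit is an equivalence
\[
\Lin\Fun_{(\infty\Cat,\boxtimes)}(\infty\Cat,\mathcal X)\xrightarrow{\ \sim\ }\mathcal X,
\]
so a right-linear functor $F\colon\infty\Cat\to\mathcal X$ is determined by $F(\ast)\in X(\ast)$ via $F(A)\simeq F(\ast)\otimes A$, and every object of $X(\ast)$ arises from a unique such $F$. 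Combined with the previous paragraph, this is exactly the claimed equivalence $\Map'(\N(\infty\fcat_{\mid\boxtimes}),X)\simeq X(\ast)$.

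The main obstacle is the comparison made in the second paragraph: I must verify that a map of presheaves $\N(\infty\fcat_{\mid\boxtimes})\to X$ preserves tensors in the stated sense if and only if the corresponding enriched functor is right-$(\infty\Cat,\boxtimes)$-linear, and I must ensure \cref{freemod} applies over the localized monoidal category $(\infty\Cat,\boxtimes)$ rather than over $\mP(\bar\cube)$. I would handle the latter using the monoidal localization $\mP(\bar\cube)\rightleftarrows\infty\Cat$ of \cref{locmon2}: since the morphism presheaves of both $\N(\infty\fcat_{\mid\boxtimes})$ and $X$ are already $\infty$-categories, both sides descend along this localization, and a tensor-preserving map automatically respects it. For the former, the key point is that $\infty\fcat_{\mid\boxtimes}$ is tensor-generated by its unit $\ast$ (indeed $\ast\otimes A\simeq A$), so that linearity of the underlying action is detected on the representables $S(\bar\cube^n)$; checking that tensor-preservation on these representables, against the Segal condition, is equivalent to compatibility with the full right action is the one step where I expect to have to work, and where the self-action description of the source does the essential bookkeeping.
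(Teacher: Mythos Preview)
The paper states this theorem without proof, so there is nothing to compare against directly. Your strategy---translate the presheaf statement into enriched category theory via \cref{premod} and \cref{geomorient}, then invoke the free-module property \cref{freemod}---is the natural one and is almost certainly what the authors intend.

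There is, however, a gap. You assert that ``the morphism presheaves of both $\N(\infty\fcat_{\mid\boxtimes})$ and $X$ are already $\infty$-categories'' and use this to descend from $\mP(\bar\cube)$-enrichment to $(\infty\Cat,\boxtimes)$-enrichment so that \cref{freemod} applies. But the hypotheses on $X$ are only the (wedge) Segal condition and tensoredness; the \emph{local} Segal condition of \cref{geomorient}, which is precisely what forces $\RMor_X(S,T)$ to lie in the essential image of the cubical nerve, is not assumed. By \cref{premod} a Segal presheaf on $\Theta(\bar\cube,\bar\boxtimes)$ is only a $(\mP(\bar\cube),\mathrm{Day})$-enriched category, and the tensored hypothesis supplies tensors with objects of the subcategory $\infty\Cat\subset\mP(\bar\cube)$---not enough to make $\mathcal X$ into a right $(\infty\Cat,\boxtimes)$-tensored category in the sense required by \cref{freemod}. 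The paper's remark that $\RMor_X(S,T)$ ``may be regarded as an $\infty$-category'' is informal and should not be read as asserting locality.

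The repair is to bypass \cref{freemod} and argue directly at the $\mP(\bar\cube)$-enriched level: since every object of the source satisfies $A\simeq\ast\otimes A$, a tensor-preserving $\mP(\bar\cube)$-enriched functor $F$ must have $F(A)\simeq F(\ast)\otimes A$, and its action on morphism objects is then forced by the tensor universal property in $\mathcal X$ (via the unit maps $B\to\RMor_{\mathcal X}(F(\ast),F(\ast)\otimes B)$); conversely each $x\in X(\ast)$ yields such an $F$ by $A\mapsto x\otimes A$. This is essentially your final paragraph, but it is the core of the proof rather than residual bookkeeping, and it does not depend on $\mathcal X$ having $\infty$-category-valued morphism objects.
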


We finish this section by the following proposition comparing bienrichment in the tensor product and Gray tensor product:

\begin{proposition}
There is a cartesian square
\[
\xymatrix{
& _{(\infty\Cat,\times)}\Cat_{(\infty\Cat,\times)}\ar[ld]\ar[rd] &\\
_{(\infty\Cat,\boxtimes)}\Cat_{(\infty\Cat,\times)}\ar[rd] & & _{(\infty\Cat,\times)}\Cat_{(\infty\Cat,\boxtimes)}\ar[ld]\\
& _{(\infty\Cat,\boxtimes)}\Cat_{(\infty\Cat,\boxtimes)} &.
}
\]
\end{proposition}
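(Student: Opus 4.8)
The plan is to exhibit all four corners as full subcategories of the bottom corner $D:={_{(\infty\Cat,\boxtimes)}\Cat_{(\infty\Cat,\boxtimes)}}$ and to identify the asserted pullback with an intersection of full subcategories. Abbreviate $(\times):=(\infty\Cat,\times)$ and $(\boxtimes):=(\infty\Cat,\boxtimes)$, and let $j\colon(\times)\to(\boxtimes)$ be the lax monoidal identity functor; its structure maps $A\boxtimes B\to A\times B$ are epimorphisms by \cref{grayepi} and its unit map is an equivalence. By \cref{indres1} and \cref{CaTT0} the lax monoidal functor $j$ induces bienriched induction $2$-functors $(j,\id)_!$ (inducing the left enrichment) and $(\id,j)_!$ (inducing the right enrichment), and the four arrows of the square are exactly these: $A\to B$ and $C\to D$ are $(j,\id)_!$, while $A\to C$ and $B\to D$ are $(\id,j)_!$. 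The two composites $A\to D$ agree, both being $(j,j)_!$, by functoriality of the induction $2$-functor in its two arguments; this provides the commuting square and hence a comparison functor $A\to B\times_D C$.

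Next I would prove that each arrow, and therefore each composite into $D$, is fully faithful. Using \cref{refk} to regard a $(\mV,\mW)$-bienriched category as a category left enriched in $\mV\otimes\mW^\rev$, each of these inductions becomes induction along a lax monoidal functor of the form $j\otimes\id$, $\id\otimes\bar{j}$, or $j\otimes\bar{j}$ between tensor products of cartesian and Gray monoidal structures, where $\bar{j}\colon(\times)\to(\infty\Cat,\bar{\boxtimes})$ is the reversed variant of $j$. I would check that each such tensored functor satisfies the hypotheses of \cref{enrinterchange}: it preserves small colimits and is fully faithful on underlying categories because $j$ is, its unit comparison is an equivalence, and its tensor comparison maps are epimorphisms --- the last point reducing on the generating pure tensors to the epimorphisms of \cref{grayepi}. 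Then \cref{enrinterchange}(1) gives fully faithfulness together with an explicit description of the essential image as those enriched categories whose composition factors through the comparison epimorphism.

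Unwinding these descriptions identifies $\operatorname{im}(C\to D)$ with the full subcategory $D_{\mathrm{left}}\subset D$ of $(\boxtimes,\boxtimes)$-bienriched categories whose left $\boxtimes$-action strictly descends to the cartesian product (the left strict interchange law of \cref{interchange}), $\operatorname{im}(B\to D)$ with the full subcategory $D_{\mathrm{right}}$ cut out by the right strict interchange law, and $\operatorname{im}(A\to D)$ with the full subcategory $D_{\mathrm{both}}$ on which both laws hold. Since $B\to D$ and $C\to D$ are fully faithful, the pullback $B\times_D C$ is the full subcategory of $D$ spanned by the objects lying in $\operatorname{im}(B)\cap\operatorname{im}(C)=D_{\mathrm{right}}\cap D_{\mathrm{left}}$; and $A\to D$ is fully faithful with image $D_{\mathrm{both}}$. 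Hence the comparison $A\to B\times_D C$ is fully faithful, and it is essentially surjective --- so an equivalence --- precisely because $D_{\mathrm{left}}\cap D_{\mathrm{right}}=D_{\mathrm{both}}$, i.e. a bioriented category satisfies both strict interchange laws if and only if it satisfies each one separately.

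The main obstacle is the second step: setting up the mixed bienriched instances of \cref{enrinterchange} and, in particular, verifying that the tensor comparison maps of $j\otimes\id$, $\id\otimes\bar{j}$, and $j\otimes\bar{j}$ remain epimorphisms after tensoring, which is where \cref{grayepi} does the essential work. Once the three essential images are pinned down as the independent left, right, and conjoined strict interchange conditions, the cartesianness of the square is formal.
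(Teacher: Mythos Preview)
Your proposal is correct and takes a genuinely different route from the paper. The paper does not argue at the level of enriched categories at all: instead it asserts that it suffices to show the square of \emph{enriching monoidal categories}
\[
(\infty\Cat,\times)\otimes(\infty\Cat,\times),\quad (\infty\Cat,\boxtimes)\otimes(\infty\Cat,\times),\quad (\infty\Cat,\times)\otimes(\infty\Cat,\boxtimes),\quad (\infty\Cat,\boxtimes)\otimes(\infty\Cat,\boxtimes)
\]
(tensor in $\Alg(\PrL)$) is a pullback in the category of monoidal categories and lax monoidal functors. It then verifies this by embedding into the Day-convolution square on the corresponding \emph{products} $\mP((\infty\Cat,?)\times(\infty\Cat,?))$, reducing to representables (pure tensors) using compactness, and observing that the resulting square of product monoidal structures is a pullback factor-by-factor. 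Notably this argument never invokes \cref{grayepi} or \cref{enrinterchange}; once reduced to the product square, the pullback is formal.

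Your approach stays on the enriched side throughout: you use \cref{refk} to pass to one-sided enrichment in $\mV\otimes\mW^\rev$, apply \cref{enrinterchange} to the tensored lax monoidal functors $j\otimes\id$, $\id\otimes\bar{j}$, $j\otimes\bar{j}$ (checking the epimorphism hypothesis on pure tensors via \cref{grayepi}, then extending by colimit-closure of epimorphisms in the arrow category), and identify the essential images as the left, right, and two-sided strict interchange conditions of \cref{interchange}. The cartesianness then becomes the tautology $D_{\mathrm{left}}\cap D_{\mathrm{right}}=D_{\mathrm{both}}$. Your route is more explicit and self-contained within the machinery already developed, and it makes the dependence on \cref{grayepi} visible; the paper's route is slicker and model-level but leaves the reduction step (why a pullback of monoidal structures yields a pullback of bienriched categories) to the reader.
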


\begin{proof}

It suffices to prove that the commutative square of monoidal categories and lax monoidal functors
\[
\xymatrix{
& (\infty\Cat, \times) \ot (\infty\Cat, \times) \ar[ld]\ar[rd] &\\
(\infty\Cat, \boxtimes) \ot (\infty\Cat, \times)  \ar[rd] & & (\infty\Cat, \times) \ot (\infty\Cat, \boxtimes) \ar[ld]\\
& (\infty\Cat, \boxtimes) \ot (\infty\Cat, \boxtimes) &
}
\]
is a pullback square.
The latter commutative square induces on underlying categories a constant diagram and is the restriction along lax monoidal embeddings of the following commutative square of monoidal categories and lax monoidal functors
\[
\xymatrix{
& \mP((\infty\Cat, \times) \times (\infty\Cat, \times))\ar[ld]\ar[rd] &\\
\mP((\infty\Cat, \boxtimes) \times (\infty\Cat, \times))  \ar[rd] & & \mP((\infty\Cat, \times) \times (\infty\Cat, \boxtimes)) \ar[ld]\\
& \mP((\infty\Cat, \boxtimes) \times (\infty\Cat, \boxtimes)) &
}
\]
The latter commutative square induces on underlying categories a constant diagram. So it suffices that the latter square of monoidal categories and lax monoidal functors induces a pullback square on multi-morphism spaces. To check this, we can reduce to representables since all tensor products preserve component-wise small colimits and representable presheaves are complete compact. Consequently, it is enough to see that the commutative square of monoidal categories and lax monoidal functors
\[
\xymatrix{
& (\infty\Cat, \times) \times (\infty\Cat, \times) \ar[ld]\ar[rd] &\\
(\infty\Cat, \boxtimes) \times (\infty\Cat, \times)  \ar[rd] & & (\infty\Cat, \times) \times (\infty\Cat, \boxtimes) \ar[ld]\\
& (\infty\Cat, \boxtimes) \times (\infty\Cat, \boxtimes) &
}
\]
is a pullback square. This is evident since the commutative square in the first component and the commutative square in the second component of the product, are both pullback squares.    
\end{proof}

\vspace{.25cm}
\bibliographystyle{plain}
\bibliography{mainbib}

\begin{thebibliography}{10}

\bibitem{ara2025lax}
Dimitri Ara and L{\'e}onard Guetta.
\newblock Lax functorialities of the comma construction for
  $\omega$-categories.
\newblock {\em arXiv preprint arXiv:2503.08832}, 2025.

\bibitem{ara.folkmodel}
Dimitri Ara and Maxime Lucas.
\newblock {The {folk} model category structure on strict $\omega$-categories is
  monoidal}.
\newblock {\em {Theory and Applications of Categories}}, 35(21):745--808, May
  2020.
\newblock 65 pages.

\bibitem{Dimitri_Ara_2020}
Dimitri Ara and Georges Maltsiniotis.
\newblock Joint et tranches pour les $\infty$-catégories strictes.
\newblock {\em Mémoires de la Société mathématique de France}, 165:1–213,
  2020.

\bibitem{MR3941460}
David Ayala, John Francis, and Nick Rozenblyum.
\newblock A stratified homotopy hypothesis.
\newblock {\em J. Eur. Math. Soc. (JEMS)}, 21(4):1071--1178, 2019.

\bibitem{baez1995higher}
John~C Baez and James Dolan.
\newblock Higher-dimensional algebra and topological quantum field theory.
\newblock {\em Journal of mathematical physics}, 36(11):6073--6105, 1995.

\bibitem{baez2011prehistory}
John~C Baez and Aaron Lauda.
\newblock A prehistory of $n$-categorical physics.
\newblock {\em Deep beauty: Understanding the quantum world through
  mathematical innovation}, pages 13--128, 2011.

\bibitem{barwick2018operator}
Clark Barwick.
\newblock From operator categories to higher operads.
\newblock {\em Geometry \& Topology}, 22(4):1893--1959, 2018.

\bibitem{bourke2023skew}
John Bourke and Gabriele Lobbia.
\newblock A skew approach to enrichment for {Gray}-categories.
\newblock {\em Advances in Mathematics}, 434:109327, 2023.

\bibitem{campion2022cubesdenseinftyinftycategories}
Tim Campion.
\newblock Cubes are dense in $(\infty,\infty)$-categories.
\newblock {\em arXiv: 2209.09376}, 2022.

\bibitem{campion2023graytensorproductinftyncategories}
Timothy Campion.
\newblock The {Gray} tensor product of $(\infty,n)$-categories.
\newblock {\em arXiv: 2311.00205}, 2023.

\bibitem{FreeAlgebras}
Hongyi Chu and Rune Haugseng.
\newblock Free algebras through {Day} convolution.
\newblock {\em Algebraic \& Geometric Topology}, 2020.

\bibitem{crans1999tensor}
Sjoerd~E Crans.
\newblock A tensor product for {Gray}-categories.
\newblock {\em Theory and applications of categories}, 5(2):12--69, 1999.

\bibitem{ferrer2024dagger}
Giovanni Ferrer, Brett Hungar, Theo Johnson-Freyd, Cameron Krulewski, Lukas
  M{\"u}ller, David Penneys, David Reutter, Claudia Scheimbauer, Luuk
  Stehouwer, Chetan Vuppulury, et~al.
\newblock Dagger $ n $-categories.
\newblock {\em arXiv preprint arXiv:2403.01651}, 2024.

\bibitem{gagna2021gray}
Andrea Gagna, Yonatan Harpaz, and Edoardo Lanari.
\newblock Gray tensor products and lax functors of ($\infty$, 2)-categories.
\newblock {\em Advances in Mathematics}, 391:107986, 2021.

\bibitem{gaiotto2019condensations}
Davide Gaiotto and Theo Johnson-Freyd.
\newblock Condensations in higher categories.
\newblock {\em arXiv preprint arXiv:1905.09566}, 2019.

\bibitem{MR3345192}
David Gepner and Rune Haugseng.
\newblock Enriched {$\infty$}-categories via non-symmetric {$\infty$}-operads.
\newblock {\em Adv. Math.}, 279:575--716, 2015.

\bibitem{articles}
David Gepner, Rune Haugseng, and Thomas Nikolaus.
\newblock Lax colimits and free fibrations in $\infty$-categories.
\newblock {\em Documenta Mathematica}, 22, 01 2015.

\bibitem{gepner2026homotopy}
David Gepner and Hadrian Heine.
\newblock Homotopy posets, {Postnikov} towers, and hypercompletions of $\infty
  $-categories.
\newblock {\em arXiv preprint arXiv:2603.09903}, 2026.

\bibitem{GepnerHeine2026}
David Gepner and Hadrian Heine.
\newblock Oriented polytopes and the {Street-Roberts} conjecture.
\newblock 2026.

\bibitem{gordon1995coherence}
Robert Gordon, Anthony~John Power, and Ross Street.
\newblock {\em Coherence for tricategories}, volume 558.
\newblock American Mathematical Soc., 1995.

\bibitem{GRAY197663}
John~W. Gray.
\newblock Coherence for the tensor product of 2-categories, and braid groups.
\newblock In Alex Heller and Myles Tierney, editors, {\em Algebra, Topology,
  and Category Theory}, pages 63--76. Academic Press, 1976.

\bibitem{heine2017topological}
Hadrian Heine.
\newblock A topological model for cellular motivic spectra.
\newblock {\em arXiv preprint arXiv:1712.00521}, 2017.

\bibitem{heine2019restricted}
Hadrian Heine.
\newblock Restricted {L}$_\infty$-algebras.
\newblock {\em Thesis, Universit{\"a}t Osnabr{\"u}ck}, 2019.

\bibitem{heine2023monadicity}
Hadrian Heine.
\newblock A duality between monads and monadic morphisms.
\newblock {\em arXiv:1712.00555}, 2023.

\bibitem{HEINE2023108941}
Hadrian Heine.
\newblock An equivalence between enriched $\infty$-categories and
  $\infty$-categories with weak action.
\newblock {\em Advances in Mathematics}, 417:108941, 2023.

\bibitem{heine2024higher}
Hadrian Heine.
\newblock The higher algebra of weighted colimits.
\newblock {\em arXiv: 2406.08925}, 2024.

\bibitem{heine2024bienriched}
Hadrian Heine.
\newblock On bi-enriched $\infty$-categories.
\newblock {\em arXiv: 2406.09832}, 2024.

\bibitem{heine2025equivalence}
Hadrian Heine.
\newblock An equivalence between two models of $\infty$-categories of enriched
  presheaves.
\newblock {\em Applied Categorical Structures}, 33(1):2, 2025.

\bibitem{heine2025categorification}
Hadrian Heine.
\newblock Homology of higher categories.
\newblock {\em arXiv preprint arXiv:2505.22640}, 2025.

\bibitem{heine2026local}
Hadrian Heine.
\newblock A local-global principle for parametrized $\infty$-categories.
\newblock In {\em Forum of Mathematics, Sigma 14 (2026) e8}, volume~14.
  Cambridge University Press, 2026.

\bibitem{heine2026stable}
Hadrian Heine.
\newblock Stable homotopy theory of higher categories.
\newblock {\em arXiv preprint arXiv:2605.05195}, 2026.

\bibitem{HINICH2020107129}
Vladimir Hinich.
\newblock Yoneda lemma for enriched $\infty$-categories.
\newblock {\em Advances in Mathematics}, 367:107129, 2020.

\bibitem{hinich2021colimits}
Vladimir Hinich.
\newblock Colimits in enriched $\infty$-categories and {Day} convolution, 2021.

\bibitem{johnson2025build}
Theo Johnson-Freyd and David Reutter.
\newblock How to build a hopf algebra.
\newblock {\em arXiv preprint arXiv:2508.16787}, 2025.

\bibitem{liu2024braided}
Yu~Leon Liu, Aaron Mazel-Gee, David Reutter, Catharina Stroppel, and Paul
  Wedrich.
\newblock A braided monoidal ($\infty$, 2)-category of {Soergel} bimodules.
\newblock {\em arXiv preprint arXiv:2401.02956}, 2024.

\bibitem{loubaton2024categorical}
F{\'e}lix Loubaton.
\newblock Categorical theory of $(\infty,\omega)$-categories.
\newblock {\em arXiv e-prints}, pages arXiv--2406, 2024.

\bibitem{loubaton2024complicialmodelinftyomegacategories}
Félix Loubaton.
\newblock The complicial model of $(\infty,\omega)$-categories.
\newblock {\em arXiv: 2207.08504}, 2024.

\bibitem{lurie.higheralgebra}
Jacob Lurie.
\newblock Higher {A}lgebra.
\newblock available at http://www.math.harvard.edu/~lurie/.

\bibitem{lurie.HTT}
Jacob Lurie.
\newblock {\em Higher topos theory}, volume 170 of {\em Annals of Mathematics
  Studies}.
\newblock Princeton University Press, Princeton, NJ, 2009.

\bibitem{Cob}
Jacob Lurie.
\newblock On the classification of topological field theories.
\newblock {\em Current Developments in Mathematics}, 2008, 05 2009.

\bibitem{MAEHARA2021107461}
Yuki Maehara.
\newblock The {Gray} tensor product for 2-quasi-categories.
\newblock {\em Advances in Mathematics}, 377:107461, 2021.

\bibitem{Masuda}
Naruki Masuda.
\newblock The algebra of categorical spectra.
\newblock {\em https://arxiv.org/abs/2605.03114}, 2024.

\bibitem{stefanich2021higher}
German Stefanich.
\newblock {\em Higher quasicoherent sheaves}.
\newblock PhD thesis, University of California, Berkeley, 2021.

\bibitem{Steiner2004OmegacategoriesAC}
Richard Steiner.
\newblock Omega-categories and chain complexes.
\newblock {\em Homology, Homotopy and Applications}, 6:175--200, 2004.

\bibitem{VERITY1}
D.R.B. Verity.
\newblock Weak complicial sets $\mathrm{I}$. $\mathrm{B}$asic homotopy theory.
\newblock {\em Advances in Mathematics}, 219(4):1081--1149, 2008.

\end{thebibliography}

\end{document}